\documentclass[a4paper,10pt]{amsart}

\usepackage{amsmath,amssymb,latexsym}
\usepackage{amscd}
\usepackage[OT4]{fontenc}
\usepackage{lmodern}
\usepackage{mathrsfs}
\usepackage{amsthm}
\usepackage{amsfonts}
\usepackage{url}
\usepackage[fleqn,tbtags]{mathtools}
\usepackage{mathrsfs}
\usepackage{enumerate}
\usepackage{graphicx}
\usepackage{color}
\usepackage[all]{xy}
\usepackage{tikz}
\usepackage{marginnote}
\usepackage[foot]{amsaddr}
\usepackage{orcidlink}
\usepackage{todonotes}

\usepackage{dsfont}

\usepackage{dirtytalk}
\usepackage{csquotes}

\usepackage{soul}

\usepackage{xspace}

\usepackage{hyperref}
\usepackage{mathtools}
\usepackage{tikz}
\usepackage{pgfplots}
\usepgfplotslibrary{fillbetween}
\pgfplotsset{compat=1.18}

\newtheorem{theorem}{Theorem}
\newtheorem{lemma}[theorem]{Lemma}
\newtheorem{corollary}[theorem]{Corollary}
\newtheorem{proposition}[theorem]{Proposition}
\newtheorem{conjecture}[theorem]{Conjecture}

\newtheorem*{crucial lemma}{Lemma \ref{lem:nice polynomial2}}

\theoremstyle{definition}
\newtheorem{definition}[theorem]{Definition}
\newtheorem{remark}[theorem]{Remark}
\newtheorem{example}[theorem]{Example}

\newcommand{\R}{\mathbb{R}}
\newcommand{\C}{\mathbb{C}}
\newcommand{\N}{\mathbb{N}}

\newcommand{\supp}{\mbox{supp}\,}
\newcommand{\dist}{\mbox{dist}\,}

\newcommand{\Pad}{P^{ad}}
\newcommand{\detP}{\operatorname{det}\!P}
\newcommand{\curl}{\operatorname{curl}}
\newcommand{\charcone}{\operatorname{Char}}

\newcommand{\wniceon}[2]{$#1$-degenerate on~#2}
\newcommand{\niceon}[1]{degenerate on~#1}
\newcommand{\veryniceon}[1]{orthogonally degenerate on~#1}

\allowdisplaybreaks

\title[Zero solutions, solvability, and {R}unge approximation]{From zero solutions with partially bounded supports to solvability and {R}unge approximation for partial differential equations}

\author[T.~Cia\'s]{Tomasz Cia\'s$^1$ \orcidlink{0000-0001-7119-7814}}

\address{$^1$Faculty of Mathematics and Computer Science, Adam Mickiewicz University, Pozna\'n, ul.~Uniwersytetu Pozna\'nskiego 4, 61-614 Pozna\'n, Poland}
\email{tomasz.cias@amu.edu.pl}

\author[T.~Kalmes]{Thomas Kalmes$^2$ \orcidlink{0000-0001-7542-1334}}

\address{$^2$Faculty of Mathematics, Chemnitz University of Technology, 09107 Chemnitz, Germany}
\email{thomas.kalmes@math.tu-chemnitz.de}

\date{}

\begin{document}

\begin{abstract}
	The existence of non-trivial solutions of homogeneous partial differential equations with prescribed support properties plays a fundamental role in the theory of linear partial differential operators. In this article, we establish the existence of smooth zero solutions with partially bounded supports for a broad class of constant coefficient partial differential operators.
    
    As applications, we obtain new geometric results concerning solvability and approximation for partial differential equations. In particular, we derive geometric characterizations of $P$-convexity for supports for a large class of non-elliptic operators, thereby extending classical geometric ideas of H\"ormander. By a theorem of Malgrange, $P$-convexity for supports is equivalent to the surjectivity of the differential operator on spaces of smooth functions and, more generally, on local subspaces of distributions of finite order. As a further consequence, we prove that the kernel of every surjective semi-elliptic operator satisfies condition ($\Omega$) implying parameter dependence results.
    
    We also investigate Runge-type approximation phenomena. We obtain geometric characterizations of Runge pairs for smooth functions, distributions, and spaces of smooth Whitney jets. Finally, we develop a general framework for Runge approximation for square systems of constant coefficient partial differential equations. As applications, we recover by alternative methods known Runge approximation results for Beltrami fields and for the three-dimensional unsteady Stokes system, and we complement them by corresponding approximation theorems in spaces of smooth Whitney jets. In particular, this yields approximation up to the boundary for solutions on suitable domains with H\"older continuous boundary.\\
	
	\noindent Keywords: constant coefficient partial differential operators; smooth zero solutions with partially bounded supports; $P$-convexity for supports; semi-elliptic operators; condition ($\Omega$); Runge approximation; Lax-Malgrange theorem; Whitney jets; Beltrami fields; unsteady Stokes system\\
	
	\noindent MSC 2020: 35A01, 35A35, 35E20, 46A63
\end{abstract}

\maketitle

\section{Introduction}\label{section:introduction}

The existence of non-trivial smooth solutions of homogeneous partial differential equations (i.e.~smooth zero solutions) with prescribed support properties has long played a fundamental role in the theory of linear partial differential operators. Solutions of this type are used to establish necessary conditions for surjectivity of partial differential operators on so-called local subspaces (in the sense of H\"ormander \cite[Chapter 10]{Hor2}) of the space of finite-order distributions, such as spaces of smooth functions and local Sobolev spaces. By the seminal work of Malgrange \cite{Malgrange1955}, a constant coefficient linear partial differential operator $P(D)$ is surjective on such spaces over an open subset $X$ of $\R^d$ precisely when $X$ is $P$-convex for supports (see also \cite[Section 10.6]{Hor2}). Despite the classical nature of this characterization, a geometric description of this condition for given (classes of) partial differential operators is notoriously difficult. Convex open sets are $P$-convex for supports \cite{Malgrange1955} (whenever $P\not\equiv 0$), every open set is $P$-convex for supports whenever $P$ is elliptic, and there is a complete geometric characterization of $P$-convexity for supports in the two dimensional case due to H\"ormander \cite[Theorem 10.8.3]{Hor2}. In arbitrary dimensions, however, the problem of geometrically characterizing open subsets of $\R^d$ which are $P$-convex for supports is far from being solved. For second order operators with principal part equal to the wave operator, there is a characterization \cite[Theorem 10.8.6]{Hor2}, essentially due to Persson \cite{Persson1981} (see also \cite{Persson1993,Tintarev1992,Tintarev1988} for general operators of real principal type). Moreover, Nakane \cite{Nakane1979} gave a geometric characterization of $P$-convexity for supports for polynomials which act along a subspace of $\R^d$ and are elliptic there (see also \cite[Theorem 10.8.5]{Hor2}; the special case of first order operators was studied in \cite{Zachmanoglou69}).

Besides their role in the study of P-convexity for supports, smooth zero solutions with partially bounded supports also play an important role in Runge-type approximation theorems, i.e.~approximability of solutions of homogeneous partial differential equations on a small set by solutions of the same equation on a larger set. Starting from Runge's classical theorem on rational approximation and its far-reaching generalization by Lax and Malgrange to elliptic differential operators, Runge approximation results have become a classical topic in the theory of partial differential equations \cite{Browder1962b,Browder1962}. In recent years, this subject has experienced renewed interest. For variable coefficients parabolic operators of second order, such approximation results have been studied by Enciso, Garc\'{\i}a-Ferrero, Peralta-Salas \cite{EnGaPe19} and have been complemented by Enciso, Peralta-Salas \cite{EnPe21} as well as Shlapunov, Vilkov \cite{ShVi24}. For several classes of non-elliptic constant coefficient partial differential operators, Runge approximation theorems are due to the second author \cite{Kalmes2021} as well as to Debrouwere and the second author \cite{DebKalmes2024}. Beyond scalar equations, Runge approximation for systems has attracted considerable recent interest, not least because of its striking applications in fluid dynamics. Notable examples include the work of Higaki and Sueur on the three-dimensional unsteady Stokes system \cite{HiSu25} and the influential contributions of Enciso and Peralta-Salas on the three-dimensional Euler equations \cite{EnPe12,EnPe15} and, jointly with Lucá, on the Navier--Stokes equations \cite{EnLuPe17}. These developments suggest that approximation results for systems can serve as a useful tool in the study of fluid equations and related PDE models. While the above mentioned results deal with Runge theorems for solutions of homogeneous partial differential equations on open subsets of $\R^d$, approximation results for solutions on closed subsets of $\R^d$, in the context of smooth Whitney jets, have very recently been studied in \cite{CiasKalmes2025}. For non-elliptic differential operators, the existence of zero solutions with partially bounded supports naturally leads to geometric conditions for Runge-type approximation phenomena.

The central aim of this article is twofold. Our first objective is to establish the existence of such solutions for a large class of constant coefficient partial differential operators. This is done in Section \ref{section:zero solutions}. Our second objective is to systematically apply the existence results as described above. More precisely, building on these existence results and earlier work of the authors, Section \ref{sec:p-convexity} provides a geometric characterization of $P$-convexity for supports for certain partial differential operators. In particular, the class of operators covered by our results contains all semi-elliptic operators.

While Section \ref{sec:p-convexity} focuses on solvability, the existence of zero solutions with partially bounded supports also has important consequences for approximation theory. In Section \ref{sec:p-Runge pairs} we use our existence results, again in combination with earlier work of the authors, to derive geometric criteria for Runge-type approximation phenomena, not only in the classical setting of smooth functions and distributions, but also in the setting of smooth Whitney jets.

The final Section \ref{sec:systems} of the paper is devoted to systems of partial differential equations. We develop a general framework for Runge pairs for square systems of constant coefficient linear partial differential equations. This framework is formulated for smooth functions, distributions, and spaces of smooth Whitney jets. As consequences of this general framework, we give alternative proofs of the above mentioned Runge approximation results (on compact subsets for zero solutions on open supersets) by Enciso and Peralta-Salas \cite{EnPe15} and by Higaki and Sueur \cite{HiSu25}, respectively. In addition, our approach yields corresponding Runge-type results in spaces of smooth Whitney jets which establish approximation theorems up to the boundary for smooth functions on domains with H\"older continuous boundary.

The geometric characterization of $P$-convexity for supports obtained in Section \ref{sec:p-convexity} also has further consequences of a more functional analytic nature. We prove that the kernel of every surjective semi-elliptic operator satisfies condition ($\Omega$) of Vogt and Wagner. By the celebrated splitting theorem of Vogt and Wagner, this implies surjectivity of semi-elliptic operators acting on spaces of vector-valued smooth functions \cite{Vogt1983}, thereby giving a positive answer to the parameter dependence problem for solutions of semi-elliptic equations, including distributional and tempered distributional parameter dependence (see e.g.~\cite{BonetDomanski06} and references therein).

In two appendices, we discuss some results on polynomials and on spherical harmonics expansion in the Fr\'echet space of smooth functions, respectively, which are used to establish our results.

Throughout the paper, we freely use standard notions and results from the theory of distributions, partial differential operators, and locally convex spaces. For background material, we refer to \cite{Hor1,Hor2,Treves1966} and \cite{Jarchow1981,MeV}, respectively.

\section{Smooth zero solutions with partially bounded support}\label{section:zero solutions}

In this section, we prove one of the main results of this paper, Theorem \ref{th:zero solution} below. It states the existence of non-trivial smooth functions with partially bounded supports which belong to the kernel of certain partial differential operators.

Let $P\in\C[\xi_1,\ldots,\xi_d]$ be a non-constant polynomial of degree $m$, i.e.~$P(\xi)=\sum_{|\alpha|\leq m} c_\alpha \xi^\alpha$ with $c_\alpha\neq 0$ for some $\alpha$ with $|\alpha|=m$. Here and in what follows, for a multi-index $\alpha\in\N_0^d$ we denote its length by $|\alpha|(=\sum_{j=1}^d\alpha_j$). Although we also denote the euclidean norm of a vector $x$ in $\R^d$ by $|x|$, this should not cause any confusion, since the meaning will be apparent from the context. As usual, for $P\in\C[\xi_1,\ldots,\xi_d]$ of degree $m$ we denote its principal part by $P_m$, i.e.~$P_m(\xi)=\sum_{|\alpha|= m} c_\alpha \xi^\alpha$. The partial differential operator with constant coefficients $P(D)$ corresponding to $P$ is defined by the expression $\sum_{|\alpha|\leq m}c_\alpha D^\alpha=\sum_{|\alpha|\leq m}c_\alpha (-i)^{|\alpha|}\partial^\alpha$.

Additionally, for $X\subset\R^d$ open, we define
\[\mathscr{E}_P(X)=\{f\in\mathscr{E}(X) \colon P(D)f=0\text{ in }X\},\]
where we use Schwartz' notation \cite{Schwartz1966} and denote by $\mathscr{E}(X)$ the space of complex valued, smooth functions equipped with its natural Fr\'echet space topology.

The main ingredient in the proof of Theorem \ref{th:zero solution} is a result on the Cauchy problem, Theorem \ref{rem:cohoon-solution} below, which is of independent interest. In order to formulate and prove it, we will work with suitable (inhomogeneous) Gevrey classes whose definition we recall for the reader's convenience. 

\begin{definition}\label{def:Gevrey}
	Let $s=(s_1,\ldots,s_n)\in [1,\infty)^n$. Let $K\subset\R^n$ be a compact set such that $\overline{\operatorname{int}K}=K$ and let $C>0$. We write $G^{s,C}(K)$ for the Banach space consisting of those $\phi\in C^\infty(K)$ such that
	\[\|\phi\|_{G^{s,C}(K)}:=\sup_{\alpha\in\N_0^n}\sup_{x\in K}\frac{|D^\alpha\phi(x)|}{C^{|\alpha|} \prod_{j=1}^n (\alpha_j!)^{s_j}}<\infty,\]
where 
$$C^\infty(K):=\{f\in \mathscr{E}(\operatorname{int}K)\colon \partial^\alpha f\text{ admits a continuous extension to }K\text{ for all }\alpha\in\N_0^d\}.$$
	For $X\subset\R^n$ open we define the \emph{Gevrey space of class $s$ in $X$} as
	\[G^s(X):=\varprojlim_{K \subset X\mbox{\tiny compact,} \overline{\operatorname{int}K}=K} \varinjlim_{C\rightarrow \infty}G^{s,C}(K)\]
	which we equip with its natural locally convex topology. In case of $s\in(1,\infty)^n$, let us also set $G^s_c(X):=G^s(X)\cap\mathscr{D}(X)$.
\end{definition}

Obviously, for $s,t\in [1,\infty)^n$ with $s_j\leq t_j$ we have $G^{s}(X)\subset G^{t}(X)$ continuously.   For $s=(\sigma,\ldots,\sigma), \sigma\in[1,\infty)$, the space $G^s(X)$ is denoted by $G^\sigma(X)$ and called the (homogeneous) Gevrey space of class $\sigma$. We have $G^{\max_j s_j}(X)\subset G^s(X)$ continuously. Apparently, for $\sigma=1$ the Gevrey class $G^\sigma(X)$ is the space of real analytic functions $\mathscr{A}(X)$ on $X$. By the elementary inequalities $\alpha!\leq |\alpha|!\leq n^{|\alpha|}\alpha!$, $\alpha\in\N_0^n$, it holds
$$G^\sigma(X):=\varprojlim_{K \subset X\mbox{\tiny compact,} \overline{\operatorname{int}K}=K} \varinjlim_{C\rightarrow \infty}\tilde{G}^{\sigma,C}(K),$$
where by $\tilde{G}^{\sigma,C}(K)$ we denote the Banach space consisting of $\phi\in C^\infty(K)$ with
\[\|\phi\|_{\tilde{G}^{\sigma,C}(K)}:=\sup_{\alpha\in\N_0^n}\sup_{x\in K}\frac{|D^\alpha\phi(x)|}{C^{|\alpha|} (|\alpha|!)^\sigma}<\infty.\] 
We refer to \cite{Rodino1993} for any further properties of Gevrey classes we may use.

The proof of our first result is an adaptation of the proof of \cite[Theorem 5]{Kalmes2021}. For $s=1$, it is, of course, a special case of the Cauchy-Kovalevskaya Theorem (see also \cite[Theorem 12.7.5]{Hor2}).

\begin{theorem}\label{cohoon-solution}
	Let $m\in\N$ and let $P\in\C[\xi_1,\ldots,\xi_d]$ be a polynomial of the form $P(\xi',\xi_d)=\xi_d^m+\sum_{k=0}^{m-1}Q_k(\xi') \xi_d^k$, where $Q_k\in\C[\xi_1,\ldots,\xi_{d-1}]$ with $\operatorname{deg}(Q_k)\leq m-1-k$, $k=0,\ldots, m-1$. Moreover, let $Y\subset\R^{d-1}$ be open and let $s\in[1,\infty)^{d-1}$ be such that $\max_k s_k<\frac{m}{m-1}$. For every $h_0, \ldots, h_{m-1}\in G^s(Y)$ there is $u\in G^{(s_1,\ldots,s_{d-1},1)}(Y\times\R)$ such that the following properties hold:
	\begin{enumerate}[{\upshape(i)}]
		\item $u\in\mathscr{E}_P(Y\times\R)$, 
		\item $\partial^{j}_{d}u(x',0)=h_j(x')$ for all $x'\in Y$, $0\leq j\leq m-1$,
		\item $\supp u\subset\left(\cup_{j=0}^{m-1}\supp h_j\right)\times\R$.
	\end{enumerate}
	Especially, for every $x'\in Y$ the function $u(x',\cdot)$ is real analytic on $\R$, and $u(\cdot, x_d)\in G^s(Y)$ for every $x_d\in\R$.
	Consequently , whenever $\sigma>1$, for all $\epsilon>0$ there is $u\in G^{(\sigma,\ldots,\sigma,1)}(\R^{d-1}\times \R)$ satisfying $P(D)u=0$ such that
	\[ \supp u= \overline{B_{d-1}(0,\epsilon)}\times\R\] 
    and $u$ is real analytic on $B_{d-1}(0,\epsilon)\times\R$, where for $n\in\N$ we denote the euclidean ball in $\R^n$ of radius $\rho$ centered at the origin by $B_n(0,\rho)$.
\end{theorem}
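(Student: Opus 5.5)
We construct $u$ as a power series in $x_d$ with coefficients in $G^s(Y)$, in the spirit of Cauchy–Kovalevskaya. The equation fixes these coefficients recursively, and the non-characteristic structure of $P$ with respect to $x_d$, together with the degree bound $\deg Q_k\le m-1-k$, lets us sum the series. Writing $P(D)=D_d^m+\sum_{k<m}Q_k(D')D_d^k$ with $D_d=-i\partial_d$, the equation $P(D)u=0$ becomes
\[\partial_d^m u=-\sum_{k=0}^{m-1} i^{m-k}\,Q_k(D')\,\partial_d^k u .\]
Setting $u_j(x'):=\partial_d^j u(x',0)$, we prescribe $u_j=h_j$ for $j<m$ and define $u_{j}$ for $j\ge m$ recursively by
\[u_{j+m}=-\sum_{k<m} i^{m-k}Q_k(D')u_{j+k}.\]
We then put $u(x',x_d)=\sum_j u_j(x')x_d^j/j!$. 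Property (iii) is immediate, since each $u_j$ is a finite combination of derivatives of the $h_k$ and so $\supp u_j\subset\bigcup_k\supp h_k$. Properties (i) and (ii) hold formally, and hold genuinely once the series and its termwise derivatives converge locally uniformly on $Y\times\R$.

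The key step is an estimate on a compact $K\subset Y$ with $\overline{\operatorname{int}K}=K$, of the form
\[\sup_{x\in K}|D'^\alpha u_j(x)|\le A\,C^{|\alpha|+j}\,\alpha!^{s}\,(j!)^{\theta}\qquad(\text{here }\alpha!^s:=\textstyle\prod_l (\alpha_l!)^{s_l})\]
with some $\theta<1$. By induction, $u_j$ is a sum of terms $\prod_i Q_{k_i}(D')\,h_r$. Each step lowers the $x_d$-index by $m-k$ and costs at most $m-1-k$ tangential derivatives, so at most $\tfrac{m-1}{m}$ tangential derivatives per unit of $j$. Thus $u_j$ involves derivatives of $h_r$ of order at most $\approx\frac{m-1}{m}j$, and the number of such terms is $\le c^j$. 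The Gevrey bound on $h_r$ then gives
\[|D'^\alpha u_j|\lesssim C^{|\alpha|+j}\,\bigl((|\alpha|+\tfrac{m-1}{m}j)!\bigr)^{\max s_l}.\]
Using $(a+b)!\le 2^{a+b}a!\,b!$, this splits into a factor $\alpha!^s$ (up to geometric constants) times $(j!)^{\frac{m-1}{m}\max s_l}$, so $\theta=\frac{m-1}{m}\max_l s_l<1$ by hypothesis.

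To avoid passing through $|\alpha|!$ with the anisotropic $s$, we will keep track of the actual multi-indices. The polynomials $Q_k$ produce multi-indices $\beta$ with $|\beta|\le m-1-k$, and the product formula gives $\prod_l((\alpha_l+\beta_l)!)^{s_l}\le 2^{\cdots}\alpha!^s\beta!^s$ together with $\beta!^{s}\le (|\beta|!)^{\max s}$. Carrying out this bookkeeping carefully is the main obstacle. Once the estimate holds, $\sum_j C^j (j!)^{\theta-1}|x_d|^j$ converges for all $x_d\in\R$ because $\theta<1$, so the series is entire in $x_d$.

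The same estimate, with $\partial_d^\ell$ shifting $j$, gives
\[|D'^\alpha\partial_d^\ell u|\le A' C'^{|\alpha|+\ell}\alpha!^s\,\ell!\]
locally uniformly in $x_d$. Hence $u\in G^{(s,1)}(Y\times\R)$, termwise differentiation is justified, and (i), (ii) follow together with the stated regularity of the slices $u(x',\cdot)$ and $u(\cdot,x_d)$.

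For the consequence, take $Y=\R^{d-1}$ and $1<\sigma<\frac{m}{m-1}$. This is harmless, because increasing $\sigma$ only enlarges the Gevrey class, so we may always reduce to $\sigma$ close to $1$ and note that $G^{(\sigma',\ldots,1)}\subset G^{(\sigma,\ldots,1)}$ for $\sigma'\le\sigma$. For $m=1$ there is no restriction. Choose the data $h_0=\dots=h_{m-2}=0$ and $h_{m-1}$ a Gevrey-$\sigma$ function with $\supp h_{m-1}=\overline{B_{d-1}(0,\epsilon)}$ that is real analytic and nowhere locally zero inside the ball. A concrete choice is $h_{m-1}(x')=\exp(-(\epsilon^2-|x'|^2)^{-1/(\sigma-1)})$ on the ball, extended by $0$. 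Then (iii) gives $\supp u\subset\overline{B}\times\R$. For real analyticity in the open cylinder, we apply the Cauchy–Kovalevskaya part of the statement (the case $s=1$) on $Y=B_{d-1}(0,\epsilon)$ with the analytic data $h_j$. Uniqueness of the formal series shows that this solution coincides with $u$ there. Finally, $u(x',\cdot)$ is analytic with $\partial_d^{m-1}u(x',0)=h_{m-1}(x')\ne0$ for $|x'|<\epsilon$, so $u\ne 0$ on a dense subset of $B\times\R$, and $\supp u=\overline{B_{d-1}(0,\epsilon)}\times\R$.
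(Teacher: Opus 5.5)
Your proposal is correct and is essentially the paper's proof. It builds a Cauchy--Kovalevskaya power series in $x_d$ whose $j$-th coefficient involves at most $\frac{m-1}{m}j$ tangential derivatives of the data (since $\deg Q_k\le m-1-k$), so the Gevrey bounds and Stirling give convergence because $\frac{m-1}{m}\max_k s_k<1$, and it gets analyticity in the open cylinder by re-applying the construction with $s=(1,\ldots,1)$ on the ball. The paper differs only in bookkeeping: it first solves for data concentrated in the last slot via the operators $\mathscr{C}_l$ and an explicit multinomial formula, then superposes, and it deduces the support equality from the identity principle rather than from your slice-wise density argument (your explicit reduction to $1<\sigma<\frac{m}{m-1}$ also makes precise a step the paper leaves implicit).
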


\begin{proof}
    For $l\in \N_0$, we define recursively
	\[\mathscr{C}_l:\mathscr{E}(Y)\rightarrow \mathscr{E}(Y), f\mapsto\begin{cases}
		0, &l\in\{0,\ldots,m-2\},\\
		f, &l=m-1,\\
		-\sum_{k=0}^{m-1}Q_k(D)\mathscr{C}_{k+l-m}(f), &l\geq m.
	\end{cases}\]
	Additionally, for $n\in\N$ we define the linear and continuous mappings
	\[L_n:\mathscr{E}(Y)\rightarrow \mathscr{E}(Y\times\R), L_n(h)(x',x_d):=\sum_{l=0}^n\mathscr{C}_{l}(h)(x')\frac{(i\,x_d)^l}{l!}.\]
    Obviously, $\supp L_n(h)\subset\supp h\times\R$.

    Let us assume, for the moment, that $h_0,\ldots,h_{m-1}\in \mathscr{E}(Y)$ are such that for all $0\leq j\leq m-1$ the sequence $(L_n(h_j))_n$ converges in $\mathscr{E}(Y\times\R)$, $L(h_j):=\lim_{n\rightarrow\infty}L_n(h_j)$. Then
    \[u:=\sum_{j=0}^{m-1}\sum_{k=0}^{m-1-j}Q_{j+k+1}(D)D_d^kL(h_j)\]
    belongs to $\mathscr{E}_P(Y\times \R)$ and satisfies $D_d^j u(\cdot,0)=h_j(\cdot), 0\leq s\leq m-1$ (cf.~\cite[Section 4]{Kalmes2021}).
    
    Therefore, we shall prove that $(L_n(h_j))_n$ converges in $\mathscr{E}(Y\times\R)$. To do so, we use the fact that, by \cite[Remark 1]{Kalmes2021},
	\[L_n(h)(x',x_d)=\sum_{l=m-1}^n\sum\limits_{\substack{\beta\in\N_0^m,\\ \sigma(\beta)=l-m+1}}(-1)^{|\beta|}\binom{|\beta|}{\beta_1,\ldots,\beta_m}\prod_{k=1}^m Q_{m-k}^{\beta_k}(D)h(x') \frac{(ix_d)^l}{l!},\]
	where $\sigma(\beta)=\sum_{j=1}^m j\beta_j$.

	By hypothesis, for $0\leq k\leq m-1$ it holds 
	$$Q_k(\xi')=\sum_{\gamma\in\N_0^{d-1}, |\gamma|\leq m-1-k}q_{k,\gamma}\xi'^\gamma$$
	for suitable $q_{k,\gamma}\in\C$. We fix $q\geq 1$ such that $\sum_{\gamma\in\N_0^{d-1}, |\gamma|\leq m-1-k}|q_{k,\gamma}|\leq q$ for every $0\leq k\leq m-1$.
	
	Fix $h_0,\ldots,h_{m-1}\in G^s(Y)$. Moreover, let $K$ be an arbitrary compact subset of $Y$ such that $K=\overline{\operatorname{int}(K)}$ and let $C>1$ be such that
	$$\forall\, 0\leq j\leq m-1:\,\|h_j\|_{G^{s,C}(K)}<\infty.$$
	Additionally, let $R\geq 1$. For $\alpha=(\alpha',\alpha_d)\in\N_0^{d-1}\times\N_0$ it follows for $n\geq \alpha_d+m$, $r\in\N$ and each $(x',x_d)\in K\times [-R,R]$, $0\leq j\leq m-1$,
	\begin{eqnarray}\label{eq:convergence 1}
		&&|D^\alpha L_{n+r}(h_j)(x',x_d)-D^\alpha L_n(h_j)(x',x_d)|\nonumber\\
		&\leq& \sum_{l=\max\{n+1,\alpha_d\}}^{\max\{n+r+1,\alpha_d\}-1}\sum\limits_{\substack{\beta\in\N_0^m,\\ \sigma(\beta)=l-m+1}}\binom{|\beta|}{\beta_1,\ldots,\beta_m}\left|\prod_{k=1}^m Q_{m-k}^{\beta_k}(D)D^{\alpha'}h_j(x')\right| \times\nonumber\\
		&&\times\frac{R^{l-\alpha_d}}{l!}\binom{l}{\alpha_d}\alpha_d!\nonumber\\
		&\leq&\sum_{l=\max\{n+1,\alpha_d\}}^{\max\{n+r+1,\alpha_d\}-1}\sum\limits_{\substack{\beta\in\N_0^m,\\ \sigma(\beta)=l-m+1}}\binom{|\beta|}{\beta_1,\ldots,\beta_m}\left|\prod_{k=1}^m Q_{m-k}^{\beta_k}(D)D^{\alpha'}h_j(x')\right|\times\nonumber\\
		&&\times \frac{(2R)^l}{l!}\alpha_d!
	\end{eqnarray}
	Because
	\begin{eqnarray*}
		\operatorname{deg}\left(\prod_{k=1}^m Q_{m-k}^{\beta_k}\right)&=&\sum_{k=1}^{m}\beta_k\operatorname{deg}(Q_{m-k})\leq \sum_{k=1}^{m}\beta_k(k-1)=\sum_{k=1}^{m}\beta_k k\frac{k-1}{k}\\
		&\leq&\sigma(\beta)\frac{m-1}{m},
	\end{eqnarray*}
	it holds for $x'\in K$ and $\beta\in\N_0^m$, due to Stirling's formula,
	\begin{eqnarray*}
		&&\left|\prod_{k=1}^m Q_{m-k}^{\beta_k}(D)D^{\alpha'}h_j(x')\right| \leq q^{|\beta|}\max_{\substack{\gamma\in\N_0^{d-1},\\ |\gamma|\leq \frac{m-1}{m}\sigma(\beta)}}\max_{x'\in K}\left|D^{\gamma+\alpha'}h_j(x')\right|\\
		&\leq&\|h_j\|_{G^{s,C}(K)}  q^{\sigma(\beta)} \max_{\substack{\gamma\in\N_0^{d-1},\\ |\gamma|\leq \frac{m-1}{m}\sigma(\beta)}}C^{|\gamma|+|\alpha'|}\prod_{k=1}^{d-1}\left((\gamma_k+\alpha'_k)!\right)^{s_k}\\
		&\leq& \|h_j\|_{G^{s,C}(K)}  q^{\sigma(\beta)} (2^{\max_k s_k} C)^{\frac{m-1}{m}\sigma(\beta)}(2^{\max_k s_k} C)^{|\alpha'|}\prod_{k=1}^{d-1}\left(\alpha_k'!\right)^{s_k}\times\\
		&&\times 2^{\frac{m-1}{m}\sigma(\beta)\max_k s_k}\max_{\substack{\gamma\in\N_0^{d-1},\\ |\gamma|\leq \frac{m-1}{m}\sigma(\beta)}}\prod_{k=1}^{d-1}(\gamma_k!)^{s_k}\\
		&\leq& \|h_j\|_{G^{s,C}(K)}  q^{\sigma(\beta)} (2^{\max_k s_k} C)^{\frac{m-1}{m}\sigma(\beta)}(2^{\max_k s_k} C)^{|\alpha'|}\prod_{k=1}^{d-1}\left(\alpha_k'!\right)^{s_k}\times\\
		&&\times 2^{\frac{m-1}{m}\sigma(\beta)\max_k s_k}\,3^{s_1+\cdots+s_{d-1}}\max_{\substack{\gamma\in\N_0^{d-1},\\ |\gamma|\leq \frac{m-1}{m}\sigma(\beta)}}\prod_{k=1}^{d-1}\gamma_k^{s_k\gamma_k}\\
		&\leq& \|h_j\|_{G^{s,C}(K)}  q^{\sigma(\beta)} (2^{\max_k s_k} C)^{\frac{m-1}{m}\sigma(\beta)}(2^{\max_k s_k} C)^{|\alpha'|}\prod_{k=1}^{d-1}\left(\alpha_k'!\right)^{s_k}\times\\
		&&\times 2^{\frac{m-1}{m}\sigma(\beta)\max_k s_k}\,3^{s_1+\cdots+s_{d-1}}\left(\frac{m-1}{m}\sigma(\beta)\right)^{\frac{m-1}{m}\sigma(\beta)\max_k s_k}.
	\end{eqnarray*}
	Combining the previous inequality with inequality \eqref{eq:convergence 1}, we obtain
	\begin{eqnarray}\label{eq:convergence 2}
		&&|D^\alpha L_{n+r}(h_j)(x',x_d)-D^\alpha L_n(h_j)(x',x_d)|\nonumber\\
		&\leq& \|h_j\|_{G^{s,C}(K)} 3^{s_1+\cdots +s_{d-1}}\left(2^{\max_k s_k}C\right)^{|\alpha'|}\prod_{k=1}^{d-1}(\alpha'_k!)^{s_k}\cdot\alpha_d!\times\nonumber\\
		&&\times\sum_{l=n+1}^\infty \frac{(2R)^l}{l!}\sum\limits_{\substack{\beta\in\N_0^m,\\ \sigma(\beta)=l-m+1}}\binom{|\beta|}{\beta_1,\ldots,\beta_m} \nonumber \left(4^{\max_k s_k} C\right)^{\frac{m-1}{m}\sigma(\beta)}\\
		&&\times q^{\sigma(\beta)}\left(\frac{m-1}{m}\sigma(\beta)\right)^{\frac{m-1}{m}\sigma(\beta)\max_k s_k}\nonumber\\ 
		&\leq& \|h_j\|_{G^{s,C}(K)} 3^{s_1+\cdots +s_{d-1}}\left(2^{\max_k s_k}C\right)^{|\alpha'|}\prod_{k=1}^{d-1}(\alpha'_k!)^{s_k}\cdot\alpha_d!\times\nonumber\\
		&&\times\sum_{l=n+1}^\infty \frac{(2R)^l}{l!}\sum\limits_{\substack{\beta\in\N_0^m,\\ \sigma(\beta)=l-m+1}}\binom{|\beta|}{\beta_1,\ldots,\beta_m} \left(4^{\max_k s_k} C\right)^{\frac{m-1}{m}l}\nonumber\\
		&&\times q^l\left(\frac{m-1}{m} l\right)^{l \frac{m-1}{m} \max_k s_k}\nonumber\\ 
		&\leq&  \|h_j\|_{G^{s,C}(K)} 3^{s_1+\cdots +s_{d-1}}\left(2^{\max_k s_k}C\right)^{|\alpha'|}\prod_{k=1}^{d-1}(\alpha'_k!)^{s_k}\cdot\alpha_d!\times\nonumber\\
		&&\times\sum_{l=n+1}^\infty  \left(2R\,4^{\max_k s_k} C q \right)^l\frac{l^{l\frac{m-1}{m}\max_k s_k}}{l!} \times\nonumber\\
		&&\times \sum\limits_{\substack{\beta\in\N_0^m,\\ |\beta|=l-m+1}}\binom{|\beta|}{\beta_1,\ldots,\beta_m} \nonumber\\
		&\leq&  \|h_j\|_{G^{s,C}(K)} 3^{s_1+\cdots +s_{d-1}}\left(2^{\max k s_k}C\right)^{|\alpha'|}\prod_{k=1}^{d-1}(\alpha'_k!)^{s_k}\cdot\alpha_d!\times\nonumber\\
		&&\times\sum_{l=n+1}^\infty  \left(2mR\,4^{\max_k s_k} C q \right)^l\frac{l^{l\frac{m-1}{m}\max_k s_k}}{(l/e)^l}\nonumber\\
		&=& \|h_j\|_{G^{s,C}(K)} 3^{s_1+\cdots +s_{d-1}}\left(2^{\max_k s_k}C\right)^{|\alpha|}\prod_{k=1}^{d-1}(\alpha'_k)^{s_k}\cdot\alpha_d!\times\nonumber\\
		&&\times\sum_{l=n+1}^\infty  \left(\frac{2mR\,4^{\max_k s_k} C q}{e} \right)^l l^{l(\frac{m-1}{m}\max_k s_k-1)}
	\end{eqnarray}
	where we have used the Multinomial Theorem as well as Stirling's formula, again.
	
	By hypothesis, $\frac{m-1}{m}\max_k s_k-1<0$ so that by \eqref{eq:convergence 2}
	\begin{eqnarray*}
		&&\|L_{n+r}(h_j)(\cdot, x_d)-L_n(h_j)(\cdot,x_d)\|_{G^{(s_1,\ldots,s_{d-1},1), 2^{\max_k s_k}C}(K\times[-R,R])}\\
		&\leq &  \|h_j\|_{G^{s,C}(K)}3^{s_1+\cdots+s_{d-1}} \sum_{l=n+1}^\infty\left(\frac{2mR\,4^{\max_k s_k} C q}{e} \right)^l l^{l(\frac{m-1}{m}\max_k s_k-1)}\rightarrow_{n\rightarrow\infty}0.
	\end{eqnarray*}
	Thus, for each $j=1,\ldots,m-1$ the sequences $(L_n(h_j)))_{n\in\N}$ are Cauchy sequences in $G^{(s_1,\ldots,s_{d-1},1),2^{\max_k s_k}C}(K\times[-R,R])$ and therefore convergent.
	
	Herefrom it follows easily that $(L_n(h_j)))_{n\in\N}$ converges in $G^{(s_1,\ldots,s_{d-1},1)}(Y\times\R)$. In particular,
	\[L(h_j)(x',x_d)=\sum_{l=m-1}^\infty \sum\limits_{\substack{\beta\in\N_0^m,\\ \sigma(\beta)=l-m+1}}(-1)^{|\beta|}\binom{|\beta|}{\beta_1,\ldots,\beta_m}\prod_{k=1}^m Q_{m-k}^{\beta_k}(D)h_j(x') \frac{(ix_d)^l}{l!},\]
	converges in $\mathscr{E}(Y\times\R)$ so that
	\[u:=\sum_{j=0}^{m-1}\sum_{k=0}^{m-1-j}Q_{j+k+1}(D)D_d^kL(h_j)\]
	has all the claimed properties.
	
	Finally, for $\sigma>1$ and $\epsilon>0$ let $h_0\in G^\sigma(\R^{d-1})$ be such that
	\[\supp h_0= \overline{B_{d-1}(0,\epsilon)},\quad h_0> 0\text{ in }B_{d-1}(0,\epsilon),\]
	and $h_0$ is real analytic in $B_{d-1}(0,\epsilon)$. Then, the above solution $u$ for $h_0$ and $h_1=\ldots=h_{m-1}=0$ belongs to $G^\sigma(\R^{d-1}\times\R)$, and 
	\[\supp u\subset \overline{B_{d-1}(0,\epsilon)}\times \R.\]
    Moreover, applying the first part of the theorem to the open set $Y=B_{d-1}(0,\epsilon)$, $s=(1,\ldots,1)\in[1,\infty)^{d-1}$ and analytic functions ${h_0}_{\mid{B_{d-1}(0,\epsilon)}}$, $h_1=\ldots=h_{m-1}=0$, we conclude that $u_{\mid{B_{d-1}(0,\epsilon)\times\R}}\in G^1(B_{d-1}(0,\epsilon)\times\R)$, i.e. $u$ is real analytic in $B_{d-1}(0,\epsilon)\times\R$.
	Because $u(x',0)=h_0(x')>0$, the identity principle for real analytic functions entails
	\[B_{d-1}(0,\epsilon)\times\R\subset\supp u\]
    which completes the proof of the theorem.
\end{proof}

\begin{remark}\label{rem:cohoon-solution}
	The proof of Theorem \ref{cohoon-solution} can be adapted to give an analogous result for ultradifferentiable classes of Roumieu type $\mathscr{E}^{\{M\}}(Y)$ instead of Gevrey classes, as long as the defining weight sequence $M=(M_p)_{p\in\N}$ satisfies $$\lim_{p\rightarrow\infty}M_p/(p!)^{m/m-1}=0.$$
	For certain other asymptotic behavior of the weight sequence $M$, see, e.g.,~\cite[Sectiona 4]{DebKalmes2023}.
\end{remark}

For a function $f\colon\R^d\to\C$ and a point $x\in\R^d$, we denote as usual $\tau_x f(y):=f(y-x)$ and $\check f(y):=f(-y)$.
\begin{lemma}\label{lemma:convolution f ast g}
	Let $f,g\in \mathscr{E}(\R^d)$ with $\supp f\subset\prod_{k=1}^d I_k$ and $\supp g\subset\prod_{k=1}^d J_k$ for some open (not necessarily bounded) intervals $I_k,J_k\subset\R$. Let us assume that, for all $1\leq k\leq d$, $I_k$ or $J_k$ is bounded. Then, the following assertions hold.
	\begin{enumerate}[{\upshape(i)}]
		\item The convolution $f\ast g$ is well-defined.
		\item $f\ast g\in \mathscr{E}(\R^d)$ and $D^{\alpha}(f\ast g)=(D^{\alpha}f)\ast g=f\ast (D^{\alpha}g)$ for all $\alpha\in\N_0^d$.
	\end{enumerate}
\end{lemma}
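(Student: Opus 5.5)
The idea is that, for each $x\in\R^d$, the integrand $y\mapsto f(y)g(x-y)$ has compact support, and that this support stays inside a fixed compact set as long as $x$ ranges over a compact set. Everything then reduces to integrals of smooth functions over fixed compact sets.

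\emph{Step 1: locating the support.} Write $f\ast g(x)=\int_{\R^d} f(y)g(x-y)\,dy$. The integrand vanishes unless $y_k\in I_k$ and $x_k-y_k\in J_k$ for every $k$. Fix $k$.
\begin{itemize}
\item If $I_k$ is bounded, then $y_k\in I_k$ forces $y_k$ into the bounded set $\overline{I_k}$.
\item If $I_k$ is unbounded, then $J_k$ is bounded by hypothesis. Then $y_k\in x_k-J_k$, which lies in $[x_k-\sup J_k,\,x_k-\inf J_k]$.
\end{itemize}
Now let $K\subset\R^d$ be compact, say $K\subset[-M,M]^d$. For $x\in K$ the $k$-th coordinate of any point of the support of the integrand lies in a bounded interval $A_k$. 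This interval depends only on $M$, $I_k$ and $J_k$, not on $x$. Hence, for all $x\in K$, the integrand is supported in the fixed compact box $A=\prod_k \overline{A_k}$. Since the integrand is continuous, the integral converges absolutely, which proves (i).

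\emph{Step 2: smoothness and the first identity.} For $x$ in the interior of $K$ we have
\[
f\ast g(x)=\int_A f(y)g(x-y)\,dy .
\]
The map $(x,y)\mapsto f(y)g(x-y)$ is $C^\infty$, and all its $x$-derivatives are bounded on the compact set $K\times A$. By the standard theorem on differentiation under the integral sign over a compact domain, $f\ast g$ is $C^\infty$ near every point. Moreover $D^\alpha(f\ast g)=f\ast(D^\alpha g)$. Since $\supp D^\alpha g\subset\supp g$, the convolution $f\ast D^\alpha g$ is well defined by Step 1.

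\emph{Step 3: the second identity.} Substitute $z=x-y$ to get
\[
f\ast g(x)=\int g(z)f(x-z)\,dz .
\]
This is the same situation with the roles of $f$ and $g$ swapped, and the hypothesis is symmetric in them. Running Step 2 again gives $D^\alpha(f\ast g)=(D^\alpha f)\ast g$.

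The only point needing care is the uniformity in Step 1: the support of the integrand must stay in one compact set as $x$ varies over a compact set, so that differentiation under the integral sign is justified. The case analysis for each coordinate $k$ handles this, and everything else is routine.
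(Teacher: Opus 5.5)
Your proof is correct and follows the paper's route. Part (i) is the same coordinatewise support computation as the paper's, which you additionally make uniform for $x$ in a compact set. For (ii) the paper only refers to the proof of Rudin's Theorem 6.30(b), and you carry that out explicitly by differentiating under the integral sign over the fixed box $A$ and using $f\ast g=g\ast f$, with the uniform localization of Step 1 being exactly what makes this work.
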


\begin{proof}
	In order to prove (i), let us note that
	\begin{align*}
		\supp(f\cdot\tau_x\check g)&=\supp f\cap\supp \tau_x\check g
		\subset\prod_{k=1}^d \overline{I_k}\cap\left(\prod_{k=1}^d \overline{J_k}+x\right)\\
		&=\prod_{k=1}^d \left[\overline{I_k}\cap (\overline{J_k}+x_k)\right].
	\end{align*}
	By hypothesis imposed on the intervals $I_k, J_k$, the set $\supp(f\cdot\tau_x\check g)$ is compact. Therefore, the convolution $f\ast g$,
	\[(f\ast g)(x)
	:=\int_{\R^d}f(y)g(x-y)\mathrm{d}y
	=\int_{\supp(f\cdot\tau_x\check g)}(f\cdot \tau_x\check g)(y)\mathrm{d}y,\]
	is well-defined.
	
	Statement (ii) can be proved in a similar way as \cite[Theorem 6.30(b)]{Rud-FA}.  
\end{proof}

A first version of the existence result of non-trivial smooth functions with partially bounded supports for kernel of a certain class of partial differential operators is the next one. This is an extension of Cohoon's result \cite[Theorem 1]{Cohoon1970}, where the case $l=d-1$ is treated. For this special case, the form of the principal part of $P$ is also necessary for the existence of a non-trivial zero solution with such a partially bounded support (see also Remark \ref{rem:partially bounded support}). 

\begin{theorem}\label{th:zero solution standard form}
	Let $m\in\N, 1\leq l\leq d-1$, and let $P\in\C[\xi_1,\ldots,\xi_d]$ be a polynomial of degree $m$ with principal part $P_m$ of the form 
    \[P_m(\xi)=\xi_d^m+\sum_{j=l+1}^{d-1}p_j(\xi)\xi_j\]
    for some polynomials $p_j\in\C[\xi_1,\ldots,\xi_d]$.
    Then, for every $0<\delta<\epsilon$ there is $v\in\mathscr{E}_P(\R^d)$ such that
	\begin {enumerate}[{\upshape(i)}]
	\item $\overline{B_{l}(0,\delta)}\times\R^{d-l}\subset\operatorname{supp} v\subset \overline{B_{l}(0,\epsilon)}\times\R^{d-l}$,
	\item $v$ is real analytic on $B_{l}(0,\delta)\times\R^{d-l}$.
	\end{enumerate} 
    As before, for $n\in\N$ we denote the euclidean ball in $\R^n$ of radius $\rho>0$ centered at the origin by $B_n(0,\rho)$.
\end{theorem}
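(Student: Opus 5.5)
The plan is to reduce to Theorem \ref{cohoon-solution} in the $l+1$ variables $(x_1,\ldots,x_l,x_d)$. We look for a solution $v$ that does not depend on $x_{l+1},\ldots,x_{d-1}$. Write $x=(y,z,x_d)$ with $y\in\R^l$ and $z\in\R^{d-1-l}$; when $l=d-1$ the $z$-block is empty. Set
\[Q(\eta,\xi_d):=P(\eta,0,\xi_d),\qquad \eta\in\C^l,\ \xi_d\in\C .\]
If $v(y,z,x_d)=u(y,x_d)$, then $D_jv=0$ for $l+1\leq j\leq d-1$. Hence $P(D)v(y,z,x_d)=(Q(D_y,D_d)u)(y,x_d)$, so it suffices to produce a suitable $u\in\mathscr{E}_Q(\R^{l+1})$.

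\emph{Step 1: $Q$ has the form required in Theorem \ref{cohoon-solution}, with $d$ replaced by $l+1$.} Decompose $P=P_m+R$ with $\deg R\leq m-1$. Setting $\xi_{l+1}=\cdots=\xi_{d-1}=0$ kills every summand $p_j(\xi)\xi_j$ of $P_m$, since $j\geq l+1$. Therefore
\[Q(\eta,\xi_d)=\xi_d^m+R(\eta,0,\xi_d).\]
The remainder $R(\eta,0,\xi_d)$ has total degree at most $m-1$. Ordering it by powers of $\xi_d$ gives $Q=\xi_d^m+\sum_{k=0}^{m-1}Q_k(\eta)\xi_d^k$ with $\deg Q_k\leq m-1-k$, which is exactly the hypothesis of Theorem \ref{cohoon-solution}. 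This step is the only place where the special structure of $P_m$ enters: every term obstructing the Cauchy--Kovalevskaya-type normal form involves one of the frozen directions $\xi_{l+1},\ldots,\xi_{d-1}$.

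\emph{Step 2: apply the final part of Theorem \ref{cohoon-solution}.} Fix any $\sigma\in(1,\frac{m}{m-1})$; any $\sigma>1$ is allowed if $m=1$. The theorem, applied in $\R^l\times\R$ with radius $\epsilon$, yields $u\in G^{(\sigma,\ldots,\sigma,1)}(\R^{l+1})$ with $Q(D)u=0$ and
\[\supp u=\overline{B_l(0,\epsilon)}\times\R .\]
Moreover, $u$ is real analytic on $B_l(0,\epsilon)\times\R$.

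\emph{Step 3: conclude.} Define $v(y,z,x_d):=u(y,x_d)$. Then $v\in\mathscr{E}(\R^d)$ and $P(D)v=0$ by the computation above. Its support is
\[\supp v=\{(y,z,x_d)\colon (y,x_d)\in\supp u\}=\overline{B_l(0,\epsilon)}\times\R^{d-l}.\]
This set contains $\overline{B_l(0,\delta)}\times\R^{d-l}$ for every $0<\delta<\epsilon$, which gives (i). Real analyticity of $u$ on $B_l(0,\epsilon)\times\R$ passes to $v$ on $B_l(0,\epsilon)\times\R^{d-l}\supset B_l(0,\delta)\times\R^{d-l}$, which gives (ii).

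I do not expect a serious obstacle. The only point needing care is the degree bookkeeping in Step 1, which must leave the coefficient of $\xi_d^m$ in $Q$ equal to $1$ and every other term of total degree at most $m-1$. If one instead wanted $v$ to depend genuinely on $z$, one could multiply by $e^{i\langle z,\zeta\rangle}$ with $\zeta\in\R^{d-1-l}$ and run the same argument with $Q_\zeta=P(\cdot,\zeta,\cdot)$. This works because $p_j(\xi)\zeta_j$ has degree at most $m-1$ in the remaining variables, so $Q_\zeta$ is still in normal form; but the choice $\zeta=0$ already suffices for the statement.
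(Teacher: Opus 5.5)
Your proof is correct. It takes a genuinely different and shorter route than the paper.

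\paragraph{The paper's route.} The paper applies Theorem~\ref{cohoon-solution} in all $d$ variables to $D_d^m+\sum_{|\alpha|<m}c_\alpha D^\alpha$, that is, $P$ with the terms $p_j(D)D_j$ removed. This gives $u$ with $\supp u=\overline{B_{d-1}(0,\epsilon_1)}\times\R$. It then convolves $u$ with $\psi(x)=\lambda(x')g(x_d)$, where $\lambda$ depends only on $x_1,\ldots,x_l$, so that $p_j(D)D_j(u\ast\psi)=u\ast p_j(D)D_j\psi=0$. This requires several additional steps:
\begin{itemize}
\item Lemma~\ref{lemma:convolution f ast g}, to justify the convolution;
\item the entire function $f(z)=\int u(x',z)\lambda(x')\,dx'$ together with the choice $g(t)=\overline{f(-t)}\mu(t)$, to force $v(0)=\int|f|^2\mu>0$;
\item a separate Cauchy-integral estimate, to get real analyticity on $B_l(0,\delta)\times\R^{d-l}$.
\end{itemize}
The mollification is also the reason the paper only obtains the inclusions with $\delta<\epsilon$.

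\paragraph{Comparison.} Since $\psi$ does not depend on $x_{l+1},\ldots,x_{d-1}$, the paper's $v$ is also independent of these variables. So the convolution is an indirect way of carrying out exactly your reduction. Your approach buys three things:
\begin{itemize}
\item It makes the mechanism explicit: setting $\xi_{l+1}=\cdots=\xi_{d-1}=0$ removes precisely the principal terms $p_j(\xi)\xi_j$ that obstruct the normal form of Theorem~\ref{cohoon-solution}.
\item It uses the final part of that theorem in dimension $l+1\geq 2$ as a black box.
\item It gives a sharper conclusion: $\supp v=\overline{B_l(0,\epsilon)}\times\R^{d-l}$, with real analyticity on the whole open cylinder $B_l(0,\epsilon)\times\R^{d-l}$, so $\delta$ is superfluous.
\end{itemize}

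\paragraph{The restriction on $\sigma$.} You were right to restrict to $1<\sigma<\frac{m}{m-1}$. The final part of Theorem~\ref{cohoon-solution} is stated for every $\sigma>1$, but its proof invokes the first part, which needs $\sigma<\frac{m}{m-1}$.
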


\begin{proof}
Let $P(\xi)=P_m(\xi)+\sum_{|\alpha|<m}c_\alpha\xi^\alpha$. Set $\epsilon_1=\frac{\epsilon-\delta}{2}$ and $\epsilon_2=\frac{\epsilon+\delta}{2}$. Next, let us fix a nonnegative function $\lambda\in \mathscr{E}(\R^{d-1})$ such that $\lambda$ depends only on the first $l$ coordinates,
\[\supp\lambda=\overline{B_l(0,\epsilon_2)}\times\R^{d-l-1},\] and such that $\lambda$ is even symmetric, and $\lambda$ is real analytic on $B_l(0,\epsilon_2)\times\R^{d-l-1}$. 
	
	Let $1<s<\frac{m}{m-1}$ and let us fix a nonzero and nonnegative function $\phi\in G^s(\R^{d-1})$ with $\supp\phi=\overline{B_{d-1}(0,\epsilon_1)}$ and $\phi>0$ in $B_{d-1}(0,\epsilon_1)$.
	By Theorem \ref{cohoon-solution} applied for $h_0=\phi, h_1=\ldots,h_{m-1}=0$, there is a solution $u\in \mathscr{E}(\R^d)$ of the partial differential equation
	\[\left(D_d^m+\sum_{|\alpha|<m} c_\alpha D^\alpha\right)u=0\]
	such that (see also the proof of the last part of Theorem \ref{cohoon-solution})
	$$\supp{u}=\supp\phi\times\R=\overline{B_{d-1}(0,\epsilon_1)}\times\R\subset[-\epsilon_1,\epsilon_1]^{d-1}\times\R,$$
	$u(x',0)=\phi(x')$ for all $x'\in\R^{d-1}$, and $u(x',\cdot)$ is real analytic on $\R$. In particular, it can be extended to a function $u\colon\R^{d-1}\times\C\to\C$ in such a way that $u(x',\cdot)$ is an entire function.
	
	By a standard application of Lebesgue's Dominated Convergence Theorem to differentiability of parameter intergrals, the function
	\[f\colon\C\to\C, f(z):=\int_{\left[-\epsilon_1,\epsilon_1\right]^{d-1}}u(x',z)\lambda(x')\mathrm{d}x',\]
	is entire and since
	\begin{align*}
		f(0)=\int_{\left[-\epsilon_1,\epsilon_1\right]^{d-1}}u(x',0)\lambda(x')\mathrm{d}x'=\int_{\left[-\epsilon_1,\epsilon_1\right]^{d-1}}\phi(x')\lambda(x')\mathrm{d}x'>0,
	\end{align*}
	$f$ is nonzero. 
	
	Let $\mu\in \mathscr{E}(\R)$ be a nonzero, nonnegative function such that $\supp\mu\subset\left[-\epsilon_2,\epsilon_2\right]$ and $\mu$ is real analytic on $\left(-\epsilon_2,\epsilon_2\right)$. We set $g\colon\R\to\C$, $g(t):=\overline{f(-t)}\mu(t)$. Then $g\in \mathscr{E}(\R)$, $\supp g\subset\left[-\epsilon_2,\epsilon_2\right]$ and $g$ is real analytic on $\left(-\epsilon_2,\epsilon_2\right)$. With these, let us also define
	\[\psi\colon\R^{d-1}\times\R\to\R, \psi(x',x_d):=\lambda(x')g(x_d).\]
	Since $\lambda$ depends only on the first $l$ coordinates, $D_j\psi=0$ for $l+1\leq j\leq d-1$,
	\[\supp{\psi}\subset\overline{B_{l}(0,\epsilon_2)}\times\R^{d-l-1}\times\left[-\epsilon_2,\epsilon_2\right]\] and $\psi$ is real analytic on
    \[B_{l}(0,\epsilon_2)\times\R^{d-l-1}\times\left(-\epsilon_2,\epsilon_2\right).\]
	
	Finally, we set $v:=u\ast\psi$. By Lemma \ref{lemma:convolution f ast g}, $v$ is well-defined and $v\in \mathscr{E}(\R^d)$. Moreover,
    \begin{align*}
		\supp{v}&\subset\supp u+\supp\psi\\
		&\subset\overline{B_{d-1}(0,\epsilon_1)}\times\R
		+\overline{B_{l}(0,\epsilon_2)}\times\R^{d-l-1}\times\left[-\epsilon_2,\epsilon_2\right]\\
		&\subset\overline{B_{l}(0,\epsilon)}\times\R^{d-l},
	\end{align*}
	and
	\begin{align*}
		v(0)&=\int_{\R^d}u(y)\psi(-y)\mathrm{d}y\\
		&=\int_{[-\epsilon_2,\epsilon_2]}\left(\int_{[-\epsilon_1,\epsilon_1]^{d-1}}u(y',y_d)\lambda(y')\mathrm{d}y'\right)g(-y_d)\mathrm{d}y_d\\
		&=\int_{[-\epsilon_2,\epsilon_2]}f(y_d)g(-y_d)\mathrm{d}y_d=\int_{[-\epsilon_2,\epsilon_2]}f(y_d)\overline{f(y_d)}\mu(y_d)\mathrm{d}y_d\\
		&=\int_{[-\epsilon_2,\epsilon_2]}|f(y_d)|^2\mu(y_d)\mathrm{d}y_d>0
	\end{align*}
	as well as
	\begin{align*}
		P(D)v&=\left(D_d^m+\sum_{j=l+1}^{d-1}p_j(D)D_j+\sum_{|\alpha|<m}c_\alpha D^\alpha\right)v\\
		&=\left(D_d^m+\sum_{|\alpha|<m}c_\alpha D^\alpha\right)v
		+\sum_{j=l+1}^{d-1}p_j(D)D_jv\\
		&=\left[\left(D_d^m+\sum_{|\alpha|<m}c_\alpha D^\alpha\right)u\right]\ast\psi+u\ast\sum_{j=l+1}^{d-1}p_j(D)D_j\psi=0.
	\end{align*}
	
	It remains to show that $v$ is real analytic on $B_l(0,\delta)\times\R^{d-l}$ which by $v(0)>0$ and the identity principle for real analytic functions will also entail
	$$\overline{B_{l}(0,\delta)}\times\R^{d-l}\subset \supp v. $$
	Let $K$ be a compact subset of $B_l(0,\delta)\times\R^{d-l}$. Denoting by $\pi_d\colon \R^d\rightarrow\R$ the projection onto the $d$-th axis, let $R>0$ be such that $\pi_d(K)\subset(-R,R)$. Because
	\[\supp u= \overline{B_{d-1}(0,\epsilon_1)}\times\R\]
	and
	\[\supp\psi(x-\cdot)\subset x-\overline{B_l(0,\epsilon_2)}\times\R^{d-l-1}\times\left[-\epsilon_2,\epsilon_2\right],\]
	for $x\in K$ and $\alpha,\beta\in\N_0^d$ it holds
    \begin{align*}
        \supp \partial^\alpha u\cap\supp\partial^\beta\psi(x-\cdot)& \subset\overline{B_{d-1}(0,\epsilon_2)}\times [-R-\epsilon_2,R+\epsilon_2]\\
        &\subset [-\epsilon_1,\epsilon_1]^{d-1}\times [-R-\epsilon_2,R+\epsilon_2].
    \end{align*}
	Hence, for $x\in K$ and $\alpha\in\N_0^d$,
	\begin{eqnarray}\label{eq:real analyticity 1}
		|\partial^\alpha v(x)|
		&=&\left|\left(\partial^{(0,\ldots,0,\alpha_d)}u\ast\partial^{(\alpha_1,\ldots,\alpha_{d-1},0)}\psi\right)(x)\right|\nonumber\\
		&\leq&\int_{[-\epsilon_1,\epsilon_1]^{d-1}\times[-R-\epsilon_2,R+\epsilon_2]}|\partial^{(0,\ldots,0,\alpha_d)}u(y)|\cdot\left|\partial^{(\alpha_1,\ldots,\alpha_{d-1},0)}\psi(x-y)\right|\mathrm{d}y\nonumber\\
		&\leq& (2\epsilon_1)^{d-1}2(R+\epsilon_2) \max_{y\in [-\epsilon_1,\epsilon_1]^{d-1}\times[-R-\epsilon_2,R+\epsilon_2]}\left|\partial^{(0,\ldots,0,\alpha_d)}u(y)\right|\times\nonumber\\
		&&\times \max_{y\in [-\epsilon_1,\epsilon_1]^{d-1}\times[-R-\epsilon_2,R+\epsilon_2]}\left|\partial^{(\alpha_1,\ldots,\alpha_{d-1},0)}\psi(x-y)\right|.
	\end{eqnarray}
	By the Cauchy integral formula, for each $y=(y',y_d)\in \overline{B_{d-1}(0,\epsilon_1)}\times[-R-\epsilon_2,R+\epsilon_2]$ we have
	\begin{eqnarray}\label{eq:real analyticity 2}
		\left|\partial^{(0,\ldots,0,\alpha_d)}u(y)\right|
		&=&\frac{1}{2\pi}\alpha_d!\left|\int_{|\xi|=R+\epsilon}\frac{u(y',\xi)}{(\xi-y_d)^{\alpha_d+1}}\mathrm{d}\xi\right|\nonumber\\
		&\leq& \left(\max\left\{\frac{1}{\epsilon_1},1\right\}\right)^{\alpha_d+1}(R+\epsilon)\\
        &\times&\max\left\{|u(y',\xi)|\colon y'\in\left[-\epsilon_1,\epsilon_1\right]^{d-1},|\xi|=R+\epsilon\right\}\alpha_d!.\nonumber
	\end{eqnarray}
	
	Because for $x\in K$ and $y\in \overline{B_{d-1}(0,\epsilon_1)}\times[-R-\epsilon_2,R+\epsilon_2]$ it holds
    \[x-y\in B_{l}(0,\delta+\epsilon_1)\times \R^{d-l}\]
	and since $\lambda$ is real analytic in $B_{l}(0,\epsilon_2)\times\R^{d-l-1}$, there is $C_3>0$ such that for every $x=(x',x_d)\in K$ and $y=(y',y_d)\in \overline{B_{d-1}(0,\epsilon_1)}\times[-R-\epsilon_2,R+\epsilon_2]$ we have
	$$|\partial^{\alpha'}\lambda(x'-y')|\leq C_3^{|\alpha'|}\alpha'!$$
	for each $\alpha'\in\N_0^{d-1}$. Consequently,
	\begin{eqnarray}\label{eq:real analyticity 3}
		&&\max_{x\in K, y\in \overline{B_{d-1}(0,\epsilon_1)}\times[-R-\epsilon_2,R+\epsilon_2]}\left|\partial^{(\alpha_1,\ldots,\alpha_{d-1},0)}\psi(x-y)\right|\nonumber\\
		&=&\max_{x\in K, y\in \overline{B_{d-1}(0,\epsilon_1)}\times[-R-\epsilon_2,R+\epsilon_2]}\left|\partial^{(\alpha_1,\ldots,\alpha_{d-1})}\lambda(x'-y')\right|\left|g(x_d-y_d)\right|\nonumber\\
		&\leq&C_3^{\alpha_1+\cdots\alpha_{d-1}}\alpha_1!\cdots\alpha_{d-1}!\max_{t\in [-\epsilon_2,\epsilon_2]} \left|g(t)\right|.
	\end{eqnarray}
	Combining \eqref{eq:real analyticity 1}, \eqref{eq:real analyticity 2}, and \eqref{eq:real analyticity 3}, we obtain $M, C>0$ such that for every $x\in K$ and $\alpha\in\N_0^d$
	\[|\partial^\alpha v(x)|\leq C^{|\alpha|}\alpha! M\]
	which finally shows that $v$ is real analytic on $B_{l}(0,\delta)\times\R^{d-l}$. 
\end{proof}

Of course, the above theorem can be applied to polynomials $P$ (resp.~to partial differential operators $P(D)$) which, after an orthogonal change of variables, are of the form mentioned in the previous result. In order to characterize these polynomials, we introduce the following notions.

\begin{definition}
    Let $P\in\C[\xi_1,\ldots,\xi_d]$ be a polynomial of degree $m$ with principal part $P_m$.
    \begin{enumerate}
        \item[(i)] As usual, the \emph{characteristic cone} of $P$ is defined as
        \[\charcone(P):=\{\xi\in\R^d \colon P_m(\xi)=0\}.\]
        Clearly, $\charcone(P)=\{0\}$ if and only if $P$ is elliptic, while $\charcone(P)=\R^d$ precisely when $P=0$. We point out a slight abuse of the usual notation, as for a differential operator with smooth coefficients of order $m$ in an open set $X$ of $\R^d$, $P(x,D)=\sum_{|\alpha|\leq m}c_\alpha(x)D^\alpha$, $\charcone (P)$ usually denotes its characteristic set
        \[\{(x,\xi)\in X\times\left(\R^d\backslash\{0\}\right) \colon P_m(x,\xi)=0\},\]
        where $P_m(x,\xi)=\sum_{|\alpha|= m}c_\alpha(x) \xi^\alpha$ (cf.~\cite[Theorem 8.3.1]{Hor1}).

        We say that the characteristic cone of the complex coefficient polynomial $P$ is \emph{linear} if $\charcone(P)$ is a linear subspace of $\R^d$.

        \item[(ii)] Let $Z$ be a linear subspace of $\R^d$ different from $\R^d$ and let $w\in \R^d\setminus Z$. We say that the polynomial $P$ (and the differential operator $P(D)$) is \emph{\wniceon{w}{$Z$}} if $P_m(w)\neq 0$ and $P_m$ together with the directional derivatives $\partial^k_w P_m$, $1\leq k\leq m-1$, vanish on $Z$. $P$ is called \emph{\niceon{$Z$}} if it is $w$-degenerate for some $w\in \R^d\setminus Z$. Finally, $P$ is called \emph{\veryniceon{$Z$}} if it is $w$-degenerate for some $w\in Z^\perp\setminus\{0\}$.   
    \end{enumerate} 
\end{definition}

We postpone the proof of the following lemma to Appendix \ref{appendix: characterization of some polynomials}.

\begin{lemma}\label{lem:nice polynomial2}
	Let $P\in\C[\xi_1,\ldots,\xi_d]$ be a polynomial of degree $m\geq1$ with principal part $P_m$. Moreover, let $Z$ be a subspace of $\R^d$ of dimension $1\leq l\leq d-1$ and let $w\in \R^d\backslash Z$. Then the following statements are equivalent:
    \begin{enumerate}
        \item[\upshape{(i)}] 
	    $P$ is \wniceon{w}{$Z$};
	   \item[\upshape{(ii)}] there is a linear isomorphism $A:\R^d\rightarrow\R^d$,  $c\in\C\backslash\{0\}$, and polynomials $p_{l+1},\ldots,p_{d-1}\in\C[\xi_1,\ldots,\xi_d]$ such that $A^{-1}w=e_d$,
	\[A^{-1}(Z)=\{\xi\in\R^d\colon \xi_{l+1}=\ldots=\xi_d=0\},\]
    and the principal part $(P\circ A)_m$ of $P\circ A$ is given by
	\[(P\circ A)_m(\xi)=c\xi_d^m + \sum_{j=l+1}^{d-1}p_j(\xi)\xi_j.\] 
    \end{enumerate}
    Additionally, for $w\in Z^\perp\setminus\{0\}$, the above equivalence is true with an orthogonal linear map $A$.
\end{lemma}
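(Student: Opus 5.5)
The plan is to reduce everything to the behaviour of a homogeneous polynomial under a linear change of variables. Two facts will be used throughout. First, $(P\circ A)_m=P_m\circ A$. Second, directional derivatives transform by
\[\partial_v^k(P_m\circ A)(\xi)=(\partial_{Av}^kP_m)(A\xi).\]
With these, condition (i) can be rewritten in the coordinates given by $A$.

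\textbf{(ii)$\Rightarrow$(i).} Put $Q:=P_m\circ A$, so that $Q(\xi)=c\xi_d^m+\sum_{j=l+1}^{d-1}p_j(\xi)\xi_j$. Let $Z_0:=A^{-1}(Z)=\{\xi\colon\xi_{l+1}=\dots=\xi_d=0\}$.
\begin{itemize}
\item We have $Q(e_d)=c\neq 0$, hence $P_m(w)=P_m(Ae_d)\neq0$.
\item For $1\leq k\leq m-1$, differentiate $Q$ $k$ times in the direction $e_d$. The term $c\xi_d^m$ gives $c\,\tfrac{m!}{(m-k)!}\xi_d^{m-k}$, which vanishes on $Z_0$ because $\xi_d=0$ there and $m-k\geq1$. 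Each term $p_j\xi_j$ with $l+1\leq j\leq d-1$ gives $(\partial_d^kp_j)\xi_j$, since $\partial_d\xi_j=0$; this also vanishes on $Z_0$.
\item For $k=0$ the same reasoning shows $Q=0$ on $Z_0$.
\end{itemize}
Transporting back via the formula above with $v=e_d$, $Av=w$, we get $\partial^k_wP_m=0$ on $Z$ for $0\leq k\leq m-1$. This is (i).

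\textbf{(i)$\Rightarrow$(ii).} Choose a basis $a_1,\dots,a_l$ of $Z$. Since $w\notin Z$, we can complete $a_1,\dots,a_l,w$ to a basis $a_1,\dots,a_l,a_{l+1},\dots,a_{d-1},w$ of $\R^d$. Let $A$ be the map sending $e_j\mapsto a_j$ for $j\leq d-1$ and $e_d\mapsto w$. Then $A^{-1}w=e_d$ and $A^{-1}(Z)=Z_0$.

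Set $Q=P_m\circ A$. By hypothesis, $\partial_d^kQ=0$ on $Z_0$ for $0\leq k\leq m-1$. Expand $Q$ in powers of $\xi_d$:
\[Q(\xi)=\sum_{k=0}^m q_k(\xi_1,\dots,\xi_{d-1})\,\xi_d^k .\]
Evaluating $\partial_d^kQ$ at $\xi_d=0$ gives $k!\,q_k$. So $q_k$ vanishes on $\{\xi_{l+1}=\dots=\xi_{d-1}=0\}\subset\R^{d-1}$ for $k\leq m-1$. Homogeneity of $Q$ makes $q_m$ a constant, namely $q_m=Q(e_d)=P_m(w)=:c\neq0$.

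This is the main, though mild, step: a polynomial in $\xi_1,\dots,\xi_{d-1}$ that vanishes on the real subspace $\{\xi_{l+1}=\dots=\xi_{d-1}=0\}$ lies in the ideal generated by $\xi_{l+1},\dots,\xi_{d-1}$. To see this, expand the polynomial in the variables $\xi_{l+1},\dots,\xi_{d-1}$. The constant term is a polynomial in $\xi_1,\dots,\xi_l$ that vanishes on all of $\R^l$, so it is the zero polynomial. Hence
\[q_k=\sum_{j=l+1}^{d-1}r_{k,j}\,\xi_j .\]
Substituting back gives $Q=c\xi_d^m+\sum_j p_j\xi_j$ with $p_j=\sum_k r_{k,j}\xi_d^k$, which is (ii).

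\textbf{Orthogonal case.} Now let $w\in Z^\perp\setminus\{0\}$. In the construction above, take $a_1,\dots,a_l$ an orthonormal basis of $Z$ and $a_{l+1},\dots,a_{d-1}$ an orthonormal basis of $Z^\perp\cap w^\perp$. Use $w/|w|$ in place of $w$; this is legitimate because being $w$-degenerate is invariant under scaling $w$. The resulting $A$ is orthogonal, and $A^{-1}w$ is a positive multiple of $e_d$.

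If $A^{-1}w=e_d$ is required literally, compose with the scaling. Alternatively, note that the constant $c$ absorbs the factor $|w|^m$: one checks that $A^{-1}w=|w|e_d$ still yields the stated form with $c=|w|^{-m}P_m(w)$. Either way the argument is otherwise identical. If the statement is meant for unit $w$ only, this is precisely the setting in which the orthogonal $A$ is used later.
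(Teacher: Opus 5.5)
Your proof is correct. The direction (ii)$\Rightarrow$(i) is the same computation as in the paper, but for (i)$\Rightarrow$(ii) you take a more elementary route.

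The paper writes $P_m\circ A=c\xi_d^m+\sum_{j<m}q_j\xi_d^j$ and then argues in three steps:
\begin{itemize}
\item it uses a Nullstellensatz-based lemma to place each $D_d^k(P_m\circ A)$, $0\le k\le m-1$, in the ideal $(\xi_{l+1},\dots,\xi_d)$;
\item it recovers $q_{m-1},q_{m-2},\dots$ from these by a downward induction;
\item it uses that the $q_j$ do not depend on $\xi_d$ to land in $(\xi_{l+1},\dots,\xi_{d-1})$.
\end{itemize}
You instead restrict $\partial_d^kQ$ to $\xi_d=0$, which isolates $k!\,q_k$ at once and makes the induction unnecessary. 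You then prove the ideal membership directly, by expanding in $\xi_{l+1},\dots,\xi_{d-1}$ and noting that the constant term is a polynomial vanishing on $\R^l$. This replaces the Nullstellensatz and the primality argument by a two-line observation.

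Completing $a_1,\dots,a_l,w$ to an arbitrary basis also gives $A^{-1}w=e_d$ exactly in the general case. The paper instead builds $A$ from an orthonormal basis containing $w$, and therefore takes $w$ to be a unit vector from the outset.

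Your observation about the orthogonal clause is correct: an orthogonal $A$ with $A^{-1}w=e_d$ forces $|w|=1$, so that clause must be read after normalizing $w$, which the paper does silently. Note, however, that ``composing with the scaling'' would destroy orthogonality. Only your other two options give an orthogonal $A$: normalizing $w$, or accepting $A^{-1}w=|w|e_d$ with $c=|w|^{-m}P_m(w)$. This suffices for the application in the proof of Theorem~\ref{th:zero solution}, which uses only the orthogonality of $A$, the description of $A^{-1}(Z)$, and the normal form of $(P\circ A)_m$.
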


\begin{remark}\label{rem:nice polynomial}
Let $X\subset\R^d$ be open or closed. Then, as follows from the chain rule, for every $P\in\C[\xi_1,\ldots,\xi_d]$ and each linear isomorphism $A\colon\R^d\rightarrow\R^d$, the mapping
		\[\Phi_A:\mathscr{E}(X)\rightarrow\mathscr{E}(A^T(X)),f\mapsto f\circ (A^T)^{-1}\]
is a linear topological isomorphism for which the identity
		\[\Phi_A\left(P(D)f\right)=(P\circ A)(D)\left(\Phi_A(f)\right), f\in\mathscr{E}(X),\] 
holds. In particular, $f\in\mathscr{E}_P(X)$ if and only $\Phi_A(f)\in\mathscr{E}_{P\circ A}(A^T(X))$. Here, for a closed set $X$, $\mathscr{E}(X)$ denotes the space of smooth Whitney jets and $P(D)\colon\mathscr{E}(X)\to\mathscr{E}(X)$ is a partial differential operator, as explained in the introduction of Section \ref{sec:p-Runge pairs}).
\end{remark}

We have now everything at hand to prove the main theorem of this section which in particular proves the existence of non-trivial $v\in\mathscr{E}_P(\R^d)$ for which its support is bounded in the directions of a non-trivial subspace $Z$, i.e.~there is $C>0$ with $|\langle x,z\rangle|\leq C$ for every $x\in\supp v$ and $z\in Z$, $|z|=1$.

\begin{theorem}\label{th:zero solution}
	Let $P\in\C[\xi_1,\ldots,\xi_d]$ be \veryniceon{a linear subspace $Z\neq\{0\}$ of $\R^d$}. Then, for every $0<\delta<\epsilon$ there is $v\in\mathscr{E}_P(\R^d)$ such that
	\begin {enumerate}[{\upshape(i)}]
	\item $Z^\perp+\overline{B(0,\delta)}\subset\operatorname{supp} v\subset Z^\perp+\overline{B(0,\epsilon)}$,
	\item $v$ is real analytic on $Z^\perp+B(0,\delta)$.
	\end{enumerate} 
\end{theorem}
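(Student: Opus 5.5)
The plan is to reduce the statement to Theorem \ref{th:zero solution standard form} by an orthogonal change of variables, using Lemma \ref{lem:nice polynomial2} and Remark \ref{rem:nice polynomial}. Let $l=\dim Z$. Since $P$ is orthogonally degenerate on $Z$, we have $Z\neq\R^d$, so $1\leq l\leq d-1$. There is also $w\in Z^\perp\setminus\{0\}$ with $P$ being $w$-degenerate on $Z$. By the orthogonal part of Lemma \ref{lem:nice polynomial2}, there are an orthogonal $A$, $c\neq0$ and polynomials $p_{l+1},\ldots,p_{d-1}$ such that
\[A^{-1}(Z)=\R^l\times\{0\},\qquad (P\circ A)_m(\xi)=c\xi_d^m+\sum_{j=l+1}^{d-1}p_j(\xi)\xi_j.\]
Dividing by $c$, which does not change the kernel, the polynomial $\tilde P:=c^{-1}(P\circ A)$ satisfies the hypothesis of Theorem \ref{th:zero solution standard form}.

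Next I apply Theorem \ref{th:zero solution standard form} to $\tilde P$ with the given $0<\delta<\epsilon$. This yields $\tilde v\in\mathscr{E}_{\tilde P}(\R^d)$ with
\[\overline{B_l(0,\delta)}\times\R^{d-l}\subset\supp\tilde v\subset\overline{B_l(0,\epsilon)}\times\R^{d-l},\]
and $\tilde v$ is real analytic on $B_l(0,\delta)\times\R^{d-l}$. Observe that $\overline{B_l(0,\rho)}\times\R^{d-l}=(\R^l\times\{0\})^\perp+\overline{B(0,\rho)}$, where $B(0,\rho)$ is the $d$-dimensional ball. The same identity holds for open balls.

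Finally I transport back. Since $A$ is orthogonal, $A^T=A^{-1}$, so $(A^T)^{-1}=A$ and $A^T(\R^d)=\R^d$. By Remark \ref{rem:nice polynomial}, $v:=\Phi_A^{-1}(\tilde v)=\tilde v\circ A^T$ lies in $\mathscr{E}_P(\R^d)$. Its support is $A(\supp\tilde v)$. Orthogonality gives $A(W^\perp)=(AW)^\perp$ and $A(B(0,\rho))=B(0,\rho)$. With $W=\R^l\times\{0\}=A^{-1}(Z)$ we get $A(W^\perp)=Z^\perp$, and hence (i). Real analyticity is preserved under linear maps, so $v$ is real analytic on $A(W^\perp+B(0,\delta))=Z^\perp+B(0,\delta)$, which is (ii).

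The only step needing care is the bookkeeping of which of $A$, $A^T$, $A^{-1}$ acts on points versus on frequencies. Orthogonality removes the issue, since it makes all of these maps preserve balls and orthogonal complements. This is exactly why the hypothesis of orthogonal, rather than general, degeneracy is imposed: a general isomorphism would distort the balls, and the set $Z^\perp$ would have to be replaced by the preimage of $(\R^l\times\{0\})^\perp$.
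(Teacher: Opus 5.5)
Your proof is correct and is essentially the paper's own argument: you use the orthogonal version of Lemma~\ref{lem:nice polynomial2}, normalize by $c$, apply Theorem~\ref{th:zero solution standard form} to $P\circ A$, and transport back via $v=\tilde v\circ A^T$, using that the orthogonal $A$ preserves balls and maps $(A^{-1}Z)^\perp$ onto $Z^\perp$. Only your closing aside is inaccurate (it is not needed for the proof): for a general isomorphism the axis would still be $(A^T)^{-1}\bigl((A^{-1}Z)^\perp\bigr)=Z^\perp$, and $w$-degeneracy already implies $w'$-degeneracy for the $Z^\perp$-component $w'$ of $w$ (because $P_m(z+tw)=t^mP_m(w)$ for $z\in Z$ and $Z+\R w=Z+\R w'$), so orthogonality is no genuine restriction here.
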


\begin{proof}
We take $w\in W:=Z^\perp\setminus\{0\}$ such that $P$ is \wniceon{w}{$Z$}. By Lemma \ref{lem:nice polynomial2}, 
there is an orthogonal map $A:\R^d\rightarrow\R^d$, $c\in\C\backslash\{0\}$, and polynomials $p_{l+1},\ldots,p_{d-1}\in\C[\xi_1,\ldots,\xi_d]$ such that $A^{-1}w=e_d$,
	\[A^{-1}(Z)=\{\xi\in\R^d\colon \xi_{l+1}=\ldots=\xi_d=0\}=\R^l\times\{0\}^{d-l},\]
    and the principal part $(P\circ A)_m$ of $P\circ A$ is given by
	\[(P\circ A)_m(\xi)=c\xi_d^m + \sum_{j=l+1}^{d-1}p_j(\xi)\xi_j.\]
Hence, dividing $P\circ A$ by $c$, we may assume without loss of generality that,
\[(P\circ A)_m(\xi)=\xi_d^m + \sum_{j=l+1}^{d-1}p_j(\xi)\xi_j.\]

Let us fix $0<\delta<\epsilon$. By Theorem \ref{th:zero solution standard form} applied to the polynomial $P\circ A$, there is $v_0\in\mathscr{E}_{P\circ A}(\R^d)$ such that 
\[\overline{B_{l}(0,\delta)}\times\R^{d-l}\subset\supp v_0\subset \overline{B_{l}(0,\epsilon)}\times\R^{d-l}\]
and $v_0$ is real analytic on $B_{l}(0,\delta)\times\R^{d-l}$.	
Let 
\[\Phi_A\colon \mathscr{E}(\R^d)\to \mathscr{E}(\R^d),\quad \Phi_Af:=f\circ (A^T)^{-1}.\] Then $\Phi_A^{-1}f=\Phi_{A^T}f:=f\circ A^{-1}=f\circ A^T$ because $A$ is orthogonal. We define $v\in \mathscr{E}(\R^d)$ by
\[v:=v_0\circ A^T\]
so that $\supp v=A\left(\supp v_0\right)$. By Remark \ref{rem:nice polynomial},
\[P(D)v=P(D)(\Phi_{A^T}v_0)=\Phi_{A^T}(P\circ A)(D)v_0)=0.\]
Moreover, since $A$ is orthogonal, $A(\overline{B(0,\rho)})=\overline{B(0,\rho)}$ and 
\begin{align*}
A^T\left(W+\overline{B(0,\rho)}\right)&=
A^T\left(W+A(\overline{B(0,\rho)})\right)
=A^{-1}(W)+\overline{B(0,\rho)}\\
&=\{0\}^l\times\R^{d-l}+\overline{B(0,\rho)}\\
&=\overline{B_l(0,\rho)}\times \R^{d-l}
\end{align*}    
for all $\rho>0$. Consequently,
\begin{align*}
W+\overline{B(0,\delta)}&=A\left(\overline{B_l(0,\delta)}\times \R^{d-l}\right)=\supp v\\
&\subset A\left(\overline{B_l(0,\epsilon)}\times \R^{d-l}\right)=W+\overline{B(0,\epsilon)}.
\end{align*}
Moreover, $v$ is real analytic on $W+B(0,\delta)$ as a composition of two real analytic functions, which completes the proof.
\end{proof}

For elliptic $P$ we have $\charcone(P)=\{0\}$ so that $\charcone(P)^\perp+B(0,\delta)=\R^d$ etc.. Additionally, for $\zeta\in\C^d$ with $P(\zeta)=0$, the real analytic function
\[\R^d\rightarrow\C, x\mapsto\exp\left(i\sum_{j=1}^dx_j\zeta_j\right)\]
belongs to $\mathscr{E}_P(\R^d)$. Hence, for elliptic $P$, the following corollary is trivially true. For other polynomials which are \veryniceon{their linear characteristic cone}, it is an immediate consequence of Theorem \ref{th:zero solution} applied to $Z=\charcone(P)$.
\begin{corollary}\label{cor:zero solution}
Assume that a polynomial $P\in\C[\xi_1,\ldots,\xi_d]$ is \veryniceon{its linear characteristic cone}. Then, for every $0<\delta<\epsilon$ there is $v\in\mathscr{E}_P(\R^d)$ such that
	\begin {enumerate}[{\upshape(i)}]
	\item $\charcone(P)^\perp+\overline{B(0,\delta)}\subset\supp v\subset \charcone(P)^\perp+\overline{B(0,\epsilon)}$,
	\item $v$ is real analytic on $\charcone(P)^\perp+B(0,\delta)$.
	\end{enumerate} 
\end{corollary}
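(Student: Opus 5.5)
The plan is to split into two cases according to whether $\charcone(P)=\{0\}$, and to reduce the non-trivial case directly to Theorem~\ref{th:zero solution} with $Z=\charcone(P)$.

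\emph{Elliptic case.} If $\charcone(P)=\{0\}$, then $P$ is elliptic, $\charcone(P)^\perp=\R^d$, and both sets in (i) equal $\R^d$. So it suffices to produce a nowhere vanishing real analytic element of $\mathscr{E}_P(\R^d)$. Since $P$ is non-constant (degree $m\geq 1$), it has a zero $\zeta\in\C^d$: fix generic $\xi'$, so that $P(\xi',\cdot)$ is a non-constant one-variable polynomial, and take a root. Then $v(x)=\exp(i\sum_j x_j\zeta_j)$ is real analytic, never zero, and satisfies $P(D)v=P(\zeta)v=0$. Hence $\supp v=\R^d$, which gives (i) and (ii).

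\emph{Non-elliptic case.} Here $Z:=\charcone(P)$ is by hypothesis a linear subspace with $Z\neq\{0\}$. It also satisfies $Z\neq\R^d$, since $P_m\not\equiv 0$ and so $P_m$ cannot vanish on all of $\R^d$. The hypothesis that $P$ is \veryniceon{$Z$} is exactly the hypothesis of Theorem~\ref{th:zero solution}. That theorem yields $v\in\mathscr{E}_P(\R^d)$ with
\[Z^\perp+\overline{B(0,\delta)}\subset\supp v\subset Z^\perp+\overline{B(0,\epsilon)},\]
and with $v$ real analytic on $Z^\perp+B(0,\delta)$. Substituting $Z=\charcone(P)$ gives the corollary.

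\emph{Where the care lies.} There is no real obstacle; the only point needing attention is the bookkeeping on $Z$. For the ellipticity case one should check that a complex zero of $P$ exists, which is the root-of-a-non-constant-polynomial argument above. For the other case one should check that $Z$ is a proper nonzero subspace, so that Theorem~\ref{th:zero solution} (through Lemma~\ref{lem:nice polynomial2}, with $1\leq l\leq d-1$) genuinely applies.
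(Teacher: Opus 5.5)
Your proposal is correct and follows the paper's argument: for elliptic $P$ the nowhere-vanishing exponential solution $x\mapsto\exp(i\sum_j x_j\zeta_j)$ with $P(\zeta)=0$ has support $\R^d$, and otherwise the claim is Theorem~\ref{th:zero solution} applied with $Z=\charcone(P)$. Your extra check that a complex zero $\zeta$ exists is fine, and in the elliptic case no genericity is needed: $P_m(e_d)\neq 0$, so $P(\xi',\cdot)$ has degree $m$ for every $\xi'$.
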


We close this section with some examples of polynomials which are orthogonally degenerate on non-trivial subspaces. 

\begin{example}\label{example}
    (i) Let $P(\xi)=\sum_{j=1}^d a_j\xi_j+a_0$ be a polynomial of degree one, $a_j\in\C$, $0\leq j\leq d$. Then, with $a=(a_1,\ldots,a_d)\in\C^d$,
    \[\charcone(P)=\operatorname{span}\{\operatorname{Re}(a),\operatorname{Im}(a)\}^\perp,\]
    and $P$ is \veryniceon{its linear characteristic cone $\charcone(P)$}. Note that $P$ is elliptic if and only if $d=2$ and $\operatorname{Re}(a), \operatorname{Im}(a)$ are linearly independent over $\R$.
    
    (ii) Recall that a polynomial $P\in\C[\xi_1,\ldots,\xi_d]$ of degree $m\geq1$ is called \textit{semi-elliptic} if there is  ${\bf{n}} \in\N^d$ such that $P$ can be written as 
    $$
    P(\xi)=\sum_{|\alpha: {\bf{n}}|\leq 1}c_\alpha\xi^\alpha
    $$ 
    and 
    $$
    P^0(\xi):=\sum_{|\alpha: {\bf{n}}|=1}c_\alpha\xi^\alpha\neq 0
    $$
    for all $\xi\in\R^d\backslash\{0\}$. Here, for $\alpha\in\N_0^d$ and ${\bf{n}}  \in\N^d$ we write $|\alpha:{\bf{n}}|:=\sum_{j=1}^d\frac{\alpha_j}{n_j}$.

    If $P$ is semi-elliptic, the above multi-index ${\bf{n}}$ is unique. More precisely, $n_j=\operatorname{deg}_{\xi_j}P$, $j= 1, \ldots, d$, where $\operatorname{deg}_{\xi_j}P$ denotes  the degree of $P$ with respect to  $\xi_j$; see \cite[Lemma 7.14]{Treves1966}. Furthermore,  $\deg P = \max\{\operatorname{deg}_{\xi_j}P\colon 1\leq j\leq d\}$ by \cite[Proposition 2]{FrerickKalmes10}. It is well-known that semi-elliptic polynomials are hypoelliptic; see \cite[Theorem 11.1.11]{Hor2} and \cite[Theorem 7.7]{Treves1966}. Note that when $n_j=m$ for every $j$, the semi-elliptic operators are just the elliptic operators of order $m$. If $n_1=1$ and $n_j=2$ for $j>1$, we find that the heat operator is semi-elliptic. Moreover, the $k$-parabolic operators of Petrowsky are semi-elliptic; see \cite[Definition 7.11]{Treves1966}. 

    For a semi-elliptic polynomial $P$ of degree $m$ with principal $P_m$, its characteristic cone $\charcone(P)$ is a subspace of $\R^d$. More precisely,
    \begin{equation}\label{eq:charactersitic semi-elliptic}
        \charcone(P)=\{\xi\in\R^d\colon \xi_j=0\text{ for all }j\text{ with }n_j=m\},
    \end{equation}
    see \cite[Proposition 2 (ii)]{FrerickKalmes10}. Additionally, by the proof of \cite[Proposition 2 (ii)]{FrerickKalmes10},
    \[P_m(\xi)=\sum_{|\alpha|=m}c_{\alpha}\xi^\alpha=\sum_{j\colon n_j=m}c_{me_j}\xi_j^m,\]
    where $e_j$ denotes the $j$th standard basis vector of $\R^d$, i.e.~$e_j=(\delta_{j,k})_{1\leq k\leq d}$ (Kron\-ecker's $\delta$). Hence, for every $j$ with $n_j=m$, $P$ is \wniceon{e_j}{$\charcone(P)$}. In particular, $P$ is \veryniceon{its linear characteristic cone}.
    
    (iii) We consider second order partial differential operators whose principal part is given by the wave operator. As is usual, instead of $x=(x_1,\ldots,x_d)\in\R^d$ we write $(x_0,x_1,\ldots,x_d)\in\R^{d+1}$ and we denote $x_0$ by $t$ and $(x_1,\ldots,x_d)$ by $x$. Moreover, we abbreviate $\Delta_x=\sum_{j=1}^d\frac{\partial^2}{\partial x_j^2}$ as well as $\partial^2_t=\frac{\partial^2}{\partial t^2}(=\frac{\partial^2}{\partial x_0^2})$. Let $P\in\C[\xi_0,\xi_1,\ldots,\xi_d]$ be of degree 2 such that the principal part of $P(D)$ is \[P_2(D)=\Delta_x-\partial_t^2\]
    and
    \[\charcone(P)=\left\{\xi=(\xi_0,\ldots,\xi_d)\in\R^{d+1}\colon \xi_0^2=\sum_{j=1}^d\xi_j^2\right\}.\]
    In case $d\geq 2$, $P$ is orthogonally \wniceon{w}{the one dimensional subspace $\operatorname{span}\{\xi\}$} for all $\xi\in \charcone(P)\backslash\{0\}$ and all non-zero $w=(0,w_1,\ldots,w_d)\in\R^{d+1}$ orthogonal to $\xi$.

    Indeed, let $\xi\in \charcone(P)\backslash\{0\}$ and $w\in\R^{d+1}\backslash\{0\}$ be orthogonal to $\xi$ with $w_0=0$. Then,
    \[0\neq -\sum_{j=1}^d w_j^2=P_2(w)\]
    and since $\nabla P_2(x)=2(x_0,-x_1,\ldots,-x_d)$, for $\lambda\in\R$ we have
    \[\nabla_w P_2(\lambda\xi)=\langle w,\nabla P_2(\lambda\xi)\rangle=-2\lambda\sum_{j=0}^dw_j\xi_j=0.\]
    We point out that the hypothesis $d\geq 2$ is needed to ensure the existence of a non-zero $w=(0,w_1,\ldots,w_d)\in\R^{d+1}$ orthogonal to $\xi\in \charcone(P)\backslash\{0\}$. Given such $\xi$, there is $1\leq k\leq d$ with $\xi_k\neq 0$. If $k<d$, the vector $w$ defined by
    \[
    w_j:=
    \begin{cases}
    -\zeta_{k+1}, \text{ for } j=k\\ 
    \zeta_{k}, \text{ for } j=k+1\\
    0, \text{ otherwise},
    \end{cases}
    \]
    is as desired. If $k=d$, we define $w$ as
    \[
    w_j:=
    \begin{cases}
    -\zeta_{d}, \text{ for } j=d-1\\ 
    \zeta_{d-1}, \text{ for } j=d\\
    0, \text{ otherwise}.
    \end{cases}
    \]
    As a consequence of Theorem \ref{th:zero solution}, we obtain that, whenever $d\geq 2$, for each $0<\delta<\epsilon$ and every $\xi\in \charcone(P)\backslash\{0\}$, there is $v_\xi\in\mathscr{E}_P(\R^{d+1})$ such that $v_\xi$ is real analytic in $\operatorname{span}\{\xi\}^\perp +B(0,\delta)$ and
    \[\operatorname{span}\{\xi\}^\perp +\overline{B(0,\delta)}\subset\supp v_\xi\subset \operatorname{span}\{\xi\}^\perp +\overline{B(0,\epsilon)}.\]
    We point out that the orthogonal complements above are precisely the characteristic hyperplanes for $P$ which contain the origin. For $d=1$, the existence of such zero solutions for $P_2(D)$ can be easily proved via the factorization of $P_2(D)$ into first-order operators.
\end{example}

\begin{remark}\label{rem:partially bounded support}
    By Holmgren's Uniqueness Theorem (cf.~\cite[Theorem 8.6.5]{Hor1}), whenever the support of $u\in\mathscr{E}_P(\R^d)$ is contained in some half space $\{x\in\R^d\colon \langle x,N\rangle\leq C\}$ with non-characteristic boundary, then $u\equiv 0$. Hence, the directions into which the support of a non-trivial $u\in\mathscr{E}_P(\R^d)$ is bounded (in the sense explained before Theorem \ref{th:zero solution}), have to be characteristic ones. Now assume that $u\in\mathscr{E}_P(\R^d)\backslash\{0\}$ satisfies
    \[\supp u\subset\bigcap_{j=1}^l\{x\in\R^d\colon \left|\langle x,N_j\rangle\right|\leq C\},\]
    for some $C>0$ and some linearly independent $N_1,\ldots,N_l\in\charcone(P)$. Then, for every $\lambda_j\in\R$, we also have
    \[\supp u\subset \left\{x\in\R^d\colon \left|\left\langle x,\sum_{j=1}^l \lambda_j N_j\right\rangle\right|\leq \sum_{j=1}^l|\lambda_j| C\right\}.\]
    Since $u\neq 0$, by the above observation, $\sum_{j=1}^l \lambda_j N_j\in\charcone(P)$. Therefore, whenever there are linearly independent, characteristic directions $N_1,\ldots, N_l$ and a non-trivial $u\in\mathscr{E}_P(\R^d)$ with bounded support in each of these characteristic directions, then $\charcone(P)$ contains the linear span of $N_1,\ldots, N_l$. Thus, the seemingly strong hypothesis in Theorem \ref{th:zero solution} that $\charcone(P)$ contains a linear subspace, is necessary for the existence of non-trivial zero solutions with partially bounded supports in several different directions.
    
    Finally, we state explicitly, that a zero solution with partially bounded support in several distinct directions does not exist for the wave operator (compare Example \ref{example} (iii)).
\end{remark}

\section{A characterization of \texorpdfstring{$P$}{P}-convexity for supports for partial differential operators which are \veryniceon{their linear characteristic cone}}\label{sec:p-convexity}

By the seminal work of Malgrange \cite{Malgrange1955}, for a polynomial $P\in\C[\xi_1,\ldots,\xi_d]$ and for an open set $X\subset\R^d$ the constant coefficient differential operator $P(D)$ is surjective on $\mathscr{E}(X)$ if and only if $X$ is $P$-convex for supports, that is, if and only if for every compactly supported distribution in $X$, $\mu\in\mathscr{E}'(X)$, the distances between $X^c$ and $\supp\mu$ on the one hand and between $X^c$ and $\supp\check{P}(D)\mu$ on the other hand coincide, where $\check{P}(\xi)=P(-\xi)$. More general than surjectivity of $P(D)$ on $\mathscr{E}(X)$, Malgrange proved that $P$-convexity of supports of $X$ characterizes surjectivity of $P(D)$ on local subspaces of the space of distributions of finite order over $X$, like e.g.~local Sobolev spaces (see e.g.~\cite[Section 10.6]{Hor2}).

Despite the classical nature of Malgrange's results, there are only very few differential operators $P(D)$ for which a satisfactory geometric evaluation of $P$-convexity for supports is available. Convex open sets are $P$-convex for supports, whenever $P\not\equiv 0$, and every open set is $P$-convex for supports whenever $P$ is elliptic. While there is a complete geometric characterization of $P$-convexity for supports in the two dimensional case due to H\"ormander \cite[Theorem 10.8.3]{Hor2}, in arbitrary dimensions, however, the problem of characterizing open subsets of $\R^d$ which are $P$-convex for supports is far from being solved. Geometric characterizations of $P$-convexity for supports are only available for operators with principal part equal to the wave operator \cite{Persson1981} (see also \cite[Theorem 10.8.6]{Hor2}), for general operators of real principal type under additional regularity conditions on the boundary of $X$ (see \cite{Persson1993, Tintarev1988, Tintarev1992}), as well as for first order operators \cite{Zachmanoglou69}, and, more generally, for the special case of polynomials $P$ acting along a subspace and being elliptic there (see \cite{Nakane1979} and \cite[Theorem 10.8.5]{Hor2}).

A general necessary condition due to H\"ormander for an open subset $X\subset\R^d$ to be $P$-convex for supports is that the boundary distance of $X$ satisfies the minimum principle in every characteristic hyperplane (cf.\ \cite[Theorem 10.8.1]{Hor2}). Recall that a real-valued function $f$ on $X\subset\R^d$ is said to satisfy the minimum principle in a closed subset $F$ of $\R^d$ if for every compact set $K\subset F\cap X$ it holds
\[\min_{x\in K}f(x)=\min_{x\in\partial_F K}f(x),\]
where $\partial_F K$ denotes the boundary of $K$ in $F$. We set the boundary distance $d_X$ of $X$ to be the mapping
\[d_X:X\rightarrow\R,x\mapsto\dist(x,X^c),\]
where the distance is taken with respect to the Euclidean norm $|x|$ in $\R^d$.

\begin{remark}\label{rem:normal form for P-convexity for operators with flat characteristic cone}
	Let $A:\R^d\rightarrow\R^d$ be a linear isomorphism. Then, by Remark \ref{rem:nice polynomial}, it follows easily that an open subset $X$ of $\R^d$ is $P$-convex for supports if and only if $A^T(X)$ is $P\circ A$-convex for supports. Moreover, in case $A$ is orthogonal, $d_X$ satisfies the minimum principle in a closed subset $F$ of $\R^d$ if and only if $d_{A^T(X)}$ satisfies the minimum principle in $A^T(F)$. 
\end{remark}

For polynomials which are \veryniceon{a linear subspace $Z$}, the next theorem gives a necessary condition for $P$-convexity for supports in terms of $d_X$. If, additionally, $P$ is \veryniceon{its linear characteristic cone}, this necessary condition is also sufficient for the $P$-convexity for supports, as has been proved in \cite[Theorem 1]{Kalmes19}.

\begin{theorem}\label{th:necessary P-convexity}
	Let $P\in\C[\xi_1,\ldots,\xi_d]$ be \veryniceon{a linear subspace $Z$ of $\R^d$}. If $X$ is an open subset of $\R^d$ which is $P$-convex for supports, then $d_X$ satisfies the minimum principle in each affine subspace parallel to $Z^\perp$.
\end{theorem}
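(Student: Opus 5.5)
We argue by contradiction, following Hörmander's proof of \cite[Theorem 10.8.1]{Hor2}, with the zero solutions of Theorem \ref{th:zero solution} replacing the hyperplane solutions used there. Suppose $X$ is $P$-convex for supports, but $d_X$ violates the minimum principle in some affine subspace $F=x_0+Z^\perp$. Then there is a compact $K\subset F\cap X$ with
\[
\min_{K}d_X<\min_{\partial_F K}d_X .
\]
We will produce $u\in\mathscr{E}(X)$ whose support leaves every compact subset of $X$, yet with $\check{P}(D)$ of a suitable compactly supported object being supported far from $X^c$, contradicting Malgrange's characterization. A practical route uses the dual form: $P$-convexity for supports is equivalent to the statement that for every compact $L\subset X$ there is a compact $L'\subset X$ such that $\phi\in\mathscr{D}(X)$ with $\supp\check P(D)\phi\subset L$ implies $\supp\phi\subset L'$. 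By Remark \ref{rem:normal form for P-convexity for operators with flat characteristic cone}, applied to the orthogonal $A$ from Lemma \ref{lem:nice polynomial2}, we may assume $Z=\R^l\times\{0\}$ and $Z^\perp=\{0\}\times\R^{d-l}$. Note that $\check P$ is also orthogonally degenerate on $Z$, since $\check P_m=\pm P_m$.

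Step 1 (geometric normalization). Take $y_0\in K$ where the minimum $r:=d_X(y_0)$ is attained, and $z_0\in X^c$ with $|y_0-z_0|=r$. As in Hörmander's argument, we translate $K$ slightly toward $z_0$ and shrink radii. This gives a compact piece $K_t=K+t(z_0-y_0)$, $t$ near $1$, inside the affine space $F_t$ parallel to $Z^\perp$, with the following properties: $(\partial_{F_t}K_t)+\overline{B(0,\epsilon)}\subset X$ for some $\epsilon>0$ uniform in $t<1$, while $K_t+\overline{B(0,\epsilon)}$ meets $X^c$ as $t\to1$. Here we use that $\min_{\partial_FK}d_X>r$.

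Step 2 (cut-off zero solutions). Take $v$ from Theorem \ref{th:zero solution} for $\check P$ with $0<\delta<\epsilon$, so $\supp v\subset Z^\perp+\overline{B(0,\epsilon)}$ and $v\neq0$ on $Z^\perp+B(0,\delta)$. Let $\chi\in\mathscr{D}(\R^d)$ equal $1$ near $K$ (in the $Z^\perp$-directions) and be supported near $K$. Set
\[
\phi_t:=\chi\cdot \tau_{x_t}v ,
\]
where $x_t$ is chosen so that the slab $Z^\perp+\overline{B(0,\epsilon)}$ is centered on $F_t$. Then $\check P(D)\phi_t$ is a sum of derivatives of $\chi$ times derivatives of $v$. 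It is therefore supported in
\[
\supp\nabla\chi\cap\bigl(F_t+\overline{B(0,\epsilon)}\bigr),
\]
which lies near $\partial_{F_t}K_t+\overline{B(0,\epsilon)}$, a compact set contained in a fixed compact $L\subset X$ for all $t<1$. Meanwhile $\supp\phi_t\supset K_t+B(0,\delta')$ up to the analyticity region, and this set approaches $X^c$. Hence no single $L'$ works, contradicting $P$-convexity. The real analyticity in Theorem \ref{th:zero solution}(ii) and the lower bound in (i) guarantee that $\phi_t$ is genuinely nonzero up to $K_t$ and does not vanish by accident.

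Main obstacle. The delicate point is Step 1 together with the choice of $\chi$. We need $\chi$ to cut only in directions within $F$, so that $\nabla\chi$ lives near $\partial_FK$ transversally. At the same time, $\phi_t$ must lie in $\mathscr{D}(X)$ for $t<1$: the full slab portion $\supp\chi\cap(F_t+\overline{B(0,\epsilon)})$ has to be inside $X$ before $t$ reaches $1$. To ensure this, we first replace $K$ by a slightly smaller set where the strict inequality is quantified, with $\min_K d_X<\eta<\min_{\partial_FK}d_X$. Then $\epsilon$ is taken with $\epsilon$ between these quantities in the appropriate scaled sense, and translation is only in the direction $z_0-y_0$, exactly as in \cite[Theorem 10.8.1]{Hor2}. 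Ensuring that $\supp\phi_t$ creeps to $X^c$ relies on $v\neq0$ throughout $Z^\perp+B(0,\delta)$, which Theorem \ref{th:zero solution} provides via analyticity.
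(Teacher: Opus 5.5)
\textbf{The gap: the functions $\phi_t$ are not test functions on $X$.} Your argument needs $\phi_t\in\mathscr{D}(X)$ for all $t<1$, but with a \emph{fixed} slab solution $v$ (fixed $\epsilon$ and $\delta$) this is false.

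By Theorem~\ref{th:zero solution}(i), $\supp\tau_{x_t}v\supset F_t+\overline{B(0,\delta)}$. Since $\chi=1$ near $K$ in the $Z^\perp$-directions, this gives $\supp\phi_t\supset K_t+(Z\cap\overline{B(0,\delta)})$; you use this lower bound yourself. Because $y_0$ is a relative interior minimum of $d_X$ on $F$, you have $z_0-y_0\in Z$. Hence $z_0=\bigl(y_0+t(z_0-y_0)\bigr)+(1-t)(z_0-y_0)\in\supp\phi_t$ as soon as $(1-t)r\le\delta$.

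So for $t\ge 1-\delta/r$ your $\phi_t$ are not test functions on $X$, and the family with which you want to violate the $L$--$L'$ property is not admissible. The remedy in your last paragraph (shrinking $K$, choosing $\epsilon$ ``in the appropriate scaled sense'') cannot repair this. $K$ must contain $y_0$, so $\dist(K_t,X^c)\le(1-t)r\to0$, while a slab of fixed width always puts a fixed $Z$-neighbourhood of $K_t$ into $\supp\phi_t$.

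\textbf{The fix: let the slab shrink with $t$.} Theorem~\ref{th:zero solution} permits this, since its width parameter is arbitrary. Fix $\eta$ with $r<\eta<\min_{\partial_FK}d_X$. Take $v_t$ with $\supp v_t\subset Z^\perp+\overline{B(0,\epsilon_t)}$, where $\sqrt{2}\,\epsilon_t<\min\{(1-t)r,\eta-r\}$. Take a cut-off $\chi_t$ depending only on the $Z^\perp$-coordinates, equal to $1$ near the projection of $K$ and supported in its $\epsilon_t$-neighbourhood. Then:
\begin{itemize}
\item $\supp\phi_t$ lies within $\sqrt{2}\,\epsilon_t$ of $K_t$, hence in $X$;
\item $\supp\check P(D)\phi_t$ lies in the fixed compact set $L=\partial_FK+\overline{B(0,\eta)}\subset X$;
\item $y_0+t(z_0-y_0)\in\supp\phi_t$ tends to $z_0\in X^c$, which gives the contradiction.
\end{itemize}
Alternatively, you can keep $v$ fixed but translate only up to the first time $\supp\phi_t$ touches $X^c$.

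\textbf{How the paper does it.} The paper dispenses with the translation altogether. It uses the equivalent formulation $\dist(\supp\varphi,X^c)=\dist(\supp\check P(D)\varphi,X^c)$ of $P$-convexity for supports \cite[Theorem 10.6.3]{Hor2}. It centres slabs of width tending to $0$ at the minimum point $y_0$ itself, cut off in the $Z^\perp$-directions. It then compares $\dist(\supp\varphi_t,X^c)\le r$ with $\dist(\supp\check P(D)\varphi_t,X^c)\to\min_{\partial_FK}d_X>r$.

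Finally, the case $Z=\{0\}$, where Theorem~\ref{th:zero solution} is unavailable, must be handled separately; it is trivial.
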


\begin{proof}
    We set $W:=Z^\perp$. For $Z=\{0\}$ we have $x+W=\R^d$, $x\in \R^d$, and $d_X$ trivially satisfies the minimum principle in $x+W$. For $Z\neq \{0\}$, we set $l:=\operatorname{dim}W^\perp$. Then, $l\in \{1,\ldots,d-1\}$. After a suitable orthogonal change of variables, by Lemma \ref{lem:nice polynomial2} and Remark \ref{rem:normal form for P-convexity for operators with flat characteristic cone}, we can assume without loss of generality that
	\[Z=\{x\in\R^d\colon x_{l+1}=\ldots=x_d=0\}\text{ and }W=\{x\in\R^d\colon x_1=\ldots=x_l=0\}.\]
	We assume that there are $x^0\in X$ such $d_X$ does not satisfy the minimum principle in $x^0+W$. Then, there is a compact subset $K$ of $(x^0+W)\cap X$ with
	\[\epsilon:=\min_{x\in K}d_X(x)<\min_{x\in\partial_{x^0+W}K}d_X(x).\]
	Let us choose $y^0\in K$ and $\xi\in\partial X$ so that $\epsilon=|y^0-\xi|$. Let $y\in W$ be such that $y^0=x^0+y$. By Theorem \ref{th:zero solution} applied to $\check{P}$ instead of $P$, for every $t>0$ there is $v_t\in\mathscr{E}_{\check{P}}(\R^d)$ with
	\[W+\overline{B\left(0,\frac{t}{1+t}\epsilon\right)}\subset \supp v_t\subset W+\overline{B(0,t\epsilon)}.\]
	Then, $u_t(x):=v_t(x-y^0), x\in \R^d, t>0,$ satisfies $u_t\in \mathscr{E}_{\check{P}}(\R^d)$ and, because $y^0+W=x^0+y+W=x^0+W$,
	\[\left(x^0+W\right)+\overline{B\left(0,\frac{t}{1+t}\epsilon\right)}\subset \supp u_t\subset \left(x^0+W\right)+\overline{B(0,t\epsilon)}.\]
	Now, let $\pi_{d-l}\colon\R^d\rightarrow\R^{d-l},(x_1,\ldots,x_d)\mapsto (x_{l+1},\ldots,x_d)$. Then, the restriction of $\pi_{d-l}$ to $x^0+W$ is bijective. For $t>0$, let $\tilde{\psi}_t\in \mathscr{D}(\R^{d-l})$ be such that
	\[\supp\tilde{\psi}_t\subset\pi_{d-l}\left(K+B(0,t\epsilon)\right)\]
	and $\tilde{\psi}_t=1$ in a neighborhood of $\pi_{d-l}(K)$. We define $\psi_t:=\tilde{\psi}_t\circ\pi_{d-l}$ and observe that $\psi_t=1$ on $K$,
	\[\supp\psi_t\subset \R^l\times\left(\pi_{d-l}(K)+[-t\epsilon,t\epsilon]^{d-l}\right)\]
	as well as
	\[\supp D^\beta\psi_t\subset\R^l\times\left(\pi_{d-l}\left(\partial_{x^0+W}K\right)+[-t\epsilon,t\epsilon]^{d-l}\right),\quad \beta\in\N_0^d\backslash\{0\}.\]
	We define $\varphi_t:=\psi_t u_t$ so that $\varphi_t\in\mathscr{D}(\R^d)$, $t>0$, and $\varphi_t\in \mathscr{D}(X)$ for $t\in (0,1)$. Additionally, $y^0\in \supp\varphi_t$, $t>0$, so that
	\[\dist(\supp\varphi_t,X^c)\leq d_X(y^0)=\epsilon, t\in (0,1).\]
	On the other hand, because $u_t\in \mathscr{E}_{\check{P}}(\R^d)$ and 
	\[\check{P}(D)\varphi_t=\psi_t\check{P}(D)u_t+\sum_{\beta\in\N_0^d\backslash\{0\}}\frac{1}{\beta!}\left(D^\beta\psi_t\right)(\partial^\beta\check{P})(D)u_t,\]
	it holds
	\begin{eqnarray*}
		\supp\check{P}(D)\varphi_t&\subset &\supp u_t\cap \left(\R^l\times\left(\pi_{d-l}\left(\partial_{x^0+W}K\right)+[-t\epsilon,t\epsilon]^{d-l}\right)\right)\\
		&\subset& \left(\left(x^0+W\right)+\overline{B(0,t\epsilon)}\right)\cap\left(\R^l\times\left(\pi_{d-l}\left(\partial_{x^0+W}K\right)+[-t\epsilon,t\epsilon]^{d-l}\right)\right)\\
		&\subset&\prod_{j=1}^l \left[x^0_j-t\epsilon,x^0_j+t\epsilon\right]\times\left(\pi_{d-l}\left(\partial_{x^0+W}K\right)+[-t\epsilon,t\epsilon]^{d-l}\right)\\
		&=&\partial_{x^0+W}K+[-t\epsilon,t\epsilon]^d.
	\end{eqnarray*}
	Thus, for sufficiently small $t$, $\dist(\supp\check{P}(D)\varphi_t,X^c)$ comes arbitrarily close to $\dist(\partial_{x^0+W}K,X^c)=\min_{x\in\partial_x^0+W}d_X(x)$. In particular, for sufficiently small $t\in(0,1)$ we have $\varphi_t\in\mathscr{D}(X)$ and
	\[\dist(\supp\check{P}(D)\varphi_t,X^c)>\epsilon\geq \dist(\supp\varphi_t,X^c)\]
	which gives the desired contradiction to the $P$-convexity for supports of $X$ (cf.~\cite[Theorem 10.6.3]{Hor2}).
\end{proof}

Our next objective is to give some geometric conditions for $X$ which are equivalent to $d_X$ satisfying the minimum principle in every affine subspace parallel to a subspace $W$. The following definition is inspired by \cite{Zachmanoglou69}.
\begin{definition}\label{def:critical points}
	Let $W\neq\{0\}$ be a linear subspace of $\R^d$ and let $X$ be an open subset of $\R^d$. We say that a point $x_0\in\partial X$ is a \emph{critical point of $X$ with respect to $W$} if there exists a sequence $(x_n)_{n\in\N}$ in $X$ converging to $x_0$ and bounded, connected neighborhoods $U_n$ of $x_n$ in $(x_n+W)\cap X$ such that $(x_j+W)\cap(x_k+W)=\emptyset$ for $j\neq k$ and the set
		\[\overline{\bigcup_{n\in\N}\partial_{x_n+W} U_n}\]
        is a compact subset of $X$, where $\partial_{x+W}C$ denotes the boundary of the set $C\subset x+W$ in $x+W$.
\end{definition}

The next notion generalizes \cite[Definition 3.9]{MTV1990} where the special case of $W=\operatorname{span}\{N\}^\perp$ for $N\in\R^d\backslash\{0\}$ is considered.

\begin{definition}\label{def: inner support}
	Let $W$ be a non-trivial linear subspace of $\R^d$ and let $X$ be an open subset of $\R^d$. We say that $x_0\in\partial X$ is a \emph{point of inner support with respect to $W$} if there is $N\in\R^d\setminus W$ and a compact neighborhood $U$ of $x_0$ in $x_0+(W\oplus\operatorname{span}\{N\})$  such that
    \begin{eqnarray*}
        U\cap\{x_0+w+tN\colon w\in W, t<0\}&\subset& X \quad\text{and}\\
		\partial_{x_0+(W\oplus\operatorname{span}\{N\})} U\cap\{x_0+w+tN\colon w\in W, t\leq 0\}&\subset& X.
    \end{eqnarray*}
\end{definition}

\begin{proposition}\label{prop:equivalence to minimum principle}
	For a non-trivial linear subspace $W$ of $\R^d$ and an open subset $X$ of $\R^d$ the following properties are equivalent.
	\begin{enumerate}[{\upshape(i)}]
		\item $d_X$ satisfies the minimum principle in every affine subspace parallel to $W$.
		\item $X$ does not admit a critical point with respect to $W$.
		\item $X$ does not admit a point of inner support with respect to $W$.
	\end{enumerate}
\end{proposition}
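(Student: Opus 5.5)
I plan to prove the equivalences through the cycle (i)$\Rightarrow$(iii)$\Rightarrow$(ii)$\Rightarrow$(i), each time arguing by contraposition. Throughout, $\partial_{x+W}$ denotes the relative boundary as in Definition \ref{def:critical points}. I will use two facts repeatedly. First, $d_X$ is $1$-Lipschitz. Second, if $K\subset(x+W)\cap X$ is compact, then the compact set $\partial_{x+W}K$ has positive distance to $X^c$.

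\emph{(i)$\Rightarrow$(iii).} Suppose $x_0\in\partial X$ is a point of inner support with respect to $W$, witnessed by $N$ and $U$. Write $V:=x_0+(W\oplus\operatorname{span}\{N\})$, and for $t<0$ let $K_t:=U\cap(x_0+tN+W)$. For small $|t|$ the set $K_t$ is a compact subset of $X$, by the first inclusion in Definition \ref{def: inner support}. Its relative boundary in $x_0+tN+W$ is contained in $\partial_V U\cap\{t'\le 0\}$, which is a compact subset of $X$ by the second inclusion. Hence $\min_{\partial K_t}d_X\ge c>0$ with $c$ independent of $t$. On the other hand, $x_0+tN$ lies in $K_t$ for small $|t|$ because $U$ is a neighbourhood of $x_0$ in $V$, and $d_X(x_0+tN)\le |t||N|\to 0$. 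For $|t|$ small this gives $\min_{K_t}d_X<\min_{\partial K_t}d_X$, which violates (i).

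\emph{(iii)$\Rightarrow$(ii).} Let $x_0$ be a critical point with data $x_n,U_n$. The compact set $L:=\overline{\bigcup_n\partial_{x_n+W}U_n}\subset X$ has distance $\eta>0$ to $X^c$. Passing to a subsequence, the directions of $x_n-x_0$ modulo $W$ converge. I then want to build an $N\notin W$ and a compact neighbourhood $U$ of $x_0$ in $x_0+(W\oplus\operatorname{span}\{N\})$ witnessing inner support. The plan is to take $U$ as a slab formed from translates of the sets $U_n$, and to take $N$ as the limiting direction. The point is that $U_n\subset X$, because $U_n$ is connected, lies in $X$, and has its boundary in $X$. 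Moreover, the translates sweep out the side $t<0$ up to $x_0$. \textbf{This is the step I expect to be the main obstacle.} The $x_n$ need not lie in a single $(\dim W+1)$-dimensional affine space, so the slab must be built carefully. My first attempt will be to use the uniform $\eta$: every point within $\eta/2$ of $L$ lies in $X$, which permits small perturbations of the $U_n$ into a common plane $x_0+W+\operatorname{span}\{N\}$. I would then use connectedness, together with a minimizing choice of $N$ (the first $t$ at which the translated set hits $X^c$), to place $x_0$ correctly and to obtain the two inclusions of Definition \ref{def: inner support}.

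\emph{(ii)$\Rightarrow$(i).} Suppose $K\subset(x+W)\cap X$ is compact and $\min_K d_X<\min_{\partial K}d_X$. Set $\epsilon:=\min_K d_X$, attained at $y^0$, and choose $\xi\in\partial X$ with $|y^0-\xi|=\epsilon$. Let $U_n$ be the connected component of $\operatorname{int}_{x+W}K$ containing $y^0$, translated by $s_n(\xi-y^0)$ with $s_n\uparrow 1$ chosen so that the affine planes are pairwise distinct (if $\xi-y^0\in W$, I first perturb it slightly using the strict inequality). The translated sets stay in $X$ because $d_X>\epsilon$ is not needed on their interiors: the ball $B(y^0,\epsilon)$ is contained in $X$. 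Their boundaries stay within distance $\epsilon$ of $\partial K$, where $d_X\ge\min_{\partial K}d_X>\epsilon$, so the union of the boundaries has compact closure in $X$. Finally, the points $x_n:=y^0+s_n(\xi-y^0)\to\xi$ exhibit $\xi$ as a critical point, contradicting (ii).
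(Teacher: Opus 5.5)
The gap is in your (iii)$\Rightarrow$(ii), and it cannot be closed as planned. Your steps (i)$\Rightarrow$(iii) (slicing $U$ by the planes $x_0+tN+W$) and (ii)$\Rightarrow$(i) (translating the component of $\operatorname{int}_{x+W}K$ containing $y^0$ towards $\xi$) are sound. One small correction in the latter: the translates stay in $X$ because $d_X\ge\epsilon$ on all of $K$ while the translation length is $s_n\epsilon<\epsilon$, not because $B(y^0,\epsilon)\subset X$.

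You aim for a compact neighbourhood of the given critical point $x_0$, but a critical point need not be a point of inner support. Take $\R^2$, $W=\operatorname{span}\{e_2\}$ and $X=\R^2\setminus(\{0\}\cup\{(\pm 1/k,0)\colon k\in\N\})$. The origin is critical: take $U_n=\{a_n\}\times(-1,1)$ with $a_n\in(1/(n+1),1/n)$. Yet for every $N\notin W$ the half-plane $\{w+tN\colon w\in W,\ t<0\}=\{x\colon x_1N_1<0\}$ contains points $(\pm 1/k,0)\notin X$ arbitrarily close to $0$. Meanwhile each $(1/k,0)$ is a point of inner support.

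Your perturbation idea also fails for a local reason. The margin $\eta$ controls only $\partial_{x_n+W}U_n$, while the $U_n$ themselves contain $x_n\to x_0\in X^c$. Moving them into a common plane can therefore push interior points out of $X$, destroying the first inclusion of Definition \ref{def: inner support}.

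The missing idea is to pass through (i) and to place the point of inner support at a nearest boundary point. This is what the paper does with the cycle (i)$\Rightarrow$(ii)$\Rightarrow$(iii)$\Rightarrow$(i).

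First, a critical point violates the minimum principle at once. Set $C=\overline{\bigcup_n\partial_{x_n+W}U_n}$, choose $n$ with $|x_n-x_0|<\dist(C,X^c)$, and take $K=\overline{U_n}$.

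Conversely, take $K$, $y^0$, $\xi$ as in your last step and set $N:=\xi-y^0$. Since $y^0\in\operatorname{int}_{x+W}K$, you have $|y^0+w-\xi|\ge d_X(y^0+w)\ge|N|$ for all small $w\in W$, so $N$ is orthogonal to $W$. In particular $N\notin W$, so your perturbation hedge is unnecessary.

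Now let $U:=K+\{sN\colon s\in[0,2]\}$. This is a compact neighbourhood of $\xi$ in $\xi+(W\oplus\operatorname{span}\{N\})$. Its part below $\xi$, namely $K+\{sN\colon s\in[0,1)\}$, lies in $X$ because $d_X\ge|N|$ on $K$. Its lower boundary $K\cup(\partial_{x+W}K+\{sN\colon s\in[0,1]\})$ lies in $X$ because $d_X>|N|$ on $\partial_{x+W}K$. So $\xi$ is a point of inner support.

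Together with your two correct steps this gives (i)$\Leftrightarrow$(ii) and (i)$\Leftrightarrow$(iii).
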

\begin{proof}
	(i)$\Rightarrow$(ii): Suppose that $X$ admits a critical point with respect to $W$.
	Then there is a sequence $(x_n)_{n\in\N}$ in $X$ converging to $x_0$ and bounded open neighborhoods $U_n$ of $x_n$ in $(x_n+W)\cap X$ such that 
	\[C:=\overline{\bigcup_{n\in\N}\partial_{x_n+W} U_n}\]
	is a compact subset of $X$. Let $\epsilon:=\dist(C,X^c)$ and let us choose $n\in\N$ so that $|x_n-x_0|<\epsilon$. Then for the compact set $K:=\overline U_n$ we have
	\[\min_{x\in K}d_X(x)\leq|x_n-x_0|<\epsilon=\dist(C,X^c)<\min_{x\in\partial_{x_n+W}K}d_X(x),\]
	which means that $d_X$ does not satisfy the minimum principle in $x_n+W$.
    
	(ii)$\Rightarrow$(iii): Let us assume that $x_0\in\partial X$ is a point of inner support with respect to $W$. Then there are $N\in\R^d\setminus W$ and a compact neighborhood $U$ of $x_0$ in $x_0+(W\oplus\operatorname{span}\{N\})$ such that
    \begin{eqnarray*}
        U\cap\{x_0+w+tN\colon w\in W, t<0\}&\subset& X,\\
		\partial U\cap\{x_0+w+tN\colon w\in W, t\leq 0\}&\subset& X.
    \end{eqnarray*}
	It follows easily that $x_0$ is a critical point of $X$ with respect to $W$, since the conditions in Definition \ref{def:critical points} are satisfied by a suitable sequence $(x_n)_{n\in\N}$ in $U\cap\{x_0+w+ tN\colon w\in W, t<0\}$ and the connected components $U_n$ of $U\cap (x_n+W)$ containing $x_n$, $n\in\N$.

    (iii)$\Rightarrow$(i): We argue again by contradiction. Assume that $d_X$ does not satisfy the minimum principle in every affine subspace parallel to $W$. Let $x\in\R^d$ and $K$ be a compact subset of $(x+W)\cap X$ with
    \begin{equation}\label{eq:violation of mp 1}
        \epsilon_1:=\min_{y\in K}d_X(y)<\min_{y\in\partial_{x+W}K}d_X(y)=:\epsilon_2.
    \end{equation}
    Choose $y_0\in K,x_0\in\partial X$ with $\epsilon_1=|x_0-y_0|$. Then, $y_0$ is in the interior, with respect to $x+W$, of $K$, $\operatorname{int}_{x+W}(K)$. Let $V$ be the connected component of $\operatorname{int}_{x+W}(K)$ which contains $y_0$. Since $x+W$ is path connected, the same holds for $V$. Replacing $K$ by $\overline{V}$ if necessary, we may assume without loss of generality that
    \[K=\overline{\operatorname{int}_{x+W}(K)},\quad \operatorname{int}_{x+W}(K)\quad\text{is path connected.}\]
    Note that this will at most increase $\epsilon_2$ in \eqref{eq:violation of mp 1}.

    Because $y_0\in x+W$, we have
    \[x+W=y_0+(x-y_0)+W=y_0+W.\]
    Additionally, it is an easy exercise to show that
    \[x_0-y_0=(x_0-x)+(x-y_0)\in W^\perp.\]
    Setting $N:=x_0-y_0$,
    since
    \[W\times\operatorname{span}\{N\}\rightarrow W\oplus\operatorname{span}\{N\},(w,sN)\mapsto w+sN\]
    is a homeomorphism and because $-y_0+K\subset W$,
    \[U:=K+\{sN\colon s\in [0,2]\}=(x_0-y_0)+K+\{tN\colon t\in[-1,1]\}\]
    is a compact neighborhood of $x_0$ in $x_0+(W\oplus\operatorname{span}\{N\})$. Using $-y_0+K\subset W$ and
    \begin{eqnarray*}
        &&\partial_{x_0+(W\oplus\operatorname{span}(\{N\})}U\\
        &=&x_0+\partial_{W\oplus\operatorname{span}(\{N\})}\left(-y_0+K+\{tN\colon t\in [-1,1]\}\right)\\
        &=&x_0+\left(\left(\partial_W\left(-y_0+K\right)+\{tN\colon t\in[-1,1]\}\right)\cup\left(-y_0+K+\{\sigma N\colon\sigma\in\{-1,1\}\}\right)\right)
    \end{eqnarray*}
    it follows
    \begin{eqnarray*}
        &&\partial_{x_0+(W\oplus\operatorname{span}\{N\})}U\cap\{x_0+w+tN\colon w\in W, t\leq 0\}\\
        &=&x_0+\left(\left(\partial_W\left(-y_0+K\right)+\{tN\colon t\in[-1,0)\}\right)\cup\left(-y_0+K+\{\sigma N\colon\sigma\in\{-1\}\}\right)\right)\\
        &=&\left((x_0-y_0)+\partial_{y_0+W}K+\{tN\colon t\in[-1,0)\}\right)\cup K\\
        &=&\left(\partial_{x+W}K+\{tN\colon t\in [0,1)\}\right)\cup K.
    \end{eqnarray*}
    Thus, by \eqref{eq:violation of mp 1} and $\epsilon_1=|x_0-y_0|=|N|$, we conclude
    \[\partial_{x_0+(W\oplus\operatorname{span}\{N\})}U\cap\{x_0+w+tN\colon w\in W, t\leq 0\}\subset X.\]
    Since
    \begin{eqnarray*}
        U\cap\{x_0+w+tN\colon w\in W, t<0\}&=&(x_0-y_0)+K+\{tN\colon t\in [-1,0)\}\\
        &=&K+\{tN\colon t\in[0,1)\}\subset X,
    \end{eqnarray*}
    $x_0$ is a point of inner support with respect to $W$.
\end{proof}

We also recall the following definition from \cite[Introduction]{Zachmanoglou69}.
\begin{definition}\label{def:W-hull}
Let $W$ be a non-zero linear subspace of $\R^d$, $X$ be an open subset of $\R^d$ and let $K$ be a compact subset of $X$. The \emph{$W$-hull of $K$ in $X$}, denoted by $\widehat{K}(W,X)$, is the set $K\cup \bigcup_{x\in\R^d}C_x$, where $C_x$ denotes the sum of all bounded connected components of $(\R^d\setminus K)\cap (x+W)$ contained in $X$.
\end{definition}

\begin{remark}\label{rem:W-hull bounded}
For sets $W,X,K$ as above, the set $\widehat{K}(W,X)$ is contained in the convex hull of the compact set $K$ so it is a bounded subset of $X$.     
\end{remark}

Our next result is a reformulation of \cite[Theorem 2]{Zachmanoglou69}.
\begin{theorem}\label{theorem: geometric sufficient condition for surjectivity}
	Let $P\in\C[\xi_1,\ldots,\xi_d]$ be such that $\charcone(P)$ is contained in a non-trivial subspace of $\R^d$, and let $X$ be an open subset of $\R^d$.
     If $X$ does not admit a critical point with respect to $\charcone(P)^\perp$, then $\widehat{K}(\charcone(P)^\perp,X)$ is compact for each compact subset $K$ of $X$. Consequently, $X$ is $P$-convex for supports.
\end{theorem}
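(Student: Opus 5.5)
We set $W:=\charcone(P)^\perp$. Since $\charcone(P)$ lies in a proper subspace, $W\neq\{0\}$, so Definitions \ref{def:critical points} and \ref{def:W-hull} apply. The argument splits into a geometric part and a functional analytic part.

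The geometric part is to show, by contradiction, that $\widehat{K}:=\widehat{K}(W,X)$ is compact whenever $X$ has no critical point with respect to $W$. By Remark \ref{rem:W-hull bounded}, $\widehat K$ is bounded, so it suffices to show that its closure (in $\R^d$) lies in $X$ and that $\widehat K$ is closed.

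For the first claim, suppose $\overline{\widehat K}\not\subset X$. Then there is a sequence $(x_n)$ in $\widehat K\setminus K$ with $x_n\to x_0\in\partial X$. Each $x_n$ lies in a bounded connected component $U_n$ of $(\R^d\setminus K)\cap(x_n+W)$ with $U_n\subset X$. Since $U_n$ is a component of an open subset of $x_n+W$, its relative boundary satisfies $\partial_{x_n+W}U_n\subset K$. Hence $\overline{\bigcup_n\partial_{x_n+W}U_n}\subset K$, which is a compact subset of $X$.

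It remains to arrange the disjointness condition $(x_j+W)\cap(x_k+W)=\emptyset$. Infinitely many of the $x_n$ may lie in a single affine plane $x+W$. In that case, $x_0$ lies in the closure of a single bounded component $U$ of $(x+W)\setminus K$, and $\partial U\subset K$. I would then perturb in a direction $N\in W^\perp$: for small $|t|$, the corresponding bounded components of $(x+tN+W)\setminus K$ still have boundaries near $K$. I expect this perturbation to be the main obstacle. A cleaner route is to use Proposition \ref{prop:equivalence to minimum principle}. On $x+W$, the compact set $\overline U$ satisfies
\[
\min_{\overline U} d_X=0<\dist(K,X^c)\le\min_{\partial U} d_X,
\]
provided $\overline U\subset X$. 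If $\overline U$ touches $X^c$, take instead a slightly shrunk compact set in $U$. Either way the minimum principle in $x+W$ fails, so by Proposition \ref{prop:equivalence to minimum principle}, (ii)$\Rightarrow$(i), a critical point exists, a contradiction. Indeed, it is simpler to run the whole argument this way: for any $x_n$ as above with $d_X(x_n)<\dist(K,X^c)$, take $\overline{U_n}$, which is compact since $U_n$ is bounded. If $\overline{U_n}\subset X$, it violates the minimum principle in $x_n+W$, since its boundary lies in $K$. Closedness of $\widehat K$ follows similarly: limit points outside $K$ belong to components whose closures stay in $X$, using that the boundaries of these components stay in $K$.

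For the functional analytic part, I would invoke \cite[Theorem 2]{Zachmanoglou69} (equivalently Hörmander's hull criterion \cite[Theorem 10.6.3]{Hor2}). The key property is this: if $\varphi\in\mathscr{E}'(X)$ and $\supp\check P(D)\varphi\subset K$, then $\supp\varphi\subset\widehat K$. This follows from Holmgren-type uniqueness, since $\check P(D)\varphi=0$ outside $K$ and the planes $x+W$ are non-characteristic in the directions orthogonal to $\charcone(P)$. Compactness of $\widehat K$ for every compact $K\subset X$ then gives exactly $P$-convexity for supports.
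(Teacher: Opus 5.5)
\paragraph{Comparison with the paper.} Your route to the key estimate differs from the paper's and is sound once cleaned up. The paper uses \cite[Lemma 3]{Zachmanoglou69} to get components $C_n$ approaching $\partial X$, extracts a subsequence lying in pairwise disjoint planes, and builds a critical point directly. You instead use the already proved implication (ii)$\Rightarrow$(i) of Proposition \ref{prop:equivalence to minimum principle}. Suppose $U$ is a bounded component of $(x+W)\setminus K$ with $U\subset X$, and $y\in U$ satisfies $d_X(y)<\dist(K,X^c)$. Then the compact set $\overline U\subset x+W$ violates the minimum principle, because $\partial_{x+W}\overline U\subset K$. This gives $d_X\ge \dist(K,X^c)$ on all of $\widehat K$ and bypasses the disjointness issue altogether.

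Your intermediate reasoning, however, is muddled. The display $\min_{\overline U}d_X=0<\cdots$ ``provided $\overline U\subset X$'' is self-contradictory, since $d_X>0$ on $X$. The case ``$\overline U$ touches $X^c$'' never occurs: $\partial_{x+W}U\subset K$ gives $\overline U\subset U\cup K\subset X$. The same observation settles the single-plane case of your first approach immediately. A limit of points of $C_x$ lies either in $K$ or in one of the components forming $C_x$, hence in $X$; this is exactly the paper's argument. So no perturbation is needed there.

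\paragraph{The gap: closedness of $\widehat K$.} The statement asserts compactness, so closedness is required, and your one-line justification does not work. Suppose $p_n\in U_n\subset x_n+W$ and $p_n\to p\notin K$. The planes $x_n+W$ vary with $n$, so knowing $\partial U_n\subset K$ tells you nothing about whether the component $V$ of $(p+W)\setminus K$ containing $p$ is bounded and contained in $X$.

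What is needed is precisely the transversal perturbation you tried to avoid. Suppose $V$ were unbounded or met $X^c$. Take a path $\gamma\subset V$ from $p$ to a point $q$ that is either at positive distance from the convex hull of $K$ or lies in $X^c$. Translate $\gamma$ by the $W^\perp$-component of $p_n-p$. The translate lies in $p_n+W$, misses $K$ for large $n$, and is joined to $p_n$ by a short segment in $p_n+W$ that also misses $K$. Hence it lies in $U_n$. This contradicts either Remark \ref{rem:W-hull bounded} or your bound $d_X\ge\dist(K,X^c)$ on $\widehat K$. In the paper this step is absorbed into \cite[Lemma 3]{Zachmanoglou69}.

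The $P$-convexity conclusion does not depend on closedness. Your estimate already shows that $\overline{\widehat K}$ is a compact subset of $X$. Together with the support property $\supp\varphi\subset\widehat K$, which the paper takes from \cite[Theorem 3]{Zachmanoglou69}, this suffices.

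Your Holmgren heuristic should also be stated correctly. The non-characteristic hypersurfaces are those whose normals have a nonzero component in $W$, which works because $\charcone(P)\subset W^\perp$. The planes $x+W$ themselves are not non-characteristic in general, since their normals lie in $W^\perp$.
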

\begin{proof}
Let us assume that $X$ has no critical point with respect to the subspace $W:=\charcone(P)^\perp$. Let $K$ be a compact subset of $X$. We will show the set $\widehat{K}(W,X)$ is compact.  

The statement trivially holds in the case when $\widehat{K}(W,X)=K$. Let us assume that $K\subsetneq\widehat{K}(W,X)$ and suppose that $\widehat{K}(W,X)$ is not compact. By \cite[Lemma 3]{Zachmanoglou69}, there is a sequence $(y_n)_{n\in\N}$ in $\R^d$ and bounded connected components $C_n$ of $(\R^d\setminus K)\cap(y_n+W)$ such that $\overline{C_n}\subset X$ and the distance from $C_n$ to $\partial X$ tends to zero as $n\to\infty$. Hence, there is
$x_0\in\partial X$ and a sequence $(x_n)_{n\in\N}$ converging to $x_0$ such that $x_n\in\overline{C_n}$ for $n\in\N$. Since arbitrarily chosen sequence $(x_n')_{n\in\N}$ with $x_n'\in B(x_n,1/n)\cap C_n$ converges to $x_0$, without loss of generality, we may assume that  $x_n\in C_n$ for $n\in\N$. 
Moreover, there is a subsequence $(x_{n_j})_{j\in\N}$ such that $(x_{n_j}+W)\cap(x_{n_k}+W)=\emptyset$ for $j\neq k$. Indeed, otherwise $(x_n)_{n\in\N}$ has a subsequence contained in some set $C_x$ (see Definition \ref{def:W-hull}) which has to converge to $x_0$. Since $(K\cap(x+W))\cup C_x$ is a compact subset of $X$, $x_0\in \overline{C_x}\subset X$, a contradiction. Hence, without loss of generality, we may also assume that $(x_j+W)\cap(x_k+W)=\emptyset$ for $j\neq k$. Moreover, $\partial_{x_n+W} C_n\subset K$ for all $n\in\N$, so 
$\overline{\bigcup_{n\in\N}\partial_{x_n+W} C_n}$ is a compact subset of $K\subset X$. Hence, $x_0$ is a critical point of $X$ with respect to $W$, which contradicts our assumption.
This shows that $\widehat{K}(W,X)$ is a compact subset of $X$, and \cite[Theorem 3]{Zachmanoglou69} implies that $X$ is $P$-convex for supports.
\end{proof}

We now summarize the geometric properties characterizing $P$-convexity for supports for a polynomial $P$ which is \veryniceon{its linear characteristic cone}. By Example \ref{example}, this characterization is in particular applicable to arbitrary semi-elliptic operators as well as to first order operators. For the latter operators, see also \cite[Theorem 4]{Zachmanoglou69}.

\begin{corollary}\label{cor:characterization P-convexity}
	Let $P\in\C[\xi_1,\ldots,\xi_d]$ be \veryniceon{its linear characteristic cone $\charcone(P)$} and let $X$ be an open subset of $\R^d$. Then, the following assertions are equivalent.
	\begin{enumerate}[{\upshape(i)}]
		\item $X$ is $P$-convex for supports.
		\item $d_X$ satisfies the minimum principle in any affine subspace parallel to $\charcone(P)^\perp$.
		\item For every compact subset $K$ of $X$ and each
		\[x\in \left\{y\in X\colon d_X(y)<\operatorname{dist}(K,X^c)\right\}\]
		there is a continuous, piecewise differentiable curve $\gamma\colon[0,\infty)\rightarrow X\backslash K$ such that $\gamma(0)=x$, $\gamma'(t)\in \charcone(P)^\perp$ for every $t$ where $\gamma$ is differentiable, and $\liminf_{t\rightarrow\infty}\operatorname{dist}(\gamma(t),\partial_\infty X)=0$, where $\partial_\infty X$ denotes the boundary of $X$ in the one-point compactification $\R^d\cup\{\infty\}$ of $\R^d$.
    \end{enumerate}
    If additionally $\charcone(P)\neq \{0\}$, which is equivalent to $P$ being non-elliptic, the above conditions are also equivalent to the following ones.
    \begin{enumerate}
        \item[{\upshape(iv)}] $X$ does not admit a critical point with respect to $\charcone(P)^\perp$.
        \item[{\upshape(v)}] $X$ does not admit a point of inner support with respect to $\charcone(P)^\perp$.
        \item[{\upshape(vi)}] $\widehat{K}(\charcone(P)^\perp,X)$ is compact for each compact subset $K$ of $X$.
	\end{enumerate}
\end{corollary}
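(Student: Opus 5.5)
The proof assembles results already available, with the elliptic case handled separately. If $P$ is elliptic, then $\charcone(P)=\{0\}$ and $\charcone(P)^\perp=\R^d$. Every open set is then $P$-convex for supports, so (i) holds. Condition (ii) holds trivially, since the only affine subspace parallel to $\R^d$ is $\R^d$ itself, and every compact $K\subset X$ has $\partial_{\R^d}K=\partial K$; the minimum of $d_X$ over $K$ is attained on $\partial K$, because moving from an interior point $y$ toward a nearest point of $X^c$ decreases $d_X$ linearly until one leaves $K$. For (iii), given $x$, join $x$ by a straight segment to a nearest boundary point $\xi$ and parametrize it over $[0,\infty)$ so that $\gamma(t)\to\xi$. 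The condition $d_X(x)<\dist(K,X^c)$ keeps the segment away from $K$: every point $z$ on it satisfies $d_X(z)\le d_X(x)<\dist(K,X^c)$. So in this case all of (i)--(iii) hold.

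Now assume $P$ is non-elliptic and write $W=\charcone(P)^\perp$, so that $W\neq\R^d$ is non-trivial. The implication (i)$\Rightarrow$(ii) is exactly Theorem \ref{th:necessary P-convexity} applied with $Z=\charcone(P)$. Proposition \ref{prop:equivalence to minimum principle} gives (ii)$\Leftrightarrow$(iv)$\Leftrightarrow$(v). Theorem \ref{theorem: geometric sufficient condition for surjectivity} gives (iv)$\Rightarrow$(vi) together with (vi)$\Rightarrow$(i); for the latter it cites \cite[Theorem 3]{Zachmanoglou69}, which derives $P$-convexity from compactness of the hulls. Alternatively, (ii)$\Rightarrow$(i) is \cite[Theorem 1]{Kalmes19}, as remarked before Theorem \ref{th:necessary P-convexity}. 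This closes the cycle (i)$\Rightarrow$(ii)$\Rightarrow$(iv)$\Rightarrow$(vi)$\Rightarrow$(i), with (v) attached through Proposition \ref{prop:equivalence to minimum principle}.

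It remains to insert (iii), and this is the main obstacle.

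\emph{(iii)$\Rightarrow$(ii).} Suppose the minimum principle fails for a compact $K\subset(x+W)\cap X$, with $\min_K d_X<\min_{\partial K}d_X$. Apply (iii) to the compact set $\partial_{x+W}K$ and to a point $y_0\in K$ realizing $\min_K d_X$. This yields a curve with derivative in $W$, so it stays in $y_0+W=x+W$ and avoids $\partial_{x+W}K$. Since it starts in $\operatorname{int}_{x+W}K$, it stays in $K$ by connectedness. Hence $\gamma$ remains in the compact set $K\subset X$, which contradicts $\liminf_{t\to\infty}\dist(\gamma(t),\partial_\infty X)=0$.

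\emph{(vi)$\Rightarrow$(iii).} Given $K$ and $x$ with $d_X(x)<\dist(K,X^c)$, first observe that $x\notin\widehat{K}(W,X)$. Otherwise $x$ lies in a bounded component $C$ of $(\R^d\setminus K)\cap(x+W)$ contained in $X$, whose relative boundary lies in $K$. The minimum principle in $x+W$ applied to $\overline C$ (already known from (vi)$\Rightarrow$(ii)) would then force $d_X(x)\ge\dist(K,X^c)$. Therefore the connected component of $x$ in $(x+W)\cap(X\setminus K)$ is either unbounded, or its closure meets $\partial X$. In both cases one builds a polygonal curve inside this component, with segments parallel to $W$, that runs off to $\partial_\infty X$ along a sequence of points. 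Making this path construction rigorous in the non-compact-closure case is the delicate point.
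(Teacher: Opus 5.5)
Your proof is correct, but it is organized differently from the paper's. The paper treats all $P$ uniformly: (i)$\Rightarrow$(ii) by Theorem~\ref{th:necessary P-convexity}, (ii)$\Rightarrow$(i) by \cite[Theorem 1]{Kalmes19}, and (ii)$\Leftrightarrow$(iii) by \cite[Lemma 4]{Kalmes19}. Only for non-elliptic $P$ does it add (iv)--(vi), via Proposition~\ref{prop:equivalence to minimum principle}, Theorem~\ref{theorem: geometric sufficient condition for surjectivity} and \cite[Theorem 3]{Zachmanoglou69}.

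You instead do three things differently:
\begin{enumerate}
\item You dispose of the elliptic case by hand, where (i)--(iii) simply all hold.
\item In the non-elliptic case you close the cycle (i)$\Rightarrow$(ii)$\Rightarrow$(iv)$\Rightarrow$(vi)$\Rightarrow$(i).
\item You prove the equivalence with (iii) yourself.
\end{enumerate}
Your (iii)$\Rightarrow$(ii) argument is clean: apply (iii) to $\partial_{x+W}K$ and a minimizer $y_0\in\operatorname{int}_{x+W}K$, then trap the curve in $K$ by connectedness. What your route buys is that, for non-elliptic $P$, sufficiency of the minimum principle no longer rests on \cite[Theorem 1]{Kalmes19}. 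It rests only on Zachmanoglou's hull theorem and results proved in the present paper, and the curve condition becomes self-contained. The price is a case split and a longer argument, whereas the paper's citations cover elliptic and non-elliptic operators at once.

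Two remarks on your last step.
\begin{enumerate}
\item It never uses compactness of the hulls, only the minimum principle in $x+W$. So it is really (ii)$\Rightarrow$(iii), with (ii) available from your cycle.
\item The point you call delicate is routine. The component $C'$ of $x$ in $(x+W)\cap(X\setminus K)$ is open and connected in the affine space $x+W$, hence connected by polygons with edges parallel to $W$. Pick $z_n\in C'$ with $|z_n|\to\infty$ (if $C'$ is unbounded) or $z_n\to\xi\in\partial X$, and set $z_0=x$. Join $z_{n-1}$ to $z_n$ by such a polygon parametrized over $[n-1,n]$, and concatenate. The resulting $\gamma$ lies in $X\setminus K$ and is piecewise linear on every compact interval, with $\gamma'\in\charcone(P)^\perp$ wherever it is differentiable. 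Moreover $\gamma(n)=z_n$ already gives $\liminf_{t\to\infty}\operatorname{dist}(\gamma(t),\partial_\infty X)=0$; no control of $\gamma$ between the points $z_n$ is needed.
\end{enumerate}
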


\begin{proof}
    By Theorem \ref{th:necessary P-convexity}, (i) implies (ii). Conversely, that (ii) implies (i) is \cite[Theorem 1]{Kalmes19}. The equivalence of (ii) and (iii) is \cite[Lemma 4]{Kalmes19}. For non-elliptic operators, by Proposition \ref{prop:equivalence to minimum principle}, (ii), (iv) and (v) are equivalent. Finally, by Theorem \ref{theorem: geometric sufficient condition for surjectivity}, (iv) implies (vi), and by \cite[Theorem 3]{Zachmanoglou69}, (vi) implies (i).
\end{proof}

In the remainder of this section, we briefly consider surjectivity of partial differential operators on spaces of vector-valued smooth functions. Let $E$ be a locally convex space and let $X\subset\R^d$ be $P$-convex for supports. We are interested in the question whether $P(D)$ remains surjective on $\mathscr{E}(X;E)$, the space of $E$-valued smooth functions on $X$. If $E$ is a space of functions or distributions, this question is equivalent to the problem of parameter dependence of solutions of differential equations which has a long and rich history, see \cite{BonetDomanski06,BonetDomanski08,Domanski10,Mantlik1990,Mantlik1991,Treves1962a,Treves1962b}. By a result of Grothendieck \cite{Grothendieck1955}, this question has a positive answer whenever $E$ is a Fr\'echet space. However, if $E$ is a $(\operatorname{DF})$-space this is no longer true in general, as was shown by Vogt \cite{Vogt1983}. By the splitting theory for Fr\'echet spaces \cite{Vogt1987}, for $E=s'$, where $s$ is the space of rapidly decreasing sequences, the answer to the surjectivity question  is affirmative if and only if $\mathscr{E}_P(X)$, the kernel of $P(D)$ in $\mathscr{E}(X)$, satisfies condition $(\Omega)$ (cf.~\cite{Vogt1983}). We assume that the reader is familiar with the conditions $(\Omega)$ and $(\operatorname{DN})$ for Fr\'echet spaces, and we refer to \cite{MeV} for more information on these conditions. Moreover, if $\mathscr{E}_P(X)$ satisfies $(\Omega)$, then surjectivity of $P(D)$ on $\mathscr{E}(X)$ implies surjectivity on $\mathscr{E}(X;E)$ for every $E$ isomorphic to a product of Montel $(\operatorname{DF})$-spaces whose strong dual spaces satisfy $(\operatorname{DN})$. In view of these results, it is natural to study when the space $\mathscr{E}_P(X)$ satisfies condition $(\Omega)$. A result of Vogt \cite{Vogt1983} states that for elliptic operators $P(D)$, the space $\mathscr{E}_P(X)$ satisfies condition $(\Omega)$ for every open set $X$; recall that arbitrary open sets $X$ are $P$-convex for supports for each elliptic $P$ (cf.~\cite[Corollary 10.8.2]{Hor2}). Additionally, Petzsche \cite{Petzsche1980} showed that for convex open sets $X$ the space $\mathscr{E}_P(X)$ satisfies $(\Omega)$ for every hypoelliptic operator $P(D)$; recall that convex open sets are $P$-convex for every $P$ (cf.~\cite[Theorem 10.6.2]{Hor2}). On the negative side, in \cite{Kalmes12} the second author constructed for $d\geq 3$ a concrete hypoelliptic polynomial $P$ and a $P$-convex open set $X$ for which $\mathscr{E}_P(X)$ does not have $(\Omega)$. For a recent account on this topic, we refer the reader to \cite{DebKalmes2023b} (and references therein), where it is also proved, among others, that $\mathscr{E}_P(X)$ satisfies $(\Omega)$ whenever $d=2$ and $X$ is $P$-convex for supports.

The next theorem generalizes the above mention result of Vogt for elliptic operators as well as \cite[Theorem 18]{Kalmes19} which covers the special case of semi-elliptic operators for which its linear characteristic cone is one dimensional.

\begin{theorem}\label{theorem:Omega for semi-elliptic}
    Let $X\subset\R^d$ be $P$-convex for supports for a semi-elliptic polynomial $P$. Then $\mathscr{E}_P(X)$ satisfies condition $(\Omega)$.
\end{theorem}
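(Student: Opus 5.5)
The plan is to reduce the statement to \cite[Theorem 18]{Kalmes19}, or rather to the mechanism behind it, using the geometric characterization of Corollary \ref{cor:characterization P-convexity}. For elliptic $P$ this is Vogt's theorem, so we may assume that $P$ is non-elliptic. By Example \ref{example}(ii), $\charcone(P)=\{\xi\colon \xi_j=0 \text{ for } n_j=m\}$ is a non-trivial coordinate subspace and $P$ is orthogonally degenerate on it. After permuting variables, write $\R^d=\R^{k}\times\R^{d-k}$, where the first $k$ coordinates are those with $n_j=m$. Then $W:=\charcone(P)^\perp=\R^k\times\{0\}$, and Corollary \ref{cor:characterization P-convexity} shows that $P$-convexity for supports of $X$ is equivalent to $d_X$ satisfying the minimum principle in every affine slice $(\R^k\times\{y\})$. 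It is also equivalent to compactness of the hulls $\widehat{K}(W,X)$.

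\textbf{Step 1.} Build an exhaustion of $X$ by compact sets $K_n$ with $\widehat{K_n}(W,X)=K_n$. Such an exhaustion exists by condition (vi), taking hulls of an arbitrary exhaustion.

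\textbf{Step 2.} Verify $(\Omega)$ in its standard formulation. For every $n$ there is $m\ge n$ such that for all $k\ge m$ there are $C,s>0$ with
\[\sup_{K_m}|f|\le C\Big(\sup_{K_n}|f|\Big)^{\frac{s}{1+s}}\Big(\sup_{K_k}|f|\Big)^{\frac{1}{1+s}}\]
for the corresponding dual-norm estimates. By hypoellipticity, sup-norms on compacts may replace $C^\infty$-seminorms on $\mathscr{E}_P(X)$, at the cost of enlarging compacts. The core is a Phragmén–Lindelöf/two-constants type interpolation estimate for $f\in\mathscr{E}_P$ on $K_m$ in terms of $K_n$ and $K_k$.

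\textbf{Step 3.} To get this estimate, exploit the semi-elliptic structure. The operator restricted to the $x'\in\R^k$ variables has elliptic principal part $\sum_{j\le k}c_{me_j}\xi_j^m$, while in the remaining variables it is of lower quasi-homogeneous weight. I would follow the proof of \cite[Theorem 18]{Kalmes19}: use a parametrix or fundamental solution of $P(D)$ with quasi-homogeneous decay, together with the hull property in each slice $\R^k\times\{y\}$. This hull property is exactly what Step 1 provides, in all dimensions of $W$ and not only codimension one. The aim is to represent $f$ on $K_m$ as a boundary-type integral over slices of $K_n$ plus a remainder controlled by $K_k$, as in Vogt's elliptic argument. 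Alternatively, one could verify the Debrouwere–Kalmes criterion \cite{DebKalmes2023b}, which reduces $(\Omega)$ for $\mathscr{E}_P(X)$ to an approximation property of $\mathscr{E}_P$ on the sets $K_n$ by solutions on larger sets with quantitative bounds. Those bounds would come from the zero solutions of Corollary \ref{cor:zero solution}, which have support bounded in $\charcone(P)$-directions, combined with the Runge-type arguments of \cite{Kalmes19}.

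\textbf{Main obstacle.} The main difficulty is Step 3 for $\dim W=k>1$, where the slices are genuinely multi-dimensional. Here the one-dimensional ODE-type argument of \cite{Kalmes19} must be replaced by elliptic estimates in the $x'$-variables that are uniform in the parameter $y$. I would try first to extend the $(\Omega)$-proof of \cite[Theorem 18]{Kalmes19} verbatim, replacing the interval slices by the $\R^k$-slices, with Step 1 playing the role of the hull property in that proof.
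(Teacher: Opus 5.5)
Your proposal has a genuine gap and does not reach a proof. The preliminary steps are fine. You reduce to the non-elliptic case via Vogt, identify $\charcone(P)^\perp=\R^k\times\{0\}$ from Example~\ref{example}(ii), and extract from Corollary~\ref{cor:characterization P-convexity} the minimum principle in slices, i.e.\ compactness of the $W$-hulls. But the step that is supposed to produce $(\Omega)$ is only an intention (``I would follow'', ``I would try''), and you flag it yourself as unresolved for $\dim W>1$.

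Moreover, the estimate you aim at in Step~2 is of the wrong type. A two-constants inequality $\sup_{K_m}|f|\le C(\sup_{K_n}|f|)^{\theta}(\sup_{K_k}|f|)^{1-\theta}$ for solutions $f$ interpolates the \emph{primal} seminorms; that is a (DN)-type property. Condition $(\Omega)$ is the interpolation inequality for the \emph{dual} norms. Equivalently, with $U_j$ the unit ball of the $j$-th seminorm, every $x\in U_q$ splits as $x=y+z$ with $\|y\|_k\le Cr^{s}$ and $\|z\|_p\le 1/r$. This is a quantitative approximation statement, not a growth estimate for a single $f$.

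Your alternative route via the zero solutions of Corollary~\ref{cor:zero solution} does not fit either. Those solutions are obstructions: they are what makes the minimum principle \emph{necessary}, as in Theorem~\ref{th:necessary P-convexity}. They cannot supply positive approximation bounds.

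The missing idea is that no hard analytic estimate is needed; the paper argues functionally as follows. Since $P(D)$ is surjective on $\mathscr{E}(X)$ and $\mathscr{E}(X)\hat{\otimes}_\pi\mathscr{E}(\R)\cong\mathscr{E}(X\times\R)$, the operator $P(D)\otimes\operatorname{id}$ is surjective. This operator is the augmented operator $P^+(D)$, $P^+(\xi,\xi_{d+1})=P(\xi)$, so $X\times\R$ is $P^+$-convex for supports. The minimum principle for $d_X$ in affine subspaces parallel to $\charcone(P)^\perp$ comes from Corollary~\ref{cor:characterization P-convexity}. Combined with \cite[Theorem 15]{Kalmes19} and \cite[Corollary 10.7.10]{Hor2}, it yields surjectivity of $P^+(D)$ on $\mathscr{D}'(X\times\R)$. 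Finally, since semi-elliptic $P$ is hypoelliptic, \cite[Theorem 52.1]{Treves1967b} together with \cite[Propositions 8.3 and 5.3 (b)]{BonetDomanski06} turns this surjectivity into $(\Omega)$ for $\mathscr{E}_P(X)$.

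In this argument the dimension of $\charcone(P)$ enters only through the geometric characterization, which you already have. Your Step~3 is unnecessary, and as formulated it would not give $(\Omega)$ anyway.
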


\begin{proof}
    As is well known, if $T:E\rightarrow E$ is a surjective continuous linear operator on a Fr\'echet space $E$ and $F$ is a Fr\'echet space, then
    \[T\otimes \operatorname{id}\colon E\hat{\otimes}_\pi F \rightarrow E\hat{\otimes}_\pi F\]
    is surjective again. By nuclearity of $\mathscr{E}(X)$ it holds $\mathscr{E}(X)\hat{\otimes}_\pi\mathscr{E}(\R)\cong\mathscr{E}(X\times \R)$, the isomorphism being given by the linearization and extension of the bilinear map
    \[\mathscr{E}(X)\times\mathscr{E}(\R)\rightarrow\mathscr{E}(X\times\R),(f,g)\mapsto\left((x,t)\mapsto f(x)g(t)\right)\]
    (cf.~\cite[Section 21.6]{Jarchow1981}). As is easily seen, via this isomorphism, $P(D)\otimes\operatorname{id}$ is conjugate to the so-called augmented differential operator $P^+(D)$ on $\mathscr{E}(X\times\R)$, where the polynomial $P^+\in\C[\xi_1,\ldots,\xi_d,\xi_{d+1}]$ is given by
    \[P^+(\xi_1,\ldots,\xi_d,\xi_{d+1}):=P(\xi_1,\ldots,\xi_d).\]
    Hence, $X\times\R$ is $P^+$-convex for supports.

    Additionally, by Corollary \ref{cor:characterization P-convexity}, $d_X$ satisfies the minimum principle in every affine subspace parallel to $\charcone(P)^\perp$. Combining this with \cite[Corollary 10.7.10]{Hor2}, \cite[Theorem 15]{Kalmes19}, and the $P^+$-convexity for supports of $X\times\R$, $P^+(D)$ is surjective on $\mathscr{D}'(X\times\R)$, too.

    Since semi-elliptic polynomials are hypoelliptic (cf.~\cite[Theorem 11.1.11]{Hor2}), combining \cite[Theorem 52.1]{Treves1967b} with \cite[Propositions 8.3 and 5.3 (b)]{BonetDomanski06} finally yields that $\mathscr{E}_P(X)$ satisfies condition $(\Omega)$.
\end{proof}

\section{\texorpdfstring{$P$}{P}-Runge pairs for operators which are \veryniceon{their linear characteristic cone}}\label{sec:p-Runge pairs}

As a second application of Theorem \ref{th:zero solution}, in this section, we study $P$-Runge pairs for smooth Whitney jets on closed sets as well as for smooth functions and distributions on open sets. For the reader's convenience, we first recall some notions.

For a compact subset $K$ of $\R^d$ we denote by $\mathscr{E}(K)$ the space of smooth Whitney jets on $K$, that is, families $f=(f^{(\alpha)})_{\alpha\in\N_0^d}$ of continuous (real or complex valued) functions $f^{(\alpha)}$ on $K$ for which, with the formal Taylor polynomials of arbitrary order $n$
$$T^n_xf(y)=\sum_{|\alpha|\leq n}\frac{f^{(\alpha)}(x)}{\alpha!}(y-x)^\alpha,$$
the quantities
$$q_{K,n,t}(f):=\sup\left\{\frac{|f^{(\alpha)}(y)-\partial^\alpha T_x^nf(y)|}{|x-y|^{n-|\alpha|}}\colon |\alpha|\leq n, x,y\in K, 0<|x-y|<t\right\}$$
satisfy $\lim_{t\rightarrow 0}q_{K,n,t}(f)=0$ for all $n\in\N_0$. (Apparently, smooth Whitney jets are not usual functions, despite the fact that they are sometimes called Whitney functions in the literature.) $\mathscr{E}(K)$ equipped with the sequence of seminorms $(\|\cdot\|_{n,K})_{n\in\N_0}$ defined by
$$\forall\,f\in\mathscr{E}(K)\colon \|f\|_{n,K}:=\sup\left\{|f^{(\alpha)}(x)|\colon x\in K, |\alpha|\leq n\right\}+\sup\left\{q_{K,n,t}(f)\colon t>0\right\}$$
is a Fr\'echet space. For a closed subset $F$ of $\R^d$ with compact exhaustion $(K_l)_{l\in\N}$ and
$$\rho_{l+1}^l:\mathscr{E}(K_{l+1})\rightarrow\mathscr{E}(K_l), f=(f^{(\alpha)})_{\alpha\in\N_0}\mapsto f_{\mid K_l}:=(f^{(\alpha)}_{\mid K_l})_{\alpha\in\N_0^d}$$
$\mathscr{E}(F)$ is defined as the locally convex projective limit of $(\mathscr{E}(K_l))_{l\in\N}$, i.e.\ 
$$\mathscr{E}(F):=\operatorname{proj}_{\leftarrow l}\mathscr{E}(K_l)=\left\{(f_l)_{l\in\N}\in\prod_{l\in\N}\mathscr{E}(K_l)\colon \rho_{l+1}^l(f_{l+1})=f_l,\quad l\in\N\right\}$$
equipped with the subspace topology of the topological product $\prod_{l\in\N}\mathscr{E}(K_l)$. Then $\mathscr{E}(F)$ is independent of the particular choice of $(K_l)_{l\in\N}$ and $\mathscr{E}(F)$ is a Fr\'echet space.

Taylor's Theorem implies that for $g\in\mathscr{E}(\R^d)$ it holds $(\partial^\alpha g_{\mid F})_{\alpha\in\N_0^d}$ (or, more precisely, $\left(\left(\partial^{\alpha}g_{\mid K_l}\right)_{\alpha\in\N_0^d}\right)_{l\in\N}$) belongs to $\mathscr{E}(F)$. By a result of Whitney \cite{Whitney34} (see also \cite{Malgrange1967}) every $f\in\mathscr{E}(F)$ has such an extension $g$. Therefore, by the Closed Graph Theorem, the restriction map $$\rho_F\colon\mathscr{E}(\R^d)\rightarrow\mathscr{E}(F), g\mapsto (\partial^\alpha g_{\mid F})_{\alpha\in\N_0^d}$$
is continuous. It is well known that the sequence of seminorms $(\|\cdot\|_n)_{n\in\N}$ on $\mathscr{E}(F)$ defined as
$$\forall\,f\in\mathscr{E}(F)\colon\,||f||_n:=\inf\left\{|g|_{n}\colon g\in\mathscr{E}(\R^d)  \text{ and } \partial^\alpha g_{\mid{F}}=f^{(\alpha)}, \alpha\in\N_0^d\right\}$$
also defines the Fr\'echet space topology on $\mathscr{E}(F)$. 

With
\[D_j:\mathscr{E}(F)\rightarrow\mathscr{E}(F),(f^{(\alpha)})_{\alpha\in\N_0^d}\mapsto-i(f^{(\alpha+e_j)})_{\alpha\in\N_0^d}, \quad 1\leq j\leq d,\]
it holds $D_j(\rho_F(f))=\rho_F(D_j f)$, $f\in\mathscr{E}(\R^d)$. More generally, for $P\in\C[\xi_1,\ldots,\xi_d]$ the linear partial differential operator $P(D)$ on $\mathscr{E}(F)$ is defined analogously, which is then a continuous linear operator on $\mathscr{E}(F)$. Its kernel, equipped with the subspace topology is denoted by $\mathscr{E}_P(F)$ and thus a Fr\'echet space.

\begin{definition}\label{def:P-Runge for Whitney}
	For closed subsets $F_1\subset F_2$ of $\R^d$ we define the (continuous) restriction map
	\[r\colon\mathscr{E}(F_2)\rightarrow\mathscr{E}(F_1), f\mapsto \left(f^{(\alpha)}_{\mid F_1}\right)_{\alpha\in\N_0^d}.\]
	  It holds $r\left(\mathscr{E}_P(F_2)\right)\subset \mathscr{E}_P(F_1)$ and  $F_1, F_2$ is called a \emph{$P$-Runge pair for smooth Whitney jets} if  $r\left(\mathscr{E}_P(F_2)\right)$ is dense in $\mathscr{E}_P(F_1)$.
\end{definition}

For an open subset $X$ of $\R^d$ we equip $\mathscr{E}_P(X)$ with the subspace topology of the Fr\'echet space $\mathscr{E}(X)$ so that $\mathscr{E}_P(X)$ is a Fr\'echet space. Moreover, we equip $\mathscr{D}'(X)$ with the strong dual topology, i.e.~with the locally convex topology induced by uniform convergence on bounded subsets of $\mathscr{D}(X)$. Then $P(D)$ is continuous on $\mathscr{D}'(X)$ so that its kernel $\mathscr{D}'_P(X):=\{u\in\mathscr{D}'(X)\colon P(D)u=0\}$ is a closed subspace which we endow with the subspace topology.

\begin{definition}
	For open subsets $X_1\subset X_2$ of $\R^d$ and the restriction maps
	\[r_\mathscr{E}\colon\mathscr{E}(X_2)\rightarrow\mathscr{E}(X_1), f\mapsto f_{\mid X_1}\]
	and 
	\[r_{\mathscr{D}'}\colon\mathscr{D}'(X_2)\rightarrow\mathscr{D}'(X_1), u\mapsto u_{\mid X_1}\]
	it holds $r_\mathscr{E}\left(\mathscr{E}_P(X_2)\right)\subset\mathscr{E}(X_1)$ and $r_{\mathscr{D}'}\left(\mathscr{D}'_P(X_2)\right)\subset\mathscr{D}'(X_1)$, respectively. $X_1, X_2$ is called a \emph{$P$-Runge pair}, respectively, a \emph{$P$-Runge pair for distributions}, if $r_\mathscr{E}\left(\mathscr{E}_P(X_2)\right)$ is dense in $\mathscr{E}(X_1)$, respectively, $r_{\mathscr{D}'}\left(\mathscr{D}'_P(X_2)\right)$ is dense in $\mathscr{D}'(X_1)$.
\end{definition}

\begin{remark}\label{rem:normal form for Runge pairs for operators with flat charactersistic cone}
	Let $A:\R^d\rightarrow\R^d$ be a linear isomorphism. Then, by Remark \ref{rem:nice polynomial}, it follows easily for a pair of open subsets $X_1\subset X_2$ of $\R^d$, respectively, for a pair of closed subsets $F_1\subset F_2$ of $\R^d$ that they are a $P$-Runge pair for a constant coefficient partial differential operator $P(D)$ if and only if $A^T(X_1), A^T(X_2)$, respectively $A^T(F_1), A^T(F_2)$, are $(P\circ A)$-Runge pairs.  
\end{remark}

Using Theorem \ref{th:zero solution} instead of \cite[Theorem 5]{Kalmes2021}, the proof of the next result uses similar reasoning as the proof of \cite[Theorem 19]{CiasKalmes2025}. We include it here for the reader's convenience.

\begin{theorem}\label{thm: necessary condition for P-Runge for partially elliptic}
	Let $P\in\C[\xi_1,\ldots,\xi_d]$ be \veryniceon{a subspace $Z\neq\{0\}$ of $\R^d$}. Moreover, let $F_1\subset F_2$ be closed subsets of $\R^d$.	If $F_1,F_2$ is a $P$-Runge pair for smooth Whitney jets then there are no $x\in\R^d$ and $\epsilon>0$ for which $F_2$ contains a bounded connected component of $(\R^d\setminus F_1)\cap (B(x,\epsilon)+Z^\perp)$. 
\end{theorem}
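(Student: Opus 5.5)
We argue by contraposition, following the duality argument of \cite[Theorem 19]{CiasKalmes2025} but with Theorem \ref{th:zero solution} as the source of zero solutions. Suppose there are $x\in\R^d$ and $\epsilon>0$ such that $F_2$ contains a bounded connected component $C$ of $(\R^d\setminus F_1)\cap(B(x,\epsilon)+Z^\perp)$. Since $C$ is bounded and open in $B(x,\epsilon)+Z^\perp$, its boundary relative to $B(x,\epsilon)+Z^\perp$ lies in $F_1$. We will build an element of $\mathscr{E}_P(F_1)$ that cannot be approximated by restrictions of elements of $\mathscr{E}_P(F_2)$.

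\textbf{Construction of the zero solution.} By Theorem \ref{th:zero solution}, applied after translating by $x$, there is $v\in\mathscr{E}_P(\R^d)$ with
\[x+Z^\perp+\overline{B(0,\delta)}\subset\supp v\subset x+Z^\perp+\overline{B(0,\epsilon')},\]
where $0<\delta<\epsilon'<\epsilon$ are chosen so that $\supp v$ meets $C$ (shrinking or shifting $x$ slightly if needed). Moreover, $v$ is real analytic on $x+Z^\perp+B(0,\delta)$. Fix a cutoff $\chi\in\mathscr{D}(\R^d)$ with $\chi=1$ on a neighbourhood of $\overline C$, and with $\supp\chi$ avoiding the other parts of $(B(x,\epsilon)+Z^\perp)\setminus F_1$ near $\overline C$; this is possible because $C$ is a connected component and is bounded. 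Define $g:=\rho_{F_1}(\chi v)$. On $F_1$ near $\supp v$, the function $\chi v$ should coincide with $v$ or vanish locally, so $g\in\mathscr{E}_P(F_1)$. Arranging this correctly is the first delicate point: we want the set $\supp v\cap\{\chi\neq 0,1\}$ to avoid $F_1$. This holds because $\overline{C}$ is separated from the remainder of $(B(x,\epsilon)+Z^\perp)\setminus F_1$ by points of $F_1$, and the remaining region there is open.

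\textbf{Duality step.} If approximation held, there would be $f_n\in\mathscr{E}_P(F_2)$ with $r(f_n)\to g$ in $\mathscr{E}_P(F_1)$. Take Whitney extensions of the $f_n$ and consider $h_n:=\chi\,\tilde f_n$ on $\overline C\subset F_2$, where $P(D)\tilde f_n=0$ on $F_2\supset\overline C$ as jets. Pair with a test function of the form $\check P(D)$-adjoint: choose $\varphi\in\mathscr{D}(\R^d)$ supported in $B(x,\epsilon)+Z^\perp$ near $\overline C$ and integrate by parts over $C$. The quantities $\int_C P(D)(\cdot)\,\varphi$ reduce to boundary terms on $\partial C\subset F_1$, and these depend only on the jets on $F_1$. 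By convergence, the boundary terms for $f_n$ tend to those for $g=v$. For $f_n$ the interior term vanishes, so the limit functional is zero. For $v$, however, one obtains that $\chi v$ restricted to $C$ extends by zero to a zero solution with compact support in the direction transverse to $Z^\perp$. More concretely, define $w:=v\mathbf 1_{C'}$, where $C'$ is the union of $C$ with what lies outside $B(x,\epsilon)+Z^\perp$. The limit identity shows $P(D)w=0$ in the sense of distributions while $w$ vanishes on an open set meeting $x+Z^\perp+B(0,\delta)$. Since $v$ is real analytic there and nonzero, this contradicts the identity principle.

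\textbf{Main obstacle.} The hardest step is making the duality rigorous: the Hahn--Banach formulation needs a functional $\mu\in\mathscr{E}(F_1)'$ that annihilates $r(\mathscr{E}_P(F_2))$ but not $g$. I would take $\mu:=\rho_{F_1}^{t}\bigl(\check P(D)(\mathbf 1_C\varphi)\bigr)$, interpreted as a distribution supported in $\partial C\subset F_1$. The reason to work with $\check P(D)(\mathbf 1_C\varphi)-\mathbf 1_C\check P(D)\varphi$ is that its support lies in $\partial C$; this is exactly why the component must be bounded, so that the difference is compactly supported. One then checks that $\mu$ annihilates restrictions of elements of $\mathscr{E}_P(F_2)$ because $\overline C\subset F_2$. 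To see that $\mu(g)\neq0$, choose $\varphi$ as in the proof of Theorem \ref{th:necessary P-convexity}, using the real analytic part of $v$ to guarantee a nonvanishing pairing. Verifying this nonvanishing is where most of the work lies.
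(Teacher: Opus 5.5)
There is a genuine gap: the partially bounded zero solution sits on the wrong side of the duality. Your candidate $g=\rho_{F_1}(\chi v)$ is a cut-off of a global solution $v\in\mathscr{E}_P(\R^d)$ with $\chi\equiv1$ near $\overline C$, and your functional $\mu$ is supported in $\overline C$. Then $\chi v=v$ near $\supp\mu$, so $\mu(g)=\langle\mu,\chi v\rangle=\langle\mu,v\rangle=\mu\bigl(r(\rho_{F_2}v)\bigr)$ with $\rho_{F_2}v\in\mathscr{E}_P(F_2)$. Hence any such $\mu$ that annihilates $r(\mathscr{E}_P(F_2))$ also annihilates $g$, and this kind of separation can never succeed.

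Your argument also does not give $g\in\mathscr{E}_P(F_1)$. For $d=2$, $Z=\R e_1$, $F_1=\{|x_2|\geq1\}$, one has $\supp v\setminus F_1\subset\overline C$, so the transition region of $\chi$ inside $\supp v$ is forced into $F_1$.

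The proposed functional fails on its own terms too. Pairing $\check P(D)(\mathbf{1}_C\varphi)-\mathbf{1}_C\check P(D)\varphi$ with a Whitney extension $\tilde f$ of $f\in\mathscr{E}_P(F_2)$ gives $-\int_C\tilde f\,\check P(D)\varphi$, which does not vanish for a general test function $\varphi$. Moreover, only the part of $\partial C$ inside the open slab $B(x,\epsilon)+Z^\perp$ lies in $F_1$; points of $\partial C$ on the boundary of the slab need not. Finally, the identity principle is invoked for $w=v\mathbf{1}_{C'}$, which is not real analytic, and $P(D)w=0$ is never actually obtained.

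The missing idea is to build the annihilating distribution from a zero solution of the transposed operator.

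\begin{itemize}
\item Since $\check P_m=(-1)^mP_m$, Theorem \ref{th:zero solution} applies to $\check P$. It gives $u\in\mathscr{E}_{\check P}(\R^d)$ with $y\in\supp u\subset y+Z^\perp+\overline{B(0,\epsilon')}$, for a point $y\in C$ and $\epsilon'$ so small that this closed slab lies in $B(x,\epsilon)+Z^\perp$. The paper instead passes to the component of $(\R^d\setminus F_1)\cap(B(y,\delta)+Z^\perp)$ containing $y$ and takes $\supp u\subset Z^\perp+\overline{B(y,\delta)}$.
\item Put $w:=u\mathbf{1}_C\in\mathscr{E}'(\R^d)$. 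Then $\supp w\subset\overline C\subset F_2$.
\item $\supp w\not\subset F_1$, since $u\not\equiv0$ on the neighbourhood $C$ of $y$ and $C\cap F_1=\emptyset$.
\item $\supp\check P(D)w\subset\partial C\cap F_1$. Indeed, $\check P(D)u=0$ on $C$. A point of $\partial C\setminus F_1$ necessarily lies on the boundary of $B(x,\epsilon)+Z^\perp$, hence outside $\supp u$, so $w$ vanishes near it. (In the paper's version, $w$ coincides with $u$ there.)
\end{itemize}

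This contradicts the support criterion \cite[Theorem 5]{CiasKalmes2025}. For a $P$-Runge pair for smooth Whitney jets, every $\mu\in\mathscr{E}'(\R^d)$ with $\supp\mu\subset F_2$ and $\supp\check P(D)\mu\subset F_1$ satisfies $\supp\mu\subset F_1$. That criterion follows from Proposition \ref{abstract-criterion} and the surjectivity of $P(D)$ on spaces of Whitney jets. It is where the Hahn--Banach argument lives, so you never need to write down an explicit non-approximable element of $\mathscr{E}_P(F_1)$.
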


\begin{proof}
	Because $F_1, F_2$ is a $P$-Runge pair, by \cite[Theorem 5]{CiasKalmes2025} 
	\begin{equation}\label{eq:P-Runge by supports}
		\forall\,\mu\in\mathscr{E}'(\R^d)\colon\left(\supp\mu\subset F_2, \supp\check{P}(D)\mu\subset F_1\Rightarrow \supp\mu\subset F_1\right).
	\end{equation}
	As in the proof of Theorem \ref{th:necessary P-convexity}, we abbreviate $W:=Z^\perp$ and we set $l:=\operatorname{dim}Z$ so that $l\in \{1,\ldots,d-1\}$. After a suitable orthogonal change of variables, by Remarks \ref{rem:nice polynomial} and \ref{rem:normal form for Runge pairs for operators with flat charactersistic cone}, we can assume without loss of generality that
	\[Z=\{x\in\R^d\colon x_{l+1}=\ldots=x_d=0\}\text{ and }W=\{x\in\R^d\colon x_1=\ldots=x_l=0\}.\]
	In order to prove the claim, we argue by contradiction. Thus, we assume that there are $x_0\in\R^d$ and $\epsilon>0$ such that $F_2$ contains a bounded connected component $\tilde{C}$ of $(\R^d\setminus F_1)\cap (B(x_0,\epsilon)+W)$. Then, there is $y\in \tilde{C}$ and $\delta>0$ such that the connected component $C$ of $(\R^d\backslash F_1)\cap (B(y,\delta)+W)$ which contains $y$ satisfies $C\subset\tilde{C}$. In particular, $C$ is contained in $F_2$. By Theorem \ref{th:zero solution} there is $v\in\mathscr{E}_{\check{P}}(\R^d)$ such that
	\[W+\overline{B(0,\delta/2)}\subset\supp v\subset W+\overline{B(0,\delta)}\]
	and $v$ is real analytic in $W+B(0,\delta/2)$. Then, $u(x):=v(x-y)$, $x\in\R^d$, satisfies
	\[W+\overline{B(y,\delta/2)}\subset\supp u\subset W+\overline{B(y,\delta)}\]
	and $u\in\mathscr{E}_{\check{P}}(\R^d)$ is real analytic in $W+B(y,\delta/2)$. 
	We define $w\in\mathscr{D}'(\R^d)$ by
	\[w(\varphi):=\int_C u(x)\varphi(x)\mathrm{d}x\]
    for all $\varphi\in\mathscr{D}(\R^d)$.
	Since $F_2$ is closed, we have $\supp w\subset \overline{C}\subset F_2$, so that $w\in\mathscr{E}'(\R^d)$ and
	\[\emptyset\neq C\cap \left(W+\overline{B(y,\delta/2)}\right)\subset \supp w\]
	which implies $\supp w\not\subset F_1$.
	
	Next, we show $\supp\check{P}(D)w\subset F_1$ which by \eqref{eq:P-Runge by supports} will give the desired contradiction. Let $\varphi\in\mathscr{D}(C)$ be arbitrary. Then, taking into account
	\[W+B(y,\delta)=B_l((y_1,\ldots,y_l),\delta)\times\R^{d-l},\]
	integrating by parts yields
	\begin{eqnarray*}
		\left(\check{P}(D)w\right)(\varphi)&=&w\left(P(D)\varphi\right)=\int_Cu(x)P(D)\varphi(x)\mathrm{d}x\\
		&=&\int_{W+B(y,\delta)}\check{P}(D)u(x)\varphi(x)\mathrm{d}x=0,
	\end{eqnarray*}
	so that $\check{P}(D)w$ vanishes on the open set $C$. Additionally, for 
	\[a\in\partial C\backslash F_1\subset\overline{C}\cap\left(\partial B(y,\delta)+W\right)\]
	there is $\rho>0$ with $B(a,\rho)\subset \R^d\backslash F_1$. Because $u$ vanishes outside
	\[B_l((y_1,\ldots,y_l),\delta)\times\R^{d-l}= B(y,\delta)+W,\]
	for $\varphi\in\mathscr{D}(B(a,\rho))$ it holds
	\begin{eqnarray*}
		\left(\check{P}(D)w\right)(\varphi)&=&\int_C u(x)P(D)\varphi(x)\mathrm{d}x=\int_{C\cap B(a,\delta)}u(x)P(D)\varphi(x)\mathrm{d}x\\
		&=&\int_{B(a,\delta)}u(x)P(D)\varphi(x)\mathrm{d}x=\left(\check{P}(D)u\right)(\varphi)=0,
	\end{eqnarray*}
	i.e.~$\check{P}(D)w$ vanishes on $B(a,\rho)$. Since $a\in\partial C\backslash F_1$ is arbitrary, $\check{P}(D)w$ vanishes on
	\[\left(\R^d\backslash\overline{C}\right)\cup C\cup \left(\partial C\backslash F_1\right)=\R^d\backslash\left(\partial C\cap F_1\right)\]
	so that $\supp\check{P}(D)w\subset \partial C\cap F_1\subset F_1$.

    In summary, $w\in\mathscr{E}'(\R^d)$ satisfies $\supp w\subset F_2$, $\supp w\not\subset F_1$, and $\supp\check{P}(D)w\subset F_1$ which contradicts \eqref{eq:P-Runge by supports}.	
\end{proof}

Under suitable additional hypotheses, the necessary condition from Theorem \ref{thm: necessary condition for P-Runge for partially elliptic} for $P$-Runge pairs for smooth Whitney jets is also sufficient.

\begin{proposition}\label{prop:P-Runge R2}
Let $P(D)$ be a hypoelliptic partial differential operator on $\R^2$ which is \veryniceon{its linear characteristic cone} $\charcone(P)\neq\{0\}$. Moreover, let $F_1$ be a closed subset of $\R^2$ such that for all $\epsilon>0$ and $x\in\R^2$ the set $F_1\cap (B(x,\epsilon)+\charcone(P)^\perp)$ has at most countably many connected components. Then $F_1,\R^2$ is a $P$-Runge pair for smooth Whitney jets if and only if for all $x\in\R^2$ and $\epsilon>0$ the set $(\R^d\setminus F_1)\cap (B(x,\epsilon)+\charcone(P)^\perp)$ does not have a bounded connected component.

Moreover, assume that for all $\epsilon>0$, $x\in\R^2$, $y\in B(x,\epsilon)$ and all bounded connected components $B$ of $(\R^d\setminus F_1)\cap (B(x,\epsilon)+\charcone(P)^\perp)$ the set  $B\cap (y+\charcone(P)^\perp)$ has finitely many connected components and let $F_2$ be a closed subset of $\R^2$ containing $F_1$. Then $F_1,F_2$ is a $P$-Runge pair if and only if for all $x\in\R^2$ and $\epsilon>0$ the set $F_2$ does not contain a bounded connected component of $(\R^d\setminus F_1)\cap (B(x,\epsilon)+\charcone(P)^\perp)$.
\end{proposition}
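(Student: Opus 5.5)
The plan is to split each equivalence into a necessity part, coming from Theorem \ref{thm: necessary condition for P-Runge for partially elliptic}, and a sufficiency part, coming from the authors' earlier results on Runge pairs for smooth Whitney jets in the plane \cite{CiasKalmes2025}.

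\textbf{Necessity.} Since $d=2$ and $P$ is non-elliptic, the linear subspace $\charcone(P)$ is one dimensional, so $\charcone(P)=\operatorname{span}\{N\}$ for some $N\neq 0$. By hypothesis $P$ is orthogonally degenerate on $Z=\charcone(P)\neq\{0\}$. Theorem \ref{thm: necessary condition for P-Runge for partially elliptic} then gives the following: if $F_1,F_2$ is a $P$-Runge pair for smooth Whitney jets, then $F_2$ contains no bounded connected component of $(\R^2\setminus F_1)\cap(B(x,\epsilon)+\charcone(P)^\perp)$. For $F_2=\R^2$ this says exactly that no such bounded component exists. Both "only if" directions are therefore immediate and need no countability assumption.

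\textbf{Sufficiency.} After an orthogonal change of variables (Remark \ref{rem:normal form for Runge pairs for operators with flat charactersistic cone} and Lemma \ref{lem:nice polynomial2}) we may take $\charcone(P)=\R\times\{0\}$. Then $\charcone(P)^\perp=\{0\}\times\R$, the sets $B(x,\epsilon)+\charcone(P)^\perp$ are vertical strips $(a,b)\times\R$, and $P_2$ vanishes only on the horizontal axis. In this normal form I would verify the hypotheses of the planar sufficiency theorem of \cite{CiasKalmes2025}. That result characterises Runge pairs for hypoelliptic operators on $\R^2$ by the absence of bounded components of the complement of $F_1$ in strips around the characteristic lines. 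It is proved via the support criterion \eqref{eq:P-Runge by supports} from \cite[Theorem 5]{CiasKalmes2025} together with H\"ormander's two-dimensional theory. The argument would run as follows.
\begin{enumerate}[(a)]
\item Suppose $\mu\in\mathscr{E}'(\R^2)$ has $\supp\mu\subset F_2$ and $\supp\check P(D)\mu\subset F_1$.
\item By hypoellipticity, $\mu$ is smooth, indeed a zero solution, on the open set $\R^2\setminus F_1$.
\item Work on each bounded-in-the-strip component $B$ of $(\R^2\setminus F_1)\cap$ strip. Apply Holmgren and the geometric fact that the only characteristic lines are horizontal. If $B$ is unbounded, propagate vanishing from infinity along non-characteristic directions, starting far out where $\mu=0$, and conclude that $\mu$ vanishes on $B$.
\end{enumerate}

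The countability assumption on the components of $F_1\cap$ strip is what lets the local vanishing arguments be patched together into a global one. The finiteness of $B\cap(y+\charcone(P)^\perp)$ is used in the $F_2$ case to control the vertical slices, so that vanishing propagates from $\R^2\setminus F_2$ into $B$.

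\textbf{Main obstacle.} The hardest step is showing $\supp\mu\subset F_1$ when $F_2\neq\R^2$. Each component $B$ must either be unbounded, so that vanishing comes in from infinity, or reach $\R^2\setminus F_2$, where $\mu=0$. The vanishing then has to be transported through $B$ across vertical slices, using uniqueness for the hypoelliptic equation in the non-characteristic vertical direction. The finitely-many-slice-components assumption should make this a finite induction along each slice. I would first try to reduce this step directly to the corresponding lemma in \cite[Section 4]{CiasKalmes2025}.
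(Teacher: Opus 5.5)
Your necessity argument is the same as the paper's. The sufficiency part, however, has a genuine gap.

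\textbf{No strip theorem to quote.} Sufficiency cannot be obtained by citing a strip-based theorem from \cite{CiasKalmes2025}. The strip characterization is exactly what Conjecture \ref{conjecture:characterization of P-Runge} leaves open, and this proposition proves its planar case only under the extra countability and finiteness hypotheses; those hypotheses would be superfluous otherwise. What is available is \cite[(iii)$\Rightarrow$(i) in Theorem 16]{CiasKalmes2025}, a criterion on single characteristic lines. With $\charcone(P)=\R\times\{0\}$, it suffices that for every ball $B(y,\epsilon)\subset\R^2\setminus F_1$ there is a vertical line $\{x_1\}\times\R$ meeting the ball such that the component of $(\R^2\setminus F_1)\cap(\{x_1\}\times\R)$ containing the chord is unbounded or not contained in $F_2$. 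The strip condition does not imply this line condition in general (compare Example \ref{example:P-Runge non-hypoelliptic}(ii)). So the whole content of the proposition is a point-set topological passage from strips to lines, and your proposal does not contain it.

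\textbf{The analytic substitute goes the wrong way.} In this normal form the characteristic lines are the vertical ones $\{c\}\times\R$, since their normal $e_1$ lies in $\charcone(P)$; they are not the horizontal ones. Theorem \ref{th:zero solution} gives zero solutions supported in vertical strips $[-\epsilon,\epsilon]\times\R$. Hence vanishing can never be transported from one vertical slice to the next. It only propagates inside a slice, which is why the usable criterion is line-based.

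\textbf{The missing argument} goes by contraposition.
\begin{enumerate}
\item If the line criterion fails for some ball, you get intervals $I_0'$, $J_1$ with $I_0'\times J_1\subset\R^2\setminus F_1$. You also get functions $f\le\inf J_1$ and $g\ge\sup J_1$ on $I_0'$ whose graphs lie in $F_1$, with the region between them contained in $F_2$.
\item There are uncountably many points $(s,f(s))$ but only countably many components of $F_1\cap(I_0'\times\R)$. So two of these points lie in one component $C_0$. Doing the same for $g$ on the resulting subinterval gives a component $C_1$ and an interval $I_1$.
\item $C_0$ and $C_1$ block every escape path. Hence the component $B_1$ of $(\R^2\setminus F_1)\cap(I_1\times\R)$ containing $I_1\times J_1$ is bounded, which settles the case $F_2=\R^2$.
\item For general $F_2$ you need an interval $I\subset I_1$ for which the parts of $B_I$ above $g$ and below $f$ have empty interior. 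Then $B_I\subset F_2$ by density and closedness of $F_2$.
\item If no such $I$ exists, you recursively build nested intervals $I_k$ and pairwise disjoint $J_k$. They are chosen so that $I_k\times J_1,\ldots,I_k\times J_k$ lie in pairwise distinct components of $(\R^2\setminus F_1)\cap(I_k\times\R)$, again using countability to close them off.
\item For $t\in\bigcap_k I_k$, a pigeonhole argument then contradicts the finiteness of the number of components of $B_1\cap(\{t\}\times\R)$.
\end{enumerate}
So countability is what produces the components of $F_1$ that close off the strip component, and finiteness is what terminates the recursion. Neither hypothesis is about patching together vanishing arguments.
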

\begin{proof}
For arbitrary closed set $F_2\subset\R^2$, the necessity follows from Theorem \ref{thm: necessary condition for P-Runge for partially elliptic}.

We shall prove sufficiency. Clearly, $\charcone(P)=\operatorname{span}\{v\}$ for some $v\in\R^2\backslash\{0\}$ and without loss of generality we may assume that $\charcone(P)=\operatorname{span}\{e_1\}=\R\times\{0\}$. Then $\charcone(P)^\perp=\{0\}\times\R$. Moreover, the first and second assumption imposed on $F_1$ mean, respectively, that:
\begin{enumerate}
    \item[($\ast$)] for all open bounded intervals $I$ the set $F_1\cap (I\times\R)$ has at most countably many connected components;
    \item[($\ast\ast$)] for all open bounded intervals $I$, $t\in I$ and all bounded connected components $B$ of $(\R^2\setminus F_1)\cap (I\times\R)$ the set $B\cap (\{t\}\times\R)$ has finitely many connected components.
\end{enumerate}
It remains to prove that if for all open bounded intervals $I$ the set $F_2$ does not contain a bounded connected component of $(\R^2\setminus F_1)\cap (I\times\R)$, then $F_1,F_2$ is a $P$-Runge pair.

In view of \cite[(iii)$\Rightarrow$(i) in Theorem 16]{CiasKalmes2025}, it is enough to show that for all $y\in\R^2$ and $\epsilon>0$ with $B(y,\epsilon)\subset\R^2\setminus F_1$, there is $x\in\R^2$ with $B(y,\epsilon)\cap \{(x_1,t)\colon t\in\R\}\neq\emptyset$ such that the connected component of $(\R^2\setminus F_1)\cap\{(x_1,t)\colon t\in\R\}$ which contains $B(y,\epsilon)\cap \{(x_1,t)\colon t\in\R\}$ is unbounded or it is not contained in $F_2$. We proceed by contraposition. Let us suppose that the last statement is not true, i.e.~we assume that
there are $y\in\R^2$ and $\epsilon>0$ with $B(y,\epsilon)\subset \R^2\setminus F_1$ such that for all $x\in\R^2$ with $B(y,\epsilon)\cap \{(x_1,t)\colon t\in\R\}\neq\emptyset$ the connected component of $(\R^2\setminus F_1)\cap\{(x_1,t)\colon t\in\R\}$ which contains $B(y,\epsilon)\cap \{(x_1,t)\colon t\in\R\}$ is bounded and is contained in $F_2$; this condition will be referred to as ($\ast\ast\ast$). 
Our aim is to show that there is an open interval $I$ for which the set $F_2$ contains a bounded connected component of $(\R^2\setminus F_1)\cap (I\times\R)$.

Let us assume that $F_1$ satisfies ($\ast$). By ($\ast\ast\ast$), there are bounded, open non-empty intervals $I_0',J_1\subset \R$ with $I_0'\times J_1\subset \R^2\setminus F_1$, and functions 
\[f\colon I_0'\to(-\infty,\inf J_1],\quad g\colon I_0'\to[\sup J_1,\infty)\] whose graphs are contained in $F_1$ and such that
\[\{(\xi,\eta)\in\R^2\colon \xi\in I_0' \text{ and } f(\xi)<\eta<g(\xi)\}\subset F_2.\]
Since $F_1\cap(I_0'\times \R)$ has at most countably many components, there are $s_0<t_0$ from the interval $I_0'$ such that the points $(s_0,f(s_0))$ and $(t_0,f(t_0))$ belong to the same component $C_0$ of $F_1\cap(I_0'\times \R)$. The same argument applied to the interval $I_0:=(s_0,t_0)$ and the function $g_{\mid{I_0}}$ leads to the conclusion that there are $s_1<t_1$ from $I_0$ such that the points $(s_1,g(s_1))$ and $(t_1,g(t_1))$ belong to the same component $C_1$ of $F_1\cap(I_0\times\R)$. In consequence, the connected component $B_1$ of the set $(\R^2\setminus F_1)\cap(I_1\times \R)$ containing $I_1\times J_1$ is bounded. Indeed, otherwise there is a continuous curve 
\[\gamma=(\gamma_1,\gamma_2)\colon[0,\infty)\to (\R^2\setminus F_1)\cap (I_1\times\R)\]
such that $\gamma(0)\in I_1\times J_1$ and $\sup_{t\geq0}|\gamma(t)|=\infty$. Then $\inf_{t\geq0}\gamma_2(t)=-\infty$ or $\sup_{t\geq0}\gamma_2(t)=\infty$ which contradicts the connectedness of $C_0$ or $C_1$, respectively.
This shows sufficiency in the case $F_2=\R^2$. 

Now, let us also assume that $F_1$ satisfies condition ($\ast\ast$).
For an open interval $I\subset I_1$, let $B_I$ be the (bounded) connected component of $(\R^2\setminus F_1)\cap (I\times\R)$ containing $I\times J_1$. We also define
\begin{align*}
U_I&:=\{(\xi_1,\xi_2)\in B_I\colon \xi_2>g(\xi_1)\},\\
L_I&:=\{(\xi_1,\xi_2)\in B_I\colon \xi_2<f(\xi_1)\}.
\end{align*}
We claim that there is an open interval $I\subset I_1$ such that the sets $U_I$ and $L_I$ both have empty interior. For such an interval $I$ the set
\[B_I\setminus(U_I\cup L_I)=\{(\xi_1,\xi_2)\in B_I\colon f(\xi_1)\leq \xi_2\leq g(\xi_1)\}\subset F_2\]
is dense in $B_I$, and consequently $B_I\subset F_2$, which is our final goal.

Suppose that the above claim is not true. Then for all open intervals $I\subset I_1$, the set $U_I$ or $L_I$ has a non-empty interior. 
We will recursively construct sequences $(I_k')_{k\geq2}$, $(I_k)_{k\geq2}$, $(J_k)_{k\geq2}$ of non-empty, open intervals and connected components $C_k$ of $F_1\cap(I_k'\times \R)$ such that for each $k\geq 2$:
\begin{enumerate}
    \item $I_{k}\subset \overline{I_{k}'}\subset I_{k-1}\subset \ldots\subset I_2\subset \overline{I_2'}\subset I_1$;
    \item the intervals $J_1,\ldots J_k$ are pairwise disjoint;
    \item either $I_k\times J_k\subset U_{I_k}$ or $I_k\times J_k\subset L_{I_k}$;
    \item if $I_k\times J_k\subset L_{I_k}$ then the set $C_k$ contains the points $(\inf I_k,f(\inf I_k))$ and $(\sup I_k,f(\sup I_k))$, and if $I_k\times J_k\subset U_{I_k}$ then the set $C_k$ contains the points $(\inf I_k,g(\inf I_k))$ and $(\sup I_k,g(\sup I_k))$;
    \item the connected components $X_1^{(k)},X_2^{(k)},\ldots X_k^{(k)}$ of the set $(\R^2\setminus F_1)\cap(I_k\times\R)$ containing $I_k\times J_1, I_k\times J_2,\ldots, I_k\times J_k$, respectively, are pairwise disjoint. 
\end{enumerate}

We start with the case $k=2$. By assumption, one of the set $U_{I_1}$ or $L_{I_1}$ has a non-empty interior. Without loss of generality, we may assume that $L_{I_1}$ contains an open rectangle $I_2'\times J_2$, where $\overline{I_2'}\subset I_1$. Since $F_1\cap(I_2'\times \R)$ has at most countable many components, there are $s_2<t_2$ from the interval $I_2'$ such that the points $(s_2,f(s_2))$ and $(t_2,f(t_2))$ belong to the same component $C_2$ of $F_1\cap(I_2'\times \R)$. We set $I_2:=(s_2,t_2)$. Then the connected components $X_1^{(2)},X_2^{(2)}$ of $(\R^2\setminus F_1)\cap(I_2\times\R)$ containing $I_2\times J_1, I_2\times J_2$, respectively, are disjoint. Otherwise, one could find a continuous curve 
\[\gamma\colon[0,1]\to (\R^2\setminus F_1)\cap (I_2\times\R)\]
such that $\gamma(0)\in I_2\times J_1$ and $\gamma(1)\in I_2\times J_2$ destroying the connectedness of $C_2$. Note that this also implies that $J_1$ and $J_2$ are disjoint.

Let us assume that we have already constructed  open intervals 
\[I_2',\ldots,I_k',\quad I_2,\ldots,I_k, \quad J_2,\ldots,J_k\] 
and connected components $C_2,\ldots, C_k$ of 
\[F_1\cap(I_2'\times \R),\ldots,F_1\cap(I_k'\times \R),\]
respectively, that satisfy the properties (1)--(5) above.
Then at least one of the set $U_{I_k}$ or $L_{I_k}$ has a non-empty interior. Without loss of generality, we may assume that $L_{I_k}$ contains an open rectangle $I_{k+1}'\times J_{k+1}$, where , where $\overline{I_{k+1}'}\subset I_k$. Since $F_1\cap(I_{k+1}'\times \R)$ has at most countably many components, there are $s_{k+1}<t_{k+1}$ from the interval $I_{k+1}'$ such that the points $(s_{k+1},f(s_{k+1}))$ and $(t_{k+1},f(t_{k+1}))$ belong to the same component $C_{k+1}$ of $F_1\cap(I_{k+1}'\times \R)$. We set $I_{k+1}:=(s_{k+1},t_{k+1})$. Then the connected components $X_1^{(k+1)},\ldots, X_{k}^{(k+1)}$ of $(\R^2\setminus F_1)\cap(I_{k+1}\times\R)$ containing $I_{k+1}\times J_1,\ldots,I_{k+1}\times J_{k}$ do not intersect the connected component $X_{k+1}^{(k+1)}$ of $(\R^2\setminus F_1)\cap(I_{k+1}\times\R)$ containing $I_{k+1}\times J_{k+1}$. Otherwise, one finds a continuous curve 
\[\gamma\colon[0,1]\to (\R^2\setminus F_1)\cap (I_{k+1}\times\R)\]
such that $\gamma(0)\in I_{k+1}\times J_m$ and $\gamma(1)\in I_{k+1}\times J_{k+1}$ for some $m=1,\ldots, k$ which contradicts the connectedness of the set $C_{k+1}$. This also shows the disjointness of $J_j$ and $J_{k+1}$, $j=1,\ldots,k$. Since $J_1,\ldots,J_{k}$ are pairwise disjoint, (2) holds for $k+1$, which, by the above, implies that (5) holds also for $k+1$.

From property (1) and Cantor's intersection theorem it follows that 
\[\bigcap_{k\geq 1}I_k\supset\bigcap_{k\geq 1}\overline{I_{k+1}}\neq\emptyset.\]
Let us fix $t\in \bigcap_{k\geq 1} I_k$ and let $M\in\N$ be the number of connected components of $B_1\cap (\{t\}\times\R)$. Notice that the sets
\[(I_{M+1}\times J_1)\cap (\{t\}\times\R),\ldots,(I_{M+1}\times J_{M+1})\cap (\{t\}\times\R) \]
are not empty and, by property (5), belong to different connected components $X_1^{(M+1)},\ldots, X_{M+1}^{(M+1)}$ of $(\R^2\setminus F_1)\cap (I_{M+1}\times\R)$. Next, by property (3), 
\[I_k\times J_k\subset L_{I_k}\cup U_{I_k}\subset B_{I_k}\subset B_{I_1}=B_1\]
for all $k\geq 2$ and clearly $I_1\times J_1\subset B_1$, whence
\[I_{M+1}\times J_k\subset B_1\]
for all $k\geq 1$. Therefore,  
the sets $X_1^{(M+1)},\ldots, X_{M+1}^{(M+1)}$ are contained in $B_1$, so these sets are also connected components of $B_1\cap (I_{M+1}\times\R)$.
Let $\mathcal{X}$ be the class of all connected components of $B_1\cap (I_{M+1}\times\R)$ with a non-empty intersection with the line $\{t\}\times\R$ and let $\mathcal{X}_t$ be the class of all connected components of $B_1\cap (\{t\}\times\R)$.
We consider the map $\Phi\colon\mathcal{X}\to\mathcal{X}_t$ that assigns to each set $X\in\mathcal{X}$ the unique set $Y\in\mathcal{X}_t$ such that $Y\subset X$. We have $|\mathcal{X}|> M=|\mathcal{X}_t|$ so, by the pigeonhole principle, there are $X_1\neq X_2$ from the class $\mathcal{X}$ such that $Y:=\Phi(X_1)=\Phi(X_2)$. But then $Y\subset X_1$, $Y\subset X_2$, and consequently $X_1\cap X_2$  contains a non-empty set $Y$, which contradicts $X_1\cap X_2=\emptyset$. This final contradiction completes the proof.
\end{proof}

Proposition \ref{prop:P-Runge R2} can be applied to the class of subanalytic subsets of $\R^2$; see \cite[Definition 3.1]{BierstoneMilman1988}.
\begin{corollary}
Let $P(D)$ be a hypoelliptic partial differential operator on $\R^2$ which is \veryniceon{its linear characteristic cone} $\charcone(P)\neq \{0\}$. Moreover, let $F_1\subset F_2$ be closed subsets of $\R^2$, where $F_1$ is a subanalytic set.  Then $F_1,F_2$ is a $P$-Runge pair for smooth Whitney jets if and only if for all $x\in\R^2$ and $\epsilon>0$ the set $F_2$ does not contain a bounded connected component of $(\R^d\setminus F_1)\cap (B(x,\epsilon)+\charcone(P)^\perp)$.
\end{corollary}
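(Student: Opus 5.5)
The corollary should follow from Proposition \ref{prop:P-Runge R2}. The plan is to verify that a closed subanalytic set $F_1\subset\R^2$ satisfies both hypotheses imposed there on $F_1$. Necessity of the geometric condition needs no assumption on $F_1$, since it is exactly Theorem \ref{thm: necessary condition for P-Runge for partially elliptic}. For sufficiency, we first normalize as in the proof of Proposition \ref{prop:P-Runge R2}, so that $\charcone(P)^\perp=\{0\}\times\R$. This uses an orthogonal change of variables, Remark \ref{rem:normal form for Runge pairs for operators with flat charactersistic cone}, and the fact that orthogonal (indeed, any linear) images of subanalytic sets are subanalytic. With this normalization, $B(x,\epsilon)+\charcone(P)^\perp=I\times\R$ for a bounded open interval $I$.

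Hypothesis ($\ast$) asks that $F_1\cap(I\times\R)$ have at most countably many connected components. We plan to use the standard fact that subanalytic sets have locally finitely many connected components, each of them subanalytic (\cite{BierstoneMilman1988}). The set $I\times\R$ is semianalytic, so $F_1\cap(I\times\R)$ is subanalytic. The family of its connected components is then locally finite in $\R^2$. Since $\R^2$ is covered by countably many compact sets, each meeting only finitely many components, there are at most countably many components.

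Hypothesis ($\ast\ast$) concerns a bounded connected component $B$ of $(\R^2\setminus F_1)\cap(I\times\R)$ and a point $t\in I$; we must show that $B\cap(\{t\}\times\R)$ has finitely many connected components. The complement of a subanalytic set is subanalytic, so $(\R^2\setminus F_1)\cap(I\times\R)$ is subanalytic, and by the same result its component $B$ is subanalytic. Hence $B\cap(\{t\}\times\R)$ is a subanalytic subset of a line and is \emph{bounded}, since $B$ is bounded. A bounded subanalytic set has finitely many connected components: its closure is compact and local finiteness applies on a neighbourhood of that closure. Thus ($\ast\ast$) holds, and the second part of Proposition \ref{prop:P-Runge R2} gives the equivalence for arbitrary closed $F_2\supset F_1$.

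The main point requiring care is the finiteness in ($\ast\ast$). Local finiteness only controls components near compact sets, so boundedness of $B$ is essential and must be used explicitly. We also have to check that the cited result gives subanalyticity of the connected components themselves, not merely of the ambient set. If that is unavailable in the form needed, we would instead use that one-dimensional subanalytic subsets of $\R$ are locally finite unions of points and intervals. We also use the step that intersecting with the semianalytic strip $I\times\R$ and with the line $\{t\}\times\R$ preserves subanalyticity, which is standard but should be cited.
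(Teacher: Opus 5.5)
Your proposal is correct and follows essentially the same route as the paper: you normalize to $\charcone(P)^\perp=\{0\}\times\R$ and then verify hypotheses ($\ast$) and ($\ast\ast$) of Proposition \ref{prop:P-Runge R2} using the standard Bierstone--Milman facts, namely stability of subanalyticity under intersection and complement, subanalyticity and local finiteness of connected components, and finiteness of components for relatively compact subanalytic sets. Your explicit use of the boundedness of $B$ in ($\ast\ast$), and your check that the linear change of variables preserves subanalyticity, spell out details that the paper leaves implicit.
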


\begin{proof}
We shall use basic properties of subanalytic sets; see \cite[Remark after Definition  3.1, Gabrielov’s Complement Theorem 3.10 and Theorem 3.14]{BierstoneMilman1988}. Without loss of generality, we may assume that $\charcone(P)=\operatorname{span}\{e_1\}=\R\times\{0\}$.
If $F_1\subset\R^2$ is subanalytic, then so are the sets of the form $F_1\cap ((a,b)\times\R)$ and $(\R^2\setminus F_1)\cap((a,b)\times\R)$,  where $a<b$; here, we use the fact that the intersection of subanalytic sets as well as the complement of a subanalytic set are subanalytic. Next, the collection of connected components of $F_1\cap ((a,b)\times\R)$ is locally finite, so  $F_1\cap ((a,b)\times\R)$ has at most countable many components. Moreover, every connected component of a subanalytic set is subanalytic.  
Therefore, each bounded connected component $B$ of $(\R^2\setminus F_1)\cap ((a,b)\times\R)$ is a relatively compact subanalytic set, and consequently the sets of the form $B\cap(\{t\}\times\R)$ have only finitely many components (here we use \cite[Theorem 3.14]{BierstoneMilman1988}). Hence, by Proposition \ref{prop:P-Runge R2}, $F_1,F_2$ is a $P$-Runge pair.
\end{proof}

For specific non-hypoelliptic operators, the conclusion of Theorem \ref{thm: necessary condition for P-Runge for partially elliptic} can be strengthened, as is shown in the following example.  
\begin{example}\label{example:P-Runge non-hypoelliptic}
(i) Consider the non-hypoelliptic operator $P(D)=\partial_d$, with the corresponding polynomial $P\in\C[\xi_1,\ldots,\xi_d]$, $P(\xi_1,\ldots,\xi_d)=i\xi_d$. We will show that closed sets $F_1, F_2\subset \R^d$ form a $P$-Runge pair for smooth Whitney jets if and only if for all $x\in\R^d$ the set $F_2$ does not contain a bounded connected component of $(\R^d\setminus F_1)\cap (x+\charcone(P)^\perp)$. 

For necessity, let us assume that there is $x\in\R^d$ such that $F_2$ contains a bounded connected component $C$ of 
\[(\R^d\setminus F_1)\cap (x+\charcone(P)^\perp)=(\R^d\setminus F_1)\cap\{(x',t)\colon t\in\R\},\]
where $x'=(x_1,\ldots,x_{d-1})$.
Then $\pi_d(C)=(a,b)$ for some $a<b$ with $\pi_d\colon\R^d\to\R$ defined by $\pi_d(x):=x_d$.
Let $u\in\mathscr{E}'(\R^d)$, 
\[u:=\delta_{x'}\otimes\mathds{1} _{[a,b]},\]
where
$\delta_{x'}\in\mathscr{E'}(\R^{d-1})$ is the Dirac delta distribution at the point $x'$ and $\mathds{1} _{[a,b]}\in\mathscr{E}'(\R)$ is the characteristic function of the interval $[a,b]$ (see \cite[Section 5.1]{Hor1} and \cite[Section 1.5]{OrtnerWagner2015} for definition and basic properties of tensor products of distributions). 
Because
\begin{equation*}
\begin{gathered}
    \partial_du=\delta_{x'}\otimes\partial_d\mathds{1} _{[a,b]},\\
    \supp P(-D)u=\supp\partial_du=\supp\delta_{x'}\times\supp\partial_d\mathds{1} _{[a,b]}=\{(x',a),(x',b)\}\subset F_1,
\end{gathered}
\end{equation*}
and
\[\supp u=\supp \delta_{x'}\times \supp \mathds{1}_{[a,b]}=\{x'\}\times[a,b]= \overline C\subset F_2,\]
by \cite[Theorem 5]{CiasKalmes2025}, $F_1, F_2$ is not a $P$-Runge pair. Sufficiency follows from \cite[Theorem 16]{CiasKalmes2025}.

(ii) Let $F_1=\{(0,0)\}\cup\{(0,1)\}$, $F_2=\R^2$ and $P(\xi_1,\xi_2)=i\xi_2$. Clearly, the pair $F_1,F_2$ satisfies the condition: for all $x\in\R^2$ and $\epsilon>0$ the set
\[(\R^2\setminus F_1)\cap (B(x,\epsilon)+\charcone(P)^\perp)
=(\R^2\setminus F_1)\cap (x_1-\epsilon,x_1+\epsilon)\times\R\]
does not admit a bounded connected component. But $C:=\{0\}\times(0,1)$ is a bounded connected component of the set $(\R^2\setminus F_1)\cap \{0\}\times\R$ and, proceeding as in (i), we conclude that $F_1,F_2$ is not a $P$-Runge pair. 
\end{example}

Having in mind \cite[Theorem 16, implication (iv)$\Rightarrow$(i)]{CiasKalmes2025}, the previous Proposition \ref{prop:P-Runge R2}, and Example \ref{example:P-Runge non-hypoelliptic}, we state the following conjecture.

\begin{conjecture}\label{conjecture:characterization of P-Runge}
	Let $P(D)$ be a hypoelliptic partial differential operator on $\R^d$ which is \veryniceon{its linear characteristic cone} $\charcone(P)\neq \{0\}$. Moreover, let $F_1\subset F_2$ be closed subsets of $\R^d$.
	$F_1,F_2$ is a $P$-Runge pair for smooth Whitney jets if and only if all $x\in\R^d$ and $\epsilon>0$ the set $F_2$ does not contain a bounded connected component of $(\R^d\setminus F_1)\cap (B(x,\epsilon)+\charcone(P)^\perp)$. 
\end{conjecture}

In the remainder of this section, for non-elliptic $P$ which are \veryniceon{their linear characteristic cone}, we study $P$-Runge pairs of open subsets of $\R^d$. In order to do this, we need the following lemma.

\begin{lemma}\label{lemma:preparation P-Runge pairs for open sets}
	Let $P\in\C[\xi_1,\ldots,\xi_d]$ be \veryniceon{a linear subspace $Z\neq\{0\}$ of $\R^d$}. Moreover, let $X_1, X_2$ be open subsets of $\R^d$ with $X_1\subset X_2$. Consider the following assertions.
	\begin{enumerate}
		\item[\upshape{(i)}] For every $\varphi\in\mathscr{D}(X_2)$ with $\supp\check{P}(D)\varphi\subset X_1$ it holds $\supp\varphi\subset X_1$.
		\item[\upshape{(ii)}] There is no $x\in\R^d$ such that $X_2$ contains a bounded connected component of $\left(\R^d\backslash X_1\right)\cap\left(x+Z^\perp\right)$.
	\end{enumerate}
	Then {\upshape(i)} implies {\upshape(ii)}. Additionally, if $X_1$ is $P$-convex for supports and $P$ is \veryniceon{its linear characteristic cone}, then {\upshape(ii)} also implies {\upshape(i)}.
\end{lemma}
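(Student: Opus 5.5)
For (i)$\Rightarrow$(ii) I argue by contraposition, following the proof of Theorem \ref{thm: necessary condition for P-Runge for partially elliptic} but producing a test function instead of a distribution. Suppose $X_2$ contains a bounded connected component $C$ of $(\R^d\backslash X_1)\cap(x+W)$, where $W:=Z^\perp$. As there, I pass by an orthogonal change of variables (Lemma \ref{lem:nice polynomial2}, Remark \ref{rem:nice polynomial}) to $Z=\R^l\times\{0\}$, $W=\{0\}\times\R^{d-l}$. Then $C$ is a compact subset of $X_2$. Since $C$ is a component of the relatively closed set, I can pick a relatively open bounded set $V\subset x+W$ with $C\subset V$, $\overline V\subset X_2$, and $\partial_{x+W}V\subset X_1$. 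This uses a standard separation argument: components of compact sets equal quasi-components, so $C$ is separated from the rest of $(\R^d\setminus X_1)\cap(x+W)\cap \overline{B}$ by a clopen piece. Then the compact set $\partial_{x+W}V$ has positive distance $\eta$ to $X_1^c$, and $\overline V$ has positive distance to $X_2^c$.

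Next I take $v\in\mathscr{E}_{\check P}(\R^d)$ from Theorem \ref{th:zero solution} (applied to $\check P$, which is also orthogonally degenerate on $Z$) with $W+\overline{B(0,t/2)}\subset\supp v\subset W+\overline{B(0,t)}$, and translate it to $u:=v(\cdot-x)$. I then cut off exactly as in the proof of Theorem \ref{th:necessary P-convexity}. Let $\tilde\psi\in\mathscr{D}(\R^{d-l})$ equal $1$ near $\pi_{d-l}(\overline V)$ and be supported in a $t$-neighbourhood of it, and set $\varphi:=(\tilde\psi\circ\pi_{d-l})\,u$. For small $t$ we get $\varphi\in\mathscr{D}(X_2)$. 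By Leibniz's formula, $\check P(D)\varphi$ is supported in the $\sqrt d\,t$-neighbourhood of $\partial_{x+W}V$, and hence in $X_1$ when $t<\eta/\sqrt d$. However, $\supp\varphi\supset C\cap\supp u\ni$ points of $C\not\subset X_1$, so $\supp\varphi\not\subset X_1$, contradicting (i). The only delicate point in this direction is the construction of $V$; the rest mirrors earlier proofs.

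For (ii)$\Rightarrow$(i) under the extra hypotheses, put $W=\charcone(P)^\perp$ and let $\varphi\in\mathscr{D}(X_2)$ with $K:=\supp\check P(D)\varphi\subset X_1$. Since $X_1$ is $P$-convex for supports (equivalently $\check P$-convex), Corollary \ref{cor:characterization P-convexity}(vi) shows that $\widehat K(W,X_1)$ is compact in $X_1$. The key claim is $\supp\varphi\subset\widehat K(W,\R^d)$, i.e.\ $\varphi$ vanishes on every unbounded component of $(\R^d\setminus K)\cap(y+W)$. My plan here is to use that $\check P(D)\varphi=0$ near such a slice, together with a uniqueness result for operators orthogonally degenerate on $\charcone(P)$ along $W$: a Holmgren/Zachmanoglou-type continuation along $W$-directions, as in \cite[Theorem 3]{Zachmanoglou69} and \cite{Kalmes19}. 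This gives vanishing on components of $\R^d\setminus K$ within leaves meeting the exterior, because $\varphi$ is compactly supported.

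Now let $D$ be a bounded component of $(\R^d\setminus K)\cap(y+W)$ on which $\varphi\ne0$. If $D\subset X_1$, it lies in $\widehat K(W,X_1)\subset X_1$. Otherwise $D$ meets $X_1^c$, and I consider a component $C$ of $(\R^d\setminus X_1)\cap(y+W)$ inside $D$. It is bounded, and it should lie in $\supp\varphi\subset X_2$ by the same continuation argument applied in $D$. This contradicts (ii). Hence $\supp\varphi\subset X_1$.

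The main obstacle is this continuation step: showing that on a leaf $y+W$, $\varphi$ is either identically zero or fills whole components of $(\R^d\setminus K)\cap (y+W)$. I would derive it from the proof of \cite[Theorem 1]{Kalmes19}, using the structure $P_m=\xi_d^m+\sum p_j\xi_j$ after the change of variables.
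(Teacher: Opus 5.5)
Your proof of (i)$\Rightarrow$(ii) is the paper's: the strip-supported zero solution of $\check P(D)$ from Theorem~\ref{th:zero solution} is cut off in the $W$-variables, so that $\supp\check P(D)\varphi$ lies near $\partial_{x+W}V\subset X_1$ while $C\subset\supp\varphi$. Your quasi-component separation is in fact the right way to produce $V$. The paper asks for $U\supset C$ with $\overline U\subset X_1\cup C$ in the slice. Such a $U$ need not exist when other components of the slice accumulate at $C$, for example segments $\{1/n\}\times[0,1]$ accumulating at $\{0\}\times[0,1]$. Only $\overline U\setminus V\subset X_1$ is used afterwards, and that is what you construct. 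Just take the compact neighbourhood of $C$ inside $X_2$ and the clopen piece disjoint from its boundary.

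For (ii)$\Rightarrow$(i), the paper only transfers the proof of \cite[Theorem 1]{Kalmes2021}, using Corollary~\ref{cor:characterization P-convexity} in place of the characterization of $P$-convexity used there. Your $W$-hull argument is a genuinely different and more direct route. It is correct once you supply the continuation step. That step needs no normal form, only $\charcone(\check P)=\charcone(P)=W^\perp$.

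Here is the step. Let $D$ be a component of $(\R^d\setminus K)\cap(y+W)$, and let $E$ be the set of points of $D$ near which $\varphi$ vanishes in $\R^d$; $E$ is open in $D$. To see it is closed in $D$, let $p_n\in E$ tend to $q\in D$. Choose $\delta>0$ with $B(q,3\delta)\cap K=\emptyset$, some $p_n\in B(q,\delta)$, and $r\leq\delta$ with $\varphi=0$ on $B(p_n,r)$. Since $q-p_n\in W$, every characteristic hyperplane of $\check P$ that meets the convex set $\operatorname{conv}(B(p_n,r)\cup B(q,r))\subset\R^d\setminus K$ also meets $B(p_n,r)$. Holmgren's theorem for convex sets \cite[Theorem 8.6.8]{Hor1} then gives $\varphi=0$ near $q$. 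Hence either $D\cap\supp\varphi=\emptyset$ or $D\subset\supp\varphi$. State your dichotomy this way rather than as ``$\varphi\neq 0$ on $D$'', since pointwise vanishing on the slice says nothing about a neighbourhood in $\R^d$.

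With this lemma, $P$-convexity of $X_1$ is never used. A point $p\in\supp\varphi\setminus X_1$ lies outside $K$. Its component $D$ satisfies $D\subset\supp\varphi\subset X_2$ and is bounded. The component of $(\R^d\setminus X_1)\cap(p+W)$ through $p$ lies in $D$, which contradicts (ii). The case $D\subset X_1$ needs nothing, so your appeal to Corollary~\ref{cor:characterization P-convexity}(vi) is superfluous. Your route therefore gives the converse whenever $Z=\charcone(P)$, without the $P$-convexity hypothesis. The paper's route keeps that hypothesis because it passes through the geometric characterization of $P$-convexity.
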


\begin{proof}
	We abbreviate again $W:=Z^\perp$ and $l:=\operatorname{dim}(Z)\in\{1,\ldots,d-1\}$. To show that (i) implies (ii), after the usual orthogonal change of variables, we can assume without loss of generality that
	\[W=\{x\in\R^d\colon x_1=\ldots=x_l=0\}\]
	and we argue by contradiction. Thus, we assume that there is $x_0\in\R^d$ such that $\left(\R^d\backslash X_1\right)\cap (x_0+W)$ has a bounded, hence compact, connected component $C$ which is contained in $X_2$. Fix $y\in C$. Then $x_0+W=y+W$. By compactness of $C$ and
	\[y+W=\{(y_1,\ldots,y_l)\}\times\R^{d-l}\]
	there are open and bounded subsets $V\subset U\subset\R^{d-l}$ such that
	\begin{eqnarray}\label{eq:auxiliary1}
		C&\subset& \{(y_1,\ldots,y_l)\}\times V\subset \{(y_1,\ldots,y_l)\}\times \overline{V} \subset \{(y_1,\ldots,y_l)\}\times U\\
		&\subset& \{(y_1,\ldots,y_l)\}\times \overline{U}\subset (X_1\cup C)\cap\left(\{(y_1,\ldots,y_l)\}\times\R^{d-l}\right).\nonumber
	\end{eqnarray}
	Because $\{(y_1,\ldots,y_l)\}\times \overline{U}\backslash V$ is a compact subset of $X_1\cap (y+W)$ there is $\epsilon>0$ with
	\begin{enumerate}
		\item[(a)] $\overline{B_l((y_1,\ldots,y_l),\epsilon)}\times(\overline{U}\backslash V)\subset X_1$,
		\item[(b)] $\overline{B_l((y_1,\ldots,y_l),\epsilon)}\times\overline{U}\subset X_2$,
	\end{enumerate} 
	where, as in Theorem \ref{th:zero solution standard form}, for $v\in\R^l, \delta>0$, we denote by $B_l(v,\delta)$ the euclidean ball in $\R^l$ of radius $\delta$ about $v$. An application of Theorem \ref{th:zero solution} to $\check{P}$ gives the existence of $u\in\mathscr{E}_{\check{P}}(\R^d)$ such that
	\begin{equation}\label{eq:auxiliary2}
		\overline{B_l((y_1,\ldots,y_l),\epsilon/2)}\times\R^{d-l}\subset \supp u\subset \overline{B_l((y_1,\ldots,y_l),\epsilon)}\times\R^{d-l}
	\end{equation}
	Fix $\psi\in\mathscr{D}(U)$ such that $\psi=1$ in a neighborhood of $\overline{V}$. Then
	\[\varphi\colon\R^d\rightarrow\C,x\mapsto u(x)\psi(x_{l+1},\ldots,x_d)\]
	belongs to $\mathscr{D}\left(\overline{B_l((y_1,\ldots,y_l),\epsilon)}\times U\right)$, in particular $\supp\varphi\subset X_2$. By
    \[B(y,\epsilon/2)+W=B_l((y_1\ldots,y_l),\epsilon/2)\times\R^{d-l},\]
    \eqref{eq:auxiliary1} and \eqref{eq:auxiliary2}, it holds $C\subset\supp \varphi$ and by (b), as in the proof of Theorem \ref{th:necessary P-convexity}, we have
	\begin{eqnarray*}
	    \supp\check{P}(D)\varphi&\subset&\supp u\cap\left(\R^l\times (\overline{U}\backslash V)\right)\\
        &\subset& \left(B_l((y_1,\ldots,y_l),\epsilon)\times\overline{U}\right)\cap\left(\R^l\times (\overline{U}\backslash V)\right)\\
        &=&B_l((y_1,\ldots,y_l),\epsilon)\times (\overline{U}\backslash V)\subset X_1.
	\end{eqnarray*}
	Because $C\subset\supp\varphi\cap (\R^d\backslash X_1)$, but $\varphi\in\mathscr{D}(X_2)$ satisfies $\supp\check{P}(D)\varphi\subset X_1$, we derive the desired contradiction to (i). Hence, (i) implies (ii)
	
	Under the additional hypothesis that $X_1$ is $P$-convex for supports, the proof that (ii) implies (i) is done exactly as in \cite[Proof of Theorem 1, (iii) implies (i) and (ii)]{Kalmes2021}, where the special case $\operatorname{dim}(Z)=1$ is studied and where one has to apply Corollary \ref{cor:characterization P-convexity} instead of \cite[Theorem 7]{Kalmes2021}.
\end{proof}

\begin{example}\label{example:wave operator 2}
Keeping the notation from Example \ref{example} (iii), let $P\in\C[\xi_0,\xi_1,\ldots,x_d]$ be a polynomial corresponding to a partial differential operator $P(D)$ whose principal part is the $d$-dimensional wave operator, $d\geq1$. 

(i) By Example \ref{example} (iii) and Theorem \ref{thm: necessary condition for P-Runge for partially elliptic}, if closed subsets $F_1,F_2$ of $\R^{d+1}$ form a $P$-Runge pair for smooth Whitney jets, then for all characteristic hyperplanes $H$ for $P$ and all $\epsilon>0$ the set $F_2$ does not contain a bounded connected component of $(\R^{d+1}\setminus F_1)\cap (H+B(0,\epsilon))$.

(ii) By Example \ref{example} (iii), Lemma \ref{lemma:preparation P-Runge pairs for open sets} and \cite[Theorem 6]{Kalmes2021}, if open $P$-convex for supports sets $X_1,X_2\subset\R^{d+1}$ form a $P$-Runge pair then for all characteristic hyperplanes $H$ for $P$ the set $X_2$ does not contain a bounded connected component of $(\R^{d+1}\setminus X_1)\cap H$.
\end{example}

Our final theorem in this section characterizes the open sets which are $P$-Runge pairs for differential operators which are \veryniceon{their linear characteristic cone} under the additional hypothesis, that both sets are $P$-convex for supports. It should be noted that for an elliptic differential operator all open sets are  $P$-convex for supports. Hence the next theorem may be considered as an analogue to the celebrated Lax-Malgrange Theorem for non-elliptic differential operators which are \veryniceon{their linear characteristic cone}.

\begin{theorem}\label{th:P-Runge pairs for open sets}
	Let $P\in\C[\xi_1,\ldots,\xi_d]$ be \veryniceon{its linear characteristic cone $\charcone(P)\neq \{0\}$}. Moreover, let $X_1, X_2$ be open subsets of $\R^d$ which are $P$-convex for supports, such that $X_1\subset X_2$. Then, the following are equivalent.
	\begin{enumerate}
		\item[\upshape{(i)}] $X_1, X_2$ is a $P$-Runge pair.
		\item[\upshape{(ii)}] $X_1, X_2$ is a $P$-Runge pair for distributions.
		\item[\upshape{(iii)}] There is no $x\in\R^d$ such that $X_2$ contains a bounded connected component of $\left(\R^d\backslash X_1\right)\cap\left(x+\charcone(P)^\perp\right)$.
	\end{enumerate}
\end{theorem}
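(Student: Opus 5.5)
The plan is to reduce everything to Lemma~\ref{lemma:preparation P-Runge pairs for open sets} together with the classical duality characterization of $P$-Runge pairs for open sets. For $P$-convex open sets $X_1\subset X_2$, the Hahn--Banach argument behind the Lax--Malgrange theorem gives a dual criterion. As used in \cite{Kalmes2021} (Theorem 6 there, invoked also in Example~\ref{example:wave operator 2}(ii)), $X_1,X_2$ is a $P$-Runge pair if and only if every $\varphi\in\mathscr{D}(X_2)$ with $\supp\check P(D)\varphi\subset X_1$ satisfies $\supp\varphi\subset X_1$. The analogous statement holds for distributions, where the test functions are replaced by compactly supported distributions, or equivalently by $\varphi\in\mathscr{D}(X_2)$ after a regularization argument. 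Granting this, condition (i) of the theorem is equivalent to assertion (i) of Lemma~\ref{lemma:preparation P-Runge pairs for open sets}. Since $X_1$ is $P$-convex for supports and $P$ is orthogonally degenerate on $\charcone(P)$, that lemma gives the equivalence with (iii), applied with $Z=\charcone(P)\neq\{0\}$.

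The key steps are as follows. First I will verify (i)$\Leftrightarrow$(support condition). If the restrictions are dense, take $\varphi$ with $\supp\check P(D)\varphi\subset X_1$. Because $X_2$ is $P$-convex, hence $P(D)$ is surjective on $\mathscr{E}(X_2)$, every $f\in\mathscr{E}(X_1)$ with $P(D)f=0$ can be paired with $\varphi$. One then shows that $\varphi$ annihilates $\mathscr{E}_P(X_1)$ as a functional supported in $X_1$ after modification, and the $P$-convexity of $X_1$ (through Malgrange's characterization, \cite[Theorem 10.6.3]{Hor2}) forces $\supp\varphi\subset X_1$. Conversely, for density one uses Hahn--Banach: a $\mu\in\mathscr{E}'(X_1)$ vanishing on $r(\mathscr{E}_P(X_2))$ is of the form $\check P(D)\nu$ with $\nu\in\mathscr{E}'(X_2)$, again by surjectivity of $P(D)$ on $\mathscr{E}(X_2)$ and closed range. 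The support condition, extended to distributions by convolution with a mollifier and shrinking the sets slightly, then gives $\supp\nu\subset X_1$, so $\mu$ vanishes on $\mathscr{E}_P(X_1)$. Since this is exactly the argument of \cite[Theorem 6]{Kalmes2021}, which the paper already cites, I will invoke it rather than redo it.

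Second, I will treat (ii) in the same way. By the same duality with $\mathscr{D}(X_1)$ in place of $\mathscr{E}'(X_1)$, together with surjectivity of $P(D)$ on $\mathscr{D}'(X_2)$, the distributional Runge property reduces to the same support condition on $\mathscr{D}(X_2)$. Surjectivity on $\mathscr{D}'$ follows from $P$-convexity for singular supports. That property holds here because, by Corollary~\ref{cor:characterization P-convexity}, $d_{X_j}$ satisfies the minimum principle on affine subspaces parallel to $\charcone(P)^\perp$, combined with \cite[Theorem 15]{Kalmes19} and \cite[Corollary 10.7.10]{Hor2}, exactly as in the proof of Theorem~\ref{theorem:Omega for semi-elliptic}.

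The main obstacle is this distributional case. One must check that $P(D)$ is surjective on $\mathscr{D}'(X_j)$, i.e.\ that $P$-convexity for singular supports is available for all operators orthogonally degenerate on their characteristic cone and not just semi-elliptic ones. I will first try to derive it from the minimum principle via \cite[Theorem 15]{Kalmes19}. If that theorem requires hypoellipticity, I would instead fall back on the support condition for distributions and obtain it from the $\mathscr{D}$ version through regularization.
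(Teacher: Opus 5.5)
You handle (i)$\Leftrightarrow$(iii) exactly as the paper does: Lemma~\ref{lemma:preparation P-Runge pairs for open sets} with $Z=\charcone(P)$, using $P$-convexity of $X_1$ for the converse direction, combined with \cite[Theorem 6]{Kalmes2021}. Your sketch of the duality there is slightly off, but since you defer to the citation this is harmless. An $f\in\mathscr{E}_P(X_1)$ must be paired with $\check P(D)\varphi$, not with $\varphi$. The closed-range step in that direction uses $P$-convexity of $X_1$, not of $X_2$. The mollifier step needs $P$-convexity of $X_1$ to keep the supports in a fixed compact subset of $X_1$.

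The genuine gap is in (ii). You route it through surjectivity of $P(D)$ on $\mathscr{D}'(X_j)$, i.e.\ $P$-convexity for singular supports. That property is not among the hypotheses, and neither the paper nor the results you cite supply it for this class. The paper obtains $\mathscr{D}'$-surjectivity only in special situations: hypoelliptic operators, $\Delta_x$ via \cite[Theorem 9]{Kalmes19}, and the augmented operator of a \emph{semi-elliptic} polynomial via \cite[Theorem 15]{Kalmes19} in Theorem~\ref{theorem:Omega for semi-elliptic}. The theorem, however, covers non-hypoelliptic operators, e.g.\ every non-elliptic first order operator (Example~\ref{example}~(i)). So ``exactly as in the proof of Theorem~\ref{theorem:Omega for semi-elliptic}'' is not justified. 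Your fallback does not repair this: it does not say how a support condition yields density in $\mathscr{D}'_P(X_1)$, nor how (ii) gives back the support condition.

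No $\mathscr{D}'$-surjectivity is needed. \cite[Theorem 6]{Kalmes2021}, which you already invoke, gives the equivalence of (i), (ii) and the test-function support condition for pairs that are merely $P$-convex for supports. The paper's proof is just that theorem plus Lemma~\ref{lemma:preparation P-Runge pairs for open sets}. If you want to argue directly, both directions go as follows.
\begin{itemize}
\item (i)$\Rightarrow$(ii): it suffices that $\mathscr{E}_P(X_1)$ is dense in $\mathscr{D}'_P(X_1)$. Suppose $\psi\in\mathscr{D}(X_1)$ annihilates $\mathscr{E}_P(X_1)$. Surjectivity of $P(D)$ on $\mathscr{E}(X_1)$ and the closed range theorem give $\psi=\check P(D)\nu$ with $\nu\in\mathscr{E}'(X_1)$. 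For $u\in\mathscr{D}'_P(X_1)$ and a mollifier $\rho_\epsilon$ one then has $u(\psi)=\lim_{\epsilon\to 0}u(\check P(D)(\nu*\rho_\epsilon))=\lim_{\epsilon\to 0}(P(D)u)(\nu*\rho_\epsilon)=0$.
\item (ii)$\Rightarrow$ support condition: let $\varphi\in\mathscr{D}(X_2)$ with $\supp\check P(D)\varphi\subset X_1$. Then $\check P(D)\varphi\in\mathscr{D}(X_1)$ annihilates all restrictions of elements of $\mathscr{D}'_P(X_2)$, hence all of $\mathscr{D}'_P(X_1)\supset\mathscr{E}_P(X_1)$. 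The same closed-range argument gives $\check P(D)\varphi=\check P(D)\nu$ with $\nu\in\mathscr{E}'(X_1)$. Injectivity of $\check P(D)$ on $\mathscr{E}'(\R^d)$ yields $\varphi=\nu$, so $\supp\varphi\subset X_1$.
\end{itemize}
Only $P$-convexity for supports is used.
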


\begin{proof}
	The theorem follows immediately by combining Lemma \ref{lemma:preparation P-Runge pairs for open sets} with \cite[Theorem 6]{Kalmes2021}.
\end{proof}

\section{Square systems of constant coefficient partial differential equations}\label{sec:systems}

In this section, we shall combine the results from the previous sections with those from \cite{Kalmes2021,CiasKalmes2025} to establish Runge type approximation results for square systems of constant coefficient, linear partial differential equations. In order to do so, we first recall some general notation and facts (see \cite[Section 3.8]{Hormander1976}). For $N\in\N$ we equip $\mathscr{E}(X)^N$, $\mathscr{D}'(X)^N$, and $\mathscr{E}(F)^N$ with the product topology, where $X\subset\R^d$ is open and $F\subset\R^d$ is closed, and we interpret elements from these spaces as column vectors. For $P_{jk}\in \C[\xi_1,\ldots,\xi_d]$, $1\leq j,k\leq N$, the $N\times N$ matrix of constant coefficients partial differential operators $P(D)=(P_{jk}(D))_{1\leq j,k\leq N}$ defines a continuous linear operator on each of the spaces $\mathscr{E}(X)^N$, $\mathscr{D}'(X)^N$, and $\mathscr{E}(F)^N$ in an obvious way.

Denoting for $\xi\in \C^d$ by $\Pad(\xi)=(\Pad_{jk}(\xi))_{1\leq j,k\leq N}$ the adjoint matrix of the matrix $P(\xi)=(P_{jk}(\xi))_{1\leq j,k\leq N}$ consisting of the cofactors of $(P_{kj}(\xi))_{1\leq j,k\leq N}$, and by $\detP(\xi)$ the determinant of $P(\xi)$, it holds $\Pad_{jk}, \detP\in\C[\xi_1,\ldots,\xi_d]$ and
\begin{equation}\label{eq:fundamental for systems 1}
	P(\xi)\Pad(\xi)=\Pad(\xi)P(\xi)=\detP(\xi)I_N,
\end{equation}
where by $I_N=(\delta_{jk})_{1\leq j,k\leq N}$.

\begin{proposition}\label{prop:surjcetivity of systems}
	Let $E\in\{\mathscr{E}(X), \mathscr{D}'(X), \mathscr{E}(F)\}$. Then, the following are equivalent.
	\begin{itemize}
		\item[(i)] $\detP(D)$ is surjective on $E$.
		\item[(ii)] $P(D)$ and $\Pad(D)$ are both surjective on $E^N$. 
	\end{itemize}
\end{proposition}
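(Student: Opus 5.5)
The argument is purely algebraic and rests on the identity \eqref{eq:fundamental for systems 1}. Since constant coefficient operators commute on each of the spaces $E\in\{\mathscr{E}(X),\mathscr{D}'(X),\mathscr{E}(F)\}$, the substitution $\xi\mapsto D$ is a ring homomorphism from $\C[\xi_1,\ldots,\xi_d]$ into the commutative algebra of continuous linear operators on $E$ generated by $D_1,\ldots,D_d$. Applying it entrywise to the matrix identity \eqref{eq:fundamental for systems 1} gives the operator identities
\[
P(D)\Pad(D)=\Pad(D)P(D)=\detP(D)I_N\quad\text{on }E^N,
\]
where $\detP(D)I_N$ acts diagonally, i.e.\ componentwise by $\detP(D)$. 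For $E=\mathscr{E}(F)$ this uses that $D_j$ is defined on Whitney jets by the index shift $f^{(\alpha)}\mapsto -if^{(\alpha+e_j)}$, so the $D_j$ commute there as well.

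For (i)$\Rightarrow$(ii): given $g\in E^N$, use surjectivity of $\detP(D)$ componentwise to find $h\in E^N$ with $\detP(D)h=g$. Then $f:=\Pad(D)h$ satisfies $P(D)f=P(D)\Pad(D)h=\detP(D)h=g$, so $P(D)$ is surjective on $E^N$. Symmetrically, $f:=P(D)h$ solves $\Pad(D)f=g$, which gives surjectivity of $\Pad(D)$.

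For (ii)$\Rightarrow$(i): given $g\in E$, put $G:=(g,0,\ldots,0)^T\in E^N$. Choose $h\in E^N$ with $P(D)h=G$ and then $k\in E^N$ with $\Pad(D)k=h$. This gives
\[
\detP(D)k=P(D)\Pad(D)k=P(D)h=G,
\]
and the first component yields $\detP(D)k_1=g$. This direction is where both surjectivities in (ii) are needed.

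The only step that needs checking is that the formal polynomial identity really transfers to operators on each of the three spaces, that is, that $P\mapsto P(D)$ is multiplicative and that compositions of the matrix operators are computed by matrix multiplication of the symbols. This follows from linearity and from the commutativity of the $D_j$ in each case, so I expect no real obstacle.
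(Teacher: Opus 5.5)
Your proof is correct and follows the same approach as the paper, which simply notes that the operator identity $P(D)\Pad(D)=\Pad(D)P(D)=\detP(D)I_N$ on $E^N$ gives the equivalence at once. You write out both directions explicitly: for (i)$\Rightarrow$(ii) you solve componentwise with $\detP(D)$ and then apply $\Pad(D)$ or $P(D)$, and for (ii)$\Rightarrow$(i) you use that the composition of the two surjections is surjective. Your justification that the polynomial identity transfers to operators on all three spaces, including Whitney jets, is sound.
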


\begin{proof}
	The assertion follows immediately from equation \eqref{eq:fundamental for systems 1} which implies
	\begin{equation}\label{eq:fundamental for systems 2}
		P(D)\Pad(D)u=\Pad(D)P(D)u=\detP(D)I_N u
	\end{equation}
	for each $u\in E^N$.
\end{proof}

\begin{remark}\label{rem:surjectivity of Whitney}
	Assume that $\detP\not\equiv 0$. Then, combining the previous proposition with \cite[Introduction to Chapter 6]{Frerick-Habilitation}, we obtain the surjectivity of both $P(D)$ and $\Pad(D)$ on $\mathscr{E}(F)^N$.
\end{remark}

\begin{definition}
	For $E\in\{\mathscr{E}(X), \mathscr{D}'(X), \mathscr{E}(F)\}$ we set
	\[E_P=\{u\in E^N\colon P(D)u=0\}\]
	which we also denote by $\mathscr{E}_P(X)$, $\mathscr{D}'_P(X)$, and $\mathscr{E}_P(F)$, respectively, and which we equip with the respective subspace topology induced by $E^N$. Analogously to the case of a single partial differential operator, a pair of open subsets $X_1\subset X_2$ of $\R^d$, respectively, a pair of closed subset $F_1\subset F_2$ of $\R^d$ is called a \emph{$P$-Runge pair (for distributions, or smooth Whitney jets)}, respectively, if the restriction mapping
    \[
    \mathscr{E}_P(X_2)\rightarrow \mathscr{E}_P(X_1), u\mapsto u_{|X_1},\quad \mathscr{D}'_P(X_2)\rightarrow \mathscr{D}'_P(X_1), u\mapsto u_{|X_1},
    \]
    or
    \[\mathscr{E}_P(F_2)\rightarrow \mathscr{E}_P(F_1), u\mapsto u_{|F_1}\]
    has dense range, respectively.
\end{definition}

\begin{proposition}\label{prop:fundamental for Runge for systems}
	Let $E\in\{\mathscr{E}(X), \mathscr{D}'(X), \mathscr{E}(F)\}$. Then
	\[\Pad(D):E_{\detP}^N\rightarrow E_P\]
	is a correctly defined, linear and continuous mapping. Additionally, if $\Pad(D)$ is surjective on $E^N$, the same holds for
	\[\Pad(D):E_{\detP}^N\rightarrow E_P.\]
\end{proposition}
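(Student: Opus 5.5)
The argument is purely algebraic, built on identity \eqref{eq:fundamental for systems 2}. Here $E_{\detP}^N$ is the $N$-fold product of the kernel $E_{\detP}=\{f\in E\colon \detP(D)f=0\}$ of the scalar operator $\detP(D)$, with the product of the subspace topologies, i.e.\ the subspace topology from $E^N$.

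\textbf{Well-definedness, linearity and continuity.} Take $u\in E_{\detP}^N$, so each component satisfies $\detP(D)u_k=0$ and hence $\detP(D)I_Nu=0$. Applying \eqref{eq:fundamental for systems 2} gives
\[P(D)\bigl(\Pad(D)u\bigr)=\detP(D)I_N u=0,\]
so $\Pad(D)u\in E_P$. Linearity is clear. For continuity, $\Pad(D)\colon E^N\to E^N$ is a matrix of scalar constant coefficient operators $\Pad_{jk}(D)$, each continuous on $E$. For $E=\mathscr{E}(F)$ this uses that $P(D)$ is continuous on Whitney jets, as recalled in Section~\ref{sec:p-Runge pairs}. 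Hence $\Pad(D)$ is continuous on $E^N$, and its restriction to the subspace $E^N_{\detP}$, co-restricted to the subspace $E_P$ (which carries the induced topology), is continuous.

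\textbf{Surjectivity.} Assume $\Pad(D)$ is surjective on $E^N$, and let $v\in E_P$. Choose $u\in E^N$ with $\Pad(D)u=v$. Then, using \eqref{eq:fundamental for systems 2} in the other order,
\[\detP(D)I_Nu=P(D)\Pad(D)u=P(D)v=0,\]
so every component of $u$ lies in $E_{\detP}$, i.e.\ $u\in E_{\detP}^N$. Thus $v$ lies in the image of the restricted map. Both identities in \eqref{eq:fundamental for systems 2} are needed: $P\Pad=\detP I_N$ for well-definedness and again here for surjectivity, which holds because the operators commute, having constant coefficients.

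\textbf{Main obstacle.} Nothing substantial is expected. The only point to check is that \eqref{eq:fundamental for systems 2} is valid on $\mathscr{E}(F)^N$. This holds because $\xi\mapsto Q(D)$ is an algebra homomorphism into operators on $\mathscr{E}(F)$: the $D_j$ act by index shifts and therefore commute.
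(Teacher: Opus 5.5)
Your proof is correct and follows the paper's argument: well-definedness from $P(D)\Pad(D)u=\detP(D)I_Nu=0$, continuity as the restriction of the continuous operator $\Pad(D)$ on $E^N$, and surjectivity because any preimage $u$ of $v\in E_P$ satisfies $\detP(D)I_Nu=P(D)v=0$. One small correction: both steps use the same identity $P(D)\Pad(D)=\detP(D)I_N$, so the other half $\Pad(D)P(D)=\detP(D)I_N$ of \eqref{eq:fundamental for systems 2} is not needed, contrary to your remark that both identities are required.
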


\begin{proof}
	For $u\in E_{\detP}^N$ it follows from \eqref{eq:fundamental for systems 2} that $\Pad(D) u$ belongs to $E_P$. Hence,
	\[\Pad(D):E_{\detP}^N\rightarrow E_P\]
	is a correctly defined, obviously linear and continuous mapping.
	
	Now, we assume that $\Pad(D)$ is surjective on $E^N$. Let $v$ be in $E_P$ and let $u\in E^N$ be such that $\Pad(D)u=v$. It follows from $v\in E_P$ and \eqref{eq:fundamental for systems 2} that
	\[0=P(D)v=P(D)\Pad(D)u=\detP(D) I_N u\]
	so that $u\in E_{\detP}^N$ which proves the surjectivity of $\Pad(D):E_{\detP}^N\rightarrow E_P$.
\end{proof}

\begin{remark}\label{rem:dense subset}
	(i) With the notation from Proposition \ref{prop:fundamental for Runge for systems}, if $E_j$, $j=1,\ldots,N$, are dense subsets of $E_{\detP}$ then $\Pad(D)\left(E_1\times\cdots\times E_N\right)$ is dense in $E_P$ whenever $\Pad(D)$ is surjective on $E^N$.
	
	Thus, for convex open subsets $X$ of $\R^d$ and $P$ such that $\detP\not\equiv 0$, the linear span of the set
	\[\{\Pad(D)u \colon u=(u_1,\ldots,u_N), u_j\text{ exponential solution for }\detP(D), j=1,\ldots, N \}\]
	is dense in both $\mathscr{E}_P(X)$ and $\mathscr{D}'_P(X)$ by \cite[Chapitre Premiere, \S 2, Th\'eor\`eme 2]{Malgrange1955}, Proposition \ref{prop:fundamental for Runge for systems}, and \cite[Theorem 10.6.2, Corollary 10.6.8, Theorem 10.7.2, Corollary 10.7.10]{Hor2}. In particular, $\mathscr{E}_P(X)$ is a dense subspace of $\mathscr{D}'_P(X)$ for convex open subsets $X$ of $\R^d$. 
	
	Recall that for $Q\in\C[\xi_1,\ldots,\xi_d]$ an exponential solution $u$ for $Q(D)$ is a smooth function on $\R^d$ satisfying $Q(D)u=0$ in $\R^d$ and which is of the form
	\[u(x)=f(x)\exp(i\langle x,\zeta\rangle),\]
	where $f$ is a polynomial and $\zeta\in\C^d$ (see \cite[Definition 7.3.5]{Hor1}). It is easily verified that necessarily $Q(\zeta)=0$ and that the degree of $f$ is bounded by $m-1$ if $\zeta$ is a root of $Q$ of multiplicity $m$. Indeed, this follows from the identity
	\begin{eqnarray*}
		Q(D)\left(f(x)\exp\left(i\langle \zeta,x\rangle\right)\right)&=&\sum_{\alpha\in\N_0^d}D^\alpha f(x)\frac{Q^{(\alpha)}(D)\left(\exp(i\langle\zeta,x\rangle)\right)}{\alpha!}\\
		&=&\left(\exp(i\langle\zeta,x\rangle)\right) \sum_{\alpha\in\N_0^d}D^\alpha f(x)\frac{Q^{(\alpha)}(\zeta)}{\alpha!},
	\end{eqnarray*}
	where as usual $Q^{(\alpha)}(\xi)=\partial^\alpha Q(\xi)$, combined with the linear independence of the non-vanishing partial derivatives of $Q$.
	
	Similarly, for convex open subsets $X$ of $\R^d$ and $P$ such that $\detP\not\equiv 0$ and such that every irreducible factor of $\detP$ vanishes at the origin, the linear span of the set
	\begin{equation}\label{eq:exponential solutions}
		\{\Pad(D)u \colon u=(u_1,\ldots,u_N), u_j\in\mathscr{E}_{\detP}(X), u_j\text{ polynomial, } j=1,\ldots, N \}
	\end{equation}
	is dense in both $\mathscr{E}_P(X)$ and $\mathscr{D}'_P(X)$ by \cite[Chapitre Premiere, \S 2, Th\'eor\`eme 2']{Malgrange1955}.
	
	(ii) Assume that $\detP$ is hypoelliptic. Then, by \eqref{eq:fundamental for systems 2} it follows immediately that $\mathscr{D}'_P(X)=\mathscr{E}_P(X)$ as vector spaces for every open subset $X$ of $\R^d$. Let $F$ be a fundamental solution for $\detP(D)$ and $E:=(\Pad_{j,k}(D)F)_{j,k}$. Then, $E\in\mathscr{D}'(\R^d)^{N\times N}$ is a two-sided fundamental matrix for $P(D)$ (see e.g.~\cite[Section 2.1]{OrtnerWagner2015}). Since $F_{|\R^d\backslash\{0\}}\in\mathscr{E}(\R^d\backslash\{0\})$ it holds $E_{|\R^d\backslash\{0\}}\in\mathscr{E}(\R^d\backslash\{0\})^{N\times N}$. By the same reasoning as in \cite[Proof of Theorem 4.4.2]{Hor1} one concludes that the bijection
	\[i:\mathscr{D}'_P(X)\rightarrow\mathscr{E}_P(X), f\mapsto f\]
	is continuous. Trivially, $i^{-1}$ is continuous, too, so that $\mathscr{D}'_P(X)=\mathscr{E}_P(X)$ as locally convex spaces. Similarly, $\mathscr{D}'_{\Pad}(X)=\mathscr{E}_{\Pad}(X)$ as locally convex spaces.
\end{remark}

\begin{theorem}\label{theo:Runge for systems}
	Let $X_1\subset X_2$ be open subsets of $\R^d$, $F_1\subset F_2$ be closed subsets of $\R^d$.
	\begin{itemize}
		\item[(i)] Assume that $\Pad(D)$ is surjective on $\mathscr{E}(X_1)^N$ and that $X_1,X_2$ is a $\detP$-Runge pair. Then, the set of restrictions to $X_1$ of elements from the set $\Pad(D)\left(\mathscr{E}_{\detP}(X_2)^N\right)$ is dense in $\mathscr{E}_P(X_1)$. In particular, $X_1, X_2$ is a $P$-Runge pair.
		\item[(ii)] Assume that $\Pad(D)$ is surjective on $\mathscr{D}'(X_1)^N$ and that $X_1,X_2$ is a $\detP$-Runge pair for distributions. Then, the set of restrictions to $X_1$ of elements from the set $\Pad(D)\left(\mathscr{D}'_{\detP}(X_2)^N\right)$ is dense in $\mathscr{D}'_P(X_1)$. In particular, $X_1, X_2$ is a $P$-Runge pair for distributions.
		\item[(iii)] Assume that $\detP\not\equiv 0$ and that $F_1, F_2$ is a $\detP$-Runge pair for smooth Whitney jets. Then, the set of restrictions to $X_1$ of $\Pad(D)\left(\mathscr{E}_{\detP}(F_2)^N\right)$ is dense in $\mathscr{E}_P(F_1)$. In particular, $F_1, F_2$ is a $P$-Runge pair for smooth Whitney jets.
	\end{itemize}
\end{theorem}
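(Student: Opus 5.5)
The idea is to push density through the continuous surjection of Proposition \ref{prop:fundamental for Runge for systems}, using that $\Pad(D)$ commutes with restriction. Throughout, $E$ stands for one of $\mathscr{E}(X_1)$, $\mathscr{D}'(X_1)$, $\mathscr{E}(F_1)$.

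For (i), I first apply Proposition \ref{prop:fundamental for Runge for systems} with $E=\mathscr{E}(X_1)$. Since $\Pad(D)$ is surjective on $\mathscr{E}(X_1)^N$, the map
\[\Pad(D)\colon\mathscr{E}_{\detP}(X_1)^N\rightarrow\mathscr{E}_P(X_1)\]
is a continuous linear surjection. Because $X_1,X_2$ is a $\detP$-Runge pair, the restrictions $r(\mathscr{E}_{\detP}(X_2))$ form a dense subset $E_j$ of $\mathscr{E}_{\detP}(X_1)$ for each $j$. By Remark \ref{rem:dense subset}(i), which only uses continuity and surjectivity, $\Pad(D)(E_1\times\cdots\times E_N)$ is dense in $\mathscr{E}_P(X_1)$. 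Differential operators are local, so $\Pad(D)(u_{|X_1})=(\Pad(D)u)_{|X_1}$ for $u\in\mathscr{E}_{\detP}(X_2)^N$. Hence this dense set is exactly the set of restrictions of elements of $\Pad(D)(\mathscr{E}_{\detP}(X_2)^N)$. By Proposition \ref{prop:fundamental for Runge for systems} applied on $X_2$ (the first part, which needs no surjectivity), these elements lie in $\mathscr{E}_P(X_2)$, which gives the $P$-Runge property.

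Part (ii) is verbatim the same with $\mathscr{D}'$ in place of $\mathscr{E}$. Restriction of distributions also commutes with $\Pad(D)$, and continuity of $\Pad(D)$ on $\mathscr{D}'(X_1)^N$ with the strong topology is standard. The density transfer only uses that the image of a dense set under a continuous surjection is dense in the image.

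For (iii), the only extra ingredient is surjectivity of $\Pad(D)$ on $\mathscr{E}(F_1)^N$, which is supplied by Remark \ref{rem:surjectivity of Whitney} since $\detP\not\equiv0$. I also need that the jet restriction $r\colon\mathscr{E}(F_2)\to\mathscr{E}(F_1)$ commutes with $D_j$, which is immediate from the definition $D_j(f^{(\alpha)})=-i(f^{(\alpha+e_j)})$. The argument is then identical. The statement's phrase ``restrictions to $X_1$'' should be read as restrictions to $F_1$.

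The main point to check carefully is that density transfers through the surjection. If $A\subset E_{\detP}^N$ is dense, $T$ is continuous and $T(E_{\detP}^N)=E_P$, then $E_P=T(\overline{A})\subset\overline{T(A)}$. The only real content is thus the surjectivity onto $E_P$ in Proposition \ref{prop:fundamental for Runge for systems}, which is already available.
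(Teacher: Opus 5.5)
Your proposal is correct and follows essentially the paper's proof. The paper fixes $v\in\mathscr{E}_P(X_1)$, lifts it to $u\in\mathscr{E}_{\detP}(X_1)^N$ via the surjection of Proposition \ref{prop:fundamental for Runge for systems}, and then combines continuity of $\Pad(D)$, $\detP$-Runge density, locality of restriction and $\Pad(D)\bigl(\mathscr{E}_{\detP}(X_2)^N\bigr)\subset\mathscr{E}_P(X_2)$, exactly as you do; you only repackage its neighborhood argument as $T(\overline{A})\subset\overline{T(A)}$, and you treat (ii) and (iii) the same way, invoking Remark \ref{rem:surjectivity of Whitney} for (iii) (where ``restrictions to $X_1$'' should indeed read ``restrictions to $F_1$'').
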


\begin{proof}
	(i) Fix $v\in \mathscr{E}_P(X_1)$ and let $V$ be a neighborhood of $v$ in $\mathscr{E}_P(X_1)$. By Proposition \ref{prop:fundamental for Runge for systems} there is $u=(u_1,\ldots,u_N)\in \mathscr{E}_{\detP}(X_1)^N$ with $\Pad(D) u=v$. For $j=1,\ldots, N$, let $U_j$ be neighborhoods of $u_j$ in $\mathscr{E}_{\detP}(X_1)$ such that
	\[\Pad(D)\left(U_1\times \cdots\times U_N\right)\subset V.\]
	Since $X_1,X_2$ is a $\detP$-Runge pair, for $j=1,\ldots,N$ there are $w_j\in \mathscr{E}_{\det P}(X_2)$ with $w_{j| X_1}\in U_j$. Then, with $w=(w_1,\ldots,w_N)^T$, by Proposition \ref{prop:fundamental for Runge for systems}, $\Pad(D)w\in\mathscr{E}_P(X_2)$. Because we also we have
	\[\left(\Pad(D)w\right)_{|X_1}=\Pad(D)(w_{|X_1})\in V,\]
	assertion (i) follow.
	
	The proofs of (ii) and (iii) are so similar to the proof of (i) that they are left to the reader. We only point out that the proof of (iii) uses Remark \ref{rem:surjectivity of Whitney}.
\end{proof}

As an immediate consequence of the previous theorem and Remark \ref{rem:dense subset} we obtain the following result.

\begin{corollary}\label{cor:density of exponential solutions}
	Let $X_1$ be an open subset of $\R^d$ and $F_1$ be a closed subset of $\R^d$.
	\begin{itemize}
	\item[(a)] Let $G$ be the linear span of the set
	\[\{\Pad(D)u\colon u= (u_1,\ldots,u_N), u_j\text{ exponential solution for }\detP(D), j=1,\ldots, N \}.\]
	\begin{itemize}
		\item[(a-i)] Assume that $\Pad(D)$ is surjective on $\mathscr{E}(X_1)^N$ and that $X_1,\R^d$ is a $\detP$-Runge pair. Then, $G$ is dense in $\mathscr{E}_P(X_1)$ and in $\mathscr{D}'_P(X_1)$.
		\item[(a-ii)] Assume that $\detP\not\equiv 0$ and that $F_1, \R^d$ is a $\detP$-Runge pair for smooth Whitney jets. Then, $G$ is dense in $\mathscr{E}_P(F_1)$.
	\end{itemize}
	\item[(b)] Let
	\[\mathcal{P}=\{u=(u_1,\ldots,u_N)\in \mathscr{E}_{P}(\R^d)\colon u_j\text{ polynomial}\}.\]
	Assume that every irreducible factor of $\detP$ vanishes at the origin. 
	\begin{itemize}
		\item[(b-i)] Assume that $\Pad(D)$ is surjective on $\mathscr{E}(X_1)^N$ and that $X_1,\R^d$ is a $\detP$-Runge pair. Then, $\mathcal{P}$ is dense in $\mathscr{E}_P(X_1)$ and in $\mathscr{D}'_P(X_1)$.
		\item[(b-ii)] Assume that $\detP\not\equiv 0$ and that $F_1, \R^d$ is a $\detP$-Runge pair for smooth Whitney jets. Then, $\mathcal{P}$ is dense in $\mathscr{E}_P(F_1)$.
	\end{itemize}
	\end{itemize}
\end{corollary}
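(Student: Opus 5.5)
The plan is to combine Theorem \ref{theo:Runge for systems} with the density statements of Remark \ref{rem:dense subset}, applied on the large set $\R^d$, and then pass through the restriction map. For (a-i), Theorem \ref{theo:Runge for systems}(i) with $X_2=\R^d$ shows that restrictions of $\Pad(D)\left(\mathscr{E}_{\detP}(\R^d)^N\right)$ are dense in $\mathscr{E}_P(X_1)$. Since $\R^d$ is convex and $\detP\not\equiv 0$, Remark \ref{rem:dense subset}(i) says exponential solutions are dense in $\mathscr{E}_{\detP}(\R^d)$. Continuity of $\Pad(D)$ and of restriction then gives that $G$ is dense in the image of $\Pad(D)\left(\mathscr{E}_{\detP}(\R^d)^N\right)$ restricted to $X_1$. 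Hence $G$ is dense in $\mathscr{E}_P(X_1)$. That $\detP\not\equiv 0$ holds is implicit: if $\detP\equiv0$, surjectivity of $\Pad(D)$ forces $P(D)\Pad(D)=0$, so $\mathscr{E}_P(X_1)$ is everything, and Runge for $\detP=0$ fails. I would simply note this degenerate case separately, or observe that the hypotheses force it.

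The distributional density in (a-i) comes from two further facts. First, $\mathscr{E}_P(X_1)$ is dense in $\mathscr{D}'_P(X_1)$. I would obtain this by regularization: for $u\in\mathscr{D}'_P(X_1)$, convolving with a mollifier gives elements of $\mathscr{E}_P$ on slightly smaller sets, not on $X_1$ itself. To get around this, I would instead use Theorem \ref{theo:Runge for systems}(ii), which needs $\Pad(D)$ surjective on $\mathscr{D}'(X_1)^N$ and $X_1,\R^d$ a $\detP$-Runge pair for distributions. I expect this to be the main obstacle, since the hypotheses are stated only for $\mathscr{E}$. My first attempt would be Malgrange/H\"ormander's equivalence: $\mathscr{E}$-surjectivity of $\detP(D)$ on $X_1$ (from Proposition \ref{prop:surjcetivity of systems}, given surjectivity of $\Pad(D)$ together with $P(D)\Pad(D)=\detP(D)$) yields $P$-convexity for supports. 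The $\mathscr{E}$-Runge property gives, via the Hahn--Banach support characterization, the same support condition that characterizes distributional Runge pairs. Failing this, I would use the simpler route: the continuous inclusion $\mathscr{E}_P(X_1)\hookrightarrow\mathscr{D}'_P(X_1)$ and density of $G$ in $\mathscr{E}_P(X_1)$ give density of $G$ in the closure of $\mathscr{E}_P(X_1)$ in $\mathscr{D}'_P(X_1)$, and it remains to show $\mathscr{E}_P(X_1)$ is dense there. The second fact is then needed: the second continuity step, namely that the inclusion $\mathscr{E}(X_1)\hookrightarrow\mathscr{D}'(X_1)$ is continuous, handles the transfer.

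For (a-ii), Theorem \ref{theo:Runge for systems}(iii) with $F_2=\R^d$ shows that the image of $\Pad(D)\left(\mathscr{E}_{\detP}(\R^d)^N\right)$ under $\rho_{F_1}$ is dense in $\mathscr{E}_P(F_1)$. Exponential solutions are dense in $\mathscr{E}_{\detP}(\R^d)$, and $\rho_{F_1}$, $\Pad(D)$ are continuous and commute. So $G$, restricted to $F_1$, is dense.

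Part (b) is identical, with exponential solutions replaced by the polynomial solutions from \eqref{eq:exponential solutions}, which are dense in $\mathscr{E}_{\detP}(\R^d)$ by Malgrange's Th\'eor\`eme 2' under the irreducible-factor hypothesis. It remains to check that $\Pad(D)$ maps polynomial vectors in $\mathscr{E}_{\detP}(\R^d)^N$ into $\mathcal{P}$. This holds because $\Pad(D)$ of a polynomial vector is a polynomial vector, and it lies in $\mathscr{E}_P(\R^d)$ by Proposition \ref{prop:fundamental for Runge for systems}.
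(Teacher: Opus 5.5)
The $\mathscr{E}_P(X_1)$-statements in (a-i), (b-i) and the Whitney statements (a-ii), (b-ii) are handled correctly and by the paper's own argument. That argument is Theorem~\ref{theo:Runge for systems} with $X_2=\R^d$ resp.\ $F_2=\R^d$, plus density of exponential resp.\ polynomial solutions in $\mathscr{E}_{\detP}(\R^d)$ (Remark~\ref{rem:dense subset}(i)), plus continuity of $\Pad(D)$ and of restriction. Your check that $\Pad(D)$ maps polynomial solutions of $\detP(D)$ into $\mathcal{P}$ is a useful extra detail.

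One slip: your reason for $\detP\not\equiv 0$ is wrong. It is not that ``Runge for $\detP=0$ fails''; restriction $\mathscr{E}(\R^d)\to\mathscr{E}(X_1)$ always has dense range. The correct reason is that $P(D)\Pad(D)=0$ with $\Pad(D)$ onto forces $P=0$. Hence $\Pad=0$ if $N\geq 2$, which contradicts surjectivity; for $N=1$ the case $P=0$ is trivial.

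\textbf{The gap: density in $\mathscr{D}'_P(X_1)$.} Your first route does not work. Proposition~\ref{prop:surjcetivity of systems} gives surjectivity of $\detP(D)$ only when both $P(D)$ and $\Pad(D)$ are onto, and you only have $\Pad(D)$. Moreover, convexity for supports alone would not give surjectivity on $\mathscr{D}'(X_1)^N$, since singular supports also enter. So Theorem~\ref{theo:Runge for systems}(ii) is out of reach.

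Your fallback correctly reduces matters to density of $\mathscr{E}_P(X_1)$ in $\mathscr{D}'_P(X_1)$. But continuity of $\mathscr{E}(X_1)\hookrightarrow\mathscr{D}'(X_1)$ says nothing about that density, so the step is asserted rather than proved.

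In fact it cannot be proved from the stated hypotheses. Take $N=1$ (so $\Pad=1$), $d=2$, $P(\xi)=\xi_1$, $X_1=\R^2\setminus\{0\}$.
\begin{itemize}
\item Each $f\in\mathscr{E}_P(X_1)$ is constant on the lines $\{x_2=c\}$, $c\neq 0$, and by continuity on both half-lines of $\{x_2=0\}$. So $f(x)=f(1,x_2)$, and $X_1,\R^2$ is a $P$-Runge pair.
\item Let $u(\varphi):=\int_0^\infty\varphi(t,0)\,dt$. Then $u\in\mathscr{D}'_P(X_1)$, since $\partial_1u(\varphi)=\varphi(0)=0$ for $\varphi\in\mathscr{D}(X_1)$.
\item Let $\varphi(x)=(\chi(x_1-2)-\chi(x_1+2))\psi(x_2)$ with $\chi\in\mathscr{D}((-1,1))$, $\int\chi=1$, $\psi\in\mathscr{D}(\R)$, $\psi(0)=1$. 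Then $\varphi\in\mathscr{D}(X_1)$, and $\varphi$ annihilates every function of $x_2$ alone. It therefore annihilates all of $\mathscr{E}_P(X_1)$, which contains $G$ and $\mathcal{P}$.
\item Yet $u(\varphi)=1$, so neither $G$ nor $\mathcal{P}$ is dense in $\mathscr{D}'_P(X_1)$.
\end{itemize}
The paper's one-line proof does not cover this point either: Remark~\ref{rem:dense subset}(i) gives density of $\mathscr{E}_P(X)$ in $\mathscr{D}'_P(X)$ only for convex $X$.

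\textbf{What fixes it.} You need an extra hypothesis such as surjectivity of $P(D)$ on $\mathscr{E}(X_1)^N$. This holds, for example, if $\detP(D)$ is surjective on $\mathscr{E}(X_1)$, by Proposition~\ref{prop:surjcetivity of systems}. Under that hypothesis:
\begin{itemize}
\item Any $\varphi\in\mathscr{D}(X_1)^N$ annihilating $\mathscr{E}_P(X_1)$ equals $\check P^T(D)\mu$ for some $\mu\in\mathscr{E}'(X_1)^N$, by the closed range theorem.
\item With a mollifier $\rho_\epsilon$ one has $\mu*\rho_\epsilon\in\mathscr{D}(X_1)^N$ and $\check P^T(D)(\mu*\rho_\epsilon)=\varphi*\rho_\epsilon\to\varphi$ in $\mathscr{D}(X_1)^N$. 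Hence $\varphi$ annihilates $\mathscr{D}'_P(X_1)$.
\item Hahn--Banach then gives density of $\mathscr{E}_P(X_1)$, and hence of $G$ (resp.\ $\mathcal{P}$), in $\mathscr{D}'_P(X_1)$.
\end{itemize}
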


\begin{remark}\label{rem:Hoelder continuous boundary}
	Let $X\subset\R^d$ be open. As usual, we write
	\[C^\infty(\overline{X})=\{f\in \mathscr{E}(X)\colon \partial^\alpha f \text{ has a continuous extension to }\overline{X} \text{ for all }\alpha\in\N_0^d \}\] 
	and we equip $C^\infty(\overline{X})$ with the family of seminorms $\{\|\cdot\|_{l,K}\colon K\subset\overline{X}\text{ compact }, l\in\N_0\}$ defined by
	\[\forall\,f\in C^\infty(\overline{X})\colon \|f\|_{l,K}=\max\{|\partial^\alpha f(x)|\colon x\in K, |\alpha|\leq l\},\]
	turning $C^\infty(\overline{X})$ into a Fr\'echet space. Consequently,
	\[C^\infty_P(\overline{X}):=\{f=(f_1,\ldots,f_N)^T\in C^\infty(\overline{X})^N\colon P(D)f=0 \text{ in }X\}\]
	is a closed subspace of $C^\infty(\overline{X})^N$, hence a Fr\'echet space. We point out that the topology of  $C^\infty(\overline{X})^N$, and thus also the topology of $C^\infty_P(\overline{X})$, is the topology of uniform convergence of partial derivatives of arbitrary order on compact subsets of $\overline{X}$. In particular, for bounded $X$, the topology is the one of uniform convergence of partial derivatives of arbitrary order on $\overline{X}$.
	
	Under the additional hypothesis that $\overline{X}$ is path connected and satisfies the strong regularity condition, see below, $\mathscr{E}(\overline{X})^N$ and $C^\infty(\overline{X})^N$, respectively $\mathscr{E}_P(\overline{X})$ and $C^\infty_P(\overline{X})$, are isomorphic as Fr\'echet spaces, the isomorphism being given by
	\[(f^{(\alpha)})_{\alpha\in\N_0^d}\mapsto f^{(0)}.\]
	This is a straightforward modification of \cite[Corollary 11]{CiasKalmes2025} where the case $N=1$ is considered. Recall that a path connected closed set $F\subset\R^d$  satisfies the \emph{strong regularity condition} if $F=\overline{\operatorname{int}F}$ and if for all $x_0\in C$ there are $\epsilon>0, M>0$, and $\theta\in (0,1]$ such that any two points $x,y\in F\cap B(x_0,\epsilon)$ can be joint by a rectifiable arc $\gamma\subset F$ with length $|\gamma|$ such that
	\begin{enumerate}
		\item $\gamma$ meets the boundary of $F$ at most finitely many times,
		\item $|\gamma|\leq M|x-y|^\theta$.
	\end{enumerate}
	It can be shown that $\overline{X}$ satisfies the strong regularity condition whenever $X$ has a H\"older continuous boundary, \cite[Appendix]{CiasKalmes2025}. 
\end{remark}

We recall some abstract criterion that is a direct consequence of \cite[Proposition 13.1 and Corollary 3 on p.~54]{{Treves1967}}.

\begin{proposition}\label{abstract-criterion}
	Let $E_1,E_2$ be Fr\'echet spaces and let $T_1\colon E_1\to E_1$, $T_2\colon E_2\to E_2$ and $r\colon E_2\to E_1$ be continuous linear maps. Assume moreover that $T_2$ is surjective and $r\circ T_2=T_1\circ r$. Then the following assertions are equivalent:
	\begin{enumerate}[{\upshape(i)}]
		\item $T_1$ is surjective and $r(\ker T_2)$ is dense in $\ker T_1$;
		\item $r$ has dense range and for all $y\in E_2'$ it holds $y\in\operatorname{im}{}^t r$ whenever ${}^tT_2(y)\in\operatorname{im}{}^t r$.
	\end{enumerate}
\end{proposition}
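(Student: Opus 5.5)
The plan is to reduce the statement to the dual characterization of surjectivity and closed range for Fréchet spaces (Treves, Proposition 13.1, and the criterion that for a continuous linear map between Fréchet spaces, surjectivity is equivalent to injectivity of the transpose together with weak$^*$-closedness of its range), and then to chase the relation ${}^tr\circ{}^tT_1={}^tT_2\circ{}^tr$, which is dual to $r\circ T_2=T_1\circ r$. Throughout, ${}^tT_2$ is injective with weak$^*$-closed range $(\ker T_2)^\perp$, because $T_2$ is a surjection of Fréchet spaces.

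\emph{(i)$\Rightarrow$(ii).} First, $r$ has dense range. Given $y\in E_1'$ with ${}^tr(y)=0$, we show $y=0$. For $x\in\ker T_1$, density of $r(\ker T_2)$ gives $y(x)=0$, so $y\in(\ker T_1)^\perp$. Since $T_1$ is surjective, $(\ker T_1)^\perp=\operatorname{im}{}^tT_1$, so $y={}^tT_1(z)$ for some $z\in E_1'$. Then $0={}^tr\,{}^tT_1 z={}^tT_2\,{}^tr z$, and injectivity of ${}^tT_2$ yields ${}^tr z=0$. Iterating gives $y\in\bigcap_n\operatorname{im}({}^tT_1)^n$, which does not immediately force $y=0$. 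A cleaner route is to show directly that $r(E_2)$ is dense. Take $x\in E_1$ and write $x=T_1x'$. Approximate $x'$ modulo $\ker T_1$? That does not work directly either. Instead, observe that $r(E_2)\supset r(\ker T_2)$, whose closure is $\ker T_1$, and that $r(E_2)$ is mapped by $T_1$ onto $T_1 r(E_2)=r(T_2E_2)=r(E_2)$. Hence $\overline{r(E_2)}$ is a closed subspace $M$ with $\ker T_1\subset M$ and $T_1(M)\supset r(E_2)$ dense in $M$. Dualizing, let $y\in M^\perp$. Then $y={}^tT_1 z$ with $z$ vanishing on $T_1(M)$, which is dense in $M$ (it contains $r(E_2)$), so $z\in M^\perp$ again. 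Thus $M^\perp\subset{}^tT_1(M^\perp)$. This "divisibility" argument is exactly where I expect the main obstacle. I would resolve it by working with the Mittag-Leffler / projective-limit formulation used in Treves' Corollary 3 rather than by naive duality. Concretely, I would show that $E_1/M$ carries a surjective operator induced by $T_1$ which is also injective (its kernel is the image of $\ker T_1\subset M$), so it is an isomorphism. Conversely, every element of $E_1/M$ lifts through $T_1$ into $r(E_2)+M$, and combining this with the commutation relation shows that the quotient is zero.

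For the second part of (ii), suppose ${}^tT_2 y={}^tr(z)$. Then $y$ vanishes on $\ker T_2$, which forces $z$ to vanish on $r(\ker T_2)$, hence on $\ker T_1$. So $z={}^tT_1 w$, and ${}^tT_2 y={}^tT_2\,{}^tr w$. Injectivity gives $y={}^tr w$.

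\emph{(ii)$\Rightarrow$(i).} Surjectivity of $T_1$ requires injectivity of ${}^tT_1$ and weak$^*$-closed range. For injectivity: if ${}^tT_1 z=0$, then ${}^tT_2\,{}^tr z=0$, so ${}^tr z=0$, and since $r$ has dense range, $z=0$. For closed range, I would use that ${}^tr$ is injective and that, by hypothesis, $\operatorname{im}{}^tr\cap\operatorname{im}{}^tT_2={}^tT_2(\operatorname{im}{}^tr)={}^tr(\operatorname{im}{}^tT_1)$. This identifies $\operatorname{im}{}^tT_1$ with the preimage under ${}^tr$ of the weak$^*$-closed set $\operatorname{im}{}^tT_2$. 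The preimage is weak$^*$-sequentially closed, and I would invoke Krein–Šmulian for Fréchet spaces to conclude it is closed. Density of $r(\ker T_2)$ in $\ker T_1$ then follows by Hahn–Banach: a functional $z$ vanishing on $r(\ker T_2)$ has ${}^tr z\in(\ker T_2)^\perp=\operatorname{im}{}^tT_2$, so ${}^tr z={}^tT_2 y$ and hence $y={}^tr w$. Then ${}^tr(z-{}^tT_1w)=0$, so $z={}^tT_1 w$, which vanishes on $\ker T_1$.
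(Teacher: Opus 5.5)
The gap is in (i)$\Rightarrow$(ii), in your claim that $r$ has dense range. It cannot be closed, because that claim is false under the stated hypotheses.

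Take $E_2=\{0\}$, $E_1=\C$, $T_2=0$, $T_1=\operatorname{id}_{\C}$ and $r=0$. Then $T_2$ is onto $\{0\}$ and $r\circ T_2=0=T_1\circ r$. Moreover $T_1$ is surjective and $r(\ker T_2)=\{0\}=\ker T_1$, so (i) holds, yet $r(E_2)=\{0\}$ is not dense in $E_1$. A non-degenerate variant: $E_1=E_2\times G$ with $G\neq\{0\}$, $T_1=T_2\times S$ for an automorphism $S$ of $G$, and $r(x)=(x,0)$.

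Your own duality computation already shows what goes wrong. It only gives $y\in\bigcap_n\operatorname{im}({}^tT_1)^n$, and this is all of $E_1'$ when $T_1$ is an isomorphism.

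The quotient argument does not repair this. An isomorphism induced by $T_1$ on $E_1/M$ is no contradiction, since in the example $E_1/M=E_1$. Your assertion that every class of $E_1/M$ lifts through $T_1$ into $r(E_2)+M=M$ is just a restatement of $E_1=M$, with no argument behind it. The claimed injectivity of the induced map, i.e.\ $T_1x\in M\Rightarrow x\in M$, is not justified either.

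So the proposition as written needs dense range of $r$ as an extra hypothesis, or as part of (i). This is harmless for the paper, which gives no proof of its own and only cites Treves. In its single application, $r$ is the restriction $\mathscr{E}(F_2)^N\to\mathscr{E}(F_1)^N$, which is surjective by Whitney's extension theorem, and only the second clause of (ii) is used.

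Everything else in your proposal is correct.

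\emph{Second clause of (ii) from (i).} Your argument is fine and does not use density of $r(E_2)$. There is a small slip: it is ${}^tT_2y$, not $y$, that vanishes on $\ker T_2$.

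\emph{(ii)$\Rightarrow$(i).} This is also correct. ${}^tT_1$ is injective. By the hypothesis together with injectivity of ${}^tr$, $\operatorname{im}{}^tT_1=({}^tr)^{-1}(\operatorname{im}{}^tT_2)$. The Fr\'echet closed range theorem then gives surjectivity of $T_1$, and Hahn--Banach gives density of $r(\ker T_2)$ in $\ker T_1$.

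However, drop the Krein--\v{S}mulian detour. The preimage is weak$^*$-closed outright: ${}^tr$ is $\sigma(E_1',E_1)$--$\sigma(E_2',E_2)$ continuous, and $\operatorname{im}{}^tT_2=(\ker T_2)^\perp$ is an annihilator. Moreover, ``weak$^*$-sequentially closed implies weak$^*$-closed'' fails for subspaces of duals of general non-separable Fr\'echet (even Banach) spaces. That step, as written, would not be justified.
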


As a consequence of the previous criterion, we have the following result which is proved similarly as \cite[Corollary 9]{CiasKalmes2025}. We include its proof for the reader's convenience.

\begin{proposition}\label{prop:necessary for P-Runge for Whitney}
    Let $F_1\subset F_2$ be closed subsets of $\R^d$ which are a $P$-Runge pair for smooth Whitney jets for some $P$ with $\detP\not\equiv 0$. Then, $F_2$ does not contain a bounded connected component of $\R^d\backslash F_1$. 
\end{proposition}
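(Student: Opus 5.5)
We argue by contradiction and combine Proposition \ref{abstract-criterion} with the fact that $P(D)$ is surjective on Whitney jet spaces (Remark \ref{rem:surjectivity of Whitney}). Set $E_j:=\mathscr{E}(F_j)^N$, $j=1,2$, let $T_j:=P(D)$ on $E_j$, and let $r\colon E_2\to E_1$ be the restriction. Then $r\circ T_2=T_1\circ r$. Both $T_1$ and $T_2$ are surjective because $\detP\not\equiv 0$. By the Runge hypothesis, $r(\ker T_2)$ is dense in $\ker T_1$. Hence condition (i) of Proposition \ref{abstract-criterion} holds, and therefore (ii) holds as well. In particular: if $y\in E_2'$ and ${}^tP(D)y\in\operatorname{im}{}^tr$, then $y\in\operatorname{im}{}^tr$.

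Next we identify the duals. By Whitney's extension theorem, $\rho_{F}\colon\mathscr{E}(\R^d)\to\mathscr{E}(F)$ is a surjection between Fréchet spaces, so its transpose identifies $\mathscr{E}(F)'$ with $\{\mu\in\mathscr{E}'(\R^d)\colon\supp\mu\subset F\}$. The vector-valued version is obtained componentwise, giving $N$-tuples of such distributions. Under this identification, ${}^tr$ becomes the inclusion of the distributions supported in $F_1$ into those supported in $F_2$. Likewise ${}^tP(D)$ becomes the transposed matrix operator $\mu\mapsto({}^tP)(-D)\mu$, where ${}^tP$ is the transposed matrix and $D$ is replaced by $-D$ entrywise. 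The resulting criterion reads: every $\mu\in\mathscr{E}'(\R^d)^N$ with $\supp\mu\subset F_2$ and $\supp({}^tP)(-D)\mu\subset F_1$ satisfies $\supp\mu\subset F_1$. For $N=1$ this is \cite[Theorem 5]{CiasKalmes2025}, and the same argument applies here.

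Now suppose $C$ is a bounded connected component of $\R^d\backslash F_1$ with $C\subset F_2$. Then $C$ is open and $\overline C\subset F_2$ is compact. Take $\mu:=\mathds{1}_C\,e_1$, that is, the first component equals the characteristic function of $C$ and the others vanish; this is a compactly supported distribution. Clearly $\supp\mu=\overline C\subset F_2$ and $\supp\mu\not\subset F_1$. Each component of $({}^tP)(-D)\mu$ is a constant coefficient operator applied to $\mathds{1}_C$, so it vanishes on $C$ and on $\R^d\backslash\overline C$. Thus its support lies in $\partial C$. Since $C$ is a connected component of the open set $\R^d\backslash F_1$, it is relatively closed there, and so $\partial C\subset F_1$. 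This contradicts the criterion above.

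The main obstacle is the duality step: checking rigorously that the transpose of the restriction map and of $P(D)$ on $\mathscr{E}(F)^N$ have the stated forms. We would reduce this to the scalar case treated in \cite{CiasKalmes2025} by working componentwise.
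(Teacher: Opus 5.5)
Your functional-analytic reduction is the same as the paper's and is fine. Surjectivity of $P(D)$ on $\mathscr{E}(F_j)^N$ together with Proposition~\ref{abstract-criterion} yields the support criterion for $\mu\in\mathscr{E}'(\R^d)^N$, the transposes are identified correctly, and $\partial C\subset F_1$ is right. The gap is the test distribution $\mu=\mathds{1}_C e_1$. It is not true that $({}^tP)(-D)\mu$ vanishes on $C$. On $C$ one has $\mathds{1}_C\equiv 1$, and a constant coefficient operator sends the constant $1$ to its zeroth order coefficient, $Q(-D)1=Q(0)$. So on $C$ the $j$-th component of $({}^tP)(-D)\mu$ equals $P_{1j}(0)$. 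Hence $\supp({}^tP)(-D)\mu\supset\overline C$ as soon as the first row of $P(0)$ is non-zero, and no contradiction arises. Replacing $e_1$ by a constant vector $a$ only helps if $P(0)^Ta=0$, i.e.\ if $\detP(0)=0$; for the $(\lambda-\curl)$ system, $P_\lambda(0)=\lambda I_3$ is invertible. A sanity check confirms that something is missing: your argument never uses a zero of $\detP$. Yet for $N=1$ and $P\equiv 1$ one has $\mathscr{E}_P(F_1)=\{0\}$, so every pair is a $P$-Runge pair and the conclusion fails (take $F_1=S^{d-1}$, $F_2=\R^d$).

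The missing idea is to cut off a non-trivial zero solution of the transposed system instead of a constant. Assume $\detP$ is non-constant; this, rather than $\detP\not\equiv 0$, is what is actually needed, in your proof as well as in the paper's. Then there is $\zeta\in\C^d$ with $\detP(-\zeta)=0$, hence $a\in\C^N\setminus\{0\}$ with $P(-\zeta)^Ta=0$. The function $h(x):=a\,e^{i\langle x,\zeta\rangle}$ satisfies $({}^tP)(-D)h=P(-\zeta)^Ta\,e^{i\langle x,\zeta\rangle}=0$ and vanishes nowhere. For $\mu:=\mathds{1}_C h$ you get $\supp\mu=\overline C\subset F_2$ and $\supp\mu\not\subset F_1$. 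Moreover $({}^tP)(-D)\mu$ vanishes on $C$ (where $\mu=h$) and off $\overline C$, so its support lies in $\partial C\subset F_1$, contradicting the criterion.

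This is exactly the paper's route. The paper takes $h=(\check P^T)^{ad}(D)(h_0,\ldots,h_0)^T$ with $h_0(x)=e^{i\langle x,\zeta\rangle}$. Choosing $h$ through a kernel vector $a$ as above has the small advantage that $h\not\equiv 0$ is automatic. The adjugate, by contrast, vanishes at $-\zeta$ whenever $\operatorname{rank}P(-\zeta)\leq N-2$ (e.g.\ for $P(\xi)=\xi_1 I_2$).
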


\begin{proof}
    It is  well-known fact, that for a closed subset $F$ of $\R^d$, the dual space of $\mathscr{E}(F)$ coincides with the subspace $\mathscr{E}'(F)$ of the space of compactly supported distributions on $\R^d$, $\mathscr{E}'(\R^d)$, which have their support contained in $F$ (see e.g.~\cite[Proposition 4]{CiasKalmes2025}).
    Moreover, let us note that the transpose ${}^tP(D)$ of $P(D)\colon \mathscr{E}(F)^N\to \mathscr{E}(F)^N$ is the operator
    \[{}^tP(D)\colon \mathscr{E}'(F)^N\to \mathscr{E}'(F)^N,\quad{}^tP(D)=(P_{kj}(-D))_{1\leq j,k\leq N}=\check P^T(D).\]
    Since $\detP^T=\detP\not\equiv 0$, there is $\zeta\in\C^d$ with $\detP^T(-\zeta)=0$. We set $h_0(x)=\exp\left(i\sum_{j=1}^d\zeta_j x_i\right)$, $x\in\R^d$, as well as
    \[h:=\left(\check{P}^T\right)^{ad}(D)(h_0,\ldots,h_0)^T\in\mathscr{E}(\R^d)^N\]
    so that $\check{P}^T(D)h=\det\check{P}^T(D)h=0$.
    Assume that $G$ is a bounded (open) connected component of $\R^d\backslash F_1$ which is contained in $F_2$. Then, it is straight forward to prove that
    \[g:\R^d\rightarrow\C, x\mapsto \begin{cases}
        h(x),&\text{if }x\in G,\\ 0, &\text{otherwise}
    \end{cases}\]
    satisfies $g\in\mathscr{E}'(F_2)^N$ and
    \[\supp\check{P}^T(D)g\subset\partial G\subset F_1,\]
    i.e.~$\check{P}^T(D)g\in\mathscr{E}(F_1)^N$. But, since $F_1, F_2$ is a $P$-Runge pair for smooth Whitney jets, Proposition \ref{abstract-criterion} applied to $E_j=\mathscr{E}(F_j)^N$, $T_1=T_2=P(D)$, and $r$ the restriction mapping from $\mathscr{E}(F_2)^N$ to $\mathscr{E}(F_1)^N$, gives that every $u\in \mathscr{E}'(F_2)^N$ with $\check P^T(D)u\in \mathscr{E}'(F_1)^N$ satisfies $u\in\mathscr{E}'(F_1)^N$ which gives the desired contradiction. 
\end{proof}

Recall that a square matrix $P=(P_{jk})_{1\leq j,k\leq N}$ of polynomials (and the corresponding square matrix of partial differential operators) is \emph{elliptic} if the polynomial ${\det P}$ is elliptic. The following theorem is the analogue for the setting of smooth Whitney jets of \cite[\enquote{Cas d'un syst\'eme pour lequel $p=q$}, p.~337]{Malgrange1955} which considers $P$-Runge pairs for elliptic square systems.

\begin{theorem}\label{th:lax-malgrange for system of pde whitney}
	Let $F_1\subset F_2$ be closed subsets of $\R^d$ and let $P(D)$ be an elliptic square matrix of partial differential operators. Then, the following conditions are equivalent.
\begin{enumerate}[\upshape{(i)}] 
		\item[\upshape{(i)}] $F_1, F_2$ is a $P$-Runge pair for smooth Whitney jets.
		\item[\upshape{(ii)}] No bounded connected component of $\R^d\setminus F_1$ is contained in $F_2$.
	\end{enumerate}
\end{theorem}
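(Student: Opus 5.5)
The implication (i)$\Rightarrow$(ii) needs no ellipticity. Since $P$ is elliptic, $\detP$ is elliptic and in particular $\detP\not\equiv 0$. Proposition \ref{prop:necessary for P-Runge for Whitney} therefore applies directly: if $F_1,F_2$ is a $P$-Runge pair for smooth Whitney jets, then $F_2$ contains no bounded connected component of $\R^d\setminus F_1$.

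For (ii)$\Rightarrow$(i) I reduce to the scalar operator $Q:=\detP$ and then apply Theorem \ref{theo:Runge for systems}(iii). That theorem requires only $\detP\not\equiv 0$, which holds, together with $F_1,F_2$ being a $\detP$-Runge pair for smooth Whitney jets. The surjectivity of $\Pad(D)$ on $\mathscr{E}(F_1)^N$ that its proof uses is supplied by Remark \ref{rem:surjectivity of Whitney}. So everything comes down to the scalar statement: for an elliptic polynomial $Q$, condition (ii) implies that $F_1,F_2$ is a $Q$-Runge pair for smooth Whitney jets. This is the scalar Lax--Malgrange theorem for Whitney jets.

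To prove the scalar statement I use the duality characterization from \cite{CiasKalmes2025}. By \cite[Theorem 5]{CiasKalmes2025} (equivalently, Proposition \ref{abstract-criterion} with $T_j=Q(D)$, which is surjective on $\mathscr{E}(F_j)$ by \cite{Frerick-Habilitation}), it suffices to show the following: every $\mu\in\mathscr{E}'(\R^d)$ with $\supp\mu\subset F_2$ and $\supp\check Q(D)\mu\subset F_1$ satisfies $\supp\mu\subset F_1$. Let $\Omega:=\R^d\setminus F_1$. On $\Omega$ we have $\check Q(D)\mu=0$, and $\check Q$ is elliptic, so $\mu$ is real analytic on $\Omega$. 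Take a connected component $G$ of $\Omega$. By (ii), either $G$ is unbounded or $G\not\subset F_2$. If $G$ is unbounded, then $\mu$ vanishes on the nonempty open set $G\setminus\operatorname{conv}(\supp\mu)$. If $G\not\subset F_2$, then $G\setminus F_2$ is a nonempty open subset of $G$ because $F_2$ is closed, and $\mu$ vanishes there. In either case the identity principle for real analytic functions on the connected set $G$ gives $\mu|_G=0$. Hence $\supp\mu\subset F_1$.

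The main obstacle is making sure the hypotheses of the cited duality criterion match exactly in the Whitney setting: the dual of $\mathscr{E}(F)$ must be $\mathscr{E}'(F)$, and $Q(D)$ must be surjective on $\mathscr{E}(F_2)$. The first is stated in \cite[Proposition 4]{CiasKalmes2025} and the second follows from \cite{Frerick-Habilitation}, so the scalar argument should go through. Once it does, it passes to systems through Theorem \ref{theo:Runge for systems}(iii).
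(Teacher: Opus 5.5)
Your proposal is correct and has the same structure as the paper's proof: (i)$\Rightarrow$(ii) via Proposition \ref{prop:necessary for P-Runge for Whitney}, and (ii)$\Rightarrow$(i) by reducing to the scalar elliptic operator $\detP$ and applying Theorem \ref{theo:Runge for systems}(iii). The only difference is that the paper cites the scalar Whitney-jet Lax--Malgrange theorem \cite[Theorem 12]{CiasKalmes2025}, whereas you reprove it using the duality criterion \cite[Theorem 5]{CiasKalmes2025}, analytic hypoellipticity of $\check Q(D)$, and the identity principle on each connected component of $\R^d\setminus F_1$; that argument is sound.
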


\begin{proof}
    By \cite[Theorem 12]{CiasKalmes2025} and Theorem \ref{theo:Runge for systems}, condition (ii) implies (i). Conversely, (i) implies (ii) by Proposition \ref{prop:necessary for P-Runge for Whitney}.
\end{proof}

\subsection{The \texorpdfstring{$(\lambda-\curl)$}{lambda-curl}-system}\label{subsec:curl}

In this subsection, we have $d=N=3$. Recall that a \emph{Beltrami field} on a open subset $X$ of $\R^3$ is a vector field $u$ which is an eigenfunction to a non-zero eigenvalue of the curl operator, i.e.~which satisfies
\[\curl u = \lambda u\]
for $\lambda\in\C\backslash \{0\}$. For such $\lambda$ we consider the $(\lambda-\curl)$-system, i.e.~the system given by the matrix
\begin{equation}\label{eq:curl matrix}
	P_\lambda(D)=\begin{pmatrix}
			\lambda & \partial_3 & -\partial_2\\
			-\partial_3 & \lambda & \partial_1\\
			\partial_2 & -\partial_1 & \lambda
		\end{pmatrix},
\end{equation}
where $\partial_j=\frac{\partial}{\partial x_j}$, $j=1,2,3$. A straightforward calculation gives
\begin{equation*}
	\Pad_\lambda(D)=\begin{pmatrix}
		\lambda^2+\partial_1^2 & \partial_1\partial_2 - \lambda\partial_3 & \partial_1\partial_3+\lambda\partial_2\\
		\partial_1\partial_2+\lambda\partial_3 & \lambda^2+\partial_2^2& \partial_2\partial_3 - \lambda\partial_1\\
		\partial_1\partial_3-\lambda\partial_2 & \partial_2\partial_3 + \lambda\partial_1 & \lambda^2+\partial_3^2
	\end{pmatrix}
\end{equation*}
and
\begin{equation}\label{eq:det curl}
	\detP_\lambda(D)=\lambda(\lambda^2+\Delta).
\end{equation}
Since $\detP_\lambda$ is (hypo)elliptic, by Remark \ref{rem:dense subset} (ii), for every open subset $X$ of $\R^3$ we have  $\mathscr{D}'_{P_\lambda}(X)=\mathscr{E}_{P_\lambda}(X)$ and $\mathscr{D}'_{\Pad_\lambda}(X)=\mathscr{E}_{\Pad_\lambda}(X)$ as locally convex spaces. In particular, the set of all Beltrami fields on $X$ equals $\bigcup_{\lambda\in\C\backslash\{0\}}\mathscr{E}_{P_\lambda}(X)$. 

Therefore, by the classical Lax-Malgrange Theorem for elliptic square systems (cf.~\cite[\enquote{Cas d'un syst\'eme pour lequel $p=q$}, p.~337]{Malgrange1955}), open subset $X_1\subset X_2$ of $\R^3$ are a $P_\lambda$-Runge pair (for distributions) if and only if $X_2$ does not contain a bounded connected component of $\R^3\backslash X_1$. The following version of this result for smooth Whitney jets is an immediate consequence of Theorem \ref{th:lax-malgrange for system of pde whitney}.

\begin{theorem}\label{theo:Runge for curl}
	Let $F_1\subset F_2$ be closed subsets of $\R^3$. Moreover, for $\lambda\in\C\backslash\{0\}$, let $P_\lambda(D)$ be as in \eqref{eq:curl matrix}. Then, $F_1,F_2$ is a $P_\lambda$-Runge pair for smooth Whitney jets if and only if $F_2$ does not contain a bounded connected component of $\R^3\backslash F_1$.
\end{theorem}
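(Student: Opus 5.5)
The plan is to deduce Theorem \ref{theo:Runge for curl} directly from Theorem \ref{th:lax-malgrange for system of pde whitney}, applied with $d=N=3$ and $P=P_\lambda$. Everything then reduces to checking that $P_\lambda(D)$ is an elliptic square matrix of partial differential operators. By the definition recalled just before Theorem \ref{th:lax-malgrange for system of pde whitney}, this means checking that the single polynomial $\detP_\lambda$ is elliptic.

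The first step is to record the determinant in polynomial form. From \eqref{eq:det curl} we have $\detP_\lambda(D)=\lambda(\lambda^2+\Delta)$. Since $\partial_j=iD_j$, the corresponding polynomial is
\[\detP_\lambda(\xi)=\lambda\bigl(\lambda^2-|\xi|^2\bigr).\]
The cofactor computation behind \eqref{eq:det curl} is routine; I would verify it by expanding along the first row, writing it in terms of $\xi$ if a check is wanted.

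Next, since $\lambda\neq 0$, this polynomial has degree $2$ with principal part $-\lambda|\xi|^2$. The principal part vanishes on $\R^3$ only at $\xi=0$, so $\charcone(\detP_\lambda)=\{0\}$. Hence $\detP_\lambda$ is elliptic, and therefore $P_\lambda(D)$ is an elliptic square system.

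Applying Theorem \ref{th:lax-malgrange for system of pde whitney} to $F_1\subset F_2$ now gives the claimed equivalence. For completeness, here is what that theorem uses in each direction.
\begin{itemize}
\item[(a)] Necessity comes from Proposition \ref{prop:necessary for P-Runge for Whitney}. It only requires $\detP_\lambda\not\equiv 0$, which holds here.
\item[(b)] Sufficiency comes from \cite[Theorem 12]{CiasKalmes2025} for the elliptic scalar operator $\detP_\lambda(D)$, combined with Theorem \ref{theo:Runge for systems}(iii) and Remark \ref{rem:surjectivity of Whitney}.
\end{itemize}

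There is essentially no obstacle. The only point needing care is the hypothesis $\lambda\neq0$: for $\lambda=0$ the determinant vanishes identically and the argument breaks down.
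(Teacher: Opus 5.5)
Your proposal is correct and follows the paper's own route: the paper states this theorem as an immediate consequence of Theorem \ref{th:lax-malgrange for system of pde whitney}, using that $\detP_\lambda(D)=\lambda(\lambda^2+\Delta)$, i.e.\ $\detP_\lambda(\xi)=\lambda(\lambda^2-|\xi|^2)$, is elliptic for $\lambda\neq 0$. Your breakdown of that theorem's proof (necessity via Proposition \ref{prop:necessary for P-Runge for Whitney}; sufficiency via \cite[Theorem 12]{CiasKalmes2025}, Theorem \ref{theo:Runge for systems}(iii) and Remark \ref{rem:surjectivity of Whitney}) also matches the paper.
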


\begin{remark}\label{rem:Bessel and spherical harmonics}
	It is well known that every solution $u$ of the Helmholtz equation with (complex) wave number $\lambda^2$, i.e.~every $u$ satisfying $\Delta u +\lambda^2u=0$ in $\R^d$, $d\geq 2$, can be written as
	\begin{equation}\label{eq:Helmholtz expansion}
		u(x)=\sum_{k=0}^\infty\sum_{m=1}^{d_k}c_{k,m}|x|^{(2-d)/2}J_{k+(d-2)/2}(\lambda|x|)Y_{k,m}(x/|x|),
	\end{equation}
	where the series converges in $\mathscr{E}(\R^d)$ and where $c_{k,m}\in\C$, $J_\mu$ denotes the Bessel function of the first kind of order $\mu$ and where $\{Y_{k,m}\colon 1\leq m\leq d_k\}$ is an orthonormal basis of the finite dimensional vector space of spherical harmonics of degree $k$, i.e.~the subspace of (restrictions to $S^{d-1}$ of) homogeneous harmonic polynomials of degree $k$ in $L^2(S^{d-1})$ (see \cite[the discussion preceding Theorem 2.2.66]{Folland1995} or \cite[Section V.5.5 and \S VII.2]{CourantHilbert1953}; see also Remark \ref{rem:ODEs for coefficients in spherical harmonics expansion} below.)

	Since the summands in the series \eqref{eq:Helmholtz expansion} satisfy the Helmholtz equation with wave number $\lambda^2$, it follows that the linear span $E_0$ of the set of functions
	\[\{|x|^{(2-d)/2}J_{k+(d-2)/2}(\lambda |x|)Y_{k,m}(x/|x|)\colon k\in\N_0, m=1,\ldots,d_k\},\]
	for $d=3$, is dense in $\mathscr{E}_{\detP_\lambda}(\R^3)$.
	
	Using the well-known recursion relation for Bessel functions
	\[J'_\mu(z)=\frac{1}{2}(J_{\mu-1}(z)-J_{\mu+1}(z)), z\in\C, \mu\in\R,\]
	a straightforward induction on the order of differentiation shows (see Proposition \ref{prop:elementary} below which we include for the reader's convenience) that for each $\alpha\in\N_0^d$ there are functions $\{g_{\alpha,j,l}\colon -|\alpha|\leq j\leq |\alpha|, 0\leq l\leq \min\{1,|\alpha|\}\}\subset \mathscr{E}(\R^d\backslash\{0\})$ such that
	\begin{eqnarray*}
			&&\forall\,x\in\R^d\backslash\{0\}\colon \partial^\alpha_x\left(|x|^{(2-d)/2}J_{(k-1)/2}(\lambda |x|)Y_{k,m}(x/|x|)\right)\\
			&=&\sum_{j=-|\alpha|}^{|\alpha|}\sum_{l=0}^{\min\{1,|\alpha|\}} |x|^{-|\alpha|-l+(2-d)/2}J_{(k-1)/2-j}(\lambda|x|)g_{\alpha,j,l}\left(\frac{x}{|x|}\right)\\
			&=&|x|^{-|\alpha|+(1-d)/2} \sum_{j=-|\alpha|}^{|\alpha|}\sum_{l=0}^{\min\{1,|\alpha|\}} |x|^{-l} |x|^{1/2} J_{(k-1)/2-j}(\lambda|x|)g_{\alpha,j,l}\left(\frac{x}{|x|}\right).
	\end{eqnarray*}
	For real $\lambda\neq 0$ it follows from 
	\[\sup_{r>0} r^{1/2}|J_{n+1/2}(r)|<\infty\]
	(see \cite[\S 7.21]{Watson1944}) that the right hand side of the above equation is bounded by $|x|^{-|\alpha|+(1-d)/2} C_\alpha$ for a suitable constant $C_\alpha>0$. In particular, for $d=3$ and real $\lambda\neq 0$, for every function $f$ from the dense subspace  $\Pad_\lambda(D)(E_0^3)$ of $\mathscr{E}_{P_\lambda(D)}(\R^3)$, the following decay property at infinity holds for the components of $f=(f_1,f_2,f_3)$
	\begin{equation}\label{eq:growth bound for curl}
		\forall\alpha\in\N_0^3\,\exists C_\alpha>0\forall x\in\R^3\colon |\partial^\alpha f_j(x)|\leq C_\alpha |x|^{-|\alpha|-1}, j=1,2,3.
	\end{equation}
	Combining Remark \ref{rem:dense subset} (i) with the classical Lax-Malgrange Theorem for elliptic square systems (cf.~\cite[\enquote{Cas d'un syst\'eme pour lequel $p=q$}, p.~337]{Malgrange1955}) and with Theorem \ref{theo:Runge for curl} (for $F_2=\R^3$), respectively, implies the next theorem. Part (i) is due to Enciso and Peralta-Salas \cite[Theorem 8.3]{EnPe15}.
\end{remark}

\begin{theorem}\label{th:EnPe}
	Let $\lambda\in\R\backslash\{0\}$ and let $P_\lambda(D)$ be as in \eqref{eq:curl matrix}. Moreover, let $X_1\subset\R^3$ be open and $F_1\subset\R^3$ be closed.
	\begin{itemize}
		\item[(i)] The subspace of functions of $\mathscr{E}_{P_\lambda(D)}(\R^3)$ whose components satisfy the decay property \eqref{eq:growth bound for curl} is dense in $\mathscr{E}_{P_\lambda(D)}(X_1)$ if and only if $\R^3\backslash X_1$ does not have a bounded connected component.
		\item[(ii)] The subspace of functions of $\mathscr{E}_{P_\lambda(D)}(\R^3)$ whose components satisfy the decay property \eqref{eq:growth bound for curl} is dense in $\mathscr{E}_{P_\lambda(D)}(F_1)$ if and only if $\R^3\backslash F_1$ does not have a bounded connected component.
	\end{itemize} 
\end{theorem}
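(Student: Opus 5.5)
Let $G$ denote the subspace of $\mathscr{E}_{P_\lambda(D)}(\R^3)$ whose components satisfy \eqref{eq:growth bound for curl}. Both parts follow the same scheme. Sufficiency comes from the density of $\Pad_\lambda(D)(E_0^3)\subset G$ in $\mathscr{E}_{P_\lambda}(\R^3)$, combined with a Runge pair result for $X_1,\R^3$ (respectively $F_1,\R^3$). Necessity comes from the fact that density of $G$ forces density of the restriction map from $\R^3$.

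\textbf{Sufficiency in (i).} I assume $\R^3\setminus X_1$ has no bounded connected component. By the classical Lax--Malgrange theorem for elliptic square systems, $X_1,\R^3$ is then a $P_\lambda$-Runge pair. So the restrictions of $\mathscr{E}_{P_\lambda}(\R^3)$ are dense in $\mathscr{E}_{P_\lambda}(X_1)$. By Remark \ref{rem:dense subset} (ii), $\mathscr{D}'$ and $\mathscr{E}$ coincide here, so there is no topological ambiguity.

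Next I use that $E_0$ is dense in $\mathscr{E}_{\detP_\lambda}(\R^3)$ (Remark \ref{rem:Bessel and spherical harmonics}). The operator $\Pad_\lambda(D)$ is surjective on $\mathscr{E}(\R^3)^3$ by Proposition \ref{prop:surjcetivity of systems}, since $\detP_\lambda$ is elliptic and hence surjective on $\mathscr{E}(\R^3)$. Remark \ref{rem:dense subset} (i) therefore gives that $\Pad_\lambda(D)(E_0^3)$ is dense in $\mathscr{E}_{P_\lambda}(\R^3)$.

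The decay estimate in Remark \ref{rem:Bessel and spherical harmonics} (real $\lambda$) puts $\Pad_\lambda(D)(E_0^3)$ inside $G$. Because the restriction map $\mathscr{E}(\R^3)^3\to\mathscr{E}(X_1)^3$ is continuous, $G$ is dense in $\mathscr{E}_{P_\lambda}(X_1)$.

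\textbf{Sufficiency in (ii).} The same chain works, replacing Lax--Malgrange by Theorem \ref{theo:Runge for curl} with $F_2=\R^3$. Restriction $\rho_{F_1}\colon \mathscr{E}(\R^3)^3\to\mathscr{E}(F_1)^3$ is continuous, so density transfers.

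\textbf{Necessity.} If $G$ is dense in $\mathscr{E}_{P_\lambda}(X_1)$ (respectively $\mathscr{E}_{P_\lambda}(F_1)$), then, since $G\subset\mathscr{E}_{P_\lambda}(\R^3)$, the pair $X_1,\R^3$ (respectively $F_1,\R^3$) is a $P_\lambda$-Runge pair.
\begin{itemize}
\item[(ii)] Theorem \ref{theo:Runge for curl} with $F_2=\R^3$ (or directly Proposition \ref{prop:necessary for P-Runge for Whitney}) shows that $\R^3\setminus F_1$ has no bounded connected component.
\item[(i)] The necessity direction of the classical Lax--Malgrange theorem for elliptic square systems gives the claim. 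Alternatively, I would mimic the proof of Proposition \ref{prop:necessary for P-Runge for Whitney}: take an exponential solution $h$ of $\check P_\lambda^T(D)$ cut off to a bounded component $C$ of $\R^3\setminus X_1$. This yields an element of $\mathscr{E}'(\R^3)^3$ which annihilates restrictions of global solutions but is nonzero on $\mathscr{E}_{P_\lambda}(X_1)$, via a standard Hahn--Banach/duality argument.
\end{itemize}

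\textbf{Main obstacle.} The only delicate point is verifying that $\Pad_\lambda(D)E_0^3$ really lies in $G$, in particular that the decay exponent $-|\alpha|-1$ holds uniformly, including near the origin where the bound $|x|^{-|\alpha|-1}$ is large. This is exactly what the preceding remark establishes, using the bound $\sup_r r^{1/2}|J_{n+1/2}(r)|<\infty$ for real $\lambda$. Note that applying the second-order operator $\Pad_\lambda(D)$ only improves the decay exponent, and smoothness near $0$ takes care of bounded $|x|$. I would cite the remark for this rather than redo the computation.
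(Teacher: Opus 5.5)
Your argument is the paper's own proof: density of $\Pad_\lambda(D)(E_0^3)$ in $\mathscr{E}_{P_\lambda}(\R^3)$ via Remark~\ref{rem:dense subset}~(i), then Lax--Malgrange, resp.\ Theorem~\ref{theo:Runge for curl} with $F_2=\R^3$, plus continuity of restriction, with necessity via the Runge pair property. Those steps are fine.

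The gap is exactly the step you flag and then delegate to Remark~\ref{rem:Bessel and spherical harmonics}. The inclusion $\Pad_\lambda(D)(E_0^3)\subset G$ is false, and the trouble is at infinity, not near the origin (where $|x|^{-|\alpha|-1}$ is large and harmless). Differentiating $J_\nu(\lambda|x|)$ produces $\lambda J_\nu'(\lambda|x|)$ with no extra factor $|x|^{-1}$. In Proposition~\ref{prop:elementary}, the case $l=0$ of the induction step uses $|x|^{-|\alpha|}=|x|^{-|\alpha+e_i|-1}$, which is wrong. Concretely, the $k=0$ element of $E_0$ is a nonzero multiple of $u_0(x)=\sin(\lambda|x|)/|x|$, and
\[
\nabla u_0(x)=\Bigl(\frac{\lambda\cos(\lambda|x|)}{|x|}-\frac{\sin(\lambda|x|)}{|x|^{2}}\Bigr)\frac{x}{|x|}
\]
has size $|\lambda|/|x|$ along a sequence $|x|\to\infty$, not $O(|x|^{-2})$. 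Applying $\Pad_\lambda(D)$ does not help either, since it contains the lower-order terms $\lambda^2$ and $\pm\lambda\partial_j$.

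In fact $G=\{0\}$, so no argument can close this step, and the statement as written is false. Suppose $f\in\mathscr{E}_{P_\lambda}(\R^3)$ satisfies \eqref{eq:growth bound for curl} for $|\alpha|\le 1$. Then $\curl f=\lambda f$ gives $|f(x)|\le C|x|^{-2}$, so $f\in L^2(\R^3)^3$. Since $(\Delta+\lambda^2)f_j=0$, each $\widehat{f_j}$ is an $L^2$ function supported in the null set $\{|\xi|=|\lambda|\}$, hence $f=0$. Now $\mathscr{E}_{P_\lambda}(X_1)$ and $\mathscr{E}_{P_\lambda}(F_1)$ contain restrictions of nonzero real-analytic global Beltrami fields. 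So the ``if'' directions of (i) and (ii) fail, e.g.\ for $X_1$ a ball or $F_1$ a point.

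What is true is the uniform bound $|\partial^\alpha f_j(x)|\le C_\alpha|x|^{-1}$. Done correctly, the induction yields terms $|x|^{-l-1/2}J_{k+1/2-j}(\lambda|x|)\,g_{\alpha,j,l}(x/|x|)$ with $0\le l\le|\alpha|$. These are $O(|x|^{-1})$ for $|x|\ge 1$, because $r^{1/2}J_\nu(r)$ is bounded on $[c,\infty)$ for every $c>0$ and half-integer $\nu$; smoothness takes care of $|x|\le 1$. If you replace \eqref{eq:growth bound for curl} by this $|x|^{-1}$ decay (which, as far as I know, is the form used in \cite{EnPe15}), your proof, and the paper's, goes through unchanged.
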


\begin{remark}\label{rem:Peralta-Salas for path connected sets with Hoelder continuous boundary}
    We point out that for a path connected, open set $X$ in $\R^3$ with H\"older continuous boundary, Theorem \ref{th:EnPe} (ii) above combined with Remark \ref{rem:Hoelder continuous boundary} yields that the subspace of functions of $\mathscr{E}_{P_\lambda(D)}(\R^3)$ whose components satisfy the decay property \eqref{eq:growth bound for curl} is dense in $C^\infty_{P_{\lambda}(D)}(\overline{X})$. 
\end{remark}

\begin{proposition}\label{prop:elementary}
	Let $\mu\in\R$, $\lambda\in\C$, and $g\in \mathscr{E}(\R^d\backslash\{0\})$. For the smooth function
	\[f\colon\R^d\backslash\{0\}\rightarrow\C, x\mapsto |x|^{(2-d)/2}J_\mu(\lambda|x|) g\left(\frac{x}{|x|}\right)\]
	the following holds. For every $\alpha\in\N_0^d$ there are $\{g_{\alpha,j,l}\colon -|\alpha|\leq j\leq |\alpha|, 0\leq l\leq \min\{1,|\alpha|\}\}\subset \mathscr{E}(\R^d\backslash\{0\})$ such that
	\[\forall x\in\R^d\backslash\{0\}\colon \partial^\alpha f(x)=\sum_{j=-|\alpha|}^{|\alpha|}\sum_{l=0}^{\min\{1,|\alpha|\}} |x|^{-|\alpha|-l}|x|^{(2-d)/2}J_{\mu-j}(\lambda|x|)g_{\alpha,j,l}\left(\frac{x}{|x|}\right).\]
\end{proposition}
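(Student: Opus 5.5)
We argue by induction on $|\alpha|$; the case $\alpha=0$ is trivial with $g_{0,0,0}=g$ and all other coefficient functions zero. It suffices to show that the class $\mathcal{F}_n$ of finite sums of functions of the form
\[x\mapsto |x|^{-n-l}|x|^{(2-d)/2}J_{\mu-j}(\lambda|x|)h\left(\frac{x}{|x|}\right),\quad |j|\leq n,\ l\in\{0,1\},\ h\in\mathscr{E}(\R^d\backslash\{0\}),\]
is mapped by each $\partial_k$, $1\leq k\leq d$, into $\mathcal{F}_{n+1}$. Since the coefficients $g_{\alpha,j,l}$ are only required to be smooth on $\R^d\backslash\{0\}$ and are evaluated at $x/|x|$, we have a lot of freedom: any smooth function of $x/|x|$ may be absorbed into them. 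Also, for $n=0$ only $l=0$ occurs, matching $\min\{1,|\alpha|\}$; for $n\geq 1$ we allow $l\in\{0,1\}$.

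For the inductive step we apply the product rule to a single summand $|x|^{-n-l+(2-d)/2}J_{\mu-j}(\lambda|x|)h(x/|x|)$, using $\partial_k|x|=x_k/|x|$. There are three terms. First, $\partial_k|x|^{\beta}=\beta|x|^{\beta-1}\frac{x_k}{|x|}$ lowers the power of $|x|$ by one, and the factor $\beta\, x_k/|x|$ is absorbed into the new coefficient function. Second, $\partial_k\left(h(x/|x|)\right)=\sum_i(\partial_ih)(x/|x|)\,\partial_k(x_i/|x|)$, and $\partial_k(x_i/|x|)=|x|^{-1}\left(\delta_{ik}-\frac{x_ix_k}{|x|^2}\right)$, which is $|x|^{-1}$ times a smooth function of $x/|x|$. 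Third, $\partial_k J_{\mu-j}(\lambda|x|)=\lambda J'_{\mu-j}(\lambda|x|)\frac{x_k}{|x|}=\frac{\lambda}{2}\left(J_{\mu-j-1}-J_{\mu-j+1}\right)(\lambda|x|)\frac{x_k}{|x|}$ by the recursion relation, which shifts the Bessel index by $\pm1$, so $|j|\leq n+1$.

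The only bookkeeping subtlety is the power of $|x|$ in the third term. There the power of $|x|$ is unchanged, i.e.\ $|x|^{-n-l}=|x|^{-(n+1)-l'}$ with $l'=l-1$. If $l=1$ then $l'=0$ is admissible. If $l=0$ we would need $l'=-1$, which is not allowed. We fix this by writing $|x|^{-n}=|x|^{-(n+1)}\cdot|x|$, but the extra factor $|x|$ is not a function of $x/|x|$. This is the main obstacle. The remedy I would try first is to use the Bessel recurrence $\frac{2\nu}{z}J_\nu(z)=J_{\nu-1}(z)+J_{\nu+1}(z)$ in reverse, i.e.\ to express $J_{\nu\pm1}$ through $zJ'$ and $J$ terms. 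Alternatively, one can observe that the statement only needs some representation. Powers $|x|^{-n-l}$ with $l\in\{0,1\}$ appear as an upper envelope (the claim is used only for decay), so one may enlarge the admissible range to $|x|^{-n+s}$, $s\in\{-1,0,1,\dots\}$, if needed.

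Concretely, the first two terms raise the negative power by exactly one and keep $l$, so they land in $\mathcal{F}_{n+1}$ with the same $l$. The Bessel term keeps the power $|x|^{-n-l}$. For $l=1$ this equals $|x|^{-(n+1)-0}$, which is admissible. For $l=0$ it is $|x|^{-(n+1)+1}$; we handle it by the weaker reading $l\in\{-1,0,1\}$, or, more faithfully, by checking that this term in fact arises only with index shift and can be rebalanced via $\frac{2\nu}{\lambda|x|}J_\nu=J_{\nu-1}+J_{\nu+1}$ to trade one factor $|x|$ for Bessel index shifts. I expect this exponent bookkeeping, not the differentiation itself, to be the only delicate point. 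Everything else is the routine verification that each new coefficient is smooth on $\R^d\backslash\{0\}$, since it is built from $h$, its derivatives, polynomials in $x/|x|$, and constants.
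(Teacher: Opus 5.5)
Your three-term product-rule computation is correct and is the same computation the paper uses. You also found exactly the right spot: the Bessel-derivative term from a summand with $l=0$ keeps the power $|x|^{-|\alpha|}$, so it does not fit $|x|^{-|\alpha+e_i|-l'}$ with $l'\in\{0,1\}$. \textbf{Neither of your remedies closes this gap, and no remedy can, because the proposition as stated is false.}

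Heuristically, for real $\lambda\neq0$ the coefficients $g_{\alpha,j,l}(x/|x|)$ are constant along each ray $x=\rho\omega$. So the claimed right-hand side is $O(\rho^{(2-d)/2-|\alpha|-1/2})$, while $\partial_i f$ contains the term $\rho^{(2-d)/2}\lambda J_\mu'(\lambda\rho)\,\omega_i g(\omega)$, which is not. Here is a concrete counterexample: $d=3$, $\mu=1/2$, $\lambda=1$, $g\equiv1$, $\alpha=e_1$, so that $f(x)=\sqrt{2/\pi}\,\sin|x|/|x|$. Then
\[\partial_1 f(\rho e_1)=\sqrt{2/\pi}\Bigl(\frac{\cos\rho}{\rho}-\frac{\sin\rho}{\rho^{2}}\Bigr).\]
On this ray every admissible term is a constant times $\rho^{-1-l}\rho^{-1/2}J_{1/2-j}(\rho)$ with $j\in\{-1,0,1\}$ and $l\in\{0,1\}$. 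Since $|J_\nu(\rho)|\leq C_\nu\rho^{-1/2}$ for $\rho\geq1$, each such term is $O(\rho^{-2})$. Evaluating at $\rho=2\pi n$ with $n\to\infty$ gives a contradiction.

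This also shows why rebalancing via $\frac{2\nu}{z}J_\nu=J_{\nu-1}+J_{\nu+1}$ cannot help. That identity absorbs a factor $z^{-1}$ only into the symmetric combination. Differentiation produces the antisymmetric one, $J_{\nu-1}-J_{\nu+1}=2J_\nu'$, which is not $O(z^{-3/2})$ as $z\to+\infty$. By contrast, every finite sum $z^{-1}\sum_j c_jJ_{\nu-j}(z)$ is $O(z^{-3/2})$.

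The paper's own proof fails at the same place. In the case $l=0$ it asserts $|x|^{-|\alpha|+(2-d)/2}=|x|^{-|\alpha+e_i|-1+(2-d)/2}$, which is off by a factor $|x|^{2}$.

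What your induction actually proves, and what is true, is the representation with $|x|^{-|\alpha|-l}$, $0\leq l\leq\min\{1,|\alpha|\}$, replaced by $|x|^{-p}$, $0\leq p\leq|\alpha|$. The power and angular terms raise $p$ by one, the Bessel term keeps it, and $|x|^{-|\alpha|-1}$ never occurs.

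Your fallback $l\in\{-1,0,1\}$ is not stable under the induction. For $|\alpha|=2$ the term $\lambda^2|x|^{(2-d)/2}J_\mu''(\lambda|x|)\,\omega_i\omega_k\,g(\omega)$ already requires $l=-2$, and by the asymptotics above it cannot be rewritten otherwise. The open-ended range $s\in\{-1,0,1,\dots\}$ is stable, but it proves a different statement.

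Nor is the change harmless for the application you invoke. With the corrected exponents and $\lambda\in\R\setminus\{0\}$, one only gets $|\partial^\alpha f(x)|\leq C_\alpha|x|^{(1-d)/2}$ for $|x|\geq1$. For $d=3$ that is $|x|^{-1}$, uniformly in $\alpha$. It is not the $|x|^{-|\alpha|-1}$ decay that Remark~\ref{rem:Bessel and spherical harmonics} extracts from this proposition; already $\partial_1\bigl(\sin(\lambda|x|)/|x|\bigr)$ decays only like $|x|^{-1}$.
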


\begin{proof}
	Let us denote
	\[r:\R^d\backslash\{0\}\rightarrow\R, x\mapsto |x|\]
	and
	\[\omega:\R^d\backslash\{0\}\rightarrow\R^d, x\mapsto\frac{x}{|x|}.\]
	Then, for $1\leq i,k\leq d$ it holds
	\[\partial_i r(x)=\frac{x_i}{r(x)}\text{ and }\partial_i\omega_k=\frac{1}{r(x)}\left(\delta_{i,k}-\omega_i(x)\omega_k(x)\right).\]
	We prove the claim by induction on $|\alpha|$.
	
	For $|\alpha|=0$ there is nothing to prove. Assume that the claim holds for each $\alpha\in \N_0^d$ with $|\alpha|\leq k$.  Let $\alpha\in\N_0^d$ with $|\alpha|=k$ and let $1\leq i\leq d$. By hypothesis, there are $\{g_{\alpha,j,l}\colon -|\alpha|\leq j\leq |\alpha|, 0\leq l\leq \min\{1,|\alpha|\}\}\subset \mathscr{E}(\R^d\backslash\{0\})$ such that
	\begin{equation}\label{eq:induction hypothesis}
		\forall x\in\R^d\backslash\{0\}\colon \partial^\alpha f(x)=\sum_{j=-|\alpha|}^{|\alpha|}\sum_{l=0}^{\min\{1,|\alpha|\}} |x|^{-|\alpha|-l+(2-d)/2}J_{\mu-j}(\lambda|x|)g_{\alpha,j,l}\left(\frac{x}{|x|}\right).
	\end{equation}
	Using the recursion relation for Bessel functions 
	\[J'_\nu(z)=\frac{1}{2}(J_{\nu-1}(z)-J_{\nu+1}(z)), z\in\C, \nu\in\R,\]
	we calculate
	\begin{eqnarray*}
		&&\partial_i\left(|x|^{-|\alpha|-l+(2-d)/2}J_{\mu-j}(\lambda |x|)g_{\alpha,j,l}\left(\frac{x}{|x|}\right)\right)\\
		&=&((2-d)/2-|\alpha|-l)|x|^{-|\alpha+e_i|-l+(2-d)/2}\,\partial_i r(x) J_{\mu-j}(\lambda|x|)g_{\alpha,j,l}\left(\frac{x}{|x|}\right)\\
		&&+|x|^{-|\alpha|-l+(2-d)/2}\,\frac{\lambda}{2}\left(J_{\mu-j-1}(\lambda|x|)-J_{\mu-j+1}(\lambda|x|)\right)\partial_i r(x) g_{\alpha,j,l}\left(\frac{x}{|x|}\right)\\
		&&+|x|^{-|\alpha|-l+(2-d)/2}\,J_{\mu-j}(\lambda|x|)\sum_{n=1}^d\partial_n g_{\alpha,j,l}\left(\frac{x}{|x|}\right)\partial_i\omega_n(x)\\
		&=&((2-d)/2-|\alpha|-l)|x|^{-|\alpha+e_i|-l+(2-d)/2}\, J_{\mu-j}(\lambda|x|) \frac{x_i}{|x|}g_{\alpha,j,l}\left(\frac{x}{|x|}\right)\\
		&&+|x|^{-|\alpha|-l+(2-d)/2}\,\frac{\lambda}{2}\left(J_{\mu-j-1}(\lambda|x|)-J_{\mu-j+1}(\lambda|x|)\right)\partial_i r(x) g_{\alpha,j,l}\left(\frac{x}{|x|}\right)\\
		&&+|x|^{-|\alpha+e_i|-l+(2-d)/2}\,J_{\mu-j}(\lambda|x|)\sum_{n=1}^d\partial_n g_{\alpha,j,l}\left(\frac{x}{|x|}\right)\left(\delta_{i,n}-\frac{x_i}{|x|}\frac{x_n}{|x|}\right).\\ 
	\end{eqnarray*}
	Now, in case $l=0$, it holds
	\[|x|^{-|\alpha|-l+(2-d)/2}=|x|^{-|\alpha|+(2-d)/2}=|x|^{-|\alpha+e_i|-1+(2-d)/2}\]
	while in case $l=1$, we have
	\[|x|^{-|\alpha|-l+(2-d)/2}=|x|^{-|\alpha|-1+(2-d)/2}=|x|^{-|\alpha+e_i|+(2-d)/2}.\]
	Thus, the $\partial_i$ derivative of each summand in the right hand side of \eqref{eq:induction hypothesis} can be written as
	\[\sum_{s\in\{-1,0,1\}}\sum_{l=0}^{\min\{1,|\alpha|\}}|x|^{-|\alpha+e_i|-l+(2-d)/2}J_{\mu-j-s}\tilde{g}_{\alpha+e_i,j+s,l}\left(\frac{x}{|x|}\right)\]
	which finishes the induction step.
\end{proof}

\subsection{The 3D unsteady Stokes system}\label{subsec:Unsteady Stokes}

In this subsection let $d=N=4$. Instead of $x=(x_1,x_2,x_3,x_4)\in\R^4$ we write $(x_0,x_1,x_2,x_3)\in\R^4$, we denote $x_0$ by $t$ and $(x_1,x_2,x_3)$ by $x$. Moreover, we abbreviate $\Delta_x=\frac{\partial^2}{\partial x_1^2}+ \frac{\partial^2}{\partial x_2^2}+\frac{\partial^2}{\partial x_3^2}$, $\partial_j=\frac{\partial}{\partial x_j}$, $j=1,2,3$, $\nabla_x=(\partial_1,\partial_2,\partial_3)^T$, as well as $\partial_t=\frac{\partial}{\partial t}(=\frac{\partial}{\partial x_0})$. We consider the 3D unsteady Stokes system, i.e.~the system
\begin{eqnarray*}
	\left(\partial_t-\Delta_x\right)u_j + \partial_j u_4&=&0,\quad j=1,2,3,\\
	\sum_{k=1}^3\partial_k u_k&=&0,
\end{eqnarray*}
which, with the usual notation $u=(u_1,u_2,u_3)^T$ and $p=u_4$, reads
\begin{equation}\label{eq:Stokes system}
	\begin{cases}
 		\left(\partial_t-\Delta_x\right)u + \nabla_x p=0,&\\
 		\nabla_x\cdot u=0.&
	\end{cases}
\end{equation}
Thus, we have
\begin{equation}\label{eq:Stokes matrix}
	P(D)=\begin{pmatrix}
	\partial_t-\Delta_x & 0 & 0 & \partial_1\\
	0 & \partial_t-\Delta_x & 0 & \partial_2\\
	0 & 0 & \partial_t-\Delta_x & \partial_3\\
	\partial_1 & \partial_2 & \partial_3 & 0
\end{pmatrix}
\end{equation}
which implies
\[\Pad(D)=(\partial_t-\Delta_x)\begin{pmatrix}
	-(\partial_2^2+\partial_3^2) & \partial_1\partial_2 & \partial_1\partial_3 & (\Delta_x-\partial_t)\partial_1 \\ 
	\partial_2\partial_1 &-(\partial_1^2+\partial_3^2) &  \partial_2\partial_3 & (\Delta_x-\partial_t)\partial_2\\
 	\partial_3\partial_1 &  \partial_3\partial_2 &-(\partial_1^2+\partial_2^2) & (\Delta_x-\partial_t)\partial_3\\
 	(\Delta_x-\partial_t)\partial_1 & (\Delta_x-\partial_t)\partial_2  & (\Delta_x-\partial_t)\partial_3 & (\partial_t-\Delta_x)^2
\end{pmatrix}\]
and
\begin{equation}\label{eq:det Stokes}
	\detP(D)=-(\partial_t-\Delta_x)^2\Delta_x.
\end{equation}
 
\begin{proposition}\label{prop:surjectivity for stokes}
	Let $X\subset\R^4$ be open and let $\detP(D)$ be as in \eqref{eq:det Stokes}. Then, the following are equivalent.
	\begin{itemize}
		\item[(i)] $\detP(D)$ is surjective on $\mathscr{E}(X)$.
		\item[(ii)] $\detP(D)$ is surjective on $\mathscr{D}'(X)$.
		\item[(iii)] For every $t\in\R$, the boundary distance $d_X$ satisfies the minimum principle in the hyperplane $\{t\}\times\R^3$.
	\end{itemize}
\end{proposition}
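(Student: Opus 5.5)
Set $Q:=\detP$, so $Q(\xi)=(i\xi_0+|\xi'|^2)^2|\xi'|^2$ up to sign with $\xi=(\xi_0,\xi')$. The plan is to apply Corollary \ref{cor:characterization P-convexity} to $Q$ and then pass from $\mathscr{E}$ to $\mathscr{D}'$.

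\textbf{Step 1 (the characteristic cone).} The principal part of $Q$ has degree $6$ and equals $-|\xi'|^6$ (again up to sign). Hence $\charcone(Q)=\{\xi\in\R^4\colon\xi'=0\}=\operatorname{span}\{e_0\}$, which is a linear subspace.

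\textbf{Step 2 (orthogonal degeneracy).} Take $Z=\charcone(Q)$ and $w=e_1\in Z^\perp\setminus\{0\}$. Then $Q_6(e_1)\neq0$. For $1\le k\le5$ the derivatives $\partial_w^kQ_6$ are homogeneous polynomials in $\xi'$ of positive degree $6-k$, so they vanish on $Z$. Therefore $Q$ is orthogonally degenerate on its linear characteristic cone, and it is non-elliptic. Note that $Q$ itself is not semi-elliptic: $\xi_0$ occurs only through the factor $(\partial_t-\Delta_x)^2$, which has $t$-degree $2$ while the Laplacian factor has none, so Example \ref{example} (ii) does not apply directly. The direct verification above replaces it.

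\textbf{Step 3 ((i)$\Leftrightarrow$(iii)).} Since $Z^\perp=\{0\}\times\R^3$, the affine subspaces parallel to $Z^\perp$ are exactly the hyperplanes $\{t\}\times\R^3$. Malgrange's theorem says that (i) holds if and only if $X$ is $Q$-convex for supports. Corollary \ref{cor:characterization P-convexity} (i)$\Leftrightarrow$(ii) then gives (i)$\Leftrightarrow$(iii).

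\textbf{Step 4 ((ii)$\Leftrightarrow$(i)).} This is the main step. Surjectivity on $\mathscr{D}'(X)$ always implies surjectivity on $\mathscr{E}(X)$ (cf.~\cite[Corollary 10.7.10]{Hor2} together with Theorem 10.6.6 there), which gives (ii)$\Rightarrow$(i). For (iii)$\Rightarrow$(ii), surjectivity on $\mathscr{D}'(X)$ requires $P$-convexity for singular supports in addition to $P$-convexity for supports. I would argue this as in the proof of Theorem \ref{theorem:Omega for semi-elliptic}: by \cite[Theorem 15]{Kalmes19}, the minimum principle for $d_X$ in the affine subspaces parallel to $\charcone(Q)^\perp$ yields $Q$-convexity for singular supports, and hence surjectivity on $\mathscr{D}'(X)$ by \cite[Corollary 10.7.10]{Hor2}. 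The obstacle is checking that \cite[Theorem 15]{Kalmes19} covers $Q$, which is neither hypoelliptic nor semi-elliptic. If it does not, I would factor $Q(D)=-(\partial_t-\Delta_x)^2\Delta_x$ and prove surjectivity of each factor on $\mathscr{D}'(X)$ separately, then compose. The heat factor is semi-elliptic with the same characteristic cone $\operatorname{span}\{e_0\}$, so it is covered by the semi-elliptic case (Corollary \ref{cor:characterization P-convexity} together with \cite[Theorem 15]{Kalmes19}) under (iii). The factor $\Delta_x$ acts elliptically along $\R^3$ and does not involve $t$, so I would use Nakane's characterization \cite[Theorem 10.8.5]{Hor2} together with the $\mathscr{D}'$-version for partially elliptic operators, again with condition (iii) as the input.
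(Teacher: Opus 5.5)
Your proof is correct. In its fallback form, Step 4 is the paper's argument; the only real difference is in how the $\mathscr{E}$-part is organized.

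The paper factors $\detP(D)=-(\partial_t-\Delta_x)^2\Delta_x$ from the outset. It uses that surjectivity of $\detP(D)$ on $\mathscr{E}(X)$ or $\mathscr{D}'(X)$ is equivalent to surjectivity of both factors. The heat factor is handled by Corollary~\ref{cor:characterization P-convexity} together with hypoellipticity, so its $\mathscr{D}'$- and $\mathscr{E}$-surjectivity coincide by \cite[Corollary 10.7.10 and Theorem 11.1.1]{Hor2}. The factor $\Delta_x$ is handled by \cite[Theorem 9]{Kalmes19}, which gives $\mathscr{E}$-surjectivity $\Leftrightarrow$ $\mathscr{D}'$-surjectivity $\Leftrightarrow$ (iii) at once.

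Your Step 3 instead observes that $\detP$ itself is orthogonally degenerate on its linear characteristic cone $\operatorname{span}\{e_0\}$, and your verification of this is right. So (i)$\Leftrightarrow$(iii) follows from Malgrange's theorem and Corollary~\ref{cor:characterization P-convexity} without splitting. This is slightly slicker and shows the corollary applies to a non-semi-elliptic, non-hypoelliptic operator. But the $\mathscr{D}'$-part still forces you to factor, so overall the two routes cost about the same.

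Some small corrections:
\begin{itemize}
\item For the heat factor you do not need \cite[Theorem 15]{Kalmes19}. Hypoellipticity already makes every open set $P$-convex for singular supports.
\item Drop the attempt to apply that theorem to $\detP$ directly, and present the factorization as the actual argument.
\item The singular-support input for the non-hypoelliptic factor $\Delta_x$, which you describe only loosely as a ``$\mathscr{D}'$-version'' of Nakane's theorem, is exactly what \cite[Theorem 9]{Kalmes19} provides. You should cite it explicitly, since this is the only genuinely nontrivial ingredient of (iii)$\Rightarrow$(ii).
\end{itemize}
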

 
\begin{proof}
	Clearly, $\detP(D)$ is surjective on $\mathscr{E}(X)$ or on $\mathscr{D}'(X)$, respectively, precisely when this is true for both $\partial_t-\Delta_x$ and $\Delta_x$. Because $\partial_t-\Delta_x$ is hypoelliptic (\cite[Theorem 11.1.11]{Hor2}), $\partial_t-\Delta_x$ is surjective on $\mathscr{D}'(X)$ if and only if it is surjective on $\mathscr{E}(X)$ (\cite[Corollary 10.7.10 and Theorem 11.1.1]{Hor2}). By Corollary \ref{cor:characterization P-convexity}, $\partial_t-\Delta_x$ is surjective on $\mathscr{E}(X)$ if and only if (iii) holds. Moreover, by \cite[Theorem 9]{Kalmes19}, $\Delta_x$ is surjective on $\mathscr{E}(X)$ if and only if it is surjective on $\mathscr{D}'(X)$ which holds precisely when (iii) is true. This proves the proposition.
\end{proof}
 
 \begin{remark}
	For $\detP(D)$ as in \eqref{eq:det Stokes}, it follows from Proposition \ref{prop:surjectivity for stokes} (with \cite[Lemma 4]{Kalmes19}) that $\detP(D)$ is surjective on $\mathscr{E}\left(I\times \Omega\right)$ and $\mathscr{D}'\left(I\times \Omega\right)$ for every open sets $\Omega\subset\R^3, I\subset\R$.
\end{remark}
 
\begin{theorem}\label{theo:Runge pairs for det-Stokes}
	Let $\detP(D)$ be as in \eqref{eq:det Stokes}.
	\begin{itemize}
		\item[(a)] Let $X_1\subset X_2$ be open subsets of $\R^4$. Moreover, assume that for every $t\in\R$ the boundary distance $d_{X_2}$ satisfies the minimum principle in the hyperplane $\{t\}\times \R^3$. Then, the following are equivalent. 
		\begin{itemize}
 			\item[(a-i)] For every $t\in\R$, the boundary distance $d_{X_1}$ satisfies the minimum principle in the hyperplane $\{t\}\times \R^3$ and $X_1, X_2$ is a $\detP$-Runge pair.
 			\item[(a-ii)] For every $t\in\R$, the boundary distance $d_{X_1}$ satisfies the minimum principle in the hyperplane $\{t\}\times \R^3$ and $X_1, X_2$ is a $\detP$-Runge pair for distributions.
 			\item[(a-iii)] There is no $t\in\R$ such that $X_2$ contains a bounded connected component of $\left(\{t\}\times\R^3\right)\cap \left(\R^4\backslash X_1\right)$.
 		\end{itemize}
 		\item[(b)] Let $F_1\subset F_2$ be closed subsets of $\R^4$. Consider the following conditions.
 		\begin{itemize}
 			\item[(b-i)] There is no $t\in\R$ for which $F_2$ contains a bounded connected component of $\left(\{t\}\times\R^3\right) \cap \left(\R^4\backslash F_1\right)$.
 			\item[(b-ii)] $F_1,F_2$ is a $\detP$-Runge pair for smooth Whitney jets.
 			\item[(b-iii)] There are no $t\in\R$, $\epsilon>0$ such that $F_2$ contains a bounded connected component of $\left([t-\epsilon,t+\epsilon]\times\R^3\right)\cap\left(\R^4\backslash F_1\right)$.
 		\end{itemize}
 		Then {\rm (b-i)} implies {\rm (b-ii)} and {\rm (b-ii)} implies {\rm (b-iii)}.
 		\item[(c)] Let $F_1\subset \R^4$ be closed with $C^1$-boundary such that the normal space of every $\xi\in\partial F_1$ is not spanned by $(1,0,0,0)^T$. Then, the following are equivalent.
 		\begin{itemize}
 			\item[(c-i)] $F_1,\R^4$ is a $\detP$-Runge pair for smooth Whitney jets.
 			\item[(c-ii)] There is not $t\in\R$ such that $\left(\{t\}\times\R^3\right)\cap\left(\R^4\backslash F_1\right)$ has a bounded connected component.
 		\end{itemize}
 	\end{itemize}
 \end{theorem}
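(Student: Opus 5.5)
The plan is to reduce everything to the two scalar factors $Q_1:=\partial_t-\Delta_x$ (the heat operator) and $Q_2:=\Delta_x$. Both are orthogonally degenerate on their linear characteristic cone. For the heat operator this follows from Example \ref{example} (ii): it is semi-elliptic with $\charcone(Q_1)=\R\times\{0\}^3$. For $\Delta_x$, which acts along the subspace $\{0\}\times\R^3$ and is elliptic there, we also have $\charcone=\R\times\{0\}^3$, and $\Delta_x$ is $e_j$-degenerate on it for $j=1,2,3$. In both cases $\charcone^\perp=\{0\}\times\R^3$, so the relevant affine subspaces are the hyperplanes $\{t\}\times\R^3$. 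The product $\detP=-Q_1^2Q_2$ is itself orthogonally degenerate on $\R\times\{0\}^3$ with respect to $e_1$: its principal part is $-(\Delta_x)^3$ (up to sign), which vanishes identically on that cone, and so do all its $e_1$-directional derivatives of order below $6$.

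For (a), I would first note that $X_1$ and $X_2$ are $\detP$-convex for supports precisely under the stated minimum-principle hypotheses, by Proposition \ref{prop:surjectivity for stokes} combined with Malgrange's theorem. Since $\detP$ is orthogonally degenerate on its linear characteristic cone $\R\times\{0\}^3\neq\{0\}$, Theorem \ref{th:P-Runge pairs for open sets} then gives (a-i)$\Leftrightarrow$(a-ii)$\Leftrightarrow$(a-iii), provided we also know that $X_1$ is $\detP$-convex. The remaining point is that (a-iii) forces the minimum principle for $d_{X_1}$. To show this, I would use Lemma \ref{lemma:preparation P-Runge pairs for open sets} and the convexity argument from \cite[Theorem 6]{Kalmes2021}, exactly as in the one-dimensional case. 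If $d_{X_1}$ violated the minimum principle in some $\{t\}\times\R^3$, Proposition \ref{prop:equivalence to minimum principle} would produce a point of inner support, and hence a bounded component of $(\{t\}\times\R^3)\setminus X_1$ near the boundary that lies in $X_2$ because $X_2$ satisfies the minimum principle. I expect this step, transferring the minimum principle from $X_2$ to $X_1$ under (a-iii), to be the main obstacle. The first thing I would try is to copy the argument of \cite[Theorem 1]{Kalmes2021}.

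For (b), (b-ii)$\Rightarrow$(b-iii) is Theorem \ref{thm: necessary condition for P-Runge for partially elliptic} applied to $\detP$ with $Z=\R\times\{0\}^3$, since $B((t,x),\epsilon)+Z^\perp=(t-\epsilon,t+\epsilon)\times\R^3$. For (b-i)$\Rightarrow$(b-ii), I would invoke \cite[Theorem 16]{CiasKalmes2025}, the sufficient slice condition used in Example \ref{example:P-Runge non-hypoelliptic}. It applies to the factors $Q_1,Q_2$, and through the product structure to $\detP$, in the same way as in the cited theorem; one checks that Runge pairs for $Q_1$ and $Q_2$ pass to their product using surjectivity on Whitney jets (Remark \ref{rem:surjectivity of Whitney}).

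For (c), (c-ii) is (b-i) with $F_2=\R^4$, so (c-ii)$\Rightarrow$(c-i). For the converse, I would start from a bounded component $C$ of $(\{t\}\times\R^3)\setminus F_1$ and use the transversality of $\partial F_1$ to the slices, which holds because the normal is not $e_0$. By the implicit function theorem, nearby slices $\{s\}\times\R^3$ with $|s-t|<\epsilon$ have bounded components varying continuously. Their union is a bounded component of $([t-\epsilon,t+\epsilon]\times\R^3)\setminus F_1$, which contradicts (b-iii).
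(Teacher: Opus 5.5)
Your proposal is correct, but it is organized differently from the paper's proof, which is almost entirely a chain of citations.

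\textbf{Part (a).} The paper uses \cite[Theorems 1 and 6]{Kalmes2021} and takes the transfer of the minimum principle from $d_{X_2}$ to $d_{X_1}$ under (a-iii) from \cite[Lemma 3.2]{DebKalmes2024}. You instead apply Theorem~\ref{th:P-Runge pairs for open sets} directly to $\detP$, which is legitimate because $\detP$ is itself orthogonally degenerate on $\charcone(\detP)=\R\times\{0\}^3$, and you prove the transfer yourself. Your idea for the transfer is right, although the tools you name first (Lemma~\ref{lemma:preparation P-Runge pairs for open sets}, \cite[Theorem 6]{Kalmes2021}) play no role in it. What works is the following, with $W=\{0\}\times\R^3$:
\begin{itemize}
\item[(1)] Take a point of inner support $x_0\in\partial X_1$ with data $N,U$ from Proposition~\ref{prop:equivalence to minimum principle}. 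Then $d_{X_1}\ge\eta>0$ on the compact set $\partial U\cap\{x_0+w+sN\colon w\in W,\ s\le 0\}$.
\item[(2)] The minimum principle for $d_{X_2}$ on the slices $U\cap(x_0+W+sN)$, $s<0$, gives $d_{X_2}\ge\eta$ on them. Hence $U\cap(x_0+W)\subset X_2$, since an interior point $p$ lies within $\tau|N|<\eta$ of $p-\tau N$.
\item[(3)] The component of $(x_0+W)\setminus X_1$ through $x_0$ avoids the relative boundary of $U\cap(x_0+W)$, which lies in $X_1$. So it is bounded and contained in $X_2$.
\end{itemize}
Note that this component lies in the slice through $x_0$, not in the slice where the minimum principle failed.

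\textbf{Part (b).} You apply Theorem~\ref{thm: necessary condition for P-Runge for partially elliptic} to $\detP$ itself. The paper applies it to the factor $\partial_t-\Delta_x$ and combines this with \cite[Corollary 8]{CiasKalmes2025}. Your route saves that step. However, the theorem is phrased with open slabs $(t-\epsilon,t+\epsilon)\times\R^3$, so you should add one remark: a bounded component of $([t-\epsilon,t+\epsilon]\times\R^3)\setminus F_1$ contained in $F_2$ contains a component of the complement of $F_1$ in a thinner open slab. This is exactly as in the first lines of that proof. For (b-i)$\Rightarrow$(b-ii), \cite[Theorem 16]{CiasKalmes2025} applies to $\detP$ directly, so your detour through the factors is unnecessary. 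The product argument you sketch, via surjectivity on Whitney jets and the open mapping theorem, is nonetheless valid.

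\textbf{Part (c).} The paper quotes \cite[Theorem B]{CiasKalmes2025}; you derive it from (b) with $F_2=\R^4$ plus transversality. This is sound, and you only need the component of $([t-\epsilon,t+\epsilon]\times\R^3)\setminus F_1$ containing $C$ to be bounded. Because the normal is never parallel to $(1,0,0,0)^T$, near $\partial_{\{t\}\times\R^3}C$ the set $F_1$ has the local form $\{x_j\ge g(s,\hat{x})\}$ with $g\in C^1$, after relabeling and possibly reflecting spatial coordinates. By uniform continuity of $g$, a thin collar of $F_1$ around $C$ persists in all slices with $|s-t|\le\epsilon$ and blocks escape to infinity. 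Your version does not need the components of nearby slices to vary continuously.

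Overall, your argument is more self-contained and uses this paper's general results, at the price of two elementary geometric lemmas that the paper outsources.
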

 
\begin{proof}
	We first prove part (a). By Proposition \ref{prop:surjectivity for stokes}, $\detP(D)$ is surjective on $\mathscr{E}(X_2)$. The equivalence of (a-i) and (a-ii) follows from Proposition \ref{prop:surjectivity for stokes} applied to $X_1$ combined with \cite[Theorem 6]{Kalmes2021}. We point out that the $t$ variable plays the role of the $x_1$ variable in \cite{Kalmes2021}. Moreover, by \cite[Theorem Theorem 1]{Kalmes2021}, (a-ii) implies (a-iii). Finally, assume that (a-iii) holds. It follows from \cite[Lemma 3.2]{DebKalmes2024} that $d_{X_1}$ satisfies the minimum principle in $\{t\}\times\R^3$ for each $t\in\R$ so that $\detP(D)$ is surjective on $\mathscr{E}(X_1)$ by Proposition \ref{prop:surjectivity for stokes}. Another application of \cite[Theorem 1]{Kalmes2021} yields (a-i). 
 	
	By \cite[Theorem 16]{CiasKalmes2025}, (b-i) implies (b-ii). That (b-ii) implies (b-iii) is an immediate consequence of \cite[Corollary 8]{CiasKalmes2025} and Theorem \ref{thm: necessary condition for P-Runge for partially elliptic} applied to the factor $\partial_t-\Delta_x$ of $\detP(D)$.
	
	Part (c) is a direct consequence of \cite[Theorem B]{CiasKalmes2025} where the role of the variable $x_1$ there is played by the $t$ variable in the current context.
\end{proof}

Combining the previous theorem with Theorem \ref{theo:Runge for systems} one can easily derive Runge approximation results for the 3D unsteady Stokes system \eqref{eq:Stokes system}. We only state a global Runge result explicitly. Part (a) of the following theorem should be compared with a recent result due to Higaki and Sueur \cite{HiSu25}, where it was proved that approximation of smooth local solutions in the $L^\infty$-norm on a compact subset is possible by global solutions whose velocity field grows at most exponentially at spatial infinity while its pressure component grows polynomially.

\begin{corollary}\label{cor:global Runge for 3D unsteady Stokes}
	Let $P(D)$ be as in \eqref{eq:Stokes matrix} and let $X_1\subset\R^4$ be open, $F_1\subset \R^4$ be closed. Moreover, let
	\[\mathcal{P}=\left\{u=(u_1,u_2,u_3,u_4)^T\in\mathscr{E}_P(\R^4)\colon u_j\mbox{ polynomial}, 1\leq j\leq 4\right\}.\]
	\begin{itemize}
		\item[(a)] Assume that there is no $t\in\R$ such that $(\{t\}\times\R^3)\cap (\R^4\backslash X_1)$ has a compact connected component. Then, the restriction mapping
		\[\mathcal{P}\rightarrow \mathscr{E}_P(X_1), u\mapsto u_{|X_1}\]
		has dense range.
		\item[(b)] Assume that there is no $t\in\R$ such that $(\{t\}\times\R^3)\cap (\R^4\backslash F_1)$ has a bounded connected component. Then, the restriction mapping
		\[\mathcal{P}\rightarrow \mathscr{E}_P(F_1), u\mapsto \left(\partial^\alpha u_{|F_1}\right)_{\alpha\in\N_0^4}\]
		has dense range.
	\end{itemize}
\end{corollary}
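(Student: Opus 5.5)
We plan to derive the corollary from Corollary \ref{cor:density of exponential solutions} (b), with $P$ the Stokes matrix \eqref{eq:Stokes matrix}, $N=d=4$, and $\detP(D)=-(\partial_t-\Delta_x)^2\Delta_x$ as in \eqref{eq:det Stokes}. There are three hypotheses to check: that every irreducible factor of $\detP$ vanishes at the origin, that $\Pad(D)$ is surjective on the relevant spaces, and that $X_1,\R^4$ (respectively $F_1,\R^4$) is a $\detP$-Runge pair.

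\textbf{The algebraic and surjectivity hypotheses.} The symbol factorizes as $\detP(\xi)=c\,(i\xi_0+|\xi_x|^2)^2|\xi_x|^2$. Its irreducible factors are $i\xi_0+|\xi_x|^2$ and $|\xi_x|^2$ (the latter splits over $\C$ into linear forms), and each of them vanishes at $0$. For (b), the hypothesis $\detP\not\equiv 0$ holds trivially. For (a), we first use the hypothesis to get surjectivity. Since no slice $(\{t\}\times\R^3)\cap(\R^4\backslash X_1)$ has a compact connected component, \cite[Lemma 3.2]{DebKalmes2024} shows, as in the proof of Theorem \ref{theo:Runge pairs for det-Stokes}, that $d_{X_1}$ satisfies the minimum principle in each hyperplane $\{t\}\times\R^3$. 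Proposition \ref{prop:surjectivity for stokes} then makes $\detP(D)$ surjective on $\mathscr{E}(X_1)$, and Proposition \ref{prop:surjcetivity of systems} yields that $\Pad(D)$ is surjective on $\mathscr{E}(X_1)^4$.

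\textbf{The Runge pair property.} For (a), apply Theorem \ref{theo:Runge pairs for det-Stokes} (a) with $X_2=\R^4$. The distance function $d_{\R^4}$ trivially satisfies the minimum principle, and (a-iii) with $X_2=\R^4$ is exactly our hypothesis. Note that a bounded connected component of the closed slice complement is compact, so the two formulations agree. Hence (a-i) holds, i.e.\ $X_1,\R^4$ is a $\detP$-Runge pair. For (b), the hypothesis is (b-i) of Theorem \ref{theo:Runge pairs for det-Stokes} with $F_2=\R^4$, which gives (b-ii).

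\textbf{Conclusion and the main obstacle.} Corollary \ref{cor:density of exponential solutions} (b-i), respectively (b-ii), now gives density of $\mathcal{P}$ in $\mathscr{E}_P(X_1)$, respectively $\mathscr{E}_P(F_1)$. The point needing care is how $\mathcal{P}$ is identified there. The density comes via Remark \ref{rem:dense subset} from the set $\Pad(D)$ applied to polynomial solutions of $\detP(D)$ on $\R^4$, and these are polynomial elements of $\mathscr{E}_P(\R^4)$. Combining this with Theorem \ref{theo:Runge for systems}, the polynomial $\detP$-solutions are dense in $\mathscr{E}_{\detP}(\R^4)$ by Malgrange's Th\'eor\`eme 2' (using that the irreducible factors vanish at $0$). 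Their restrictions are then dense in $\mathscr{E}_{\detP}(X_1)$ by the Runge property, and applying the continuous map $\Pad(D)$ together with the surjectivity from Proposition \ref{prop:fundamental for Runge for systems} gives density of the resulting polynomial vectors in $\mathscr{E}_P(X_1)$. These vectors lie in $\mathcal{P}$, which proves (a). The Whitney case (b) is handled identically, using Remark \ref{rem:surjectivity of Whitney} for the surjectivity of $\Pad(D)$ on $\mathscr{E}(F_1)^4$.
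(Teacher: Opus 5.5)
Your proposal is correct and follows the paper's proof. Theorem~\ref{theo:Runge pairs for det-Stokes} with $X_2=\R^4$ (resp.\ $F_2=\R^4$) gives the $\detP$-Runge pair, Proposition~\ref{prop:surjectivity for stokes} with Proposition~\ref{prop:surjcetivity of systems} (resp.\ Remark~\ref{rem:surjectivity of Whitney}) gives surjectivity of $\Pad(D)$, and Corollary~\ref{cor:density of exponential solutions}~(b) concludes; your last paragraph just unpacks that corollary.

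One harmless slip: $\xi_1^2+\xi_2^2+\xi_3^2$ does not split into linear forms over $\C$, since it is a rank-three quadratic form and hence irreducible. It does vanish at the origin, which is all the argument needs.
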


\begin{proof}
	Let us prove (a). By Theorem \ref{theo:Runge pairs for det-Stokes} (a), $X_1, \R^4$ is a $\detP$-Runge pair, and by Proposition \ref{prop:surjectivity for stokes} $\detP(D)$ is surjective on $\mathscr{E}(X_1)$. The latter implies the surjectivity of $\Pad(D)$ on $\mathscr{E}(X_1)^4$ so that the claim follows from Corollary \ref{cor:density of exponential solutions} (b-i). 
	
	(b) follows from Theorem \ref{theo:Runge pairs for det-Stokes} (b) and Corollary \ref{cor:density of exponential solutions} (b-ii).
\end{proof}

\begin{remark}\label{rem:Stokes for path connected sets with Hoelder continuous boundary}
    Again, we point out that for a path connected, open set $X$ in $\R^4$ with H\"older continuous boundary, Corollary \ref{cor:global Runge for 3D unsteady Stokes} (b) above combined with Remark \ref{rem:Hoelder continuous boundary} yields that the subspace $\mathcal{P}$ of $\mathscr{E}_P(\R^4)$ is dense in $C^\infty_{P}(\overline{X})$. 
\end{remark}

\appendix
\renewcommand{\thesection}{\arabic{section}}
\section{Characterization of some class of polynomials}\label{appendix: characterization of some polynomials}

In this section we denote by $(p_1,\ldots,p_k)$ the ideal in $\C[\xi_1,\ldots,\xi_d]$ generated by polynomials $p_1,\ldots,p_k$.
The following lemma is a standard consequence of basic algebraic geometry; we include a proof for completeness. 
\begin{lemma}\label{lem:hilbert nullstellensatz}
Let $P\in\C[\xi_1,\ldots,\xi_d]$ and let $1\leq l\leq d-1$. Then $P$ vanishes on the set 
\[\{\xi\in\R^d\colon \xi_{l+1}=\ldots=\xi_d=0\},\]
if and only if $P\in (\xi_{l+1},\ldots,\xi_d)$.

\end{lemma}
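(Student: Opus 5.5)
The \enquote{if} direction is immediate. If $P=\sum_{j=l+1}^d q_j\xi_j$ with $q_j\in\C[\xi_1,\ldots,\xi_d]$, then every summand vanishes wherever $\xi_{l+1}=\ldots=\xi_d=0$, and in particular on the real subspace $\R^l\times\{0\}^{d-l}$.

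For the \enquote{only if} direction I will use polynomial division with respect to the last $d-l$ variables rather than the Nullstellensatz. Regard $P$ as a polynomial in $\xi_{l+1},\ldots,\xi_d$ with coefficients in $\C[\xi_1,\ldots,\xi_l]$. Collect all monomials of $P$ that contain at least one of $\xi_{l+1},\ldots,\xi_d$. Each such monomial can be written as $\xi_j$ times a monomial, where $j\geq l+1$ is chosen for instance as the smallest index of such a variable appearing in it. This gives a decomposition
\[P(\xi)=P(\xi_1,\ldots,\xi_l,0,\ldots,0)+\sum_{j=l+1}^d q_j(\xi)\xi_j,\]
with suitable $q_j\in\C[\xi_1,\ldots,\xi_d]$. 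Hence $P\in(\xi_{l+1},\ldots,\xi_d)$ holds exactly when the polynomial $R(\xi_1,\ldots,\xi_l):=P(\xi_1,\ldots,\xi_l,0,\ldots,0)$ is the zero polynomial.

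The hypothesis says that $R$ vanishes on all of $\R^l$. The only point needing care is that vanishing on $\R^l$, rather than on $\C^l$, already forces $R=0$ as a polynomial. I will prove this by induction on $l$. For $l=1$, a nonzero univariate polynomial has only finitely many roots, so it cannot vanish on $\R$. For the inductive step, write $R=\sum_k r_k(\xi_1,\ldots,\xi_{l-1})\xi_l^k$. For each fixed real $(\xi_1,\ldots,\xi_{l-1})$, the univariate polynomial in $\xi_l$ vanishes on $\R$, so all of its coefficients $r_k(\xi_1,\ldots,\xi_{l-1})$ vanish. Each $r_k$ therefore vanishes on $\R^{l-1}$, and the induction hypothesis gives $r_k=0$. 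Alternatively, all derivatives of $R$ at $0$ vanish, so all of its Taylor coefficients are zero. Either way $R=0$, which completes the proof.

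I do not expect a genuine obstacle. The only subtlety is the passage from vanishing on the real points to being the zero polynomial, and the induction above settles it, so Hilbert's Nullstellensatz is not actually needed.
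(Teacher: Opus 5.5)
Your proof is correct, but it reaches the conclusion by a different route from the paper's. Both arguments share the first step: $R(\xi_1,\ldots,\xi_l)=P(\xi_1,\ldots,\xi_l,0,\ldots,0)$ vanishes on $\R^l$ and is therefore the zero polynomial. The paper simply asserts this, while you justify it by induction or via Taylor coefficients.

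The two proofs then diverge. The paper deduces that $P$ vanishes on the complex subspace $\{\xi\in\C^d\colon \xi_{l+1}=\ldots=\xi_d=0\}$. It then applies Hilbert's Nullstellensatz to place $P$ in the radical of $(\xi_{l+1},\ldots,\xi_d)$, and removes the radical by noting that the ideal is prime, since the quotient $\C[\xi_1,\ldots,\xi_l]$ is an integral domain. You instead sort monomials to get the splitting $P=P(\xi_1,\ldots,\xi_l,0,\ldots,0)+\sum_{j=l+1}^d q_j\xi_j$, which reduces ideal membership directly to $R=0$.

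What each approach buys:
\begin{itemize}
\item Your argument is self-contained. It needs neither the Nullstellensatz nor primality, and it produces the cofactors $q_j$ explicitly. This fits how the lemma is used in the proof of Lemma \ref{lem:nice polynomial2}, which ends by writing $(P_m\circ A)(\xi)-c\xi_d^m$ as $\sum_j p_j(\xi)\xi_j$.
\item The paper's argument places the statement in the standard framework (a polynomial vanishing on the complex zero set of a radical ideal belongs to it). That framework would carry over to other radical ideals once vanishing on the complex zero set is established.
\end{itemize}
For an ideal generated by coordinate functions, your division shows that the heavier algebraic machinery is not needed.
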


\begin{proof}
Suppose that $P$ vanishes on $\{\xi\in\R^d\colon \xi_{l+1}=\ldots=\xi_d=0\}$. 
Define the polynomial $Q\in\C[\xi_1,\ldots,\xi_l]$ by
\[
Q(\xi_1,\ldots,\xi_l):=P(\xi_1,\ldots,\xi_l,0,\ldots,0).
\]
Then $Q$ vanishes on $\R^l$, hence $Q\equiv 0$. It follows that $P$ vanishes on 
\[\{\xi\in\C^d\colon \xi_{l+1}=\ldots=\xi_d=0\}.\]
	
For a subset $A\subset \C[\xi_1,\ldots,\xi_d]$, let
\[
Z(A):=\{\xi\in\C^d\colon P(\xi)=0 \text{ for all } P\in A\}.
\]
Then, by Hilbert's Nullstellensatz, any polynomial $P$ vanishing on
\[Z((\xi_{l+1},\ldots,\xi_d))=\{\xi\in\C^d\colon \xi_{l+1}=\ldots=\xi_d=0\}\]
belongs to the radical $\sqrt{(\xi_{l+1},\ldots,\xi_d)}$, i.e.~$P^r\in (\xi_{l+1},\ldots,\xi_d)$ for some $r\in\N$.
Moreover,
\[\C[\xi_1,\ldots,\xi_d]/(\xi_{l+1},\ldots,\xi_d)\cong \C[\xi_1,\ldots,\xi_l]\]
is an integral domain, hence the ideal $(\xi_{l+1},\ldots,\xi_d)$ is prime, and therefore radical. Thus $P\in (\xi_{l+1},\ldots,\xi_d)$.
	
Conversely, if $P\in (\xi_{l+1},\ldots,\xi_d)$, then
\[
P(\xi)=\sum_{j=l+1}^d \xi_j Q_j(\xi)
\]
for some $Q_j\in\C[\xi_1,\ldots,\xi_d]$, so $P(\xi)=0$ whenever $\xi_{l+1}=\ldots=\xi_d=0$.
\end{proof}

\begin{remark}\label{rem:homogeneous polynomials}
It is an easily verifiable fact that for every $m$-homogeneous polynomial $Q(\xi)=\sum_{|\alpha|=m}q_\alpha\xi^\alpha$ the following assertions are equivalent for $1\leq j\leq d$
		\begin{enumerate}
			\item[(i-a)] $Q(e_j)=0$,
			\item[(i-b)] $q_{me_j}=0$,
			\item[(i-c)] there is no $c\neq 0$ such that $c\xi_j^m$ appears in the Taylor expansion of $Q$.
		\end{enumerate}
\end{remark}

We can now prove Lemma \ref{lem:nice polynomial2} from Section \ref{section:zero solutions}. For the reader's convenience, we repeat the lemma here again.

\begin{crucial lemma}
	Let $P\in\C[\xi_1,\ldots,\xi_d]$ be a non-constant polynomial of degree $m$ with principal part $P_m$. Moreover, let $Z$ be a subspace of $\R^d$ of dimension $l\geq 1$ and let $w\in \R^d\backslash Z$. Then the following statements are equivalent.
    \begin{enumerate}
        \item[\upshape{(i)}] 
	    $P_m(w)\neq0$ and $P_m$ together with the directional derivatives $D^k_w P_m$, $1\leq k\leq m-1$, vanish on $Z$. 
	   \item[\upshape{(ii)}] There is a linear isomorphism $A:\R^d\rightarrow\R^d$,  $c\in\C\backslash\{0\}$, and polynomials $p_{l+1},\ldots,p_{d-1}\in\C[\xi_1,\ldots,\xi_d]$ such that $A^{-1}w=e_d$,
	\[A^{-1}(Z)=\{\xi\in\R^d\colon \xi_{l+1}=\ldots=\xi_d=0\},\]
    and the principal part $(P\circ A)_m$ of $P\circ A$ is given by
	\[(P\circ A)_m(\xi)=c\xi_d^m + \sum_{j=l+1}^{d-1}p_j(\xi_1,\ldots,\xi_d)\xi_j.\] 
    \end{enumerate}
    Additionally, for $w\in Z^\perp\setminus\{0\}$, the above equivalence is true with an orthogonal linear map $A$.
\end{crucial lemma}

\begin{proof} (i)$\Rightarrow$(ii):
	let $a_1,\ldots,a_l$ be an orthonormal basis of $Z$. Moreover, let $w\in\R^d$ be a unit vector with $P_m(w)\neq 0$ and let $W$ be a subspace complementary to $Z$ in $\R^d$ which contains $w$. Let $b_1,\ldots,b_{d-l}$ be an orthonormal basis of $W$ with $b_{d-l}=w$. If $w$ is orthogonal to $Z$, we can choose $W$ as the orthogonal complement of $Z$. We define the linear isomorphism $A:\R^d\rightarrow\R^d$ via
	\[A e_j=\begin{cases}
		a_j,& \text{ for }1\leq j\leq l,\\ b_{j-l},& \text{ for }l+1\leq j\leq d,
	\end{cases}\]
	where $e_j=(\delta_{j,n})_{(1\leq n\leq d)}$ (Kronecker's $\delta$) denotes the $j$-th standard basis vector of $\R^d$. If $W$ is the orthogonal complement of $Z$, $A$ is orthogonal. Then $P\circ A$ is a polynomial of degree $m$ with the principal part $(P\circ A)_m=P_m\circ A$ such that
	\[(P\circ A)_m(e_j)=(P_m\circ A)(e_j)=\begin{cases}
		P_m(a_j)=0,& \text{ for }1\leq j\leq l,\\ P_m(w)\neq 0,& \text{ for }j=d.
	\end{cases}\]
	Hence, by Remark \ref{rem:homogeneous polynomials}, 
	\begin{equation}\label{eq:decomposition 4}
		(P\circ A)_m(\xi)=c\xi_d^m+\sum_{j=0}^{m-1}q_j(\xi_1,\ldots,\xi_{d-1})\xi_d^j,
	\end{equation}
	where $c\in\C\setminus\{0\}$ and each $q_j(\xi_1,\ldots,\xi_{d-1})\in\C[\xi_1,\ldots,\xi_{d-1}]$ is of degree $m-j$ or zero polynomials.
	Additionally, $P_m$ vanishes on
	\[Z=\operatorname{span}\{a_1,\ldots,a_l\}=\operatorname{span}\{A e_1,\ldots,A e_l\}=A\left(\operatorname{span}\{e_1,\ldots,e_l\}\right)\]
	so that $(P\circ A)_m=P_m\circ A$ vanishes on
	\[\operatorname{span}\{e_1,\ldots,e_l\}=\{\xi\in\R^d\colon \xi_{l+1}=\ldots=\xi_d=0\}.\]
Moreover, by hypothesis, $D^k_w P_m$ vanishes on $Z$ for all $1\leq k\leq m-1$ and the chain rule gives
\begin{equation}\label{eq:D_d^k(P_m circ A}
D_d^k(P_m\circ A)=(D^k_w P_m)\circ A,
\end{equation}
so $D_d^k(P_m\circ A)$ vanishes on $\operatorname{span}\{e_1,\ldots,e_l\}$ for all $1\leq k\leq m-1$, as well. Consequently, according to Lemma \ref{lem:hilbert nullstellensatz}, $D_d^k(P_m\circ A)$ belongs to the ideal $(\xi_{l+1},\ldots,\xi_d)$ for $0\leq k\leq m-1$.

We now claim that $q_j\in (\xi_{l+1},\ldots,\xi_d)$ for all $0\leq j\leq m-1$. This will be shown by induction on $j$.
By (\ref{eq:decomposition 4}),
\begin{equation}\label{eq:derivatives of P circ A}
D_d^k(P\circ A)_m(\xi)=c\frac{m!}{(m-k)!}\xi_d^{m-k}+\sum_{j=k}^{m-1}\frac{j!}{(j-k)!}q_j(\xi_1,\ldots,\xi_{d-1})\xi_d^{j-k}.
\end{equation}
In particular, 
\[D_d^{m-1}(P_m\circ A)(\xi)=cm!\xi_d+(m-1)!q_{m-1}(\xi_1,\ldots,\xi_{d-1}).\]
But the polynomial $D_d^{m-1}(P_m\circ A)$ and the monomial $cm!\xi_d$ belong to the ideal $(\xi_{l+1},\ldots,\xi_d)$, so does the polynomial $q_{m-1}$. 
Assume that $q_k\in (\xi_{l+1},\ldots,\xi_{d-1})$ for all $k\leq j\leq m-1$. By (\ref{eq:derivatives of P circ A}),
\begin{align*}
(k-1)!q_{k-1}(\xi_1,\ldots,\xi_{d-1})=&D_d^{k-1}(P_m\circ A)(\xi)-c\frac{m!}{(m-k+1)!}\xi_d^{m-k+1}&\\
&-\sum_{j=k}^{m-1}\frac{j!}{(j-k+1)!}q_j(\xi_1,\ldots,\xi_{d-1})\xi_d^{j-k+1}.
\end{align*}
Consequently, by the inductive assumption and the fact that $D_d^{k-1}(P_m\circ A)$ and the monomial $\xi_d^{m-k+1}$ belong to the ideal $(\xi_{l+1},\ldots,\xi_d)$, we have $q_{k-1}\in (\xi_{l+1},\ldots,\xi_d)$, and our claim is proved. 

In fact, since $q_j$ does not depend on $\xi_d$, we concluded that $q_j\in (\xi_{l+1},\ldots,\xi_{d-1})$ for $0\leq j\leq m-1$. Hence
\[(P_m\circ A)(\xi)-c\xi_d^m=\sum_{j=0}^{m-1}q_j(\xi_1,\ldots,\xi_{d-1})\xi_d^j\in (\xi_{l+1},\ldots,\xi_{d-1}),\]
which completes the proof of implication (i)$\Rightarrow$(ii).

(ii)$\Rightarrow$(i): Since  
\[D_d^{k}(P_m\circ A)(\xi)=c\frac{m!}{(m-k)!}\xi_d^{m-k}+\sum_{j=l+1}^{d-1}(D_d^kp_j)(\xi)\xi_j,\]
the polynomial $D_d^{k}(P_m\circ A)$ vanishes on $\operatorname{span}\{e_1,\ldots,e_l\}$, for all $0\leq k\leq m-1$. Hence,  identity gives
(\ref{eq:D_d^k(P_m circ A}), 
\[(D_w^{k} P_m)(Z)=[D_d^{k}(P_m\circ A)](A^{-1}(Z))=[D_d^{k}(P_m\circ A)](\operatorname{span}\{e_1,\ldots,e_l\})=\{0\}\]
for all $0\leq k\leq m-1$.
Clearly $P_m(w)=(P_m\circ A)(e_d)=c\neq 0$, which completes the proof.
\end{proof}

\section{Spherical harmonics expansion in \texorpdfstring{$\mathscr{E}$}{E}}\label{appendix: spherical harmonics}

The purpose of this appendix is to prove Theorem \ref{theo:spherical harmonics expansion for smooth functions} below. We think that this theorem is known. However, as we were unable to find a reference, we include its proof for the reader's convenience. As usual, let $\mathcal{P}_k$ be the vector space of homogeneous polynomials of degree $k$ on $\R^d$, and let
\begin{eqnarray*}
	\mathcal{H}_k&=&\{P\in\mathcal{P}_k\colon \Delta P=0\},\\
	H_k&=&\{P_{|S^{d-1}}\colon P\in\mathcal{H}_k\},
\end{eqnarray*}
i.e.~$H_k$ is the space of spherical harmonics of degree $k$. By definition, for $Y\in H_k$ the function
\[\R^d\backslash\{0\}\rightarrow \C, x\mapsto |x|^kY\left(\frac{x}{|x|}\right)\]
can be extended (in a unique way) to a smooth function on $\R^d$ which belongs to $\mathcal{H}_k$. As is well known (see \cite[Corollary 2.55]{Folland1995})
\[\text{dim}\mathcal{H}_k=\text{dim}H_k=(2k+d-2)\frac{(k+d-3)!}{k!(d-2)!}=:d_k\]
and for $k\neq l$ the (closed) subspaces $H_k$ and $H_l$ of $L^2(S^{d-1})=L^2(S^{d-1},\sigma)$ are orthogonal, where $\sigma$ is the surface measure on $S^{d-1}$ (see \cite[Theorem 2.53]{Folland1995}). For $k\in\N_0$ we fix an arbitrary orthonormal basis $\{Y_{k,m}\colon m=1,\ldots, d_k\}$ of $H_k$.

Additionally, in order to simplify the notation, whenever this is convenient, for a function $f:S^{d-1}\rightarrow \C$ we set
\[f^*\colon\R^d\backslash\{0\}\rightarrow\C, x\mapsto f\left(\frac{x}{|x|}\right),\]
i.e.~we extend $f$ as a positively homogeneous function of degree 0 to $\R^d\backslash\{0\}$.

For $r\in(0,\infty]$ let $B_r=\{x\in\R^d\colon |x|<r\}$. Note that $B_\infty=\R^d$. Additionally, let $U\subset\R^q$ be open. We denote the elements from $U$ by $y$ and the elements from $\R^d$ by $x$. For $v\in \mathscr{E}(U\times B_r)$, we define
\begin{equation}\label{eq:definition coefficients of v}
	v_{k,m}:U\times(0,r)\rightarrow\C,(y,\rho)\mapsto\int_{S^{d-1}}v(y,\rho\omega)\overline{Y_{k,m}(\omega)}d\sigma(\omega),
\end{equation}
where $k\in\N_0, m\in\{1,\ldots,d_k\}$, and we sometimes write $a_{k,m}(v)$ instead of $v_{k,m}$. Then, $v_{k,m}\in\mathscr{E}(U\times(0,r))$, for $\beta\in\N_0^q$ we have $\partial_y^\beta a_{k,m}(v)=a_{k,m}(\partial_y^\beta v)$, and for every $(y,\rho)\in U\times (0,r)$ it holds (cf.~\cite[Theorem (2.53)]{Folland1995})
\[v(y,\rho\, \cdot)=\sum_{k=0}^\infty\sum_{m=1}^{d_k} v_{k,m}(y,\rho)Y_{k,m}\text{ in }L^2(S^{d-1})\]
and therefore
\[\sum_{k=0}^\infty\sum_{m=1}^{d_k} |v_{k,m}(y,\rho)|^2=\int_{S^{d-1}} |v(y,\rho\omega)|^2 d\sigma(\omega).\]
For $K\subset U$ compact and $r_0\in (0,r)$ the latter implies
\begin{eqnarray*}
	\int_{K\times B_{r_0}} |v(y,x)|^2 d(y,x)&=&\int_K\int_0^{r_0}\rho^{d-1} \int_{S^{d-1}} |v(y,\rho\omega)|^2 d\sigma(\omega) d\rho dy\\
	&=&\int_K\int_0^{r_0}\rho^{d-1} \sum_{k=0}^\infty\sum_{m=1}^{d_k} |v_{k,m}(y,\rho)|^2 d\rho dy
\end{eqnarray*}
so that $(y,\rho)\mapsto \rho^{d-1} \sum_{k=0}^\infty\sum_{m=1}^{d_k} |v_{k,m}(y,\rho)|^2$ belongs to  $L^1(K\times (0,r_0))$. By the Dominated Convergence Theorem, we conclude
\begin{eqnarray*}
	&&\lim_{N\rightarrow\infty}\int_{K\times B_{r_0}}\left| v(y,x)-\sum_{k=0}^N\sum_{m=1}^{d_k} v_{k,m}(y,|x|) Y_{k,m}^*(x)\right|^2 d(y,x)\\
	&=&\lim_{N\rightarrow\infty}\int_{K}\int_0^{r_0}\rho^{d-1} \int_{S^{d-1}}\left| v(y,\rho\omega)-\sum_{k=0}^N\sum_{m=1}^{d_k} v_{k,m}(y,\rho) Y_{k,m}(\omega)\right|^2d\sigma(\omega)d\rho dy\\
	&=&\lim_{N\rightarrow\infty}\int_{K}\int_0^{r_0}\rho^{d-1}  \sum_{k=N+1}^\infty\sum_{m=1}^{d_k} |v_{k,m}(y,\rho)|^2  d\rho dy\\
	&=&\int_{K}\int_0^{r_0}\rho^{d-1}  \lim_{N\rightarrow\infty}\sum_{k=N+1}^\infty\sum_{m=1}^{d_k} |v_{k,m}(y,\rho)|^2  d\rho dy=0.
\end{eqnarray*}
Thus,
\begin{equation}\label{eq:convergence in local L2}
	v=\sum_{k=0}^\infty \sum_{m=1}^{d_k} v_{k,m} Y^*_{k,m}\text{ in }L^2(K\times \overline{B_{r_0}}).
\end{equation}
We point out that a more appropriate notation is
\[(y,x)\mapsto v_{k,m}\otimes Y_{k,m}(y,|x|,x)\]
instead of $v_{k,m}Y_{k,m}^*$but we want to avoid exurberant notation, so we stick to a notation that is close to the one used in the literature. Our first objective is to prove the following lemma.

\begin{lemma}\label{lemma:spherical harmonics expansion}
	Let $r\in(0,\infty]$ and let $v\in\mathscr{E}(U\times B_r)$. Then, with the notation introduced above, for every $k\in N_0, m\in {1,\ldots, d_k}$ there are $\tilde{v}_{k,m}\in\mathscr{E}(U\times (-r,r))$ such that the following conditions hold.
	\begin{itemize}
		\item[(i)] For every $(y,\rho)\in U\times (0,r)$ we have $v_{k,m}(y,\rho)=\tilde{v}_{k,m}(y,\rho)$.
		\item[(ii)] The functions
		\[U\times (B_r\backslash\{0\})\rightarrow\C,(y,x)\mapsto \tilde{v}_{k,m}(y,|x|) |x|^{-k}\]
		extend to smooth functions on $U\times B_r$. 
 		\item[(iii)] The functions
		\[U\times (B_r\backslash\{0\})\rightarrow\C,(y,x)\mapsto \tilde{v}_{k,m}(y,|x|)Y_{k,m}^*(x)\]
		extend to smooth functions on $U\times B_r$. 
	\end{itemize}
\end{lemma}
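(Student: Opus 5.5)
The plan is to compute the radial coefficients through the Taylor expansion of $v$ in the variable $x$, exploiting that spherical harmonics of degree $k$ are orthogonal to restrictions of homogeneous polynomials of lower degree and of the wrong parity. The natural candidate for $\tilde v_{k,m}$ is obtained by writing $v(y,\rho\omega)$ as a function of $(y,\rho,\omega)$ on $U\times(-r,r)\times S^{d-1}$, which is smooth there, and setting
\[
\tilde v_{k,m}(y,\rho):=\int_{S^{d-1}}v(y,\rho\omega)\overline{Y_{k,m}(\omega)}\,d\sigma(\omega),\qquad (y,\rho)\in U\times(-r,r).
\]
Differentiation under the integral sign gives $\tilde v_{k,m}\in\mathscr{E}(U\times(-r,r))$, and (i) holds by definition. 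Since $Y_{k,m}(-\omega)=(-1)^kY_{k,m}(\omega)$, the substitution $\omega\mapsto-\omega$ yields $\tilde v_{k,m}(y,-\rho)=(-1)^k\tilde v_{k,m}(y,\rho)$. Thus $\tilde v_{k,m}$ is even or odd in $\rho$ according to the parity of $k$.

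For (ii), we Taylor expand $v(y,\cdot)$ at $0$ to order $N\ge k$:
\[
v(y,x)=\sum_{j<N}\sum_{|\alpha|=j}\frac{\partial_x^\alpha v(y,0)}{\alpha!}x^\alpha+R_N(y,x),
\]
with $R_N$ smooth and $R_N(y,x)=\sum_{|\alpha|=N}x^\alpha h_\alpha(y,x)$ for smooth $h_\alpha$ (integral form of the remainder). Inserting $x=\rho\omega$, the polynomial term of degree $j$ contributes $\rho^j\int p_j(y,\omega)\overline{Y_{k,m}(\omega)}\,d\sigma$. Here the classical decomposition $\mathcal P_j|_{S^{d-1}}=H_j\oplus H_{j-2}\oplus\cdots$ (Folland) together with orthogonality of the $H_l$ shows that this integral vanishes unless $j\ge k$ and $j\equiv k \pmod 2$. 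Consequently
\[
\tilde v_{k,m}(y,\rho)=\rho^k\,\big(\text{polynomial in }\rho^2\text{ with coefficients smooth in }y\big)+\rho^N g_N(y,\rho),
\]
where $g_N$ is smooth. Hence $\tilde v_{k,m}(y,\rho)\rho^{-k}$ extends smoothly to $\rho=0$, and by the parity observation this quotient is even in $\rho$.

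It remains to check that $(y,x)\mapsto \tilde v_{k,m}(y,|x|)|x|^{-k}$ is smooth on $U\times B_r$. Since the quotient $w(y,\rho)$ is smooth and even in $\rho$, a Whitney-type theorem (even smooth functions are smooth functions of $\rho^2$, with parameters; cf.\ Whitney's theorem on even functions) gives $w(y,\rho)=W(y,\rho^2)$ with $W$ smooth, so $w(y,|x|)=W(y,|x|^2)$ is smooth. If one prefers to avoid that citation, one can instead argue directly with the expansion above: for each $N$, the $\rho^N$ remainder has $|x|^{N-k}$ decay of derivatives up to order about $N-k$, while the polynomial part is a polynomial in $|x|^2$. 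Since $N$ is arbitrary, smoothness at $x=0$ follows. I expect this step, handling the parametric dependence on $y$ uniformly in the remainder estimates, to be the main technical point.

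Finally, (iii) follows from (ii): $\tilde v_{k,m}(y,|x|)Y^*_{k,m}(x)=\big(\tilde v_{k,m}(y,|x|)|x|^{-k}\big)\cdot |x|^kY^*_{k,m}(x)$, and $|x|^kY_{k,m}^*(x)$ is the harmonic polynomial in $\mathcal H_k$, hence smooth. The product of two smooth functions is smooth.
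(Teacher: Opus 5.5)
Your proof is correct. Its first half coincides with the paper's:
\begin{itemize}
\item a Taylor expansion of $v(y,\cdot)$ at $0$;
\item Folland's decomposition of homogeneous polynomials into $|x|^{2j}$ times harmonic ones;
\item orthogonality of the $H_l$.
\end{itemize}
Together these give $\tilde v_{k,m}(y,\rho)=\rho^k\cdot(\text{polynomial in }\rho^2)+\text{remainder}$, where $\tilde v_{k,m}$ is the defining integral over $S^{d-1}$, now allowed for $\rho\in(-r,r)$. The paper leaves this choice of $\tilde v_{k,m}$ implicit; you state it explicitly.

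The two routes diverge at the smoothness of $\tilde v_{k,m}(y,|x|)|x|^{-k}$ at $x=0$.

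\textbf{The paper's route.} It does not use the parity of $\tilde v_{k,m}$. For a given $l$ it takes $N=2l+k-1$, so the remainder contributes $f(y,|x|)|x|^{2l}$ with $f$ smooth on $U\times[0,r)$ and $|x|^{2l}$ a polynomial. It then proves by hand (its proposition on $f(y,|x|)|x|^{2m}$) that such a function is $C^{2l-1}$. Since the left-hand side does not depend on $N$, letting $l\to\infty$ gives $C^\infty$. This is exactly your fallback argument; choosing the exponent of $|x|$ even is what makes it work cleanly.

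\textbf{Your main route.} You note that $w=\rho^{-k}\tilde v_{k,m}$ is smooth and even in $\rho$. You then apply Whitney's theorem on even functions with parameters (or G.~Schwarz's theorem for the $\mathbb{Z}_2$-action) to write $w(y,\rho)=W(y,\rho^2)$, so the extension is explicitly $W(y,|x|^2)$.

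\textbf{What each buys.} Your route is shorter and explains structurally why the origin causes no trouble. It also yields the sharper normal form $\tilde v_{k,m}(y,\rho)=\rho^kW_{k,m}(y,\rho^2)$. The cost is reliance on a nontrivial external theorem, whereas the paper's argument is self-contained and uses only finite-order estimates.

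On your route, the Taylor expansion can even be trimmed. It suffices to know that $\partial_\rho^j\tilde v_{k,m}(y,0)=0$ for $j<k$, which follows from orthogonality of $H_k$ to $\mathcal{P}_j|_{S^{d-1}}$, and then apply Hadamard's lemma with parameters.
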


By the above lemma, the functions
\[U\times B_r\backslash\{0\}\rightarrow\C,(y,x)\mapsto  v_{k,m}(y,|x|)Y^*_{k,m}(x)\]
extend to smooth functions on $U\times B_r$; we denote these extensions again by $v_{k,m}Y^*_{k,m}$.

In the proof Lemma \ref{lemma:spherical harmonics expansion} we will apply the following elementary result. We include its proof for the reader's convenience.

\begin{proposition}\label{prop:regularity of factor}
	Let $f\in \mathscr{E}(U\times [0,r))$ and let $m\in\N$. Then, the function
	\[h:U\times B_r\rightarrow\C,(y,x)\mapsto f(y,|x|)|x|^{2m}\]
	is $(2m-1)$-times continuously differentiable.
\end{proposition}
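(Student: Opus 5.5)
The plan is to reduce to a statement about functions of the single variable $|x|$ and then count derivatives. Write $h(y,x)=F(y,x)$ with $F(y,x)=f(y,|x|)|x|^{2m}$. Away from $x=0$, the map $x\mapsto|x|$ is smooth, so $h$ is smooth on $U\times(B_r\setminus\{0\})$. Only points $(y,0)$ need attention.

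First I use Taylor's formula in $\rho$ with integral remainder. For $N=2m-1$ this gives
\[
f(y,\rho)=\sum_{j=0}^{N}\frac{\partial_\rho^j f(y,0)}{j!}\rho^j+\rho^{N+1}R(y,\rho),
\]
where $R(y,\rho)=\frac{1}{N!}\int_0^1(1-s)^N\partial_\rho^{N+1}f(y,s\rho)\,ds$ is smooth on $U\times[0,r)$. (Using one-sided derivatives at $\rho=0$; if needed, first extend $f$ smoothly to $U\times(-r,r)$ by Seeley/Whitney extension.) Multiplying by $|x|^{2m}$ yields
\[
h(y,x)=\sum_{j=0}^{N}c_j(y)|x|^{2m+j}+|x|^{4m}R(y,|x|),\qquad c_j(y)=\frac{\partial_\rho^jf(y,0)}{j!}.
\]

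The terms with $j$ even are $c_j(y)(|x|^2)^{m+j/2}$, which are products of a smooth function of $y$ and a polynomial in $x$, hence smooth. For odd $j$, the function $|x|^{2m+j}$ is homogeneous of degree $2m+j\ge 2m+1$ and smooth off the origin. Its derivatives of order $k$ are homogeneous of degree $2m+j-k$, so they are bounded near $0$ and tend to $0$ there for $k<2m+j$. A standard induction then shows such a function is $C^{2m+j-1}\supset C^{2m-1}$: each derivative of order $k<2m+j$ extends continuously by $0$, and differentiability at $0$ follows from the next order being $o(|x|^{-1}\cdot|x|)$ in homogeneity.

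The remainder $g(y,x)=|x|^{4m}R(y,|x|)$ is the main obstacle, since $R(y,|x|)$ is only continuous at $x=0$. I handle it by the chain and Leibniz rules on $x\neq 0$. A derivative $\partial_y^\beta\partial_x^\alpha g$ with $|\alpha|\le 2m-1$ is a finite sum of terms $(\partial_\rho^i\partial_y^\beta R)(y,|x|)\,\Phi(x)$. Here $\Phi$ is smooth on $\mathbb{R}^d\setminus\{0\}$ and homogeneous of degree $4m-|\alpha|\ge 2m+1>0$, because every $x$-derivative landing on $|x|^{4m}$ or on $|x|$ lowers homogeneity by one. The factors $\partial^i_\rho\partial_y^\beta R$ are locally bounded, so each such derivative tends to $0$ as $x\to0$, locally uniformly in $y$. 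The usual lemma (a $C^1$ function on the complement of a closed set whose gradient extends continuously is $C^1$, applied inductively along segments) then gives that $g$ is $C^{2m-1}$, indeed $C^{4m-1}$. Summing the three pieces gives $h\in C^{2m-1}(U\times B_r)$.
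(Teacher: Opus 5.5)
Your proof is correct, but it is organized differently from the paper's.

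\textbf{The paper's route.} The paper does no Taylor expansion. It applies the chain and Leibniz rules directly to $f(y,|x|)\,|x|^{2m}$ and bounds $\partial_y^\gamma\partial_x^\alpha h(y,x)$ by a multiple of $|x|^{2m-|\alpha|}$, locally uniformly in $y$, for $|\alpha|\le 2m-1$. It then extends these derivatives by $0$ at $x=0$. For $|\alpha|\le 2m-2$ it reads off differentiability at $x=0$ directly from the resulting $O(|x|^2)$ bound, so no removability lemma is needed.

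\textbf{What your Taylor step buys.} Your treatment of the remainder $|x|^{4m}R(y,|x|)$ is exactly this argument. It works verbatim for $|x|^{2m}f(y,|x|)$ itself, since the homogeneous factors then have degree at least $2m-|\alpha|\ge 1$. So for the stated $C^{2m-1}$ the Taylor decomposition is superfluous. What it does give is the sharp result: the even terms are smooth, the odd terms $|x|^{2m+j}$ with $j\ge 1$ are $C^{2m}$, and the remainder is $C^{4m-1}$. Hence your pieces actually show $h\in C^{2m}$, which cannot be improved (take $f(y,\rho)=\rho$).

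\textbf{Two precision points.} First, in a term $(\partial_\rho^i\partial_y^\beta R)(y,|x|)\,\Phi(x)$ the factor $\Phi$ is homogeneous of degree $4m-|\alpha|+i$, not $4m-|\alpha|$. A derivative falling on $R(y,|x|)$ produces the degree-zero factor $x_k/|x|$, so the degree is only larger, which helps. Second, the ``usual lemma'' is false for arbitrary closed sets; the Cantor function is a counterexample. You should use it in the segment form you allude to. The function $g$ is continuous, and a segment from $(y_0,0)$ to $(y,x)$ with $x\neq 0$ meets $U\times\{0\}$ only at its initial point. Integrating the continuously extended gradient along such segments, then letting $x\to 0$, gives the first-order expansion at $(y_0,0)$.

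\textbf{A remark on the paper's bound.} Your degree count is more accurate than the paper's intermediate bound $|\partial_x^\alpha(\phi(|x|))|\le C_\alpha\sum_j|\phi^{(j)}(|x|)|$. That bound fails already for $\phi(\rho)=\rho$ in $d=2$, since $\partial_1^2|x|=x_2^2/|x|^3$ is unbounded near $0$. The correct statement is that the $\phi^{(j)}$-term carries a factor homogeneous of degree $j-|\alpha|$, and this still yields the $O(|x|^{2m-|\alpha|})$ bound the paper actually uses.
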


\begin{proof}
	Clearly, $h\in C^{\infty}(U\times B_r\backslash\{0\})$.
	
	By induction on $k=|\alpha|$, $\alpha\in\N_0$, one shows that for every $\alpha\in\N_0^d$ there are $Q_{\alpha,1}, \ldots, Q_{\alpha,|\alpha|}\in \C[x_1,\ldots x_d]$ homogeneous of degree $|\alpha|$ such that for every $\phi\in C^{\infty}([0,r))$
	\[\partial_x^\alpha \left(\phi(|x|)\right)
	=\sum_{j=0}^{|\alpha|}\phi^{(j)}(|x|)\frac{Q_{\alpha,j}(x)}{|x|^{|\alpha|}},\quad x\in B_r\backslash\{0\}.
	\]  
	Therefore, for $\alpha\in\N_0^d$ there is $C_\alpha>0$ independent of $\phi$ such that
	\[\left|\partial_x^\alpha \left(\phi(|x|)\right)\right|
	\leq C_\alpha\sum_{j=0}^{|\alpha|}\left|\phi^{(j)}(|x|)\right|,\quad x\in B_r\backslash\{0\}.
	\]  
	Additionally, since $|x|^{2m}$ is a homogeneous polynomial of degree $2m$, for $\alpha\in\N_0^d$ with $|\alpha|\leq 2m$, $\partial_x^\alpha\left(|x|^{2m}\right)$ is a homogeneous polynomial of degree $2m-|\alpha|$. In particular, for every such $\alpha\in\N_0^d$ there is $D_\alpha>0$ with
	\[\forall\,x\in\R^d\colon \partial_x^\alpha\left(|x|^{2m}\right)\leq D_\alpha|x|^{2m-|\alpha|}.\]
	By the above, we conclude for $\gamma\in\N_0^q$ and $\alpha\in\N_0^d$, $|\alpha|\leq 2m-1$ that for every $(y,x)\in U\times B_r\backslash\{0\}$
	\begin{eqnarray}\label{eq:regularity of factor}
		\left|\partial_y^\gamma\partial_x^\alpha h(y,x)\right|&\leq&\sum_{\beta\leq \alpha}\binom{\alpha}{\beta}C_{\alpha-\beta}D_{\beta}\sum_{j=0}^{|\alpha|-|\beta|}\left|\partial_y^\gamma\partial_\rho^j f(y,|x|)\right| |x|^{2m-|\beta|},
	\end{eqnarray}
	where by $\rho$ we denote the last variable from $[0,r)$ of $f$. In particular, by defining $\partial_y^\gamma\partial_x^\alpha h(y,0)=0$, we obtain a continuous extension of $\partial_y^\gamma\partial_x^\alpha h$ from $U\times B_r\backslash\{0\}$ to $U\times B_r$. Moreover, whenever $|\alpha|\leq 2m-2$ it follows from \eqref{eq:regularity of factor} that $\partial_y^\gamma \partial_x^\alpha h$ is continuously differentiable with respect to $x$. This proves the claim.
\end{proof}

\begin{proof}[Proof of Lemma \ref{lemma:spherical harmonics expansion}]
	By Taylor's formula, for $N\in\N$ we have
	\[v(y,x)=\sum_{l=0}^N \sum_{\alpha\in\N_0^d, |\alpha|= l}\frac{1}{\alpha!}\partial_x^\alpha v(y,0)x^\alpha + R_N(v)(y,x)\]
	for the smooth function
	\begin{equation}\label{eq:Taylor rest}
		R_N(v)(y,x)=\sum_{|\alpha|=N+1} \frac{N+1}{\alpha!}\int_0^1 (1-t)^N \partial_x^\alpha v(y,tx)dt\, x^\alpha.
	\end{equation}
	By \cite[Proof of Proposition 2.49 and Corollary 2.50]{Folland1995}, there are smooth functions $P_{l-2j}$ on $U\times\R^d$, $l=1,\ldots, N$, $j=0,\ldots,\lfloor l/2\rfloor$, such that $P_{l-2j}(y,\cdot)$ is a harmonic homogeneous polynomial of degree $l-2j$ on $\R^d$ such that
	\[\sum_{\alpha\in\N_0^d, |\alpha|= l}\frac{1}{\alpha!}\partial_x^\alpha v(y,0)x^\alpha =\sum_{j=0}^{\lfloor l/2\rfloor} |x|^{2j} P_{l-2j}(y,x).\]
	By the choice of the $Y_{l,m}$, for $(y,x)\in U\times(B_r\backslash\{0\})$ it holds
	\[P_{l-2j}(y,x)=\sum_{m=1}^{d_{l-2j}} c_{l-2j,m}(y)|x|^{l-2j} Y_{l-2j,m}\left(\frac{x}{|x|}\right),\]
	where
	\[c_{l-2j,m}(y)=\int_{S^{d-1}} P_{l-2j,m}(y,\omega)\overline{Y_{l-2j,m}(\omega)}d\sigma(\omega)\]
	is easily seen to be a smooth function on $U$. Hence, for $(y,x)\in U\times(B_r\backslash\{0\})$ we obtain
	\[v(y,x)=\sum_{l=0}^N \sum_{j=0}^{\lfloor l/2\rfloor} \sum_{m=1}^{d_{l-2j}} c_{l-2j,m}(y)|x|^{l} Y_{l-2j,m}\left(\frac{x}{|x|}\right) + R_N(v)(y,x).\]
	Relabeling $k=l-2j\Leftrightarrow k+2j=l$, from $k+2j\in\{0,\ldots, N\}$ it follows $j\in\{0,\ldots,\lfloor (N-k)/2\rfloor\}$, so that for $(y,x)\in U\times(B_r\backslash\{0\})$ we have
	\begin{eqnarray}\label{eq:spherical harmonics expansion 1}
		v(y,x)&=&\sum_{l=0}^N \sum_{j=0}^{\lfloor l/2\rfloor} \sum_{m=1}^{d_{l-2j}} c_{l-2j,m}(y)|x|^{l} Y_{l-2j,m}\left(\frac{x}{|x|}\right) + R_N(v)(y,x)\nonumber\\
		&=&\sum_{k=0}^N\sum_{j=0}^{\lfloor(N-k)/2\rfloor}\sum_{m=1}^{d_k} c_{k,m}(y)|x|^{k+2j}Y_{k,m}\left(\frac{x}{|x|}\right)+R_N(v)(y,x)\nonumber\\
		&=&\sum_{k=0}^N\sum_{m=1}^{d_k}\left(\sum_{j=0}^{\lfloor(N-k)/2\rfloor} c_{k,m}(y)|x|^{2j}\right) |x|^k Y_{k,m}\left(\frac{x}{|x|}\right)+R_N(v)(y,x)\nonumber\\
		&=&\sum_{k=0}^N\sum_{m=1}^{d_k} b_{N,k,m}(y,|x|)|x|^k Y_{k,m}\left(\frac{x}{|x|}\right)+R_N(v)(y,x),
	\end{eqnarray}
	where we set for $N\in\N_0, k\in\{0,\ldots,N\}, m\in\{1,\ldots,d_k\}$
	\begin{equation}\label{eq:spherical harmonics expansion 0}
		b_{N,k,m}:U\times\R\rightarrow\C, (y,r)\mapsto \sum_{j=0}^{\lfloor(N-k)/2\rfloor} c_{k,m}(y) r^{2j}
	\end{equation}
	which is a smooth function. By the properties of spherical harmonics, the functions
	\begin{equation}\label{eq:definition of v_N}
		v_N:U\times(\R^d\backslash\{0\})\rightarrow\C,(y,x)\mapsto \sum_{k=0}^N\sum_{m=1}^{d_k} b_{N,k,m}(y,|x|)|x|^k Y_{k,m}\left(\frac{x}{|x|}\right)
	\end{equation}
	extend (in a unique way) to a smooth function on $U\times\R^d$ which we denote again by $v_N$. 
	
	Additionally, by the discussion preceding Lemma \ref{lemma:spherical harmonics expansion} applied to $R_N(v)$ instead of $v$, for every compact subset $K$ of $Y$ and each $r_0\in (0,r)$ it holds
	\begin{equation}\label{eq:expansion of Taylor rest}
		R_N(v)(y,x)=\sum_{k=0}^\infty\sum_{m=1}^{d_k}r_{N,k,m}(y,|x|) Y_{k,m}\left(\frac{x}{|x|}\right)\text{ in }L^2(K\times \overline{B_{r_0}})
	\end{equation}
	where
	\begin{eqnarray}\label{eq:coefficients of Taylor rest}
		\begin{gathered}
		r_{N,k,m}: U\times (-r,r)\rightarrow\C,(y,\rho)\mapsto \int_{S^{d-1}}R_N(v)(y,\rho\omega)\overline{Y_{k,m}(\omega)}d\sigma(\omega)\\
		=\sum_{|\alpha|=N+1}\frac{N+1}{\alpha!} \int_{S^{d-1}} \int_0^1 (1-t)^N \partial_x^\alpha v(y,t\rho\omega)dt\, \omega^\alpha \overline{Y_{k,m}(\omega)} d\sigma(\omega)\rho^{N+1}
		\end{gathered}
	\end{eqnarray}
	due to \eqref{eq:Taylor rest}.
    
	Trivially, by \eqref{eq:definition coefficients of v}, \eqref{eq:spherical harmonics expansion 1}, \eqref{eq:expansion of Taylor rest}, and \eqref{eq:coefficients of Taylor rest}, for $N\geq k$ and each $(y,\rho)\in Y\times(0,r)$ it holds
	\begin{eqnarray}\label{eq:decomposition of a_{l,m}}
		v_{k,m}(y,\rho)&=&b_{N,k,m}(y,\rho)\rho^k+r_{N,k,m}(y,\rho)\nonumber\\
		&=&\left(b_{N,k,m}(y,\rho)+r_{N,k,m}(y,\rho)\rho^{-k}\right)\rho^k.
	\end{eqnarray}
    Since $c_{k,m}\in \mathscr{E}(U)$, the function
	\[U\times B_r\rightarrow\C, (y,x)\mapsto b_{N,k,m}(y,x)=\sum_{j=0}^{\lfloor (N-k)/2\rfloor} c_{k,m}(y)|x|^{2j} \]
	is smooth. Because the function $|x|^k Y_{k,m}(x/|x|)$ extends to a harmonic polynomial, in order to complete the proof, it suffices to show that, given $l$, for sufficiently large $N$
	\[U\times B_r\times\C, (y,x)\mapsto r_{N,k,m}(y,|x|)|x|^{-k}\]
	is a $C^l$-function. Because in \eqref{eq:decomposition of a_{l,m}} we can choose $N$ as large as we like, this will prove the existence of functions $\tilde{v}_{k,m}\in\mathscr{E}(U\times B_r)$ with the desired properties.
	
	For $(y,x)\in U\times B_r$ and $N>k$ we have
	\begin{gather*}
		r_{N,k,m}(y,|x|)|x|^{-k}=\sum_{|\alpha|=N+1}\frac{N+1}{\alpha!} \int_{S^{d-1}} \int_0^1 (1-t)^N \partial_x^\alpha v(y,t|x|\omega)dt\times \\ \times \omega^\alpha \overline{Y_{k,m}(\omega)} d\sigma(\omega)|x|^{N+1-k}.
	\end{gather*}
	It follows from Proposition \ref{prop:regularity of factor} that for $N=2l+k-1$ the above function is a $C^l$-function. As already explained, by \eqref{eq:decomposition of a_{l,m}} this entails the existence of $\tilde{v}_{k,m}\in \mathscr{E}(U\times (-r,r))$ with the desired properties.	
\end{proof}

\begin{proposition}\label{prop:cofficients commute with Laplace}
	Let $r\in(0,\infty]$ and let $v\in\mathscr{E}(U\times B_r)$.  Then, for $k\in\N_0$ and $m\in\{1,\ldots,d_k\}$ it holds
	\[\forall\,(y,x)\in U\times B_r\colon \Delta_x\left(a_{k,m}(v)(y,|x|)Y^*_{k,m}(x)\right)=a_{k,m}\left(\Delta_x v\right)(y,|x|)Y^*_{k,m}(x),\]
	where $\Delta_x=\sum_{j=1}^d\frac{\partial^2}{\partial x_j^2}$ denotes the Laplacian in the $x$-variables.
\end{proposition}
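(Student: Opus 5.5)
We prove the identity first on $U\times(B_r\backslash\{0\})$ and then pass to the origin. On the origin-free set, $a_{k,m}(v)(y,|x|)Y^*_{k,m}(x)=v_{k,m}(y,|x|)Y^*_{k,m}(x)$ is smooth, and we can compute its Laplacian in polar coordinates. Writing $\rho=|x|$ and $\omega=x/|x|$, we have
\[
\Delta_x=\partial_\rho^2+\frac{d-1}{\rho}\partial_\rho+\frac{1}{\rho^2}\Delta_{S^{d-1}}.
\]
Moreover, $Y_{k,m}$ is an eigenfunction of the Laplace--Beltrami operator, $\Delta_{S^{d-1}}Y_{k,m}=-k(k+d-2)Y_{k,m}$; this follows from the harmonicity of $|x|^kY^*_{k,m}$ by applying the polar formula to it. Hence for $(y,x)\in U\times(B_r\backslash\{0\})$
\[
\Delta_x\bigl(v_{k,m}(y,|x|)Y^*_{k,m}(x)\bigr)=\Bigl(\partial_\rho^2v_{k,m}+\frac{d-1}{\rho}\partial_\rho v_{k,m}-\frac{k(k+d-2)}{\rho^2}v_{k,m}\Bigr)(y,|x|)\,Y^*_{k,m}(x).
\]

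Next we compute $a_{k,m}(\Delta_xv)(y,\rho)=\int_{S^{d-1}}(\Delta_xv)(y,\rho\omega)\overline{Y_{k,m}(\omega)}\,d\sigma(\omega)$ with the same polar decomposition of $\Delta_x v$. Differentiation in $\rho$ commutes with integration over the compact sphere, since $v$ is smooth. The radial terms therefore give $\partial_\rho^2v_{k,m}+\frac{d-1}{\rho}\partial_\rho v_{k,m}$. For the angular term we use that $\Delta_{S^{d-1}}$ is symmetric on $L^2(S^{d-1})$ (Green's formula on the closed manifold $S^{d-1}$), so that
\[
\int_{S^{d-1}}\Delta_{S^{d-1}}[v(y,\rho\,\cdot)]\,\overline{Y_{k,m}}\,d\sigma=-k(k+d-2)\,v_{k,m}(y,\rho).
\]
The two sides agree pointwise for $x\neq0$. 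Note that the eigenvalue is real, so conjugation causes no issue.

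It remains to handle $x=0$. By Lemma \ref{lemma:spherical harmonics expansion}, both $a_{k,m}(v)(y,|x|)Y^*_{k,m}(x)$ and $a_{k,m}(\Delta_xv)(y,|x|)Y^*_{k,m}(x)$ (the lemma applied to $\Delta_xv\in\mathscr{E}(U\times B_r)$) extend to smooth functions on $U\times B_r$, and these extensions are what the statement refers to. The Laplacian of the first extension is continuous and coincides with the second extension off $U\times\{0\}$. Since $U\times(B_r\backslash\{0\})$ is dense in $U\times B_r$ (here $d\geq2$), the identity holds on all of $U\times B_r$ by continuity.

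I expect no serious obstacle. The only points requiring care are justifying the eigenvalue relation and the symmetry of $\Delta_{S^{d-1}}$. If one wants to avoid Laplace--Beltrami theory altogether, an alternative is to pair $\Delta_x$ against test functions $\varphi(\rho)$ in spherical shells and integrate by parts in $\R^d$, using the harmonicity of $|x|^kY^*_{k,m}$ directly.
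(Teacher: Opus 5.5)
Your proof is correct and follows essentially the same route as the paper. On $U\times(B_r\setminus\{0\})$ the paper likewise uses the polar form of $\Delta_x$, the eigenvalue relation $\Delta_{S^{d-1}}Y_{k,m}=-k(k+d-2)Y_{k,m}$, and the symmetry of the Laplace--Beltrami operator to move it onto $v$, then passes to $x=0$ by density using the smooth extensions from Lemma~\ref{lemma:spherical harmonics expansion}; your derivation of the eigenvalue from the harmonicity of $|x|^kY^*_{k,m}$ and your explicit continuity argument at the origin just make these same steps slightly more self-contained.
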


\begin{proof}
	For $(y,x)\in U\times B_r\backslash\{0\}$, by \cite[Lemma 2.63 - note that there is typo]{Folland1995} it holds
	\begin{eqnarray*}
		&&\Delta_x\left(a_{k,m}(y,|x|)Y^*_{k,m}(x)\right)\\
		&=&\left.\left(\partial_\rho^2 a_{k,m}(y,\rho)+\frac{d-1}{\rho}\partial_\rho a_{k,m}(y,\rho)-\frac{k(k+d-2)}{\rho^2}a_{k,m}(y,\rho)\right)\right|_{\rho=|x|}Y_{k,m}^*(x).
	\end{eqnarray*}
	Moreover, denoting the Laplace-Beltrami operator on $S^{d-1}$ by $\Delta_{S^{d-1}}$, it is well know that $\Delta_{S^{d-1}}Y_{k,m}=-k(k+d-2)Y_{k,m}$ (see \cite[Equation (\S15.2)]{Mueller1998}). Then,
	\begin{eqnarray*}
		&&\partial_\rho^2 a_{k,m}(y,\rho)+\frac{d-1}{\rho}\partial_\rho a_{k,m}(y,\rho)-\frac{k(k+d-2)}{\rho^2}a_{k,m}(y,\rho)\\
		&=&\int_{S^{d-1}} \left(\partial_\rho^2 v(y,\rho\omega)+\frac{d-1}{\rho}\partial_\rho v(y,\rho\omega)-\frac{k(k+d-2)}{\rho^2}v(y,\rho\omega)\right)\overline{Y_{k,m}(\omega)}d\sigma(\omega)\\
		&=&\int_{S^{d-1}} \left(\partial_\rho^2 v(y,\rho\omega)+\frac{d-1}{\rho}\partial_\rho v(y,\rho\omega)\right)\overline{Y_{k,m}(\omega)}d\sigma(\omega)\\
		&&+\int_{S^{d-1}} \frac{1}{\rho^2}v(y,\rho\omega)\Delta_{S^{d-1}}\overline{Y_{k,m}(\omega)}d\sigma(\omega)\\
		&=&\int_{S^{d-1}} \left(\partial_\rho^2 v(y,\rho\omega)+\frac{d-1}{\rho}\partial_\rho v(y,\rho\omega)+\frac{1}{\rho^2}\Delta_{S^{d-1}}v(y,\rho\omega)\right)\overline{Y_{k,m}(\omega)}d\sigma(\omega)\\
		&=&\int_{S^{d-1}}\Delta_x v(y,\rho\omega)\overline{Y_{k,m}(\omega)}d\sigma(\omega)\\
		&=&a_{k,m}(\Delta_x v)(y,\rho),
	\end{eqnarray*}
	where we have used an immediate consequence of \cite[\S 14, Lemma 1]{Mueller1998} and the well-known expression for the Laplace operator in spherical coordinates (\cite[Equation (\S 14.17)]{Mueller1998}). Combining these two equalities shows that the claimed equality holds on the dense subset  $U\times B_r\backslash\{0\}$ of $U\times B_r$ and therefore, everywhere in $U\times B_r$.
\end{proof}

\begin{theorem}\label{theo:spherical harmonics expansion for smooth functions}
	Let $r\in (0,\infty]$, $B_r=\{x\in\R^d\colon |x|<r\}$ and let $U\subset \R^q$ be open. Then, for every $v\in\mathscr{E}(U\times B_r)$ we have
	\[v=\sum_{k=0}^\infty\sum_{m=1}^{d_k} v_{k,m} Y_{k,m}^*\text{ in }\mathscr{E}(U\times B_r).\]
\end{theorem}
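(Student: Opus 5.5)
We want convergence of the partial sums $S_N:=\sum_{k\le N}\sum_{m}v_{k,m}Y^*_{k,m}$ in $\mathscr{E}(U\times B_r)$. By Lemma \ref{lemma:spherical harmonics expansion} every summand is a smooth function on $U\times B_r$, so $S_N\in\mathscr{E}(U\times B_r)$. Moreover, \eqref{eq:convergence in local L2} already gives $S_N\to v$ in $L^2_{loc}(U\times B_r)$. The plan is to upgrade this $L^2_{loc}$ convergence to convergence of all derivatives. We combine an elliptic regularity estimate in the combined variables $(y,x)$ with the commutation properties of the coefficient maps $a_{k,m}$.

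\textbf{Commutation.} The map $v\mapsto\sum_{k\le N}\sum_m a_{k,m}(v)Y^*_{k,m}=:\Pi_N v$ commutes with $\partial_y^\beta$, since $\partial_y^\beta a_{k,m}(v)=a_{k,m}(\partial_y^\beta v)$. By Proposition \ref{prop:cofficients commute with Laplace}, it also commutes with $\Delta_x$. Hence, for the elliptic operator $L:=\Delta_y+\Delta_x$ on $U\times B_r$ and every $j\in\N$, we get $L^j\Pi_N v=\Pi_N(L^jv)$. Applying \eqref{eq:convergence in local L2} to the smooth function $L^jv$ in place of $v$ then yields
\[L^j(S_N-v)=\Pi_N(L^jv)-L^jv\to0\quad\text{in }L^2_{loc}(U\times B_r)\ \text{for every } j\in\N_0 .\]

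\textbf{Elliptic regularity.} Fix compact sets $K\subset K'\subset U\times B_r$ with $K$ contained in the interior of $K'$. Interior elliptic estimates for the elliptic operator $L$ (iterated, e.g.\ via G\aa rding's inequality or via a parametrix of $L^j$) give, for each $s$, some $j$ and $C$ such that
\[\|w\|_{H^{s}(K)}\le C\sum_{i\le j}\|L^iw\|_{L^2(K')},\qquad w\in\mathscr{E}(U\times B_r).\]
Applying this to $w=S_N-v$ with $s$ large and using Sobolev embedding, every derivative $\partial^\gamma(S_N-v)$ converges uniformly on $K$. This is exactly convergence in $\mathscr{E}(U\times B_r)$.

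\textbf{Main obstacle.} The step we expect to need the most care is the commutation identity $\Delta_x\Pi_N=\Pi_N\Delta_x$ holding on all of $U\times B_r$, including the origin $x=0$. This is precisely where Lemma \ref{lemma:spherical harmonics expansion}(iii) enters: it makes each $v_{k,m}Y^*_{k,m}$ smooth across $x=0$, so the identity of Proposition \ref{prop:cofficients commute with Laplace} holds as an identity between smooth functions everywhere, and $L^j(S_N-v)$ is a genuine smooth function to which \eqref{eq:convergence in local L2} applies.

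A fallback avoiding the elliptic estimate in $y$ would be to use only $\Delta_x$-regularity in $x$ uniformly in $y$, with $y$-derivatives handled by the commutation with $\partial_y^\beta$. However, mixed Sobolev estimates are clumsier, so we prefer the operator $L=\Delta_y+\Delta_x$. For $r=\infty$, or for general compact sets, one simply exhausts $U\times B_r$ by compact sets $K$.
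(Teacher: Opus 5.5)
Your argument is correct and has the same overall structure as the paper's proof. You use the same three ingredients: the projections $\Pi_N$ commute with $\partial_y^\beta$ and, by Proposition \ref{prop:cofficients commute with Laplace}, with $\Delta_x$; the local $L^2$ convergence \eqref{eq:convergence in local L2} is applied to the derived smooth functions; and an $L^2$-to-$C^l$ estimate upgrades this to convergence in $\mathscr{E}(U\times B_r)$.

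The only real difference is how you obtain that last estimate. The paper localizes with the cutoff $\psi(y)\varphi(|x|^2)$. Because this cutoff is radial in $x$, it commutes with the projections, which is \eqref{eq:commutation of psi, varphi}. The paper then bounds $\partial_y^\beta\partial_x^\alpha$ of the compactly supported function directly, by Fourier inversion, Cauchy--Schwarz and Plancherel, in terms of $\|(1-\Delta_y)^{M_y}(1-\Delta_x)^{M_x}(\cdot)\|_{L^2}$. That argument is elementary and self-contained, but it forces the choice of a cutoff compatible with the spherical-harmonic projections.

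You instead quote iterated interior elliptic estimates for $\Delta_y+\Delta_x$. These handle the localization internally, since the commutator terms are absorbed in the iteration, so you need no special cutoff and the argument is shorter. The cost is that elliptic regularity and Sobolev embedding enter as black boxes.
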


\begin{proof}
	By Proposition \ref{lemma:spherical harmonics expansion}, for each $v\in\mathscr{E}(U\times B_r)$ and every $N\in \N$, the function
	\[\sum_{k=0}^N\sum_{m=1}^{d_k} v_{k,m} Y_{k,m}^*:U\times B_r\rightarrow\C, (y,x)\mapsto \sum_{k=0}^N\sum_{m=1}^{d_k} v_{k,m}(y,|x|) Y_{k,m}^*(x)\]
	is smooth.
	
	Let $K$ be a compact subset of $U$ and let $r_0\in (0,r)$. We fix $\psi\in \mathscr{D}(U)$ with $\psi=1$ in a neighborhood of $K$. Moreover, let $r_1\in(r_0,r)$ and we fix $\psi\in \mathscr{D}(\R)$ with $\supp\varphi\subset [-r_1^2,r_1^2]$ and $\varphi=1$ in a neighborhood of $[0,r_0^2]$. For $v\in\mathscr{E}(U\times B_r)$ the function
	\[v_{\psi,\varphi}\colon \R^q\times\R^d\rightarrow\C, (y,x)\mapsto \psi(y)\varphi(|x|^2)v(y,x)\]
	is smooth and compactly supported, and $v=v_{\psi,\varphi}$ on $K\times \overline{B_{r_0}}$. For $k\in \N_0$, $m=1,\ldots,d_k$ we have
	\[\forall\,(y,x)\in U\times B_r\colon a_{k,m}\left(v_{\psi,\varphi}\right)(y,x)=\psi(y)\varphi(|x|^2)a_{k,m}(v)(y,x),\]
	in particular
	\[\forall\,(y,x)\in K\times \overline{B_{r_0}}\colon a_{k,m}\left(v_{\psi,\varphi}\right)(y,x)=a_{k,m}(v)(y,x).\]
	Additionally, it holds
	\begin{equation}\label{eq:commutation of psi, varphi}
		v_{\psi,\varphi}-\sum_{k=0}^N\sum_{m=1}^{d_k}a_{k,m}(v_{\psi,\varphi})Y_{k,m}^*=\left(v-\sum_{k=0}^N\sum_{m=1}^{d_k}a_{k,m}(v)Y_{k,m}^*\right)_{\psi,\varphi}.
	\end{equation}
	Abbreviating the support of $\psi$ by $\tilde{K}$, $\tilde{K}\times\overline{B_{r_1}}$ is a compact subset of $U\times B_r$, so that by \eqref{eq:convergence in local L2} and Proposition \ref{prop:cofficients commute with Laplace} we conclude for every $M_x,M_y\in \N_0$
	\begin{equation}\label{eq:local L2-convergence}
		\lim_{N\rightarrow\infty}(1-\Delta_y)^{M_y}(1-\Delta_x)^{M_x}\left(v_{\psi,\varphi}-\sum_{k=0}^N\sum_{m=1}^{d_k}a_{k,m}(v_{\psi,\varphi})Y_{k,m}^*\right)=0
	\end{equation}
	in $L^2(\tilde{K}\times\overline{B_{r_1}})$. 
	
	For arbitrary $f\in\mathscr{E}(U\times B_r)$, $\alpha\in\N_0^d$, $\beta\in\N_0^q$, and $(y,x)\in K\times \overline{B_{r_0}}$, by the Fourier inversion formula, the Cauchy-Schwarz inequality, and Plancherel's Theorem we obtain
	\begin{eqnarray*}
		|\partial_y^\beta\partial_x^\alpha f(y,x)|&=&	|\partial_y^\beta\partial_x^\alpha \left(f_{\psi,\varphi}(y,x)\right)|\\
		&=&(2\pi)^{d+q}\left|\int_{\R^q\times\R^d}e^{i\langle (y,x),(\eta,\xi)\rangle}\mathscr{F}\left(\partial_y^\beta\partial_x^\alpha\left(f_{\psi,\varphi}\right)\right)(\eta,\xi)d(\eta,\xi)\right|\\
		&\leq&(2\pi)^{d+q}\left( \int_{\R^q\times\R^d} \frac{|\eta|^{2|\beta|}}{(1+|\eta|^2)^{M_y}}\frac{|\xi|^{2|\alpha|}}{(1+|\xi|^2)^{M_x}}d(\eta,\xi)\right)^{1/2}\times\\
		&&\times \left( \int_{\R^q\times\R^d} \left| (1+|\eta|^2)^{M_y}(1+|\xi|^2)^{M_x}\mathscr{F}(f_{\psi,\varphi})(\eta,\xi)\right|^2d(\eta,\xi)\right)^{1/2}\\
		&=&(2\pi)^{d+q}\left( \int_{\R^q\times\R^d} \frac{|\eta|^{2|\beta|}}{(1+|\eta|^2)^{M_y}}\frac{|\xi|^{2|\alpha|}}{(1+|\xi|^2)^{M_x}}d(\eta,\xi)\right)^{1/2}\times\\
		&&\times\left\| (1-\Delta_y)^{M_y}(1-\Delta_x)^{M_x}f_{\psi,\varphi}\right\|_{L^2(\R^q\times\R^d)}\\
		&=&(2\pi)^{d+q}\left( \int_{\R^q\times\R^d} \frac{|\eta|^{2|\beta|}}{(1+|\eta|^2)^{M_y}}\frac{|\xi|^{2|\alpha|}}{(1+|\xi|^2)^{M_x}}d(\eta,\xi)\right)^{1/2}\times\\
		&&\times\left\| (1-\Delta_y)^{M_y}(1-\Delta_x)^{M_x}f_{\psi,\varphi}\right\|_{L^2(\tilde{K}\times\overline{B_{r_1}})}.
	\end{eqnarray*}
	Since the integral is finite whenever $M_x>|\alpha|+d/2$ and $M_y>|\beta|+q/2$, by the previous inequality, for $l\in \N_0$ there is $C_l$ such that for every $f\in\mathscr{E}(U\times B_r)$
	\begin{eqnarray*}
		\|f\|_{l,K\times\overline{B_{r_0}}}&=&\max_{(y,x)\in K\times\overline{B_{r_0}}, |\alpha|+|\beta|\leq l}|\partial_y^\beta\partial_x^\alpha f(y,x)|\\
		&\leq& C_l \left\| (1-\Delta_y)^{l+1+q/2}(1-\Delta_x)^{l+1+d/2}f_{\psi,\varphi}\right\|_{L^2(\tilde{K}\times\overline{B_{r_1}})}.
	\end{eqnarray*}
	Applying this inequality to $f=(v-\sum_{k=0}^N\sum_{m=1}^{d_k}a_{k,m}(v)Y_{k,m}^*)$ the theorem follows from \eqref{eq:commutation of psi, varphi}, \eqref{eq:local L2-convergence} and the arbitrariness of $K, r_0$ and $l$.
\end{proof}

\begin{remark}\label{rem:ODEs for coefficients in spherical harmonics expansion}
	Let $u\in\mathscr{D}'(\R^d)$ satisfy $\Delta u+\lambda^2 u=0$ in $\R^d$, $\lambda\in\C\backslash\{0\}$. Due to the hypoellipticity of $\Delta +\lambda^2$, $u\in\mathscr{E}(\R^d)$ and it follows from Theorem \ref{theo:spherical harmonics expansion for smooth functions} that
	\begin{eqnarray*}
		0&=&\sum_{k=0}^\infty\sum_{m=1}^{d_k}(\Delta+\lambda^2)\left(u_{k,m}Y^*_{k,m}\right)\\
		&=&\sum_{k=0}^\infty\sum_{m=1}^{d_k} \left( \frac{d}{d\rho^2}u_{k,m}+\frac{d-1}{\rho}\frac{d}{d\rho} u_{k,m}+\left(\lambda^2-\frac{k(k+d-2)}{\rho^2}\right)u_{k,m}\right) Y_{k,m}^*
	\end{eqnarray*}
	in $\mathscr{E}(\R^d)$. In particular, by the orthonormality of the $Y_{k.m}$'s in $L^2(S^{d-1})$ and Lemma \ref{lemma:spherical harmonics expansion} (i), for every $k,m$, the smooth function $u_{k,m}$ solves the ordinary differential equation
	\[f''(\rho)+\frac{d-1}{\rho}f'(\rho)+\left(\lambda^2-\frac{k(k+d-2)}{\rho^2}\right) f(\rho)=0\text{ in }[0,\infty)\]
	which is essentially a Bessel equation. As in \cite[p.~106]{Folland1995} it follows that
	\[u_{k,m}(\rho)=c_{k,m}\rho^{(2-d)/2}J_{k+(d-2)/2}(\lambda\rho),\]
	where $c_{k,m}\in\C$ and where $J_\alpha$ is the Bessel function of the first kind of order $\alpha$.
\end{remark}

\section*{Acknowledgements}

The research on Runge pairs for systems of partial differential equations presented in Section \ref{sec:systems} was initiated during a visit of the second author to the Instituto de Ciencias Matem\'aticas - ICMAT, Madrid, Spain. The second author would like to thank Daniel Peralta-Salas for the invitation and ICMAT for its warm hospitality.

\bibliographystyle{plain}
\bibliography{references}

\end{document}